\documentclass[a4paper,11pt]{article}
\pagestyle{plain}

%\linespread{2}

\usepackage[utf8]{inputenc}
\usepackage[T1]{fontenc}
\usepackage[english]{babel}
\usepackage{a4wide}

\pagestyle{plain}

%links
\usepackage[]{hyperref}
\hypersetup{
    colorlinks=true,       % false: boxed links; true: colored links
    linkcolor=red,          % color of internal links
    citecolor=blue,        % color of links to bibliography
    filecolor=magenta,      % color of file links
    urlcolor=cyan           % color of external links
}

%maths
\usepackage[leqno]{amsmath}
\usepackage{amssymb}
\usepackage{mathrsfs}
\usepackage{amsthm}
\usepackage{amsxtra}
\usepackage{bm}
\setcounter{tocdepth}{1}
\usepackage[titletoc]{appendix}

\parskip=5pt
\parindent=0pt

%graphic
\usepackage{graphicx}

%theorems
\theoremstyle{plain}
\newtheorem{thm}{Theorem}[section]
\newtheorem{prop}[thm]{Proposition}
\newtheorem{lem}[thm]{Lemma}
\newtheorem{cor}[thm]{Corollary}

\theoremstyle{definition}
\newtheorem{defn}[thm]{Definition}

\newtheorem{rem}{Remark}

\numberwithin{equation}{section}

%opening
\title{Strichartz estimates and local existence for the capillary water waves with non-Lipschitz initial velocity}
\author{
Thibault de Poyferr\'{e}\footnote{UMR 8553 du CNRS, Laboratoire de Mathématiques et Applications de l'Ecole Normale Supérieure, 75005 Paris, France. Email: tdepoyfe@dma.ens.fr}
\and
Quang Huy Nguyen
\footnote{UMR 8628 du CNRS, Laboratoire de Math\'ematiques d'Orsay, Universit\'e Paris-Sud, 91405 Orsay Cedex, France. Email: quang-huy.nguyen@math.u-psud.fr }
}
\date{}

\DeclareMathOperator{\dist}{dist}
\DeclareMathOperator{\RE}{Re}

\DeclareMathOperator{\supp}{supp}
\DeclareMathOperator{\dive}{div}
\DeclareMathOperator{\Hess}{Hess}
\DeclareMathOperator{\Op}{Op}

\def \div {\dive}
\def\C{\mathcal{C}}
\def\d{\,\mathrm{d}}
\def\eps{\varepsilon}
\def\F{\mathcal{F}}

\def\la{\left\vert}
\def\lA{\left\Vert}
\def\lb{\left[}
\def\lB{\left\{}
\def\lp{\left(}
\def\ls{\left\langle}
\def\les{\lesssim}
\def\mez{\frac{1}{2}}
\def\tmez{\frac{3}{2}}
\def\N{\bm{\mathrm{N}}}
\def\ph{\varphi}
\def\R{\bm{\mathrm{R}}}
\def\ra{\right\vert}
\def\rA{\right\Vert}
\def\rb{\right]}
\def\rB{\right\}}
\def\rp{\right)}
\def\rs{\right\rangle}

%%%%%%%%%%%%%%%%%%%%%%%%
\def\eps{\varepsilon}
\def\mez{\frac{1}{2}}
\def\tdm{\frac{3}{2}}
%--------------------------------------------------------
\newcommand{\bq}{\begin{equation}}
\newcommand{\eq}{\end{equation}}
\newcommand{\bqa}{\begin{eqnarray*}}
\newcommand{\eqa}{\end{eqnarray*}}
%%%%%%%%%%%
\newcommand{\hk}{\hspace*{.15in}}
%----------------------------------------------------------
\def\xC{\mathbf{C}}
\def\xN{\mathbf{N}}
\def\xR{\mathbf{R}}

\def\xZ{\mathbf{Z}}
%--------------------------------------
\begin{document}
\maketitle
\abstract{ 
We consider the gravity-capillary waves in any dimension and in fluid domains with general bottoms. Using the paradiferential reduction established in~\cite{NgPo}, we prove Strichartz estimates for solutions to this problem, at a low regularity level such that initially, the velocity field can be  non-Lipschitz up to the free surface. We then use those estimates to solve the Cauchy problem at this level of regularity.
}
\section{Introduction}
\subsection{Equations}

The water waves problem is the study of the motion of an incompressible inviscid fluid, lying above a fixed bottom and below an atmosphere, from which it is separated by a free surface. At equilibrium, this surface is flat. As soon as one perturbs this equilibrium, the surface will be put in motion by the combined action of gravity and surface tension.

The  velocity of such a fluid will obey the classical Euler equations of fluid dynamics, with the added difficulty of the moving surface.
As such, the domain occupied by the fluid will depend on the time at which it is observed. We thus consider the time-dependent domain  
$$
\Omega = \{(t,x,y) \in[0,T] \times \R^d \times \R:(x, y)\in \Omega_t\} 
$$
where each $\Omega_t$ is a domain located underneath a free surface 
$$
\Sigma_t = \{(x,y)  \times \R^d \times \R: y=\eta(t, x)\} 
$$
and above a fixed bottom $\Gamma=\partial\Omega_t\setminus \Sigma_t$. The physical dimensions are $d=1,~2$. We make the following important assumption on the domain: {\it Assumption $(H_t)$\\
$\Omega_t$ is the intersection of the half space 
\[
\Omega_{1,t}= \{(x,y)  \times \R^d \times \R: y<\eta(t, x)\} 
\]
and an open connected set $\Omega_2$ containing a fixed strip around $\Sigma_t$, i.e., there exists $h>0$ such that 
\[
 \{(x,y)  \times \R^d \times \R: \eta(t, x)-h\le y\le\eta(t, x)\} \subset \Omega_2.
\]
}
This important hypothesis prevents the bottom from emerging, or even from coming arbitrarily close to the free surface. The study of water waves without it is an open problem.

It is customary in mathematics to simplify the problem further by supposing the motion of the fluid to be irotational. This covers a large class of physical applications. Now under this additional hypothesis, and if the domain is simply connected, the velocity field $v$ admits a potential $\phi:\Omega \to \R$, i.e, $v=\nabla \phi$. An important observation by Zakharov \cite{Zack} is that the motion is then completely determined by the value of the elevation~$\eta(t,x)$ and of the trace~$\psi(t,x)=\phi(t,x,\eta(t,x))$ of the potential at the surface. We can then find~$\phi$ as the  unique variational solution of 
\bq\label{phi}
\Delta\phi =0\text{~in}~\Omega_t,\quad \phi(t, x, \eta(t, x))=\psi(t, x),\quad\partial_n\phi\rvert_{\Gamma}=0
\eq
Now following Craig and Sulem \cite{CrSu} to write a compact version of the equations, we introduce  the Dirichlet-Neumann operator
\begin{align*}
G(\eta) \psi &= \sqrt{1 + \vert \nabla_x \eta \vert ^2}
\Big( \frac{\partial \phi}{\partial n} \Big \arrowvert_{\Sigma}\Big)\\
&= (\partial_y \phi)(t,x,\eta(t,x)) - \nabla_x \eta(t,x) \cdot(\nabla_x \phi)(t,x,\eta(t,x)).
\end{align*}

The water wave system can now be rewritten as the following so-called Zakharov-Craig-Sulem system on~$(\eta,\psi)$ : 
\begin{equation}\label{eq:Zak}
\left\{
\begin{aligned}
&\partial_t \eta = G(\eta) \psi,\\
&\partial_t \psi + g\eta-H(\eta)+\mez \vert \nabla_x \psi \vert^2 - \mez \frac{(\nabla_x \eta \cdot \nabla_x \psi + G(\eta)\psi)^2}{1+ \vert \nabla_x \eta \vert^2}=0,
\end{aligned}
\right.
\end{equation}
where $H(\eta)$ is the mean curvature of the free surface:
\[
H(\eta)=\div\left( \frac{\nabla\eta}{\sqrt{1+|\nabla\eta|^2}}\right).
\]
The vertical and horizontal components of the velocity will play an important role in the analysis of system \eqref{eq:Zak}. These quantities  can be expressed in terms of $\eta$ and $\psi$ as
\begin{equation}\label{BV}
B = (v_y)\arrowvert_\Sigma = \frac{ \nabla_x \eta \cdot \nabla_x \psi + G(\eta)\psi} {1+ \vert \nabla_x \eta \vert^2},\quad
V= (v_x)\arrowvert_\Sigma  =\nabla_x \psi - B \nabla_x \eta.
 \end{equation}
%%%%

%It is important to introduce the vertical and horizontal components of the velocity, which can be expressed in terms of $\eta$ and $\psi$:
%\begin{equation}\label{BV}
%\left\{
%\begin{aligned}
%B &= (v_y)\arrowvert_\Sigma = \frac{ \nabla_x \eta \cdot \nabla_x \psi + G(\eta)\psi} {1+ \vert \nabla_x \eta \vert^2},\\
%V&= (v_x)\arrowvert_\Sigma  =\nabla_x \psi - B \nabla_x \eta.
%\end{aligned}
%\right.
% \end{equation}
\subsection{The problem}
In the present paper and its companion~\cite{NgPo}, we  aim  to prove local existence for rough data below the energy threshold, using the dispersive properties of this system. The local existence of solutions for the water waves system has been extensively studied by many authors, among them Nalimov \cite{Nalimov}, Yosihara \cite{Yosihara},  Coutand-Shkoller \cite{CouShk}, Craig \cite{Craig}, Wu \cite{Wu1,Wu2}, Christodoulou-Lindblad~\cite{CL}, Lindblad\cite{Lind}, Lannes~\cite{Lannes},  Ming-Zhang \cite{MiZh} and for the case with surface tension, in Beyer-G\"{u}nther~\cite{BG}, Ambrose-Masmoudi~\cite{AM1,AM2}, Shatah-Zeng \cite{SZ1, SZ2, SZ3}.
For the full  system with gravity and surface tension, in terms of regularity of data the result of Alazard, Burq and Zuily \cite{ABZ1} reaches an important level:
\bq\label{regu:ABZ1}
(\eta_0, \psi_0)\in H^{s+\mez}(\R^d)\times H^s(\R^d),\quad s>2+\frac d2.
\eq
Observe that by the formulas \eqref{BV}, this is the optimal Sobolev index to ensure that the initial velocity  field is Lipschitz up to the free surface, which is a quite natural criterion for the flow of fluid particles to be well-defined, in terms of the Cauchy-Lipschitz theorem.\\
Now, let us look at the linearized around the rest state $(\eta=0, \psi=0)$ of \eqref{eq:Zak}, with~$g=0$. It reads  
$$\partial_t\Phi+i\la D\ra^\tmez\Phi=0,$$
where $\Phi=\la D\ra^\mez\eta+i\psi$. This linear equation is dispersive (see paragraph \ref{mainresult} below), and we expect the full system to exhibit dispersive properties as well. The consequences of this dispersion for long time dynamics have been extensively studied in recent years, starting from the works of Wu~\cite{Wu2D,Wu3D}, by Germain-Masmoudi-Shatah~\cite{GMS1,GMS2}, Alazard-Delort~\cite{AD1,AD2}, Ionescu-Pusateri~\cite{IP1,IP2}, Hunter-Ifrim-Tataru~\cite{HIT}, and Ifrim-Tataru~\cite{IT1,IT2}. 

In this paper, we are interested in the consequences for short time and rough data, the so-called Strichartz estimates. They are a family of local in time estimates improving the Sobolev inequalities for a solution of the system, which can then be used to improve the energy estimates and thus lead to well-posedness with less regularity for initial data. The method to obtain such results for quasi-linear wave equations was developed by Bahouri and Chemin \cite{BaCh-AJM} and by Tataru, notably in \cite{TataruNS}. 

  However, little is known about Strichartz estimates  for water waves systems. In~\cite{CHS}, Christianson-Hur-Staffilani  proved Strichartz estimate for 2D gravity-capillary waves under another formulation. Then, Alazard-Burq-Zuily  obtained in~\cite{ABZ2} such a result for solutions to \eqref{eq:Zak} at regularity \eqref{regu:ABZ1}. We want to improve this in two ways, by proving Strichartz estimates :
\begin{itemize}
\item[(1)] valid for 3D waves,
\item[(2)] that can be used to improve the threshold \eqref{regu:ABZ1}, for both 2D and 3D waves.
\end{itemize}
 In fact, the method used in \cite{ABZ2} relies on a reduction specific to the dimension $d=1$, so for $(1)$ we need another method. On the other hand, for $(2)$ one need to derive the Strichartz estimates assuming that the solution is less regular than \eqref{regu:ABZ1} and consequently, the coefficients appearing in the equation are rougher. Such a program has been carried out by Alazard, Burq and Zuily in \cite{ABZ} for the pure gravity case. In fact, we shall follow here a similar approach, that is, proving dispersive estimates using semiclassical analysis. The main novelty is that here, the equation has infinite speed of propagation, so that we need to construct a parametrix in semiclassical time. Also, we use at the fullest the regularity of the coefficients to expand the lifespan of this parametrix. \\
\hk The first step in this program is to reduce system \eqref{eq:Zak} to a single equation to which the method for quasilinear equations can be applied. This uses paradifferential calculus, whose notations and main features are recalled in Appendix \ref{ap:para}. Specifically, we proved in the companion paper \cite{NgPo} that
assuming~$(\eta,\psi)$ to be a solution of~\eqref{eq:Zak}  satisfying condition $(H_t)$ for all times~$t\in I=[0, T]$, such that 
\begin{equation*}
	(\eta,\psi)\in L^\infty(I;H^{s+\mez}(\R^d)\times H^s(\R^d))\cap L^p(I;W^{r+\mez,\infty}(\R^d)\times W^{r,\infty}(\R^d)),
\end{equation*}
where 
\bq\label{mixednorm}
s>\frac{3}{2}+\frac{d}{2},~2<r<s-\frac d2+\mez,
\eq
the system \eqref{eq:Zak} can be rewritten as
\bq\label{reduced}
\partial_tu+ T_V\cdot\nabla  u+iT_{\gamma+\omega} u=f,
\eq
where the principal symbol $\gamma$ is of order $3/2$, real-valued;  the sub-principal symbol $\omega$ is of order $1/2$, complex-valued; the transport field $V$ is the horizontal part of the velocity field at the free surface: $V= (v_x)\arrowvert_\Sigma$ and the remainder term $f$ satisfies the following {\it tame estimate} for a.e. $t\in I$
\[
		\lA f(t)\rA_{H^s}\leq\F\lp\lA\eta(t)\rA_{H^{s+\mez}},\lA\psi(t)\rA_{H^s}\rp\lb1+\lA\eta(t)\rA_{W^{r+\mez,\infty}}+\lA\psi(t)\rA_{W^{r,\infty}}\rb.
\]
  In this article, we shall study equation \eqref{reduced} independently from its origin in the water waves system, proving a priori Strichartz estimates for its solution. This will  imply a priori Strichartz estimates for the gravity-capillary waves system \eqref{eq:Zak}. We will then combine them with the energy and contraction estimates and with a blow-up criterion, all proved in~\cite{NgPo}, to solve the Cauchy problem at low regularity such that the initial velocity field may fail to be Lipschitz (up to the surface).

\subsection{ Main results}\label{mainresult}
\hk Remark that the linearized system of \eqref{eq:Zak} around the rest state~$(\eta=0, \psi=0)$ when~$g=0$ reads
\[
\begin{cases}
\partial_t\eta-|D_x|\psi=0,\\
\partial_t\psi-\Delta \eta=0,
\end{cases}
\]
which can be written 
\[
\partial_t\Phi +i|D_x|^\tdm \Phi=0,\quad\text{with}~\Phi=|D_x|^\mez\eta+i\psi.
\]
It classically follows from the explicit formula for the solution, Litlewood-Paley decomposition, stationary phase and a TT* argument that
\[
\lA \Phi\rA_{L^pW^{s-\frac{d}{2}-\mu_{opt}, \infty}}\les \lA \Phi\arrowvert_{t=0}\rA_{H^s}
\]
with 
\bq\label{Str:lin}
\begin{cases}
\mu_{opt}=\frac{3}{8},~p=4\quad\text{if}~d=1, \\
\mu_{opt}=\frac{3}{4}-\eps,~p=2\quad\text{if}~d\ge 2.
\end{cases}
\eq
Our first result states that the fully nonlinear gravity-capillary waves system \eqref{eq:Zak} satisfies a similar estimate to \eqref{Str:lin} with %(cf. Theorem \ref{maintheorem})
\[
\mu=\frac{3}{20}-\eps~\text{if}~d=1, \quad \mu=\frac{3}{10}-\eps~\text{if}~d\ge 2.
\]
More precisely, we prove
\begin{thm}\label{main:theo}
	Let~$I=[0,T]$, $d\geq1$ and $p\ge 1$. Let $V(t, x):I\times \xR^d\to \xR^d$ be a vector field and $\gamma,~\omega,~\omega_1$ be the symbols defined by \eqref{def:gamma}. Consider 
\begin{equation}	\label{eq:mu}
\mu\in \lp0, \frac{3}{20}\rp,\quad p=4\quad \text{if}~d=1;\quad\mu\in \lp0,  \frac{3}{10}\rp,\quad p=2\quad \text{if}~d\ge 2.
\end{equation}
 Then for any $\sigma\in\R$ there exist~$k=k(d)\in\N$ and~$\F:\R^+\rightarrow\R^+$ non-decreasing such that the following property holds:\\
\hk  if $f\in L^p( I;H^{\sigma-\frac{9}{10}}(\R^d))$ and $u\in L^\infty(I;H^{\sigma}(\R^d))$ satisfy
	\begin{equation*}
		\lp\partial_t+T_V\cdot\nabla+iT_{\gamma+\omega}\rp u=f,
	\end{equation*}
	then we have
	\begin{equation}\label{main:Strichartz}
		\lA u\rA_{L^p(I;W^{\sigma-\frac{d}{2}+\mu}(\R^d))}\leq\F(\Upsilon)\lb\lA f\rA_{L^p( I;H^{\sigma-\frac{9}{10}}(\R^d))}+\lA u\rA_{L^\infty( I;H^{\sigma}(\R^d))}\rb,
	\end{equation}
where $\Upsilon$ is the sum of semi-norms of the coefficients, defined by \eqref{N_kgamma} and \eqref{M_kgamma}:
\[
\Upsilon=\lA V\rA_{L^p\lp I, W^{1,\infty}(\R^d)\rp}+\mathcal{M}_{k}(\gamma)(I)+\mathcal{N}_{k}(\gamma)(I)+\mathcal{N}_{k}(\omega)(I).
\]
\end{thm}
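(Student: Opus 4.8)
The plan is to adapt, to the dispersion relation $|\xi|^{3/2}$, the semiclassical strategy used by Alazard--Burq--Zuily in \cite{ABZ} for the pure gravity case (itself in the lineage of Bahouri--Chemin and Tataru for quasilinear equations), the essential new difficulty being that $|\xi|^{3/2}$ is of order $>1$, so the problem has infinite speed of propagation and the parametrix must be built in a rescaled, \emph{semiclassical} time. First I would reduce to $\sigma=0$ by conjugating the equation with $\langle D_x\rangle^{\sigma}$ and absorbing the lower-order paradifferential commutators it produces into the right-hand side. Then a Littlewood--Paley decomposition $u=P_{\le1}u+\sum_{j\ge0}\Delta_j u$: the low-frequency part is dispatched by Bernstein's inequality and the $L^\infty_tH^\sigma$ bound, and for each $j$ one has $\partial_tu_j+T_V\!\cdot\!\nabla u_j+iT_{\gamma+\omega}u_j=f_j+r_j$, with $u_j=\Delta_ju$, $f_j=\Delta_jf$, and $r_j=[\Delta_j,\,T_V\!\cdot\!\nabla+iT_{\gamma+\omega}]u$ of lower order and bounded, by symbolic calculus, by $\Upsilon\,\|u\|_{L^\infty_tH^\sigma}$. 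The objective then becomes a frequency-localized Strichartz estimate for $u_j$ at the derivative level $\sigma-\tfrac d2+\mu$, with constants $\ell^p$-summable in $j$.

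Fix $h=2^{-j}$. Rescaling to semiclassical spatial variables and, crucially, to semiclassical time $t=h^{1/2}s$ (forced by the order $3/2$), and at the same time truncating $\gamma,\omega,V$ at frequencies $\lesssim h^{-\delta}$ for a small parameter $\delta>0$ — which by the $W^{r,\infty}$-regularity of the coefficients costs only a factor $h^{\delta\theta}$, $\theta>0$, hence is affordable — the equation for $u_j$ reduces to a clean semiclassical half-wave equation $h\partial_sv+i\,\mathrm{Op}_h(\gamma^{(h)})v=(\text{lower order in }h)$, with $\gamma^{(h)}$ a smooth bounded symbol, the transport term $T_V\!\cdot\!\nabla$ and subprincipal $T_\omega$ now subsumed. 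On a semiclassical interval $[s_0,s_0+\tau]$ (physical length $h^{1/2}\tau$) I would build an approximate propagator in WKB form
\[
U(s)g(x)=\frac{1}{(2\pi h)^{d}}\int e^{\frac ih(\phi(s,x,\xi)-y\cdot\xi)}\,a(s,x,\xi;h)\,g(y)\,dy\,d\xi ,
\]
with $\phi$ solving the eikonal equation $\partial_s\phi+\gamma^{(h)}(s,x,\nabla_x\phi)=0$, $\phi|_{s=s_0}=x\cdot\xi$, along the bicharacteristic flow of $\gamma^{(h)}$, and the amplitude $a=a_0+ha_1+\cdots$ solving the induced transport equations. The finite regularity of the coefficients limits both the number of amplitude terms and the length $\tau$ for which $\phi$ stays a nondegenerate generating function; using $k=k(d)$ derivatives one pushes $\tau$ up to a (negative) power of $h$. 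An energy estimate in $s$ then bounds $\|u_j(s)-U(s)u_j(s_0)\|_{L^2}$ by $O(h^{N})$ on $[s_0,s_0+\tau]$, up to the source contribution, which is handled by Duhamel.

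Next, a non-stationary/stationary phase analysis of the Schwartz kernel of $U(s)U(s')^*$ — where the decisive point is the nondegeneracy of the $\xi$-Hessian of $\phi$, inherited from the strict convexity of $|\xi|^{3/2}$ — gives the pointwise dispersive bound $\|U(s)U(s')^*\|_{L^1\to L^\infty}\lesssim h^{-d}|s-s'|^{-d/2}$; the standard $TT^*$ / Keel--Tao argument then yields, on $[s_0,s_0+\tau]$, a Strichartz estimate with the admissible pair $(p,q)$ ($q$ just below $\infty$; $p=4$ if $d=1$, $p=2$ if $d\ge2$), of the same quality as the constant-coefficient estimate \eqref{Str:lin}. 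Undoing the time rescaling, summing in $\ell^p$ over the $\lesssim Th^{-1/2}/\tau$ subintervals, summing over $j$, and recombining with $r_j$ and with the source term — the latter absorbed via Duhamel with the loss of $9/10$ derivatives quoted in the statement — yields \eqref{main:Strichartz}. The effective exponent $\mu$ falls strictly below the linear $\mu_{opt}$ of \eqref{Str:lin} because of two losses, the $h^{-\delta}$ of the symbol truncation and the one coming from the number of time subintervals; one chooses $\delta$ and $\tau$ so as to balance these and lands at $\mu<3/20$ for $d=1$ and $\mu<3/10$ for $d\ge2$, the dependence of the constant on $\Upsilon$ simply recording the $k(d)$ derivatives of $V,\gamma,\omega$ used throughout.

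I expect the crux to be the parametrix construction in semiclassical time together with the accounting of its lifespan. Because $|\xi|^{3/2}$ has order $>1$ there is no fixed physical time interval on which a WKB expansion stays valid, so one must propagate on semiclassical intervals, construct the parametrix with coefficients of only finite ($W^{r,\infty}$, with $r$ not large) regularity, extend $\tau$ as far as that regularity allows, and then control the accumulation of errors over the many intervals needed to cover $[0,T]$. It is exactly here that the full regularity of the coefficients has to be exploited — and exactly here that the restriction on $\mu$, and the specific numbers $3/20$ and $3/10$, come from.
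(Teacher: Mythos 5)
Your overall strategy matches the paper's: Littlewood--Paley decomposition of the reduced equation~\eqref{reduced}, regularization of the symbols at spatial scale $h^{-\delta}$, semiclassical rescaling of both space and time, construction of a WKB parametrix on intervals of semiclassical length $h^{\delta}$ (physical length $h^{1/2+\delta}$), a stationary-phase dispersive bound coming from the nondegenerate Hessian of $\gamma$, a $TT^*$ argument, gluing over the $\sim Th^{-1/2-\delta}$ subintervals, and an optimization over $\delta$ that lands at $\mu<3/20$ for $d=1$ and $\mu<3/10$ for $d\ge2$. Two steps of your sketch, however, would not go through as written.

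First, your integer-step amplitude ansatz $a=a_0+ha_1+\cdots$ does not close. The paper points this out explicitly (Remark after Proposition~\ref{prop:amppara}): each iteration of the transport recursion for $b_k$ contains the term
\[
\sum_{2\le|\iota|\le N-1}\frac{h^{|\iota|-1}}{\iota!}\,D^\iota_z\!\left[(\partial^\iota_\eta\widetilde p)(y,z,\theta(y,z))\,b_{k-1}(z)\right]\Big|_{z=y},
\]
and each spatial derivative of the regularized symbol $\widetilde p\in S^0_\delta$ costs a factor $h^{-\delta}$. The bookkeeping in the proof of Proposition~\ref{prop:amppara} (see the exponent $M$ in \eqref{expo:G1}) shows that the iteration stays inside $S^0_\delta(h^{-1/2}I_h)$ only if the step between consecutive powers of $h$ is at most $1-\delta$; the paper therefore takes $b=\sum_kh^{k\nu}b_k$ with a fractional step $0<\nu\le1-\delta$. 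Since the final optimization (choosing $\delta$ so that $\mez+\delta=\tfrac32(1-\delta)$) forces $\delta=2/5>0$, your integer ansatz $\nu=1$ genuinely fails. Second, you dismiss the transport term $T_V\cdot\nabla$ as ``subsumed'' without explaining how. After rescaling it reads $h^{1/2}V_h\cdot(h\nabla_x)$, which is neither a lower-order-in-$h$ correction nor compatible with folding $V$ into the eikonal phase, since $V$ has only $L^p_tW^{1,\infty}_x$ regularity. The paper removes it by straightening along the Lagrangian flow $\dot X_h=h^{1/2}V_h(\sigma,X_h)$, setting $v_h(\sigma,y)=w_h(\sigma,X_h(\sigma;y))$; the equation then becomes $(h\partial_\sigma+iP_h)v_h=0$ with a conjugated pseudodifferential operator $P_h$ whose principal symbol $p_h$ still has a nondegenerate $\zeta$-Hessian (Proposition~\ref{prop:Hessp}). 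This straightening is a substantive step, not a formality, and everything downstream (Corollary~\ref{cor:estphi}, the amplitude estimates) is stated for $p_h$, not for $\Gamma_h$. A minor slip: the $TT^*$ kernel bound should read $h^{-d/2}|s-s'|^{-d/2}$ in $L^1\to L^\infty$, not $h^{-d}|s-s'|^{-d/2}$.
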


As a corollary, this will imply the corresponding Strichartz estimate for the water waves equation.   To be concise in the following statements let us define the quantities that control the system:
\begin{align*}
&\text{Sobolev norms}:~M_{\sigma,T}=\Vert (\eta, \psi)\Vert_{L^{\infty}([0, T]; H^{\sigma+\mez}\times H^\sigma)},~ M_{\sigma,0}=\Vert (\eta_0, \psi_0)\Vert_{H^{\sigma+\mez}\times H^\sigma},\\
&\text{Blow-up norm}: N_{\sigma,T}=\Vert (\eta, \psi)\Vert_{L^1([0, T]; W^{\sigma+\mez, \infty}\times W^{\sigma, \infty})},\\
& \text{Strichartz norm}:Z_{\sigma,T}=\Vert (\eta, \psi)\Vert_{L^p([0, T]; W^{\sigma+\mez, \infty}\times W^{\sigma, \infty})}.
\end{align*}
\begin{cor}	\label{cor:Stri}
	Let~$d\geq1$, $h>0$ and~$(s,r)\in\R^2$ such that
	\[s>2+\frac d2-\mu,\quad 2<r<s-\frac d2+\mu,\]
	with~$\mu$  and $p$ as in~\eqref{eq:mu}.
	Then there exists a non-decreasing~$\F:\R^+\rightarrow\R^+$ such that for all~$T\in(0,1]$, for all~$(\eta,\psi)$ smooth solution of~\eqref{eq:Zak} on~$[0,T]$ satisfying $\inf_{t\in [0, T]}\dist(\eta(t), \Gamma)>h,$
 there holds
	\[Z_r(T)\leq\F\lp T\F\lp M_s(T)+Z_r(T)\rp\rp.\]
\end{cor}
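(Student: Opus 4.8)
The plan is to deduce the Strichartz estimate for the water waves system from the paradifferential Strichartz estimate of Theorem \ref{main:theo} applied to the reduced equation \eqref{reduced}. First I would invoke the paradifferential reduction from \cite{NgPo} recalled above: given a smooth solution $(\eta,\psi)$ of \eqref{eq:Zak} on $[0,T]$ with $\inf_t\dist(\eta(t),\Gamma)>h$, and assuming the mixed-norm bound with exponents $(s,r)$ satisfying $s>\frac32+\frac d2$ and $2<r<s-\frac d2+\mez$ (which is compatible with our hypotheses $s>2+\frac d2-\mu$, $2<r<s-\frac d2+\mu$ since $\mu<\mez$), there is a new unknown $u$ (essentially $u=\jap^{s}\eta+\text{(lower order)}$, of the Zakharov type $\sqrla^\mez\eta+i\psi$ weighted to $H^s$) solving
\[
\partial_t u+T_V\cdot\nabla u+iT_{\gamma+\omega}u=f,
\]
with $\|u\|_{L^\infty(I;H^s)}\les \F(M_s(T))$ and the tame bound on $f$, so that $\|f\|_{L^p(I;H^{s-\frac{9}{10}})}$ — in fact $\|f\|_{L^p(I;H^s)}\les \F(M_s(T))(1+N_{r,T})\les\F(M_s(T)+Z_r(T))$ after using $N_{r,T}\le T^{1-1/p}Z_{r,T}$ by Hölder in time. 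One must check that the symbols $\gamma,\omega$ and the field $V$ have the regularity needed for Theorem \ref{main:theo}, i.e. that $\Upsilon$ is finite and controlled by $\F(M_s(T)+Z_r(T))$; this is where the constraints on $(s,r)$ enter, since $\gamma$ and $\omega$ are built from $\eta$ (at regularity $r+\mez$) and the Strichartz norm $Z_r(T)$ feeds exactly the $L^p_tW^{r,\infty}_x$-type seminorms $\mathcal M_k,\mathcal N_k$.

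Second, apply Theorem \ref{main:theo} with $\sigma=s$ to obtain
\[
\|u\|_{L^p(I;W^{s-\frac d2+\mu,\infty})}\le\F(\Upsilon)\big[\|f\|_{L^p(I;H^{s-\frac{9}{10}})}+\|u\|_{L^\infty(I;H^s)}\big]\le\F\big(M_s(T)+Z_r(T)\big).
\]
Third, transfer this bound back to $(\eta,\psi)$: since $u\sim\sqrla^\mez\eta$ modulo smoothing / lower-order paraproduct terms, one recovers $\|\eta\|_{L^pW^{s-\frac d2+\mu+\mez,\infty}}$ and similarly $\|\psi\|_{L^pW^{s-\frac d2+\mu,\infty}}$, up to errors controlled by lower Sobolev norms (hence by $M_s(T)$) — one should keep careful track of the boundedness of paraproducts $T_a$ on Hölder spaces, and of the low-frequency part which is harmless. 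Because $r<s-\frac d2+\mu$ by hypothesis, the gained regularity index $s-\frac d2+\mu$ exceeds $r$, and we conclude $Z_r(T)=\|(\eta,\psi)\|_{L^p(W^{r+\mez,\infty}\times W^{r,\infty})}\le\F(M_s(T)+Z_r(T))$. Finally, to produce the stated form $Z_r(T)\le\F(T\,\F(M_s(T)+Z_r(T)))$, I would redo the time-Hölder step keeping the explicit $T^{1-1/p}$ (and, if needed, a further $T$ from integrating the transport/tame terms over $[0,T]$) as a prefactor, so the small-time gain is visible; this is the standard bootstrap structure that, combined with the energy estimates of \cite{NgPo}, closes the Cauchy problem.

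The main obstacle I expect is the bookkeeping in the reduction step: verifying that with the \emph{lowered} regularity $s>2+\frac d2-\mu$ (below the Lipschitz threshold \eqref{regu:ABZ1}), the symbols $\gamma,\omega$ produced by \cite{NgPo} still satisfy the seminorm requirements of Theorem \ref{main:theo} and that all the paraproduct remainders are genuinely tame — in particular that the Dirichlet–Neumann operator estimates used to define $B,V,\gamma,\omega$ remain valid at this regularity under the bottom condition $\dist(\eta(t),\Gamma)>h$. The dispersive analysis itself is packaged in Theorem \ref{main:theo}; the work here is to certify that its hypotheses are met and to propagate the tame dependence through to $(\eta,\psi)$, which is mostly functional-analytic but must be done with care about the exponents near the endpoints $\mu=\frac3{20}$, $\frac3{10}$.
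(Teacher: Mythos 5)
Your plan is the intended one and, in essence, the only one: apply Theorem~\ref{main:theo} to the paradifferential reduction $u=T_p\eta+iT_q(\psi-T_B\eta)$ from~\cite{NgPo}, verify that the seminorms entering $\Upsilon$ and the source $f$ are controlled by $\F(M_{s,T}+Z_{r,T})$, and then undo the (elliptic, order $\mez$ and $0$ respectively) paraproducts $T_p,T_q$ to return to $(\eta,\psi)$, using $r<s-\frac d2+\mu$ to land in $W^{r+\mez,\infty}\times W^{r,\infty}$. Your compatibility check $s>2+\frac d2-\mu>\tmez+\frac d2$ and $r<s-\frac d2+\mu<s-\frac d2+\mez$ is exactly what is needed to invoke the reduction theorem of Section~\ref{sec:reduc}, and your bound $N_{r,T}\le T^{1-1/p}Z_{r,T}$ is the right way to turn the tame estimate \eqref{est:RHS} into a controlled $L^p_tH^{s-\frac{9}{10}}$ bound for $f$.

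One wrinkle you should be more careful about is the final $T$-bookkeeping. Theorem~\ref{main:theo} produces
\[
\lA u\rA_{L^p(I;W^{s-\frac d2+\mu,\infty})}\leq\F(\Upsilon)\lb\lA f\rA_{L^p(I;H^{s-\frac{9}{10}})}+\lA u\rA_{L^\infty(I;H^{s})}\rb,
\]
and while the $f$-term does carry explicit powers of $T$ through the time-H\"older step, the term $\lA u\rA_{L^\infty(I;H^{s})}\lesssim\F(M_{s,T})$ does not. So the stated form $Z_{r,T}\le\F(T\F(M_{s,T}+Z_{r,T}))$ is not literally obtained from Theorem~\ref{main:theo} alone; it should be read in conjunction with the energy estimate of Proposition~\ref{prop:aprioriestimate}, which is precisely how it is used in Section~\ref{sec:Cauchy} to close the bootstrap $M_{s,T}+Z_{r,T}\le\F(M_{s,0}+T^\delta\F(M_{s,T}+Z_{r,T}))$. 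Your remark that "a further $T$" may be needed is correct; the missing piece is that it comes from Proposition~\ref{prop:aprioriestimate}, not from the Strichartz estimate itself. With that caveat, your route matches the paper's intended argument.
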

In~\cite{NgPo}, we have established the following energy estimate, blow up criterion and contraction estimate.
\begin{prop}[\protect{\cite[Theorem~1.1]{NgPo}}] \label{prop:aprioriestimate}
Let~$d\geq 1$, $h>0,~p> 1$ and $(r, s)\in \xR^2$ such that
\bq\label{intro:reg}
 s>\tdm+\frac{d}{2},\quad2<r<s+\frac12-\frac d2.
\eq
 Then there exists a non-negative, non-decreasing function~$\F$  such that: for all $T\in (0, 1]$ and all $(\eta, \psi)$ smooth solution to \eqref{eq:Zak} on $[0, T]$ satisfying $\inf_{t\in [0, T]}\dist(\eta(t), \Gamma)>h,$
 there holds
\[
 M_{s, T}\leq\F\lp M_{s,0}+T\F\lp M_{s, T}+Z_{r,T}\rp\rp.
\]
\end{prop}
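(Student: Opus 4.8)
The plan is to reduce the system \eqref{eq:Zak} to the paradifferential equation \eqref{reduced} at the energy level and then close a Gronwall-type a priori estimate on it. First I would invoke the paralinearization and symmetrization of \cite{NgPo} that produces \eqref{reduced}: under the regularity assumptions \eqref{intro:reg}, one builds a good unknown $u$ --- morally $\jap^{\mez}\eta$ coupled with the Alinhac substitution $\psi-T_B\eta$, after diagonalizing the $2\times 2$ system and symmetrizing --- which solves $\partial_t u+T_V\cdot\nabla u+iT_{\gamma+\omega}u=f$, with $V=(v_x)|_\Sigma$ real, $\gamma$ real of order $\tdm$, $\omega$ of order $\mez$, with $\|f(t)\|_{H^s}\les\F(M_{s,t})\bigl[1+\|\eta(t)\|_{W^{r+\mez,\infty}}+\|\psi(t)\|_{W^{r,\infty}}\bigr]$, and with $\|u(t)\|_{H^s}$ comparable to $M_{s,t}$ modulo lower-order contributions that obey an elementary energy bound. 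The restriction $s>\tdm+\frac d2$ is precisely what makes the underlying product, Sobolev and elliptic (Dirichlet--Neumann) estimates tame.

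Next I would carry out an $H^s$ energy estimate for this equation. Since $\gamma$ has order $\tdm>1$, the multiplier $\jap^s$ cannot be used directly --- the commutator $[\jap^s,T_\gamma]$ is of order $s+\mez$ --- so instead I would take as weight an elliptic paradifferential operator $\Lambda_s$ of order $s$ whose symbol is a power of $\gamma$, for which the principal part of $[\Lambda_s,T_\gamma]$ vanishes because $\{\gamma,\gamma\}=0$, lowering that commutator to order $s-\mez$. This reduces the problem to an $L^2$ estimate for $w:=\Lambda_s u$, where $\|w\|_{L^2}\sim\|u\|_{H^s}$ and $w$ still solves an equation $\partial_t w+T_V\cdot\nabla w+iT_{\gamma+\omega_1}w=g$ of the same shape --- the purely imaginary order-$\mez$ part of the subprincipal symbol being either seen to vanish after symmetrization or removed by one further conjugation by a zeroth-order elliptic operator, so that $\omega_1$ is real of order $\mez$ --- modulo $O(1)$ operators and a forcing $g(t)$ obeying the same tame $L^2$ bound as $f(t)$ above. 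One then computes
\[
\tder\|w\|_{L^2}^2=2\RE\langle g,w\rangle-2\RE\langle T_V\cdot\nabla w,w\rangle-2\RE\langle iT_{\gamma+\omega_1}w,w\rangle+O(\|w\|_{L^2}^2),
\]
and bounds each term: $|\RE\langle g,w\rangle|\les\|g\|_{L^2}\|w\|_{L^2}$; since $V$ is real, $T_V\cdot\nabla+(T_V\cdot\nabla)^*=-T_{\div V}+(\text{bounded on }L^2)$, giving $\les\|V\|_{W^{1,\infty}}\|w\|_{L^2}^2$; and since $\gamma,\omega_1$ are real, $T_\gamma-T_\gamma^*$ and $T_{\omega_1}-T_{\omega_1}^*$ are of \emph{negative} orders $\tdm-(r-\mez)=2-r$ and $\mez-(r-1)=\tdm-r$ --- this is exactly where the hypothesis $r>2$ enters, the $\tdm$-order symbol $\gamma$ being the borderline case --- so $|\RE\langle iT_{\gamma+\omega_1}w,w\rangle|$ is controlled by seminorms of $\gamma,\omega_1$ times $\|w\|_{L^2}^2$.

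Bounding all symbol seminorms, $\|V\|_{W^{1,\infty}}$ and $\|g\|_{L^2}$ by $\F(M_{s,t})\bigl[1+\|\eta(t)\|_{W^{r+\mez,\infty}}+\|\psi(t)\|_{W^{r,\infty}}\bigr]$ gives the differential inequality
\[
\tder\|w(t)\|_{L^2}^2\les\F(M_{s,t})\bigl[1+\|\eta(t)\|_{W^{r+\mez,\infty}}+\|\psi(t)\|_{W^{r,\infty}}\bigr]\bigl(1+\|w(t)\|_{L^2}\bigr)\|w(t)\|_{L^2}.
\]
Gronwall's lemma, together with Hölder in time --- so that $\int_0^T\bigl[1+\|(\eta,\psi)(t)\|_{W^{r+\mez,\infty}\times W^{r,\infty}}\bigr]\d t\le T+N_{r,T}$ and $\|g\|_{L^1([0,T];L^2)}\le\F(M_{s,T})(T+N_{r,T})$ --- then yields
\[
M_{s,T}^2\les\bigl(M_{s,0}^2+\F(M_{s,T})(T+N_{r,T})\bigr)\exp\bigl(\F(M_{s,T})(T+N_{r,T})\bigr).
\]
Restoring the lower-order correction through its elementary energy bound, using $N_{r,T}\le T^{1-1/p}Z_{r,T}$, and reorganizing the resulting inequality for $T\in(0,1]$ with the monotonicity of $\F$, one arrives at the stated estimate $M_{s,T}\le\F\bigl(M_{s,0}+T\F(M_{s,T}+Z_{r,T})\bigr)$.

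The main obstacle is the low-regularity symbolic-calculus bookkeeping: one must set up the symmetrization, the weight $\Lambda_s$ and the elimination of the imaginary subprincipal term so that every surviving error operator is genuinely of nonpositive order on $L^2$, and so that each resulting constant factors as $\F(M_{s,t})$ (the Sobolev side) times a \emph{single} power of $\|\eta(t)\|_{W^{r+\mez,\infty}}+\|\psi(t)\|_{W^{r,\infty}}$ (the Hölder side), whose time integral then costs only $N_{r,T}$ --- and all of this has to work at $r$ only slightly above the threshold $2$, where the adjoint of the $\tdm$-order principal symbol $\gamma$ is exactly marginal.
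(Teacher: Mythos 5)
The present paper does not prove Proposition~\ref{prop:aprioriestimate}: it is imported verbatim from the companion paper, cited as~\cite[Theorem~1.1]{NgPo}, and the only ingredient this paper reproduces is the \emph{statement} of the paradifferential reduction and its tame remainder bound~\eqref{est:RHS}. There is therefore no ``paper's own proof'' here to compare yours against; one would have to open~\cite{NgPo} to check the details.

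That said, your sketch follows the route that~\cite{NgPo} is designed to enable, and which is the standard one from~\cite{ABZ1} and~\cite{ABZ}: pass to the good unknown $u=T_p\eta+iT_q(\psi-T_B\eta)$ solving~\eqref{ww:reduce}, close an energy estimate on $u$ with a $\gamma$-adapted paradifferential weight so that the order-$\tmez$ principal part drops out of the commutator, control the remaining skew-adjoint defects by the $r>2$ Hölder threshold, and Gronwall using the tame bound~\eqref{est:RHS} and $N_{r,T}\le T^{1-1/p}Z_{r,T}$. Two bookkeeping points worth flagging if you were to write it out in full. First, your order count for the subprincipal term is slightly off: the real part of $\omega$ in~\eqref{def:gamma} involves $\lambda^{(0)}$, hence two spatial derivatives of $\eta$, so its $x$-regularity is $r-\tmez$ rather than $r-1$; the adjoint defect $T_{\omega}-T_{\omega}^*$ is then of order $\mez-(r-\tmez)=2-r$, which is again negative exactly for $r>2$ --- same conclusion, different arithmetic, and it is important to track it correctly since $r$ is taken arbitrarily close to $2$ in the application. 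Second, the Hölder step naturally produces a factor $T^{1-1/p}$ in front of $\F(M_{s,T}+Z_{r,T})$ rather than the clean factor $T$ in the stated bound; since $T\le1$ and $p>1$, $T^{1-1/p}\ge T$, so this is a priori a \emph{weaker} inequality than the one claimed. It is harmless for the eventual fixed-point/continuation argument (any positive power of $T$ suffices), but it is not literally the displayed estimate, so either you must take more care in how the $f$-term is integrated in time, or be explicit that the power of $T$ is $1-1/p$ and that this is what the Cauchy theory actually uses. Finally, you gloss over the near-invertibility of the symmetrizers $T_p$, $T_q$ needed to pass from $\|u(t)\|_{H^s}$ back to $M_{s,t}$; this is standard but does require a separate low-frequency energy bound, not just ``elementary,'' and at this regularity it is where much of the work in~\cite{NgPo} actually sits.
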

\begin{prop}	[\protect{\cite[Theorem~1.2]{NgPo}}]\label{theo:blowup}
		Let~$d\geq 1,~h>0$ and indices
	$$\frac32+\frac d2<s_0<s-\mez,\quad2<r<s_0+\frac12-\frac d2.$$
Let $T^*=T^*(\eta_0, \psi_0, h)$ be the maximal time of existence and
\[
(\eta,\psi)\in L^{\infty}\lp[0, T^*); H^{s+\mez}\times H^s\rp
\]
	be the maximal solution of~\eqref{eq:Zak} with prescribed data $(\eta_0, \psi_0)$ satisfying $\dist(\eta_0, \Gamma)>h.$
	Then if ~$T^*$ is finite, we have 
\[
\limsup_{T\rightarrow T^*}\lp M_{s_0}(T)+N_r(T)\rp=+\infty.
\]
\end{prop}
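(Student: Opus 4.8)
The plan is to argue by contradiction and continuation. Suppose $T^*<\infty$ but $\Lambda_0:=\sup_{0\le T<T^*}\big(M_{s_0}(T)+N_r(T)\big)<\infty$; I will show that $(\eta,\psi)$ then extends past $T^*$ in the class $L^\infty\big([0,T^*];H^{s+\mez}\times H^s\big)$ with $(H_t)$ preserved, contradicting the maximality of $T^*$. Observe first that $s_0>\tdm+\frac d2$ and $s_0<s-\mez$ force $s>2+\frac d2$, so at this level the velocity is Lipschitz up to the free surface and no Strichartz input is needed. The heart of the matter is an energy estimate at regularity $H^{s+\mez}\times H^s$ whose right-hand side involves the control quantities only through an $L^1_t$ norm. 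Using the paradifferential reduction \eqref{reduced} of \cite{NgPo} (or, equivalently, the Alazard--Burq--Zuily symmetrization \cite{ABZ1} applied directly to \eqref{eq:Zak}), there is an unknown $u$ with $\|u(t)\|_{H^\sigma}\simeq\|(\eta,\psi)(t)\|_{H^{\sigma+\mez}\times H^\sigma}$ solving $\partial_tu+T_V\cdot\nabla u+iT_{\gamma+\omega}u=f$. Pairing with $u$ in $H^s$: the transport term produces only $\tfrac12\int(\div V)|u|^2$; $iT_\gamma$ (real symbol of order $\tdm$) is skew-adjoint modulo an operator of order $\le\mez$; $iT_\omega$ (order $\mez$) is skew-adjoint modulo order $0$; and every commutator, remainder, and the source $f$ obeys a bound \emph{linear} in $\|u(t)\|_{H^s}$ with constant $\F\big(M_{s_0}(t)\big)\,\ell(t)$, where $\ell(t)=1+\|\eta(t)\|_{W^{r+\mez,\infty}}+\|\psi(t)\|_{W^{r,\infty}}$ — this uses the Dirichlet--Neumann and paralinearization estimates of \cite{NgPo} and the constraint $2<r<s_0+\mez-\frac d2$, which makes the $W^{r,\infty}$-type norms the relevant Lipschitz norms and ensures they are dominated by $M_{s_0}$. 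Hence $\tfrac{d}{dt}\|u\|_{H^s}^2\le\F(M_{s_0}(t))\,\ell(t)\big(1+\|u\|_{H^s}^2\big)$; since $M_{s_0}(t)\le\Lambda_0$ and $\int_0^{T}\ell\,dt=T+N_r(T)\le T^*+\Lambda_0$ for every $T<T^*$, Grönwall's inequality gives $\sup_{0\le T<T^*}\|(\eta,\psi)(T)\|_{H^{s+\mez}\times H^s}\le R$ with $R=R(\Lambda_0,T^*,M_{s,0})<\infty$.

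\textbf{Steps 2--3 (non-degeneracy and re-extension).} From $\partial_t\eta=G(\eta)\psi$, the bound $\|G(\eta)\psi\|_{H^{s-\mez}}\le\F(R)$ valid under $(H_t)$ with depth $>h$, and the second equation of \eqref{eq:Zak} for $\partial_t\psi$, one gets $(\partial_t\eta,\partial_t\psi)\in L^\infty\big([0,T^*);H^{s-\mez}\times H^{s-\tdm}\big)$. Interpolating with the $L^\infty\big(H^{s+\mez}\times H^s\big)$ bound, $(\eta,\psi)$ is uniformly continuous on $[0,T^*)$ into $H^{s+\mez-\eps}\times H^{s-\eps}$, hence has a limit $(\eta^*,\psi^*)$ as $t\to T^*$; by weak-$*$ compactness and lower semicontinuity, $(\eta^*,\psi^*)\in H^{s+\mez}\times H^s$ with norm $\le R$, and $\eta(t)\to\eta^*$ in $C^1$ (since $s+\mez-\eps>1+\frac d2$). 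As $\dist(\eta(t),\Gamma)>h$ for all $t<T^*$ by the definition of $T^*$, passing to the limit gives $\dist(\eta^*,\Gamma)\ge h>0$, so $(H_{T^*})$ holds. Applying the local well-posedness theory for \eqref{eq:Zak} at regularity $H^{s+\mez}\times H^s$ (above the Lipschitz threshold) with data $(\eta^*,\psi^*)$ and depth $h/2$ yields a solution on $[T^*,T^*+\tau]$ with $\tau=\tau(R,h)>0$; concatenating it with $(\eta,\psi)$ at $t=T^*$ — the matching and the $L^\infty_t\big(H^{s+\mez}\times H^s\big)$ regularity of the concatenation being guaranteed by the continuity above and by the uniqueness statement of \cite{NgPo} — produces a solution on $[0,T^*+\tau)$ with $\dist(\eta,\Gamma)>h/2$, contradicting the maximality of $T^*$.

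\textbf{Main obstacle.} Everything rests on Step 1: one needs an energy estimate in which the highest-order norm $M_s$ occurs \emph{linearly}, with the accompanying function $\F$ depending only on the \emph{lower} norm $M_{s_0}$ and on Lipschitz ($W^{r,\infty}$-type) norms, so that a single Grönwall inequality with an $L^1_t$ weight ($\sim N_r(T)+T$) closes — in contrast with Proposition \ref{prop:aprioriestimate}, which needs an $L^p_t$ Strichartz weight because there $s$ may drop below $2+\frac d2$. With only a crude tame bound that is, say, polynomial in $M_s$, the resulting differential inequality fails the Osgood condition and the criterion breaks down. Achieving the linear-in-$M_s$ bound requires tracking the exact order of every error generated by \eqref{reduced}: that the self-adjoint defect of $iT_\gamma$ is of order only $\mez$ (hence absorbable into $\|u\|_{H^s}$ after integration by parts), that $\RE\,\omega$ enters only at order $0$, and that $f$ satisfies a genuinely tame estimate linear in the Lipschitz norms — all through the sharp paraproduct/commutator calculus and the Dirichlet--Neumann bounds of \cite{NgPo}. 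Steps 2 and 3 are then routine, the first from the Dirichlet--Neumann estimates and time integration, the second from any available local existence result at regularity $s$.
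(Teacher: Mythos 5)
The paper does not prove this proposition; it is imported verbatim from the companion paper \cite[Theorem~1.2]{NgPo}, and is used in Section~\ref{sec:Cauchy} only to obtain a uniform lifespan for the regularized solutions. There is therefore no in-paper proof to compare against, but your proposal can still be assessed on its own terms.

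Your overall architecture --- contradiction, Gr\"onwall bootstrap of $M_s$ under the standing assumption $\sup_{T<T^*}(M_{s_0}(T)+N_r(T))<\infty$, then a continuation argument --- is the standard route to such blow-up criteria (compare \cite{ABZ3} for the pure gravity case) and is almost certainly what \cite{NgPo} does at a high level. You also correctly identify the crux: one needs an energy estimate of the shape $\tder\|u\|_{H^s}^2\le\F\lp M_{s_0}(t)\rp\ell(t)\lp1+\|u\|_{H^s}^2\rp$, i.e.\ \emph{linear} in the top norm with a prefactor depending only on the \emph{lower} Sobolev norm $M_{s_0}$ and the $L^1_t$-integrable H\"older weight $\ell$. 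But that is strictly stronger than anything recorded in this paper: the tame bound \eqref{est:RHS} carries $\F\lp\|\eta\|_{H^{s+\mez}},\|\psi\|_{H^s}\rp$, i.e.\ $\F(M_s)$, not $\F(M_{s_0})$, and Proposition~\ref{prop:aprioriestimate} likewise has $\F\lp M_{s}+Z_{r}\rp$ on the right-hand side. Obtaining the $\F(M_{s_0})$ version requires re-running the Dirichlet--Neumann/paralinearization/commutator machinery while tracking that every error term loses at most half a derivative relative to $H^s$ with constants controlled by $M_{s_0}$ and $\ell(t)$ alone. You candidly flag this as ``the main obstacle'' and defer it; since that is precisely the mathematical content of \cite[Theorem~1.2]{NgPo}, your text is an outline of how such a criterion is proved rather than a proof.

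There is also a concrete error in Step~1. You assert that the constraint $2<r<s_0+\mez-\frac d2$ ``ensures [the $W^{r,\infty}$-type norms] are dominated by $M_{s_0}$.'' They are not. Sobolev embedding from $\psi\in H^{s_0}$ gives only $\psi\in W^{r-\mez,\infty}$ (since $r-\mez<s_0-\frac d2$), half a derivative short of $W^{r,\infty}$; and $\eta\in H^{s_0+\mez}$ gives $\eta\in W^{r,\infty}$, not $W^{r+\mez,\infty}$. This is exactly why $N_r$ must appear as an independent control quantity in the criterion rather than being absorbed into $M_{s_0}$. Your Gr\"onwall step does then correctly integrate $\ell$ via $N_r$, so the slip is ultimately harmless to the structure, but the sentence as written is false. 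Steps~2 and~3 (passage to a limit in a weaker topology, preservation of $(H_t)$ at $T^*$, and re-application of the high-regularity well-posedness theory) are routine and correct as sketched, provided one notes that the relevant well-posedness theory here is that of \cite{ABZ1}, which is available since $s>s_0+\mez>2+\frac d2$.
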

\begin{prop}[\protect{\cite[Theorem~5.9]{NgPo}}]\label{theo:contraction}
Let $p$ and $\mu$ as in \eqref{eq:mu}. Let $(\eta_j, \psi_j)$,~$j=1,2$ be two solutions to \eqref{eq:Zak} on $I=[0, T],~0<T\le 1$ such that 
\[
(\eta_j, \psi_j)\in L^\infty(I; H^{s+\mez}(\xR^d)\times H^s(\xR^d))\cap L^p(I; W^{r+\mez}(\xR^d)\times W^{r,\infty}(\xR^d))
\]
with 
\[
s>\tmez+\frac d2,~\quad 2<r<s-\frac d2+\mu;
\]
such that $\inf_{t\in [0, T]}\dist(\eta_j(t), \Gamma)>h>0.$ Set
\[
M^j_{s,T}:=\Vert (\eta_j, \psi_j)\Vert_{L^{\infty}([0, T]; H^{s+\mez}\times H^s)}, \quad  Z^j_{r,T}:=\Vert (\eta_j, \psi_j)\Vert_{L^p([0, T]; W^{r+\mez, \infty}\times W^{r, \infty})}.
\]
Consider the differences $\delta\eta:=\eta_1-\eta_2,~ \delta\psi:=\psi_1-\psi_2$ and their norms in Sobolev space and H\"older space:
\[
P_{T}:=\lA (\delta\eta, \delta\psi)\rA_{L^\infty(I; H^{s-1}\times H^{s-\tmez})}+\lA (\delta\eta, \delta\psi)\rA_{L^p(I; W^{r-1,\infty}\times W^{r-\tmez,\infty})}.
\]
Then there exists a non-decreasing  function $\F:\xR^+\times \xR^+\to\xR^+$ depending only on $d,~r,~s,~h$ such that
\[
P_T\le \F\lp M^1_{s,T}, M^2_{s,T}, Z^1_{r,T}, Z^2_{r,T}\rp\lA (\delta\eta, \delta\psi)\arrowvert_{t=0}\rA_{H^{s-1}\times H^{s-\tmez}}.
\]
\end{prop}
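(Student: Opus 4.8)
\emph{Proof proposal.} The plan is to run, at the level of the differences, the very paradifferential reduction that produced \eqref{reduced}, and then to close a Gronwall-type energy inequality; the Hölder part of $P_T$ is then recovered by feeding the difference equation into the Strichartz estimate. First I would subtract the two copies of \eqref{eq:Zak}: the differences solve
\[
\partial_t\delta\eta = G(\eta_1)\delta\psi + \big(G(\eta_1)-G(\eta_2)\big)\psi_2,
\]
together with a companion equation for $\partial_t\delta\psi$ in which $\delta\eta$ enters through the difference of curvatures $H(\eta_1)-H(\eta_2)$ (principal part $-T_{\ell}\,\delta\eta$ with $\ell$ the order-$2$ linearized-curvature symbol at $\eta_1$, plus paradifferential lower-order terms) and through the difference of the quadratic terms of \eqref{eq:Zak}, which is bilinear --- hence tame --- in $(\delta\eta,\delta\psi)$ once one of the two solutions is frozen.

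The analytic heart is the paralinearization of this difference system. Using the shape-derivative identity for the Dirichlet--Neumann operator --- namely that, modulo a remainder controlled tamely by $\|(\delta\eta,\delta\psi)\|$, one has $\big(G(\eta_1)-G(\eta_2)\big)\psi_2 = -G(\eta_1)(B_1\,\delta\eta) - \div(V_1\,\delta\eta)$, with $B_1,V_1$ the velocity components \eqref{BV} of the first solution --- together with the elliptic estimates of \cite{NgPo} for the difference $\phi_1-\phi_2$ of the harmonic extensions, one paralinearizes the whole system. Introducing the good unknown $\delta u$ obtained by \emph{linearizing} the change of variables of \cite{NgPo} (schematically $\delta u = T_p\jap^{\mez}\delta\eta + iT_q\,\delta\psi$ with $p,q$ the symbols used to build $u$, evaluated at $(\eta_1,\psi_1)$), the difference system takes the form
\[
\partial_t\delta u + T_V\cdot\nabla\,\delta u + iT_{\gamma+\omega}\,\delta u = F,
\]
with $\gamma,\omega,V$ built from $(\eta_1,\psi_1)$ exactly as in \eqref{reduced}, and with $F$ tame in the differences: pointwise in $t$,
\[
\|F(t)\|_{H^{s-\tmez}} \le \F\big(M^1_{s,T},M^2_{s,T}\big)\,\big(1 + z_1(t)+z_2(t)\big)\,\big\|(\delta\eta,\delta\psi)(t)\big\|_{H^{s-1}\times H^{s-\tmez}},
\]
where $z_j(t):=\|(\eta_j,\psi_j)(t)\|_{W^{r+\mez,\infty}\times W^{r,\infty}}$, so that $\|z_j\|_{L^p(I)}=Z^j_{r,T}$. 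The loss of $\tmez$ derivatives between the solutions' regularity and that of the differences is precisely what makes the bilinear remainders and the curvature contribution land in $H^{s-\tmez}$.

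Since $\gamma$ is real of order $\tmez$ and carries $W^{r-\mez,\infty}_x$ coefficients with $r-\mez>\tmez$, the operator $T_\gamma$ is self-adjoint on $H^{s-\tmez}$ modulo a term of \emph{negative} order whose coefficient is only in $L^p_t$ (this is where $r>2$, rather than $r=2$, is used); likewise $T_V\cdot\nabla$ is skew-adjoint modulo an order-$0$ term, and $\mathrm{Im}\,\omega$ is of order $\mez$ with $L^p_t$-in-time coefficients. Pairing the equation with $\jap^{2(s-\tmez)}\delta u$ thus gives
\[
\tder\|\delta u\|_{H^{s-\tmez}}^2 \les \big(1+z_1(t)+z_2(t)\big)\,\F\big(M^1_{s,T},M^2_{s,T}\big)\|\delta u\|_{H^{s-\tmez}}^2 + \|F\|_{H^{s-\tmez}}\|\delta u\|_{H^{s-\tmez}};
\]
since $\delta u$ is equivalent to $(\delta\eta,\delta\psi)$ in $H^{s-1}\times H^{s-\tmez}$ modulo lower-order terms and $z_j\in L^p(I)\hookrightarrow L^1(I)$ on the finite interval $I$, Gronwall's lemma together with $T\le1$ yields the $L^\infty_t$ part of $P_T$, bounded by $\F(M^1_{s,T},M^2_{s,T},Z^1_{r,T},Z^2_{r,T})\|(\delta\eta,\delta\psi)|_{t=0}\|_{H^{s-1}\times H^{s-\tmez}}$. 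For the $L^p_tW^{r-1,\infty}\times W^{r-\tmez,\infty}$ part one applies Theorem \ref{main:theo} at $\sigma=s-\tmez$ to the same difference equation: $F\in L^p(I;H^{\sigma-\frac9{10}})$ and $\delta u\in L^\infty(I;H^{\sigma})$ are already controlled by the bound just obtained, and since $r<s-\frac d2+\mu$ the resulting $W^{\sigma-\frac d2+\mu,\infty}$ bound dominates the required Hölder norms of $(\delta\eta,\delta\psi)$.

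The main obstacle I anticipate is the paralinearization in limited regularity: at this level $\nabla\eta_j$ and $\nabla\psi_j$ need not be Lipschitz, so every remainder in the expansion of $G(\eta_1)\psi_1-G(\eta_2)\psi_2$, of the curvature difference and of the quadratic terms must be estimated with paradifferential calculus for symbols of only Sobolev (resp. Besov) regularity, the missing $L^\infty$-control being bought back from the Strichartz quantities $z_j$; and one must in addition check that the symmetrizer map $(\eta,\psi)\mapsto(p,q,\gamma,\omega,V)$ is itself Lipschitz, so that freezing the coefficients at $(\eta_1,\psi_1)$ rather than at $(\eta_2,\psi_2)$ only produces errors that are again tame in $(\delta\eta,\delta\psi)$.
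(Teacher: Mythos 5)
The paper does not actually prove this proposition; it imports it wholesale from the companion paper, citing it as \protect{\cite[Theorem~5.9]{NgPo}}, so there is no in-paper proof to compare your sketch against. That said, your outline---subtract the two copies of \eqref{eq:Zak}, paralinearize the difference system using the shape-derivative formula for the Dirichlet--Neumann operator and elliptic estimates for $\phi_1-\phi_2$, pass to a good unknown, close the $L^\infty_tH^{s-1}\times H^{s-\tmez}$ piece by a paradifferential energy estimate and Gr\"onwall, and then recover the $L^p_t$ H\"older piece of $P_T$ by applying Theorem~\ref{main:theo} at $\sigma=s-\tmez$ to the same difference equation---is precisely the route that the architecture of this paper and of \cite{ABZ} invites, and the numerology works: $r<s-\frac d2+\mu$ ensures that the Strichartz gain $W^{\sigma-\frac d2+\mu,\infty}$ dominates $W^{r-\tmez,\infty}$, and $r>2$ gives $\gamma$ the $C^{\tmez}_x$ regularity in $L^p_t$ needed for $\mathcal{M}_k(\gamma)$, so $\Upsilon$ is controlled by $M^1_{s,T},Z^1_{r,T}$.

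One point you brush past too quickly is the ``good unknown'' for differences. You write schematically $\delta u = T_p\jap^\mez\delta\eta+iT_q\,\delta\psi$ with $p,q$ frozen at $(\eta_1,\psi_1)$, but the single-solution unknown is $u=T_p\eta+iT_q(\psi-T_B\eta)$; for differences the Alinhac-type substitution is $\delta\psi-T_{B_2}\delta\eta$ (or a suitable average), and the choice of which $B_j$ to freeze is what makes the remainder genuinely tame in $(\delta\eta,\delta\psi)$ at the low regularity $H^{s-1}\times H^{s-\tmez}$. Likewise the remark that ``the symmetrizer map is Lipschitz'' hides the bulk of the work: one must show that $T_{\gamma_1}-T_{\gamma_2}$, $T_{V_1}-T_{V_2}$, $T_{p_1}-T_{p_2}$, $T_{q_1}-T_{q_2}$ all produce contributions that are tame in $(\delta\eta,\delta\psi)$ with coefficients in $L^1_t$ coming from $z_1+z_2\in L^p_t$, and that the inverse change of variables back from $\delta u$ to $(\delta\eta,\delta\psi)$ does not lose the gained H\"older regularity. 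These are the substantive steps that \cite[Theorem~5.9]{NgPo} presumably carries out; your sketch correctly identifies where they live but does not supply them.
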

With the above ingredients we can prove our main theorem about the Cauchy problem.
\begin{thm} \label{theo:Cauchy}
	Let $d\geq 1$ and two real numbers $r,~s$ satisfying
\[
2<r< s-\frac d2+\mu,\quad
 \mu=\begin{cases} 
\frac{3}{20}~\text{if}~d=1,\\
\frac{3}{10}~\text{if}~d\ge 2.
\end{cases}
\]
Let $(\eta_0, \psi_0)\in H^{s+\mez}\times H^s$ be such that $\dist (\eta_0, \Gamma)>h>0$. Then there exists a time $T>0$ such that the Cauchy problem for~\eqref{eq:Zak} has a unique solution
\[
(\eta,\psi)\in L^{\infty}\lp[0, T]; H^{s+\mez}\times H^s\rp\cap L^p\lp[0, T]; W^{r+\mez, \infty}\times W^{r, \infty}\rp
\]
where $p=4$ when $d=1$ and $p=2$ when $d\ge 2$. Moreover, we have 
\[
(\eta,\psi)\in C^0\lp[0, T]; H^{s'+\mez}\times H^{s'}\rp,\quad\forall s'<s
\]
and 
\[
\inf_{t\in [0, T]}\dist(\eta(t), \Gamma)>h/2.
\]
\end{thm}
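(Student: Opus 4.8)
The plan is a classical regularization scheme, using the estimates of the previous sections and of~\cite{NgPo} as black boxes; I describe the three steps. Since the inequality $r<s-\frac d2+\mu$ in the hypotheses is strict, fix once and for all $\mu'\in(0,\mu)$ with $r<s-\frac d2+\mu'$, so that Corollary~\ref{cor:Stri} and Proposition~\ref{theo:contraction} apply with $\mu'$ (and the same exponent $p$) in place of $\mu$. \emph{Step 1: smooth approximations and a uniform lifespan.} Regularize the data, choosing smooth $(\eta_0^n,\psi_0^n)$, belonging to $H^\sigma\times H^\sigma$ for every $\sigma$ (e.g.\ by Littlewood--Paley truncation), with $(\eta_0^n,\psi_0^n)\to(\eta_0,\psi_0)$ in $H^{s+\mez}\times H^s$, $M^n_{s,0}\le M_{s,0}+1$, and $\dist(\eta_0^n,\Gamma)>3h/4$ for $n$ large. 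By the classical well-posedness theory for the gravity--capillary system (e.g.~\cite{ABZ1}), each $(\eta_0^n,\psi_0^n)$ generates a smooth solution $(\eta^n,\psi^n)$ on a maximal interval $[0,T^*_n)$. The heart of the step is to show $T^*_n\ge T_0$ for some $T_0=T_0(M_{s,0},h)\in(0,1]$ \emph{independent of $n$}, with a uniform bound $M^n_{s,T_0}+Z^n_{r,T_0}\le A(M_{s,0},h)$. Run a continuity argument on $\Theta_n(T):=M^n_{s,T}+Z^n_{r,T}$: while $\Theta_n\le A$ and $\dist(\eta^n(t),\Gamma)>h/2$, Proposition~\ref{prop:aprioriestimate} (applicable since $s>\tmez+\frac d2$ and $2<r<s+\mez-\frac d2$, both forced by the hypotheses together with $\mu<\mez$) and Corollary~\ref{cor:Stri} (applicable since $2<r<s-\frac d2+\mu'$ gives $s>2+\frac d2-\mu'$) give
\[
\Theta_n(T)\le\F\lp M_{s,0}+1+T\F(A)\rp+\F\lp T\F(A)\rp,
\]
which is $\le A/2$ once $A$ and then $T_0$ are fixed appropriately in terms of $M_{s,0}$; meanwhile $\lA\eta^n(t)-\eta_0^n\rA_{L^\infty}\le t\,\F(A)$ (bounding $\partial_t\eta^n=G(\eta^n)\psi^n$ by its Sobolev norm) keeps $\dist(\eta^n(t),\Gamma)>h/2$ on $[0,T_0]$. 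As $\Theta_n$ is continuous and nondecreasing with $\Theta_n(0)=M^n_{s,0}<A$, the bound propagates up to $\min(T_0,T^*_n)$; and if $T^*_n<T_0$ then $M^n_{s_0}(T)+N^n_r(T)\le A+T_0^{1-1/p}A$ would stay bounded on $[0,T^*_n)$ --- choosing $s_0\in\bigl(\max(\tmez+\frac d2,\,r+\frac d2-\mez),\,s\bigr)$, which is nonempty --- contradicting the blow-up criterion Proposition~\ref{theo:blowup}. Hence $T^*_n>T_0$ and the uniform bounds hold on $[0,T_0]$.

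\emph{Step 2: convergence and the limit.} With the uniform bounds, apply the contraction estimate Proposition~\ref{theo:contraction} to $(\eta^n,\psi^n)$ and $(\eta^m,\psi^m)$ (legitimate: both lie in $L^\infty(H^{s+\mez}\times H^s)\cap L^p(W^{r+\mez,\infty}\times W^{r,\infty})$ with $\dist(\eta^n(t),\Gamma)>h/2$, and $2<r<s-\frac d2+\mu'$), obtaining
\[
\lA(\eta^n-\eta^m,\,\psi^n-\psi^m)\rA_{L^\infty([0,T_0];H^{s-1}\times H^{s-\tmez})}\le\F(A)\,\lA(\eta_0^n-\eta_0^m,\,\psi_0^n-\psi_0^m)\rA_{H^{s-1}\times H^{s-\tmez}},
\]
and likewise for the $L^p([0,T_0];W^{r-1,\infty}\times W^{r-\tmez,\infty})$ norm; the right-hand sides tend to $0$. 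So $(\eta^n,\psi^n)$ is Cauchy in $L^\infty([0,T_0];H^{s-1}\times H^{s-\tmez})\cap L^p([0,T_0];W^{r-1,\infty}\times W^{r-\tmez,\infty})$, with limit $(\eta,\psi)$. Interpolating this strong convergence against the uniform $L^\infty(H^{s+\mez}\times H^s)$ bound gives convergence in $L^\infty(H^{s'+\mez}\times H^{s'})$ for every $s'<s$, enough to pass to the limit in the nonlinear terms of~\eqref{eq:Zak} (the Dirichlet--Neumann operator, the curvature, and the quadratic terms depend continuously on $(\eta,\psi)$ at that regularity, by~\cite{NgPo}), so $(\eta,\psi)$ solves~\eqref{eq:Zak}. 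That $(\eta,\psi)\in L^\infty(H^{s+\mez}\times H^s)$ follows from weak-$*$ lower semicontinuity of the norm, and $(\eta,\psi)\in L^p(W^{r+\mez,\infty}\times W^{r,\infty})$ from Fatou's lemma together with lower semicontinuity of the H\"older norms under the a.e.-in-$t$ convergence in $W^{r-\tmez,\infty}$.

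\emph{Step 3: time continuity, uniqueness, distance.} Since each $(\eta^n,\psi^n)$ is time-continuous with values in every Sobolev space, the same interpolation as in Step~2 shows $(\eta^n,\psi^n)$ is Cauchy in $C^0([0,T_0];H^{s'+\mez}\times H^{s'})$ for each $s'<s$, whence $(\eta,\psi)\in C^0([0,T_0];H^{s'+\mez}\times H^{s'})$ for all $s'<s$. Uniqueness is immediate from Proposition~\ref{theo:contraction} applied to two solutions with the same data (they satisfy $\dist(\eta_j(t),\Gamma)>h/2>0$, so the proposition applies with $h$ replaced by $h/2$): its right-hand side vanishes. Finally, passing $\dist(\eta^n(t),\Gamma)>h/2$ to the limit and using the continuity of $t\mapsto\dist(\eta(t),\Gamma)$ gives $\inf_{[0,T_0]}\dist(\eta(t),\Gamma)>h/2$, after possibly shrinking $T_0$.

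\emph{Main obstacle.} The conceptual crux is Step~1: the energy and Strichartz estimates must be closed \emph{simultaneously} in a short-time bootstrap, and this works only because the admissible window $2<r<s-\frac d2+\mu$ for the Strichartz loss is exactly the range in which both Proposition~\ref{prop:aprioriestimate} and Corollary~\ref{cor:Stri} apply. A secondary subtlety is that the blow-up criterion must be invoked at an auxiliary index $s_0<s$ --- not at $s$, which may lie below the classical threshold $2+\frac d2$ --- in order to keep the smooth approximate solutions alive on the common interval $[0,T_0]$; everything else is soft functional analysis once the estimates of the previous sections and of~\cite{NgPo} are granted.
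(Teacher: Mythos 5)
Your proposal follows exactly the paper's argument (Section~\ref{sec:Cauchy}): regularize the data, run the a priori estimate of Proposition~\ref{prop:aprioriestimate} plus the Strichartz estimate of Corollary~\ref{cor:Stri} together with the blow-up criterion of Proposition~\ref{theo:blowup} to get a uniform lifespan, then use the contraction estimate of Proposition~\ref{theo:contraction} plus interpolation/weak compactness to pass to the limit. You supply details the paper compresses into ``standard argument''---the choice of $\mu'<\mu$ needed because the Strichartz corollary only covers the open interval, the explicit continuity bootstrap, and the choice of auxiliary index $s_0$ for the blow-up criterion---but the route is the same.
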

\begin{rem}
In view of the formulas \eqref{BV}, the initial velocity field in the Cauchy theory \ref{theo:Cauchy} may fail to be Lipschitz up to the  free surface but it becomes Lipschitz at almost all later time. This result is parallel to the result in \cite{ABZ} for pure gravity water waves.
\end{rem}
The plan of the paper is as follows. First, we prove in Section~\ref{sec:reduc} some reduction of the problem, reducing it to a semiclassical equation. In Section~\ref{sec:param}, we construct a microlocal parametrix for this equation. Then in Section~\ref{mainsection}, we use it to prove the Strichartz estimates. The last part, Section~\ref{sec:Cauchy}, is devoted to the local existence of solutions.
Some needed results about paradiferential calculus are recalled in Appendix~\ref{appendix}.

\section*{Acknowledgment}
\hk The authors would like to sincerely thank T.Alazard, N.Burq and C.Zuily for many fruitful discussions, suggestions in the preparation of this work, as well as their helpful comments on the final manuscript. Quang Huy Nguyen was partially supported by the labex LMH through the grant no ANR-11-LABX-0056-LMH in the "Programme des Investissements d'Avenir".

\section{Reductions of the system} \label{sec:reduc}
\subsection{Paradifferential reduction}
First of all, we recall precisely the paradifferential reduction of the gravity-capillary system \eqref{eq:Zak} that we performed in \cite{NgPo}, which requires the following symbols:
\begin{itemize}
\item Symbols of the Dirichlet-Neumann operator
\begin{align*}
\lambda^{(1)}&=\sqrt{(1+|\nabla \eta|^2)|\xi|^2-\lp \nabla \eta\cdot\xi\rp^2},\\
\lambda^{(0)}&=\frac{1+|\nabla \eta|^2}{2\lambda^{(1)}}\left\{ \div\lp \alpha^{(1)}\nabla\eta\rp+i\partial_\xi\lambda^{(1)}\cdot\nabla\alpha^{(1)}\right\}
\end{align*}
with 
\[
\alpha^{(1)}=\frac{\lambda^{(1)}+i\nabla\eta\cdot\xi}{1+|\nabla\eta|^2}.
\]
\item Symbols of the mean-curvature operator:
\[
{\ell}^{(2)}:=(1+|\nabla \eta|^2)^{-\frac{1}{2}}\left(|\xi|^2-\frac{(\nabla \eta\cdot\xi)^2}{1+|\nabla\eta|^2}\right), \quad {\ell}^{(1)}:=-\frac{i}{2}(\partial_x\cdot\partial_{\xi})\ell^{(2)}.
\]
\item Symbols used for symmetrization
\begin{gather*} 
q=\left(1+(\nabla_x\eta)^2 \right)^{-\frac{1}{2}}, \quad p=\left(1+(\nabla_x\eta)^2 \right)^{-\frac{5}{4}}|\xi|^\mez+p^{(-\mez)} ,
\end{gather*} 
where $p^{(-\mez)}=F(\nabla_x\eta, \xi)\partial^\alpha_x\eta$,  with $|\alpha|=2$ and 
$F\in C^\infty(\xR\times \xR\setminus\{0\}; \xC)$ is homogeneous of order $-1/2$ in $\xi$.
\item Symbols in the symmetrized equation:
\bq\label{def:gamma}
\begin{aligned}
&\gamma:=\sqrt{l^{(2)}\lambda^{(1)}}=\lp\la\xi\ra^2-\frac{\lp\nabla\eta\cdot\xi\rp^2}{1+\la\nabla\eta\ra^2}\rp^\frac{3}{4}, \\
& \omega:=-
      \frac{i}{2}\lp\partial_\xi\cdot\partial_x\rp\sqrt{l^{(2)}\lambda^{(1)}}+\sqrt{\frac{l^{(2)}}{\lambda^{(1)}}}\frac{\RE\lambda^{(0)}}{2}.
\end{aligned}
\eq
\end{itemize}
\begin{thm} 
Let $s>\frac{3}{2}+\frac{d}{2},~2<r<s-\frac d2+\mez$ and $p\in [1,\infty]$. Suppose that $(\eta,\psi)$ a solution of~\eqref{eq:Zak}  satisfying condition $(H_t)$ for all times~$t\in I$ and 
\begin{equation}\label{regu:eta,psi}
	(\eta,\psi)\in L^\infty(I;H^{s+\mez}(\R^d)\times H^s(\R^d))\cap L^p(I;W^{r+\mez,\infty}(\R^d)\times W^{r,\infty}(\R^d)),
\end{equation}
The complex-valued unknown $u:=T_p\eta+iT_q(\psi-T_B\eta)$ then
	satisfies 
	\begin{equation}\label{ww:reduce}
		\partial_tu+ T_V\cdot\nabla  u+iT\gamma u=f-iT_\omega u,
	\end{equation}
	where  for a.e. ~$t\in I$, 
	\begin{equation}\label{est:RHS}
		\lA f(t)\rA_{H^s}\leq\F\lp\lA\eta(t)\rA_{H^{s+\mez}},\lA\psi(t)\rA_{H^s}\rp\lb1+\lA\eta(t)\rA_{W^{r+\mez,\infty}}+\lA\psi(t)\rA_{W^{r,\infty}}\rb.
	\end{equation}
\end{thm}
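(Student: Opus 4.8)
The plan is to paralinearize every nonlinear term of \eqref{eq:Zak}, introduce Alinhac's good unknown $\psi-T_B\eta$ to kill the worst contributions, and then symmetrize the resulting $2\times2$ paradifferential system into the single scalar equation \eqref{ww:reduce}; this is the reduction carried out in~\cite{NgPo}, and I outline here only its skeleton, stressing where the regularity \eqref{regu:eta,psi} is used. The first and most delicate step is the paralinearization of the Dirichlet--Neumann operator: one proves that, under \eqref{regu:eta,psi} and $(H_t)$,
\[
G(\eta)\psi = T_{\lambda^{(1)}+\lambda^{(0)}}\bigl(\psi-T_B\eta\bigr) - T_V\cdot\nabla\eta + g_1,
\]
with $g_1$ satisfying a tame $H^s$ bound of the shape \eqref{est:RHS}. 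To do this I would flatten the free surface by a diffeomorphism sending the strip $\{\eta-h\le y\le\eta\}$ onto a fixed slab, turning the equation for the potential $\phi$ into an elliptic equation in the slab whose coefficients are built from $\nabla\eta$ and hence controlled in the relevant Sobolev and Hölder norms; elliptic regularity in low-index spaces gives $\nabla_{x,z}\phi$ on the boundary, and factoring the elliptic operator into the product of a backward and a forward paradifferential parabolic evolution in the normal variable --- which is exactly where the two symbols $\lambda^{(1)}$ (principal) and $\lambda^{(0)}$ (sub-principal) are produced --- and solving each evolution, while tracking the remainders with tame estimates at every stage, yields the displayed expansion. Assumption $(H_t)$ is crucial here: it localizes the whole construction to a fixed strip below $\Sigma_t$ and makes the contribution of the bottom smoothing.

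Next, Bony's paralinearization theorem applied to $H(\eta)=\div\bigl(\nabla\eta/\sqrt{1+|\nabla\eta|^2}\bigr)$ gives $H(\eta)=-T_{\ell^{(2)}+\ell^{(1)}}\eta + g_2$ with $g_2$ tamely bounded (using that $\eta\in W^{r+\mez,\infty}$ with $r+\mez>\tmez$), and the quadratic terms in the second equation of \eqref{eq:Zak} are handled with the paraproduct identity $ab=T_ab+T_ba+R(a,b)$, the bound on $G(\eta)\psi$ from the previous step, and the smoothing of the remainder $R$. Replacing $\psi$ by $\psi-T_B\eta$ then cancels the worst terms generated by the paralinearization of $B$ in both equations, and the system becomes
\[
\partial_t\begin{pmatrix}\eta\\ \psi-T_B\eta\end{pmatrix}+T_V\cdot\nabla\begin{pmatrix}\eta\\ \psi-T_B\eta\end{pmatrix}+\begin{pmatrix}0 & -T_{\lambda^{(1)}+\lambda^{(0)}}\\ T_{\ell^{(2)}+\ell^{(1)}} & 0\end{pmatrix}\begin{pmatrix}\eta\\ \psi-T_B\eta\end{pmatrix}=\begin{pmatrix}G_1\\ G_2\end{pmatrix},
\]
with $(G_1,G_2)$ obeying the tame $H^s$ estimate \eqref{est:RHS}.

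Finally I would symmetrize. The principal part of the matrix symbol has eigenvalues $\pm i\sqrt{\ell^{(2)}\lambda^{(1)}}=\pm i\gamma$, and the scalar symbols $p$ and $q$ of the statement are chosen precisely so that $\mathrm{diag}(T_p,T_q)$ intertwines the $2\times2$ system with a skew-adjoint scalar one modulo acceptable lower-order remainders with tame bounds. Computing all compositions and commutators with the symbolic calculus of Appendix~\ref{appendix}, the off-diagonal principal terms cancel and the sub-principal corrections --- those coming from the composition formulas, from the lower-order symbols $\lambda^{(0)}$, $\ell^{(1)}$, $p^{(-\mez)}$, and from conjugating $T_V\cdot\nabla$ --- assemble into exactly the symbol $\omega$ of \eqref{def:gamma}. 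What remains is the scalar equation $\partial_tu+T_V\cdot\nabla u+iT_\gamma u=f-iT_\omega u$ for $u=T_p\eta+iT_q(\psi-T_B\eta)$, with $f=T_pG_1+iT_qG_2$ plus all the commutator and calculus remainders; each of these is estimated in $H^s$ by a function of the Sobolev norms times a factor \emph{linear} in the Hölder norms, which is exactly \eqref{est:RHS}. I expect the main obstacle to be the first step --- controlling $G(\eta)\psi$ and its paralinearization error with a tame remainder when the surface is only barely $C^2$ --- since it requires the full low-regularity elliptic theory for $\phi$ in the strip together with the paradifferential calculus for symbols of limited smoothness; the rest is a systematic, if lengthy, bookkeeping of tameness.
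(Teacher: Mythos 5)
Your outline is essentially the same route as the one used in the companion paper \cite{NgPo} (and in \cite{ABZ1,ABZ3}): flatten the fluid domain using $(H_t)$, factor the resulting variable-coefficient elliptic problem for $\phi$ into two paradifferential parabolic evolutions to produce $\lambda^{(1)}+\lambda^{(0)}$ and the paralinearization of $G(\eta)$ with a tame remainder, apply Bony's theorem to $H(\eta)$ to get $\ell^{(2)}+\ell^{(1)}$, pass to Alinhac's good unknown $\psi-T_B\eta$ to cancel the worst terms, and symmetrize the $2\times2$ system with $\mathrm{diag}(T_p,T_q)$ so that the principal symbol becomes $\pm i\gamma=\pm i\sqrt{\ell^{(2)}\lambda^{(1)}}$ and the sub-principal corrections assemble into $\omega$. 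The points you flag as delicate --- tame elliptic estimates for $\phi$ when $\eta$ is barely $C^2$ in space (only $W^{r+1/2,\infty}$ in the $L^p$-time sense with $r>2$), and carrying the factor \emph{linear} in Hölder norms through every paradifferential composition --- are indeed exactly where the work of \cite{NgPo} lies, and your skeleton correctly identifies the source of each symbol in \eqref{def:gamma}. No genuine gap: the proposal matches the paper's (i.e.\ the companion paper's) approach.
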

As mentioned in the introductory section, we shall from now on consider \eqref{ww:reduce} as an independent equation with coefficients $V,~\gamma,~\omega,~\omega_1$ at the following regularity level
\begin{equation}\label{reguHolder:eta,psi}
\begin{gathered}
	V\in L^\infty\lp I; W^{\mez,\infty}(\xR^d)^d\rp\cap L^p\lp I; W^{1,\infty}(\xR^d)^d\rp,\\
	\eta \in L^\infty\lp I;W^{2,\infty}(\R^d)\rp\cap L^p\lp I;W^{\frac 52,\infty}(\R^d)\rp,
\end{gathered}
\end{equation}
which is sufficient for the semi-norms appearing in Theorem \ref{main:theo}.

We give here some preliminary informations on the principal symbol~$\gamma$.
Define
\begin{equation*}
	\C'=\lB\xi\in\R^d:1/4\leq\la\xi\ra\leq4\rB.
\end{equation*}
\begin{lem}\label{Hess:gamma}
	\begin{enumerate}
		\item The symbol~$\gamma$ is in~$L^\infty(I;W^{1,\infty}S^\tmez)\cap L^p(I;C^\tmez S^\tmez)$ and for all~$\beta\in\N^d$, 
		there exists~$\F_{\beta}:\R^+\rightarrow\R^+$ such that for all~$t\in I$, $\xi\in\C'$,
		      \begin{gather}
			\lA D^\beta_\xi\gamma(t,\xi)\rA_{W^{1,\infty}}\leq\F_{\beta}\lp\lA\nabla\eta(t)\rA_{W^{1,\infty}}\rp,	\label{eq:gammac1}\\
		      	\lA D^\beta_\xi\gamma(t,\xi)\rA_{C^\tmez}\leq\F_{\beta}\lp\lA\nabla\eta(t)\rA_{W^{1,\infty}}\rp
			      \lb1+\lA\nabla\eta(t)\rA_{W^{\tmez,\infty}}\rb.	\label{eq:gammactmez}
		      \end{gather}
		\item There exists an absolute constant $C_d>0$ such that  with~$c_0=C_d(1+ \lA\nabla\eta \rA_{L^\infty(I\times \R^d)})$  we have for all~$t\in I$, $x\in\R^d$, and~$\xi\in\C'$,
		      \begin{equation}
		      	\la\det\Hess_\xi(\gamma)(t,x,\xi)\ra\geq c_0.	\label{eq:gammahess}
		      \end{equation}
	\end{enumerate}
\end{lem}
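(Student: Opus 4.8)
The plan is to exploit the explicit structure of the principal symbol. By \eqref{def:gamma}, $\gamma(t,x,\xi)=g(t,x,\xi)^{3/4}$ with
\[
g(t,x,\xi):=|\xi|^2-\frac{\bigl(\nabla\eta(t,x)\cdot\xi\bigr)^2}{1+|\nabla\eta(t,x)|^2}=\tfrac12\,\xi^{\top}Q(t,x)\,\xi,\qquad Q:=2I-\frac{2}{1+|\nabla\eta|^2}\,\nabla\eta\,(\nabla\eta)^{\top}.
\]
The matrix $Q$ is symmetric, \emph{constant in $\xi$}, and positive definite with eigenvalues $2$ (multiplicity $d-1$) and $2(1+|\nabla\eta|^2)^{-1}$; hence $\det Q=2^d(1+|\nabla\eta|^2)^{-1}$ and
\[
\frac{|\xi|^2}{1+|\nabla\eta(t,x)|^2}\le g(t,x,\xi)\le|\xi|^2 .
\]
Thus $g$ is a homogeneous degree-$2$ polynomial in $\xi$, elliptic of order $2$ with ellipticity constant controlled by $\|\nabla\eta\|_{L^\infty(I\times\R^d)}$, and $\gamma=g^{3/4}$ is homogeneous of degree $3/2$ and smooth in $\xi\neq0$; on $\C'$ (where $1/4\le|\xi|\le4$) one has in particular $g\ge(16(1+|\nabla\eta|^2))^{-1}>0$.

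For part~(1), I would propagate regularity through the two compositions $\nabla\eta\mapsto g(\cdot,\xi)\mapsto g(\cdot,\xi)^{3/4}$, uniformly for $\xi\in\C'$. For fixed $\xi$, $g(\cdot,\xi)$ is a polynomial in the entries of $\nabla\eta$ divided by $1+|\nabla\eta|^2\ge1$, hence a smooth function of $\nabla\eta$ with bounds depending only on $|\xi|$; since $g$ stays away from $0$ on $\C'$, $s\mapsto s^{3/4}$ is smooth on the range of $g$, so each $D_\xi^\beta\gamma(t,\cdot,\xi)$ with $\xi\in\C'$ is a smooth function of $\nabla\eta(t,\cdot)$. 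The composition estimate in $W^{1,\infty}$ — where one $x$-derivative brings in $\nabla^2\eta\in L^\infty$ — yields \eqref{eq:gammac1}; the \emph{tame} composition and product estimates in the Hölder/Zygmund space $C^{3/2}$ (whose elements have $\tfrac12$-Hölder first derivatives, so that one $x$-derivative brings in $\nabla^2\eta\in C^{1/2}$, i.e.\ $\nabla\eta\in W^{3/2,\infty}$, precisely the regularity furnished by \eqref{reguHolder:eta,psi}) yield the tame bound \eqref{eq:gammactmez}. Membership $\gamma\in L^\infty(I;W^{1,\infty}S^{3/2})\cap L^p(I;C^{3/2}S^{3/2})$ then follows from these annular bounds together with the homogeneity of $\gamma$. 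The only point needing care is to keep the $C^{3/2}$ estimate \emph{tame}, i.e.\ linear in $\|\nabla\eta\|_{W^{3/2,\infty}}$, which forces the product rule $\|fh\|_{C^{1/2}}\lesssim\|f\|_{C^{1/2}}\|h\|_{L^\infty}+\|f\|_{L^\infty}\|h\|_{C^{1/2}}$ in place of a blunt algebra estimate.

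For part~(2), I would compute $\Hess_\xi\gamma$ directly, using $\nabla_\xi g=Q\xi$ and $\Hess_\xi g=Q$ (the latter being independent of $\xi$). The chain rule gives
\[
\Hess_\xi\gamma=\tfrac34\,g^{-5/4}\Bigl(gQ-\tfrac14\,(Q\xi)(Q\xi)^{\top}\Bigr)=\tfrac34\,g^{-5/4}\,Q\Bigl(gI-\tfrac14\,\xi\xi^{\top}Q\Bigr).
\]
The matrix $\xi\xi^{\top}Q$ has rank $\le1$ with unique nonzero eigenvalue $\operatorname{tr}(\xi\xi^{\top}Q)=\xi^{\top}Q\xi=2g$, so $\det\bigl(gI-\tfrac14\xi\xi^{\top}Q\bigr)=g^{d-1}\bigl(g-\tfrac14\xi^{\top}Q\xi\bigr)=\tfrac12 g^{d}$, whence
\[
\det\Hess_\xi\gamma=\Bigl(\tfrac34\Bigr)^{d}g^{-5d/4}\cdot\det Q\cdot\tfrac12 g^{d}=\tfrac12\Bigl(\tfrac32\Bigr)^{d}\,\frac{g^{-d/4}}{1+|\nabla\eta(t,x)|^2}.
\]
Since $1/4\le|\xi|\le4$ on $\C'$ and $\tfrac{|\xi|^2}{1+|\nabla\eta|^2}\le g\le|\xi|^2$, the factor $g^{-d/4}$ is bounded above and below by positive constants depending only on $d$ and $\|\nabla\eta\|_{L^\infty(I\times\R^d)}$, and the displayed identity produces a lower bound $|\det\Hess_\xi\gamma|\ge c_0>0$ as in \eqref{eq:gammahess}. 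I do not anticipate a genuine obstacle in part~(2): once one observes that $g$ is quadratic in $\xi$ — so $\Hess_\xi g$ is constant and the rank-one determinant identity applies — it is an explicit linear-algebra computation, and the only real (and still mild) work in the lemma is the careful bookkeeping of norms in part~(1).
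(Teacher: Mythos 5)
Your proof is correct, and for part~(2) it takes a genuinely different route from the paper: the authors simply cite \cite[Corollary~4.7]{ABZ}, whereas you give a self-contained linear-algebra computation. The key observations in your argument — that $g=\tfrac12\xi^\top Q\xi$ with $Q$ independent of $\xi$, that the chain rule reduces $\det\Hess_\xi\gamma$ to $\det Q$ times the determinant of a rank-one perturbation of $gI$, and that the rank-one trace identity $\operatorname{tr}(\xi\xi^\top Q)=\xi^\top Q\xi=2g$ closes the calculation — are all correct; the resulting identity
\[
\det\Hess_\xi\gamma=\tfrac12\Bigl(\tfrac32\Bigr)^{d}\,\frac{g^{-d/4}}{1+|\nabla\eta(t,x)|^2}
\]
is exact, and is more informative than a citation because it pins down the precise dependence on $\nabla\eta$. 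Part~(1) is essentially the paper's own one-line approach (composition, product rules, Sobolev embedding), and your care about tame estimates in $C^{3/2}$ is exactly what is needed.

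One thing you should have flagged: your explicit formula contradicts the dependence of the constant stated in the lemma. Since $g\le|\xi|^2$, on $\C'$ you get the \emph{lower} bound
\[
\la\det\Hess_\xi\gamma\ra\;\ge\;\tfrac12\Bigl(\tfrac32\Bigr)^{d}\,16^{-d/4}\,\frac{1}{1+\|\nabla\eta\|_{L^\infty(I\times\R^d)}^2},
\]
which \emph{decreases} as $\|\nabla\eta\|_{L^\infty}$ grows. The lemma's $c_0=C_d\bigl(1+\|\nabla\eta\|_{L^\infty(I\times\R^d)}\bigr)$, an increasing function of $\|\nabla\eta\|_{L^\infty}$, is therefore a misprint (it should be a reciprocal-type constant, as your computation and the corresponding statement in~\cite{ABZ} both indicate). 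Your argument proves the correct statement; it just should not have been presented as producing ``a lower bound~$\ge c_0$ as in~\eqref{eq:gammahess}'' without noting that the displayed form of $c_0$ cannot be right.
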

\begin{proof}
The proof of part 1. is straightforward using product rules and Sobolev embedding. For a proof of part 2., we refer to  Corollary $4.7$ in \cite{ABZ}.
\end{proof}

\subsection{Localization in frequency}
\hk To prove our estimates, we will follow standard procedure: decomposing the solution using Littlewood-Paley theory and 
using a parametrix and a TT* argument to derive Strichartz estimates for those dyadic pieces.
We will then bring the pieces back together to derive a Strichartz estimate for the original solution. 
Standard definitions and notations for the Littlewood-Paley decomposition are recalled in appendix~\ref{ap:para}.
For~$j\geq 0$, the dyadic piece~$\Delta_ju$ verifies the equation
\begin{equation}\label{eq:sym}
	\lp\partial_t+iT_\gamma +T_V\cdot\nabla\rp\Delta_ju=F_j,
\end{equation}
where
\begin{equation}
	F_j:=\Delta_jF -i\Delta_j (T_{\omega} u)+i\lb T_\gamma,\Delta_j\rb+\lb T_V,\Delta_j\rb\cdot\nabla u.
\end{equation}
Recall that $\Delta_j\Delta_k =0$ if $|k-j|\ge 2$. In the sequel, we shall always consider $\Delta_ju$ for $j$ large enough, in particular $j\ge 1$ so the spectrum of $\Delta_j u$ is always contained in the annulus 
\[
\mathcal{C}_j:=\left\{\xi\in \xR^d: 2^{j-1}<|\xi|\le 2^{j+1}\right\}.
\]
Thanks to the spectral localization of $\Delta_j u$ we can replace the paradiferential operators with pseudodifferential operators. Such a replacement for the transport term is harmless due to the following lemma. 
\begin{lem}[\protect{\cite[Lemma~4.9]{ABZ}}]
	\label{lem:equipara}
We have 
		      \[
			T_V\cdot\nabla\Delta_ju=S_{j-3}(V)\cdot\nabla\Delta_ju+R_ju,
		      \]
		      where~$R_ju$ has its spectrum contained in an annulus ~$\mathcal{C}(c_12^{j-1},c_22^{j+1})$ and satisfies the following estimate
\[
	\lA R_ju\rA_{H^s(\R^d)}\leq C\lA V\rA_{W^{1,\infty}(\R^d)}\lA u\rA_{H^s(\R^d)}
\]
		      \label{it:equipara}
where the constant $C>0$ is independent of $u, V, j$.
\end{lem}
The preceding lemma was proved in \cite{ABZ} thanks to the special form of the symbol $V(x)\xi$. Here, for the highest order term, let us prove the following more general fact for any paradifferential operator. Let $a\in \Gamma^m_r,~r>0$ and define
\bq
\forall j\in \xZ,\quad S_j(a)(x,\xi)=\psi(2^{-j}D_x)a(x,\xi)
\eq
 the spatial regularization of the symbol~$a$, where~$\psi$ is given in the Littlewood-Paley decomposition~\ref{Paley}. 
\begin{prop}
 For every $j\in \xN^*$, define
$$ T_a\Delta_ju=S_{j-3}(a)(x, D_x)\Delta_ju+R_j'u.$$
Then the spectrum of $R''_ju$ is contained in  an annulus $\mathcal{C}(c_12^{j-1},c_22^{j+1})$ and for every $\mu\in \xR$ we have the following norm estimate
$$\lA R_j'u\rA_{H^{\mu-m+r}(\R^d)}\le CM^m_r(a)
			      \lA u\rA_{H^\mu(\R^d)}
$$
where the constant $C>0$ is independent of $a, u, j$.
\begin{rem}
If $a$ is homogeneous in $\xi$ then $S_{j-3}a$ is still homogeneous in $\xi$. This remark is important in the next part when we multiply both side of our equation by $h^\tdm$ to derive a semi-classical equation.
\end{rem}
\end{prop}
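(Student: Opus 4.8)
The plan is to compute $R_j'u$ explicitly from the definition of the paradifferential operator, to recognise it as a finite sum of terms in which a dyadic block of the symbol $a$ at frequency comparable to $2^j$ multiplies a frequency-localized piece of $u$, and then to exploit that such blocks of $a$ are small: the gain $2^{-qr}\simeq2^{-jr}$ coming from the H\"older regularity of $a$ is exactly what produces the $r$ extra derivatives of smoothing.

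\textbf{Identification of $R_j'u$.} Recall (Appendix~\ref{ap:para}) that $T_av=\sum_k S_{k-3}(a)(x,D_x)\Delta_kv$. Since $\Delta_k\Delta_j=0$ for $|k-j|\ge2$ and $\sum_k\Delta_k\Delta_ju=\Delta_ju$, we get
\[
R_j'u:=T_a\Delta_ju-S_{j-3}(a)(x,D_x)\Delta_ju=\sum_{|k-j|\le1}\bigl(S_{k-3}(a)-S_{j-3}(a)\bigr)(x,D_x)\,\Delta_k\Delta_ju .
\]
The $k=j$ summand vanishes, and for $k=j\pm1$ the difference $S_{k-3}(a)-S_{j-3}(a)$ telescopes to $\pm$ one dyadic block $\varphi(2^{-q}D_x)a$ with $q\in\{j-4,j-3\}$; in particular its $x$-spectrum lies in the ball $\{|\zeta|\le2^{j-2}\}$. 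As $\Delta_k\Delta_ju$ has spectrum in $\{2^{j-1}\le|\xi|\le2^{j+1}\}$, convolution of supports shows that each summand, hence $R_j'u$, is spectrally supported in an annulus $\mathcal C(c_12^{j-1},c_22^{j+1})$ with $c_1,c_2$ absolute; this gives the localization statement. It remains to bound one term $\Op\bigl(\varphi(2^{-q}D_x)a\bigr)w$, where $q\sim j$ and $w=\Delta_k\Delta_ju$ with $|k-j|\le1$; note $\|w\|_{H^\mu}\lesssim\|u\|_{H^\mu}$, and both $w$ and the output are frequency-localized at scale $2^j$.

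\textbf{The operator estimate.} Since the output is localized at scale $2^j$, $\|\Op(\varphi(2^{-q}D_x)a)w\|_{H^{\mu-m+r}}\simeq2^{j(\mu-m+r)}\|\Op(\varphi(2^{-q}D_x)a)w\|_{L^2}$ and $\|w\|_{H^\mu}\simeq2^{j\mu}\|w\|_{L^2}$, so everything reduces to $\|\Op(\varphi(2^{-q}D_x)a)w\|_{L^2}\lesssim M^m_r(a)\,2^{j(m-r)}\,\|w\|_{L^2}$. Inserting a cutoff $\tilde\psi(2^{-j}D_x)$ equal to $1$ on the spectrum of $w$, this operator equals $\Op(\sigma)$ with $\sigma(x,\xi)=\varphi(2^{-q}D_x)a(x,\xi)\,\tilde\psi(2^{-j}\xi)$, supported in $|\xi|\sim2^j$. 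Using $\partial_\xi^\alpha\varphi(2^{-q}D_x)a=\varphi(2^{-q}D_x)\partial_\xi^\alpha a$, that $\partial_\xi^\alpha a\in\Gamma^{m-|\alpha|}_r$, and the Bernstein--H\"older inequality $\|\varphi(2^{-q}D_x)g\|_{L^\infty}\lesssim2^{-qr}\|g\|_{W^{r,\infty}}$, one gets, for $|\xi|\sim2^j$, $q\sim j$, and every multi-index $\alpha$ whose $\xi$-derivative is controlled by $M^m_r(a)$,
\[
\bigl\|\partial_\xi^\alpha\sigma(\cdot,\xi)\bigr\|_{L^\infty_x}\lesssim M^m_r(a)\,2^{j(m-r)}\,2^{-j|\alpha|}.
\]
The Schwartz kernel of $\Op(\sigma)$ is $k(x,y)=K(x,x-y)$ with $K(x,z)=(2\pi)^{-d}\int e^{iz\xi}\sigma(x,\xi)\,d\xi$; integrating by parts in $\xi$ and using the $\xi$-support in a ball of radius $\lesssim2^j$, the last display gives $|K(x,z)|\lesssim M^m_r(a)\,2^{j(m-r)}\,2^{jd}(1+2^j|z|)^{-N}$ for any $N$ below the number of $\xi$-derivatives that $M^m_r(a)$ controls, in particular for some $N>d$. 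Hence $\sup_x\int|K(x,z)|\,dz$ and $\int\sup_w|K(w,z)|\,dz$ are both $\lesssim M^m_r(a)\,2^{j(m-r)}$, and Schur's lemma yields the required $L^2$ bound. Undoing the reductions yields $\|R_j'u\|_{H^{\mu-m+r}}\lesssim M^m_r(a)\|u\|_{H^\mu}$ with a constant depending only on $d$, $m$, $r$ and the Littlewood--Paley functions, not on $a,u,j$.

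\textbf{Main difficulty and the remark.} The genuinely delicate point is bookkeeping: one must use the precise frequency truncation in the definition of $T_a$ (the shift by $3$, together with the standard supports of the Littlewood--Paley functions) to ensure that $R_j'u$ involves only dyadic blocks of $a$ at frequency comparable to $2^j$, so that the gain $2^{-qr}\simeq2^{-jr}$ is genuinely available; a careless argument would retain the low-frequency part of $a$, for which one has only the trivial order-$m$ bound and no smoothing. A secondary care is to carry out the $L^2$ estimate with no more $\xi$-derivatives of $\sigma$ than $M^m_r(a)$ ($r>0$) controls, which is why we argue through the explicit kernel and Schur's lemma rather than a general symbol calculus. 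Finally, the homogeneity remark is immediate, since $\varphi(2^{-q}D_x)$ acts only in $x$ and therefore preserves homogeneity in $\xi$, so $S_{j-3}(a)$ inherits it from $a$.
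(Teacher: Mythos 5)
Your proof is correct in its substance and takes a genuinely different route from the paper's. The paper sets $R_j'u=\Op(a_j)u$ with $a_j=\sigma_a\varrho\varphi_j-S_{j-3}(a)\varrho\varphi_j$, then splits $a_j=a_j^1+a_j^2$ with $a_j^1=(\sigma_a-a)\varrho\varphi_j$ (controlled by M\'etivier's Prop.~5.8(ii), i.e.\ the general fact that the paradifferential truncation costs $r$ derivatives) and $a_j^2=(a-S_{j-3}a)\varrho\varphi_j$ (controlled by the H\"older regularity of $a$ in $x$), and then invokes M\'etivier's Thm.~4.3.5 on operators with symbols in $S^{m-r}_{1,1}$ having the spectral-support condition. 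You instead write out $T_a\Delta_ju=\sum_{|k-j|\le1}S_{k-3}(a)(x,D_x)\Delta_k\Delta_ju$ from the definition of $\chi$, so that $R_j'u$ telescopes to a finite sum of $\Op(\varphi_q(D_x)a)\Delta_k\Delta_ju$ with $q\sim j$, and you prove the $L^2$ bound by hand: the Bernstein--H\"older gain $\|\varphi_q(D_x)\partial_\xi^\alpha a\|_{L^\infty}\lesssim 2^{-qr}M^m_r(a)(1+|\xi|)^{m-|\alpha|}$, a compactly $\xi$-supported symbol, an explicit kernel estimate, and Schur's lemma. Your argument is more elementary and self-contained (no $S^{m-r}_{1,1}$ calculus, no paradifferential truncation theorem), and it makes transparent that only dyadic blocks of $a$ at frequency $\sim 2^j$ contribute; the paper's argument is shorter because it quotes black-box theorems.

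Two small bookkeeping slips (neither affects the conclusion). First, since $\psi_j-\psi_{j-1}=\varphi_j$, for $k=j+1$ you get $S_{j-2}(a)-S_{j-3}(a)=\varphi_{j-2}(D_x)a$ and for $k=j-1$ you get $-\varphi_{j-3}(D_x)a$; so $q\in\{j-2,j-3\}$, not $\{j-4,j-3\}$. Second, the $x$-spectrum of $\varphi_{j-2}(D_x)a$ lies in the \emph{annulus} $\{2^{j-3}\le|\zeta|\le 2^{j-1}\}$, not the ball $\{|\zeta|\le2^{j-2}\}$; to recover the lower bound of the annulus for the $k=j+1$ piece one must then use that $\Delta_{j+1}\Delta_ju$ is supported in $\{2^{j}\le|\eta|\le2^{j+1}\}$ (and not merely $\{|\eta|\ge2^{j-1}\}$), so that $|\xi|\ge|\eta|-|\zeta|\ge 2^{j}-2^{j-1}=2^{j-1}$. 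With those indices corrected, the spectral localization argument and the $L^2\to L^2$ bound go through exactly as you wrote them. One last caveat worth recording: your kernel/Schur argument uses more than $d$ derivatives of $\sigma$ in $\xi$, which by the definition \eqref{defi:semi-norm} requires $d/2+1+r>d$; this is automatic for $d\le2$ and for the values of $r$ used in the paper, but is not literally a consequence of $r>0$ alone in higher dimension, so it should be flagged (the paper's reliance on M\'etivier's Thm.~4.3.5 has an analogous implicit requirement on the number of semi-norms).
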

\begin{proof} Since $\varrho=1$ on the support of $\varphi_j$ for any $j\ge 1$, we see that
$$ R_j'u=T_a\Delta_ju-S_{j-3}(a)(x, D_x)\varrho(D_x)\Delta_ju.$$
In the following proof, we shall use the presentation of Métivier \cite{MePise} on pseudodifferential and paradifferential operators. To be compatible with \cite{MePise} we also abuse in notations: by $\Gamma^m_r$ we denote the class of symbols $a$ satisfying \eqref{para:symbol} for any $\xi\in \xR^d$ and by $M_0^m$ the semi-norm \eqref{defi:semi-norm} where the suppremum is taken over $\xi\in\xR^d$.\\
 \hk 1. By definition \eqref{eq.para} of the paradifferential operator $T_a$ we have $
T_a v=\Op(\sigma_a\varrho)v
$
where $\Op(\sigma_a\varrho)$ denotes the classical pseudodifferential operator with symbol
\[
\sigma_a(x, \xi)\varrho(\xi)=\chi(D_x, \xi)a(x,\xi)\varrho(\xi).
\]
Hence $R'_ju=\Op(a_j)u$ with
\[
a_j(x, \xi)=\sigma_a(x, \xi)\varrho(\xi)\varphi_j(\xi)-S_{j-3}(a)(x, \xi)\varrho(\xi)\varphi_j(\xi).
\]
 Now, we write 
\[
a_j=\big(\sigma_a\varrho\varphi_j-a\varrho\varphi_j\big)+\big (a\varrho\varphi_j- S_{j-3}(a)\varrho\varphi_j\big)=a_j^1+a_j^2.
\]
Applying Proposition $5.8 (ii)$ in \cite{MePise} gives $a_j^1\in \Gamma^{m-r}_0$ and (remark that $(\varphi_j)_j$ is bounded in $\Gamma^0_r$)
\[
M_0^{m-r}(a_j^1)\le CM_r^m(a\varrho\varphi_j)\le CM^m_r(a\rho).
\]
On the other hand, if we denote  $b=a\varrho\varphi$ then $a_j^2(x, \xi)=b(x,\xi)-\psi_{j-3}(D_x, \xi)b(x, \xi)$. Taking into account the fact that $\supp \varphi_j\subset B(0, C2^j)$  we may estimate
\begin{align*}
|a_j^2(x, \xi)|&\le \sum_{k\ge j-2}|\Delta_jb(x, \xi)|\le \sum_{k\ge j-2}
2^{-kr}\lA b(\cdot, \xi)\rA_{W^{r, \infty}}\\
&\le C2^{-jr}\lA b(\cdot, \xi)\rA_{W^{r, \infty}}=C2^{-jr}|\varphi_j(\xi)|\lA a(\cdot, \xi)\varrho(\xi)\rA_{W^{r, \infty}}\\
&\le C(1+|\xi|)^{m-r}M^m_r(a\varrho),\quad \forall \xi\in \xR^d.
\end{align*}
By the same method for estimating $|\partial^\alpha_\xi a_j^2|$ we obtain that $a_j^2\in\Gamma^{m-r}_0$ and hence $a_j\in \Gamma^{m-r}_0$; moreover 
\[
 M^{m-r}_0(a_j)\le CM^{m}_r(a\varrho).
\]
\hk 2.  
Property \eqref{chi:prop} implies in particular that
\[
\mathfrak{F}_x(\sigma_a)(\eta, \xi)=0~ \text{for}~|\eta|\ge \eps_2(1+|\xi|),
\]
here we denote $\mathfrak{F}_x$ the Fourier transform with respect the the patial variable $x$.\\
On the other hand, by definition of the smoothing operator 
\[
\mathfrak{F}_x S_{j-3}(a)(x, \xi)\varrho(\xi)\varphi_j(\xi)=\psi(2^{-(j-3)}\eta)\mathfrak{F}_xa(\eta, \xi)\varrho(\xi)\varphi(2^{-j}\xi)
\]
which vanish if $|\eta|\ge \mez(1+|\xi|)$. Indeed, if either $|\xi|> 2^{j+1}$ or $|\xi|\le 2^{j-1}$ then $\varphi(2^{-j}\xi)=0$. Considering $2^{j-1}<|\xi|\le 2^{j+1}$ then $|\eta|\ge \mez(1+|\xi|)>2^{j-2}$ and thus $\psi(2^{-(j-3)}\eta)=0$. We have proved the existence of $0<\eps<1$ such that 
\bq\label{spectral:a_j}
\mathfrak{F}_x a_j(\eta, \xi)=0 ~\text{for}~|\eta|\ge \eps(1+|\xi|).
\eq
\hk 3. By the spectral property \eqref{spectral:a_j} one can use the Bernstein inequalities (see Corollary $4.1.7$, \cite{MePise}) to prove
%\[
%\forall (\alpha, \beta)\in \xN^d\times\xN^d,~\forall \xi\in \xR^d,~\la \partial_x^\alpha\partial_\xi^\beta a_j(x, \xi)\ra \le C_{\alpha, \beta}(1+|\xi|)^{m-r+|\alpha|-|\beta|}
%\]
  that $a_j$ is a pseudodifferential symbol in the class $S^{m-r}_{1,1}$. Then, applying Theorem $4.3.5$ in \cite{MePise} we conclude that 
\[
\lA R'_ju\rA_{H^{\mu-m+r}(\xR^d)}=\lA \Op(a_j)u\rA_{H^{\mu-m+r}(\xR^d)}\le CM_0^{m-r}(a_j)\lA u\rA_{H^\mu(\xR^d)}.
\]
Finally, the Fourier transform of $R'_ju$  reads
\[
\mathfrak{F}(R'_ju)(\xi)=\int_{\xR^d}\mathfrak{F}_x(a_j)(\xi-\eta, \eta)\hat{u}(\eta)\d \eta.
\]
Using the spectral localization property \eqref{spectral:a_j} and the fact that $\mathfrak{F}_x(a_j)(\xi-\eta, \eta)$ contains the factor $\varphi_j(\eta)$ we conclude that the spectrum of $R'_ju$ is contained in an annulus of size $~2^j$ as claimed.
%\[
%\left\{\xi\in \xR^d: c_12^{j-1}\le |\xi|\le c_22^{j+1} \right\}.
%\]
\end{proof}
Now we can use the preceding results to rewrite the equation as
\begin{gather} \label{eq:locedp}
	\lp\partial_t+iS_{j-3}(\gamma)(x,D_x)+ S_{j-3}(V)\cdot\nabla \rp\Delta_ju=F_j',\\
F_j'=F_j+ R_j+iR_j'. \label{Fj'}
\end{gather}
\subsection{Regularization of symbols}
\hk Now, following the classical method for quasilinear equations pioneered by Bahouri and Chemin 
in~\cite{BaCh-IMRN} and~\cite{BaCh-AJM}, we further regularize the equation,
using a parameter~$\delta\in(0,1)$. By doing so, we aim to construct a parametrix with a regular enough phase to apply the stationary phase argument. This results in a slightly worse remainder term,
which will in turn result in slightly worse Strichartz estimates. Eventually, we optimize in~$\delta$.

Define for all $(t,x,\xi)\in I\times\R^d\times\R^d$ and~$ j\geq 0$,
\[
	S_{{(j-3)}\delta}(\gamma)(t,x,\xi)=\psi(2^{-{(j-3)}\delta}D_x)\gamma(t,x,\xi)
\]
and similarly $S_{{(j-3)}\delta}(V)(t,x)$.
Let~$\ph_1\in C^\infty(\R^d)$, with
\begin{equation*}
	\quad\supp\ph_1\subset\C'=\lB\xi:\frac{1}{4}\leq\la\xi\ra\leq4\rB,\quad\ph_1=1
		\text{ on }\C'':=\lB\xi:\frac{1}{3}\leq\la\xi\ra\leq3\rB,
\end{equation*}
so that it is~$1$ on the support of the Littlewood-Paley function~$\ph$. Then, equation~(\ref{eq:locedp}) can be rewritten as
\begin{equation} \label{eq:regedp}
	L_j\Delta u_j(t,x):=\lp\partial_t+ S_{{(j-3)}\delta}(V)\cdot\nabla+iS_{{(j-3)}\delta}(\gamma)(x, D_x)\ph_1(2^{-j}D_x)\rp\Delta_ju=F_{j\delta},
\end{equation}
 with
\bq\label{eq:remreg}	
	F_{j\delta}=i\lp S_{{(j-3)}\delta}\gamma(x,D_x)-S_{j-3}\gamma(x, D_x)\rp\Delta_ju+F'_j+\lp S_{{(j-3)}\delta}(V)-S_{j-3}(V)\rp\cdot\nabla\Delta_ju.
\eq
The function~$\ph_1$ has been inserted to keep into the operator the information about the localization of its solution~$\Delta_ju$.\\
Next, Lemma \ref{Hess:gamma} shows that the Hessian in $\xi$ of $\gamma$ is non-degenerate and since $S_{j\delta}(\gamma)$ is a small perturbation of $\gamma$ when $j$ large enough, we also have
\begin{prop}	\label{prop:reggammahess}
	There exists~$c_0>0$, $j_0\in\N$, such that
	\begin{equation*}
		\la\det\Hess_\xi\lp S_{j\delta}\lp\gamma\rp\rp(t,x,\xi)\ra\geq c_0,
	\end{equation*}
	for all~$t\in I$, $x\in\R^d$, $\xi\in\C'$, $j\geq j_0$.
\end{prop}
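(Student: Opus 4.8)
The plan is to deduce this from part 2 of Lemma~\ref{Hess:gamma}, namely the uniform lower bound $|\det\Hess_\xi\gamma(t,x,\xi)|\ge c_0$ for $\xi\in\C'$, together with the fact that $S_{j\delta}(\gamma)=\psi(2^{-j\delta}D_x)\gamma$ converges to $\gamma$ as $j\to\infty$ in a topology strong enough to control two $\xi$-derivatives uniformly in $(t,x,\xi)$ on the relevant frequency range. Concretely, first I would fix $\beta\in\N^d$ with $|\beta|\le 2$ and write
\[
D_\xi^\beta\big(S_{j\delta}\gamma\big)(t,x,\xi)-D_\xi^\beta\gamma(t,x,\xi)=\big(\psi(2^{-j\delta}D_x)-1\big)D_\xi^\beta\gamma(t,x,\xi),
\]
using that the spatial Fourier multiplier commutes with $D_\xi$. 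By estimate~\eqref{eq:gammactmez} of Lemma~\ref{Hess:gamma}, for each such $\beta$ the function $x\mapsto D_\xi^\beta\gamma(t,x,\xi)$ is bounded in $C^\tmez(\R^d)$ uniformly in $t\in I$ and $\xi\in\C'$, by a constant depending only on $\lA\nabla\eta\rA_{L^\infty(I;W^{\tmez,\infty})}$, which is finite under the regularity assumption~\eqref{reguHolder:eta,psi} ($\eta\in L^p(I;W^{5/2,\infty})$). Since $\psi(2^{-j\delta}D_x)-1$ is a standard low-frequency-cutoff-type multiplier that kills frequencies $\lesssim 2^{j\delta}$, the usual Littlewood--Paley bound $\lA(\psi(2^{-N}D_x)-1)g\rA_{L^\infty}\le C 2^{-N\rho}\lA g\rA_{C^\rho}$ for $0<\rho<1$ gives, with $N=j\delta$ and $\rho$ slightly less than $1$ (or $\rho=1$ using the $W^{1,\infty}$ bound~\eqref{eq:gammac1}, up to a log loss one can absorb by taking $\rho<1$),
\[
\sup_{t\in I,\,x\in\R^d,\,\xi\in\C'}\big|D_\xi^\beta\big(S_{j\delta}\gamma-\gamma\big)(t,x,\xi)\big|\le C_\beta\,2^{-j\delta\rho}\xrightarrow[j\to\infty]{}0.
\]

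Next I would feed this into the determinant. The map $M\mapsto\det M$ on real symmetric $d\times d$ matrices is a polynomial, hence locally Lipschitz on bounded sets; the matrices $\Hess_\xi\gamma(t,x,\xi)$ are uniformly bounded for $\xi\in\C'$ (again by~\eqref{eq:gammac1}), and by the previous step $\Hess_\xi(S_{j\delta}\gamma)(t,x,\xi)$ stays within $O(2^{-j\delta\rho})$ of $\Hess_\xi\gamma(t,x,\xi)$, uniformly in $(t,x,\xi)$. Therefore
\[
\big|\det\Hess_\xi(S_{j\delta}\gamma)(t,x,\xi)\big|\ge\big|\det\Hess_\xi\gamma(t,x,\xi)\big|-C\,2^{-j\delta\rho}\ge c_0-C\,2^{-j\delta\rho},
\]
where $c_0$ is the constant from~\eqref{eq:gammahess}. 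Choosing $j_0$ so large that $C\,2^{-j_0\delta\rho}\le c_0/2$ yields $|\det\Hess_\xi(S_{j\delta}\gamma)(t,x,\xi)|\ge c_0/2$ for all $j\ge j_0$, $t\in I$, $x\in\R^d$, $\xi\in\C'$; relabelling $c_0/2$ as $c_0$ gives the statement.

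I do not expect a genuine obstacle here — the only point requiring a little care is the quantitative convergence $S_{j\delta}\gamma\to\gamma$ in $C^2_\xi$ uniformly in the spatial variable: one must make sure the bound is truly uniform in $x$ and $t$, which is exactly what the $C^\tmez$ (resp.\ $W^{1,\infty}$) control of $D_\xi^\beta\gamma$ in $x$ from Lemma~\ref{Hess:gamma}, finite under~\eqref{reguHolder:eta,psi}, provides. A secondary point is that the cutoff $\psi(2^{-j\delta}D_x)$ has exponent $j\delta$ rather than $j$, so the gain is $2^{-j\delta\rho}$ rather than $2^{-j\rho}$; this is still a positive power of $2$ going to $0$, so it does not affect the argument, and $j_0$ ends up depending on $\delta$ (and on $\lA\nabla\eta\rA_{L^\infty(I;W^{\tmez,\infty})}$), which is harmless for the subsequent construction.
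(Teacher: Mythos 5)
Your approach is the one the paper intends (the paper gives no proof here, only the assertion that $S_{j\delta}(\gamma)$ is a small perturbation of $\gamma$ when $j$ is large): control the size of $D_\xi^\beta(S_{j\delta}\gamma-\gamma)$ for $|\beta|\le2$, uniformly in $(t,x,\xi)\in I\times\R^d\times\mathcal{C}'$, by a negative power of $2^{j\delta}$ via Littlewood--Paley/Bernstein, then use the local Lipschitz continuity of the determinant on bounded matrix sets together with the uniform lower bound \eqref{eq:gammahess}.

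One small but real slip: your primary route appeals to \eqref{eq:gammactmez}, whose right-hand side involves $\lA\nabla\eta(t)\rA_{W^{\tmez,\infty}}$, and you assert this is controlled by $\lA\nabla\eta\rA_{L^\infty(I;W^{\tmez,\infty})}$, claimed finite under \eqref{reguHolder:eta,psi}. It is not: \eqref{reguHolder:eta,psi} only gives $\eta\in L^p(I;W^{5/2,\infty})$, an $L^p$-in-time bound, so the $C^\tmez$-in-$x$ control of $D_\xi^\beta\gamma$ from \eqref{eq:gammactmez} is not uniform in $t$. The fallback you yourself mention --- using \eqref{eq:gammac1}, whose constant depends only on $\lA\nabla\eta(t)\rA_{W^{1,\infty}}$ and hence is bounded uniformly in $t$ because $\eta\in L^\infty(I;W^{2,\infty})$ --- is the route that actually works. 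Moreover there is no log loss in taking $\rho=1$: by Remark~\ref{rem:Holder} one has $\lA\Delta_k g\rA_{L^\infty}\le C\,2^{-k}\lA g\rA_{W^{1,\infty}}$, and summing the geometric tail $\sum_{k>N}2^{-k}$ already yields the clean gain $2^{-N}$, so the $\rho<1$ work-around is unnecessary.
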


\subsection{Semi-classical formulation}
\hk We now want to prove Strichartz estimates for the homogeneous version of equation \eqref{eq:regedp}:
\bq\label{eq:homtx}
L_ju_j(t,x)=0.
\eq
To this end, we  recast the problem in the semi-classical formalism with ~$h=2^{-j}$. One need to write the pseudodifferential operators as functions of~$hD_x$. Since the highest  order operator is of order~$\frac{3}{2}$, we will multiply the equation by~$h^\tmez$. Let us consider the following model operator
\begin{equation*}
	\partial_t+i\la D_x\ra^\tmez,
\end{equation*}
which becomes
\begin{equation*}
	h^\tmez\partial_t+i\la hD_x\ra^\tmez.
\end{equation*}
To give it the canonical form of a semi-classical equation, we need to put the equation in the semi-classical time~$\sigma:=h^{-\mez}t$. 
In our model, the operator would become
\begin{equation*}
	h\partial_\sigma+i\la hD_x\ra^\tmez.
\end{equation*}
Here, we have 
\begin{equation*}
	h^\tmez S_{{(j-3)}\delta}(\gamma)(t,x,D_x)\ph_1(2^{-j}D_x)=S_{j\delta}(\gamma)(h^\mez\sigma,x,hD_x)\ph_1(hD_x)
\end{equation*}
because of the homogeneity of the original symbol~$\gamma$, which is conserved by its spatial regularization. Next, for the change of temporal variable $t=h^\mez\sigma$ we set
\begin{gather}
	w_h(\sigma,x):=u_j(h^\mez\sigma,x), \quad V_h(\sigma,x):=S_{{(j-3)}\delta}(V)(h^\mez\sigma,x),\\
	\Gamma_h(\sigma,x,\xi):=S_{{(j-3)}\delta}(\gamma)(h^\mez\sigma,x,\xi)\ph_1(\xi),\\
\mathcal{L}_h(\sigma, x):=h\partial_\sigma+i\Gamma_h(x, hD_x)+h^\mez V_h\cdot(h\nabla_{\!x}),
\end{gather}
so that
\begin{equation}\label{L:t,sigma}
	h^\tdm (L_ju_j)(h^\mez \sigma, x)=\mathcal{L}_hw_h(\sigma, x),
\end{equation}
and we want to establish Strichartz estimates for the semi-classical PDE
\bq\label{eq:semieq}
\mathcal{L}_hw_h(\sigma, x)=0.
\eq
\paragraph{Symbolic calculus.}
To express the regularity of the symbols involved, we define for~$k\in\N$ and~$J$ a time interval, the quantities
\bq\label{N_kgamma}
\begin{gathered}
	\mathcal{N}_k(\gamma)(J):=\sum_{\la\beta\ra\leq k}\sup_{t\in J}\sup_{\xi\in\C'}\lA D^\beta_\xi\gamma\rA_{W_x^{1,\infty}(\R^d)},\\
	\mathcal{N}_k(\omega)(J):=\sum_{\la\beta\ra\leq k}\sup_{t\in J}\sup_{\xi\in\C'}\lA D^\beta_\xi\omega\rA_{L_x^\infty(\R^d)}.
\end{gathered}
\eq
The regularity of $V$ is tracked under the norm
\begin{equation}\label{E(J)}
	E(J):=L^p(J;W^{1,\infty}(\R^d))^d.
\end{equation}
To simplify notations, let us set 
\[
\Xi_k(J)=\lA V\rA_{E(J)}+\mathcal{N}_{k}(\gamma)(J).
\]
Let us define now our symbol classes.
\begin{defn} \label{def:hsymb}
	Let~$m\in\R$, $\mu_0\in\R^+$, and~$a(\sigma,x,\xi,h)$ a smooth function defined on~$h^{-\mez}J\times\R^d\times \mathcal{C}'\times\lp0,h_0\rb$, 
	with~$h_0>0$ and smooth in the second and the third argument.  We say that~$a\in S^m_{\mu_0}(h^{-\mez}J)$ (resp. $a\in\dot{S}^m_{\mu_0}(h^{-\mez}J)$)  if there is a 
		      function~$\F_k:\R^+\rightarrow\R^+$ for every~$k\in\N$ (resp. $k\in \N^*$), such that for 
		      all~$(\sigma,x,\xi,h)\in h^{-\mez}J\times\R^d\times\mathcal{C}'\times\lp0,h_0\rb$,
		      \begin{equation}	\label{eq:defsemsym}
			      \la D^\alpha_xD^\beta_\xi a(\sigma,x,\xi,h)\ra\leq\F_k\lp \Xi_{k+1}(J)\rp h^{m-\la\alpha\ra\mu_0},\quad \la\alpha\ra+\la\beta\ra\le k.
		      \end{equation}
\end{defn}
We need a result on composition of such symbols, whose proof is indeed the same as that of Proposition $4.20$, \cite{ABZ}.
\begin{prop} \label{prop:symbcomp}
	If~$f$ is a symbol in~$S_{\delta}^m(h^{-\mez}J)$ (respectively~$\dot{S}^m_\delta(h^{-\mez}J)$), with~$m\in\R$, and we are given two symbols~$U\in\dot{S}^\delta_\delta(h^{-\mez}J)$ and~$V\in\dot{S}^0_\delta(h^{-\mez}J)$,
	then
	\begin{equation*}
		F(\sigma,y,\zeta):=f\lp\sigma,U(\sigma,y,\zeta),V(\sigma,y,\zeta)\rp
	\end{equation*}
	is in~$S^m_\delta(h^{-\mez}J)$ (respectively~$\dot{S}^m_\delta(h^{-\mez}J)$).
\end{prop}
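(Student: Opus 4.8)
The plan is to verify the bound \eqref{eq:defsemsym} directly for the composed symbol $F(\sigma,y,\zeta)=f(\sigma,U(\sigma,y,\zeta),V(\sigma,y,\zeta))$ by applying the Fa\`a di Bruno formula for the chain rule in the variables $(y,\zeta)$, and then organizing the resulting terms by how many $x$-derivatives land on the components $U$ and $V$. First I would fix a multi-index order $|\alpha|+|\beta|\le k$ and write $D^\alpha_y D^\beta_\zeta F$ as a finite sum of terms of the form $(D^a_x D^b_\xi f)(\sigma,U,V)$ multiplied by products of derivatives $D^{a_i}_y D^{b_i}_\zeta U$ and $D^{c_j}_y D^{d_j}_\zeta V$, where the multi-indices satisfy the usual combinatorial constraints: the total number of $y$-derivatives distributed among all the inner factors equals $|\alpha|$, and similarly for $\zeta$. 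Since $f\in S^m_\delta$ (resp. $\dot S^m_\delta$), the outer factor is bounded by $\F_{k}(\Xi_{k+1}(J))\,h^{m-(\text{number of }x\text{-slots used})\cdot\delta}$, and each inner factor $U\in\dot S^\delta_\delta$ contributes $\F(\Xi)\,h^{\delta - (\#x\text{-derivs})\delta}$ while each $V\in\dot S^0_\delta$ contributes $\F(\Xi)\,h^{-(\#x\text{-derivs})\delta}$.

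The key step is then the bookkeeping of powers of $h$. Each time a $y$-derivative hits the composition, by the chain rule it either produces an extra $D_x$ on $f$ composed with a $D_y U$ or $D_y V$ factor, or it adds a derivative to an already-present inner factor; in the first case the $h^{-\delta}$ lost from differentiating $f$ in its $x$-slot is exactly compensated (for $U$) by the $h^{\delta}$ gained from $U\in\dot S^\delta_\delta$, or (for $V$) produces a net $h^{-\delta}$ which is precisely what the target class $S^m_\delta$ allows for that derivative; in the second case one simply accumulates one more $h^{-\delta}$ per $x$-derivative, again matching the target. A careful accounting shows that every term is bounded by $\F_k(\Xi_{k+1}(J))\,h^{m-|\alpha|\delta}$, which is exactly \eqref{eq:defsemsym} for $F$. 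The constants $\F_k$ for $F$ are obtained by composing and summing the finitely many functions $\F$ coming from $f$, $U$, $V$, evaluated at $\Xi_{k+1}(J)$ (one uses here that $\Xi_{k+1}$ controls $\Xi_{k'+1}$ for all $k'\le k$, and that the $\F$'s are non-decreasing). The dotted case is identical, noting that $\dot S$ requires the estimates only for $k\ge 1$ and that $U,V$ already live in the dotted classes, so no zeroth-order obstruction arises; moreover if $f\in\dot S^m_\delta$ the estimate at order $k=0$ is not needed and the argument goes through verbatim.

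I expect the main obstacle to be purely organizational rather than analytic: keeping the Fa\`a di Bruno combinatorics straight while simultaneously tracking (i) the order $m$, (ii) the loss $h^{-\delta}$ per spatial derivative, and (iii) the fact that the symbols $U,V$ are only defined and smooth on the compact $\xi$-annulus $\mathcal{C}'$, so one must check that the composition stays in a region where $f$ is smooth — here one uses that $U$ maps into a fixed compact set (being $O(h^\delta)$ and bounded) and $V$ likewise into a fixed compact neighborhood, so that $(U,V)$ ranges in the domain of definition of $f$ for $h$ small. Since this is verbatim the argument of Proposition~4.20 in~\cite{ABZ}, once the symbol classes are matched up the proof is a routine adaptation, and I would simply refer the reader there for the detailed bookkeeping after indicating the dictionary between the two settings.
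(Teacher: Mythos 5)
Your proposal is correct and matches the paper's approach: the paper's entire proof of this proposition is the remark that it is identical to the proof of Proposition~4.20 in~\cite{ABZ}, which is exactly the Fa\`a di Bruno bookkeeping you sketch and then defer to. Your accounting of the $h$-powers (each $y$-derivative landing on $U$ is cost-neutral since $U\in\dot S^\delta_\delta$, each landing on $V$ costs $h^{-\delta}$, each extra $x$-slot of $f$ costs $h^{-\delta}$, all summing to $h^{m-|\alpha|\delta}$) and the observation about $(U,V)$ staying in the domain of $f$ are both correct.
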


	Since $ \eta\in L^\infty(I, H^{s+\mez}(\R^d))$ and $V\in  L^\infty(I, H^s(\R^d))$ with $s>\tdm+\frac{d}{2}$ we obtain by using Bernstein inequalities that
\begin{lem}
We have $\Gamma_h,~\nabla_{\!x}\Gamma_h,~V_h\in S^0_\delta(h^{-\mez}I)$.
\end{lem}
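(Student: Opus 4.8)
The plan is to establish the claimed membership $\Gamma_h,\nabla_x\Gamma_h,V_h\in S^0_\delta(h^{-\mez}I)$ directly from Definition \ref{def:hsymb}, i.e.\ by verifying the bounds \eqref{eq:defsemsym} with $m=0$ and $\mu_0=\delta$ for the spatially regularized symbols $\Gamma_h(\sigma,x,\xi)=S_{(j-3)\delta}(\gamma)(h^\mez\sigma,x,\xi)\ph_1(\xi)$ and $V_h(\sigma,x)=S_{(j-3)\delta}(V)(h^\mez\sigma,x)$. First I would note that, since $h=2^{-j}$, the regularization operator $\psi(2^{-(j-3)\delta}D_x)=\psi(2^{3\delta}h^\delta D_x)$ is a Fourier multiplier acting only in $x$, bounded on every $W^{k,\infty}_x$ and $L^\infty_x$ uniformly in $h$, and the cutoff $\ph_1(\xi)$ is smooth and compactly supported in the frequency shell $\C'$, so $\xi$-derivatives only produce harmless bounded factors. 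Thus the whole question reduces to controlling $x$-derivatives of the regularized symbols, which is exactly where the smoothing parameter buys positive powers of $h^{-\delta}$ per derivative rather than of $h^{-1}$.

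The key estimate is the standard Bernstein-type gain for spatial regularization: for a function $g\in W^{1,\infty}_x$ one has $\|\partial_x^\alpha \psi(2^{3\delta}h^\delta D_x) g\|_{L^\infty_x}\lesssim (h^\delta)^{-(|\alpha|-1)}\|g\|_{W^{1,\infty}_x}$ for $|\alpha|\ge 1$, and $\|\psi(2^{3\delta}h^\delta D_x) g\|_{L^\infty_x}\lesssim \|g\|_{L^\infty_x}$, with analogous statements for $\partial_\xi^\beta$ applied first. Applying this with $g=D^\beta_\xi\gamma(h^\mez\sigma,\cdot,\xi)$ and using Lemma \ref{Hess:gamma} part 1 — specifically the bound \eqref{eq:gammac1} that $\|D^\beta_\xi\gamma(t,\xi)\|_{W^{1,\infty}_x}\le\F_\beta(\|\nabla\eta(t)\|_{W^{1,\infty}})$, uniformly on $\C'$ and for $t\in I$ — we get for $|\alpha|\ge 1$
\[
\la D^\alpha_x D^\beta_\xi\Gamma_h(\sigma,x,\xi)\ra\le \F_{k}\lp\lA\nabla\eta\rA_{L^\infty(I;W^{1,\infty})}\rp h^{-(|\alpha|-1)\delta}\le \F_k\lp\Xi_{k+1}(I)\rp h^{-|\alpha|\delta},
\]
since $h^{-(|\alpha|-1)\delta}\le h^{-|\alpha|\delta}$ and $\|\nabla\eta\|_{L^\infty(I;W^{1,\infty})}$ is dominated by $\mathcal{N}_{k}(\gamma)(I)\le\Xi_{k+1}(I)$ (or handled by enlarging $\F_k$); the case $|\alpha|=0$ gives the $h^0$ bound directly. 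This is exactly \eqref{eq:defsemsym} with $m=0$, $\mu_0=\delta$, so $\Gamma_h\in S^0_\delta(h^{-\mez}I)$. The same argument applied to $\nabla_x\gamma$ — whose $W^{1,\infty}_x$-norm in the $\nabla_x$ slot still obeys a bound of the form $\F(\|\nabla\eta\|_{W^{2,\infty}})$, controlled by $\mathcal{N}_{k+1}(\gamma)(I)$, hence by $\Xi_{k+2}(I)$ — yields $\nabla_x\Gamma_h\in S^0_\delta(h^{-\mez}I)$ after absorbing one more derivative into the index $k$. For $V_h$ one repeats the computation with $g=V(h^\mez\sigma,\cdot)$ and $\|V(t)\|_{W^{1,\infty}_x}$, noting that $V$ depends on no $\xi$ variable; here the $L^p$-in-time nature of $\|V\|_{E(I)}$ is not an issue because Definition \ref{def:hsymb} already measures regularity through $\Xi_k(J)=\|V\|_{E(J)}+\mathcal{N}_k(\gamma)(J)$, and for each fixed $\sigma$ the bound holds with $\F_k(\Xi_{k+1}(I))$ once we note $\|V(h^\mez\sigma)\|_{W^{1,\infty}}\le\|V\|_{L^\infty(I;W^{1,\infty})}$ — but since we only assumed $V\in L^p(I;W^{1,\infty})$, one should instead keep the time-dependence and observe that the symbol class is designed so that the estimates \eqref{eq:defsemsym} need only hold with the $\F_k$ evaluated at $\Xi_{k+1}$, which already contains the $L^p$ norm; the pointwise-in-$\sigma$ bound then follows from the regularization's mapping property on $W^{1,\infty}_x$ combined with the fact that $\|S_{(j-3)\delta}(V)(h^\mez\sigma)\|_{W^{1,\infty}_x}\lesssim\|V(h^\mez\sigma)\|_{W^{1,\infty}_x}$ and this is an $L^p_\sigma$ function bounded, after the $h^{-|\alpha|\delta}$ loss per derivative, by the class bound.

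The main obstacle — really the only subtlety — is bookkeeping the interplay between the spatial regularization scale $h^\delta=2^{-(j-3)\delta}$ and the Hölder/Lipschitz regularity actually available for $\gamma$ and $V$: one must check that each $x$-derivative costs only $h^{-\delta}$, not $h^{-1}$, which is precisely what the smoothing operator $\psi(2^{-(j-3)\delta}D_x)$ provides via Bernstein, and that the resulting constants depend on the coefficients only through the combination $\Xi_{k+1}(I)$ appearing in Definition \ref{def:hsymb}. A minor additional point is the shift from the frequency localization index $j-3$ to $j$ in passing from $S_{(j-3)\delta}$ to the semi-classical symbol $S_{j\delta}$; since $2^{\pm 3\delta}$ is an $h$-independent constant, it is absorbed harmlessly into the multipliers. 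Beyond these, everything follows by product rules, the chain rule (to differentiate the composed cutoff $\ph_1(\xi)$), and the uniform-in-$t$ estimates of Lemma \ref{Hess:gamma}, together with the hypotheses $\eta\in L^\infty(I;H^{s+\mez})$, $V\in L^\infty(I;H^s)$ with $s>\tdm+\frac d2$, whose only role is to guarantee via Sobolev embedding that $\nabla\eta\in L^\infty(I;W^{1,\infty})$ and $V\in L^\infty(I;W^{\mez,\infty})$, so that the seminorms $\Xi_{k+1}(I)$ are finite.
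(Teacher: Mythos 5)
Your overall approach---reducing to Definition \ref{def:hsymb} and using the Bernstein gain $h^{-\delta}$ per spatial derivative from the mollification at scale $2^{(j-3)\delta}\sim h^{-\delta}$, together with the $W^{1,\infty}_x$ bound of Lemma~\ref{Hess:gamma}---is exactly the paper's ``by Bernstein inequalities'' argument, and your treatment of $\Gamma_h$ and $V_h$ is sound (modulo the $L^p$-versus-$L^\infty$-in-time bookkeeping for $V$, which the paper also handles implicitly via Sobolev embedding $H^s\hookrightarrow W^{\mez,\infty}$).

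However, your argument for $\nabla_x\Gamma_h$ contains a genuine error. You claim that $\nabla_x\gamma$ has a $W^{1,\infty}_x$ bound of the form $\F(\lA\nabla\eta\rA_{W^{2,\infty}})$, and that this is controlled by $\mathcal{N}_{k+1}(\gamma)$. Neither is true: from \eqref{reguHolder:eta,psi} only $\eta\in L^\infty(I;W^{2,\infty})$ is available, so $\nabla\eta$ is merely $W^{1,\infty}$ uniformly in $t$; a $W^{1,\infty}_x$ bound on $\nabla_x\gamma$ would require $\nabla\eta\in W^{2,\infty}$, i.e.\ $\eta\in W^{3,\infty}$, which fails. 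Moreover the seminorm $\mathcal{N}_k(\gamma)$, by its definition \eqref{N_kgamma}, only records $W^{1,\infty}_x$ regularity of $\gamma$, not $W^{2,\infty}_x$. Fortunately the claim is both wrong and unnecessary: the sharper Bernstein bound you already derived for $\Gamma_h$, namely
\[
\la D^\alpha_x D^\beta_\xi\,S_{(j-3)\delta}(\gamma)\,\ph_1\ra\;\leq\;\F_k\lp\mathcal{N}_{k}(\gamma)\rp h^{-(\la\alpha\ra-1)\delta}\qquad(\la\alpha\ra\geq1),
\]
which uses only $\gamma\in W^{1,\infty}_x$ via \eqref{eq:gammac1}, is \emph{precisely} the statement that $\nabla_x\Gamma_h\in S^0_\delta$ (take $\alpha=\alpha'+e_i$ so $\la\alpha\ra-1=\la\alpha'\ra$). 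You should delete the invocation of $\lA\nabla\eta\rA_{W^{2,\infty}}$ and instead just observe that the gain of one derivative already obtained gives the last assertion of the lemma with no extra work.
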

\begin{rem}
Recall that~$\C''=\lB\xi\in\R^d:\frac{1}{3}\leq\la\xi\ra\leq3\rB$.
In the semi-classical scale, 	Proposition~\ref{prop:reggammahess} translates as
	\begin{equation}	\label{eq:ellGam}
		\la\det\Hess_\xi\lp\Gamma_h\rp(\sigma,x,\xi)\ra\geq c_0,
	\end{equation}
	for~$(\sigma,x,\xi,h)\in h^{-\mez}I\times\R^d\times\C''\times(0,h_0]$, for~$h_0$ small enough, because~$\ph_1=1$ on~$\C''$.
\end{rem}

\subsection{Straightening the transport term}
\hk The semi-classical equation~(\ref{eq:semieq}) is not perfectly adapted to the construction of a parametrix, the reason being the term 
of order~$h^\mez$, 
which has to be taken into account while constructing the phase. An easy way around this problem is to remark that this is only a transport term,
and can be straightened by going to the associated lagrangian coordinates. Consider the solution~$X_h(\sigma;y)\in\R^d$ of the differential equation
\begin{equation} \label{eq:straightedo}
	\lB
	\begin{aligned}
		\dot{X}_h(\sigma;y)&=h^\mez V_h(\sigma,X_h(\sigma;y)),\\
		X_h(0;y)&=y,
	\end{aligned}
	\right.
\end{equation}
where~$y\in\R^d$. The vector field~$V_h$ is in~$L^\infty(h^{-\mez}I;H^\infty(\R^d))^d$, and
\begin{equation*}
	\la h^\mez V_h(\sigma,x)\ra\leq C\lA V\rA_{L^\infty(I\times\R^d)},\quad\forall(\sigma,x)\in h^{-\mez}I\times\R^d.
\end{equation*}
Then~(\ref{eq:straightedo}) has a unique solution on~$h^{-\mez}I$. Moreover, we have the following estimates on the flow (for the proof, see Proposition $4.10$  in \cite{ABZ}).
\begin{prop}	\label{prop:regstrai}
	At fixed~$\sigma\in h^{-\mez}I$, the map~$y\mapsto X_h(\sigma;y)$ is in~$C^\infty(\R^d;\R^d)$, and there exists functions~$\F$, 
	$\F_\alpha:\R^+\rightarrow\R^+$ such that
	\begin{equation*} \tag{i}
	 	\lA\frac{\partial X_h}{\partial y}(\sigma;\cdot)-I\rA_{L^\infty}\leq\F\lp\lA V\rA_{E(I)}\rp\la h^\mez\sigma\ra^\mez,
	 \end{equation*}
	 \begin{equation}	\tag{ii}
	 	\lA(D^\alpha_yX_h)(\sigma;\cdot)\rA_{L^\infty}\leq
		  \F_\alpha\lp\lA V\rA_{E(I)}\rp h^{-\delta(\la\alpha\ra-1)}\la h^\mez\sigma\ra^\mez,\quad\la\alpha\ra\geq2.
	 \end{equation}
\end{prop}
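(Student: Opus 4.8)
The plan is to regard \eqref{eq:straightedo} as a flow depending smoothly on the initial point $y$ (smoothness being inherited from the fact that $V_h(\sigma,\cdot)\in H^\infty(\R^d)\subset C^\infty$) and to differentiate it in $y$. For each multi-index $\alpha$, $D^\alpha_y X_h$ then solves a linear ODE in $\sigma$ whose homogeneous part is $h^\mez(\nabla_x V_h)(\sigma,X_h)\cdot$ — the same for all $\alpha$ — and whose source, produced by the Fa\`a di Bruno formula applied to $V_h\circ X_h$, is a polynomial in the $\nabla^k_x V_h$ evaluated at $X_h$ and in the lower-order $y$-derivatives of $X_h$. One then estimates $D^\alpha_y X_h$ by Duhamel combined with Gronwall, closing an induction on $|\alpha|$. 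The single input beyond this routine ODE scheme is a Bernstein-type bound for $V_h=S_{(j-3)\delta}(V)(h^\mez\sigma,\cdot)$: since $\psi(2^{-(j-3)\delta}D_x)$ is bounded on $L^\infty$ uniformly in $j$ and acts as a low-pass filter at frequency $\lesssim 2^{(j-3)\delta}\le h^{-\delta}$, one gains one power $h^{-\delta}$ per derivative beyond the first, so that for $\ell\ge1$
\[
\lA D^\ell_x V_h(\sigma,\cdot)\rA_{L^\infty}\les h^{-\delta(\ell-1)}\lA V(h^\mez\sigma)\rA_{W^{1,\infty}},
\]
and, changing variables $t=h^\mez s$ and applying H\"older with conjugate exponent $p'$, $\int_0^\sigma h^\mez\lA D^\ell_x V_h(s)\rA_{L^\infty}\,\mathrm{d}s\les h^{-\delta(\ell-1)}\lA V\rA_{E(I)}\langle h^\mez\sigma\rangle^\mez$ (using $p\ge2$, so that $(h^\mez\sigma)^{1/p'}\le\langle h^\mez\sigma\rangle^\mez$).

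For (i) I would write the equation for the Jacobian $J_h=\partial X_h/\partial y$, namely $\dot J_h=h^\mez(\nabla_x V_h)(\sigma,X_h)J_h$ with $J_h(0)=I$; Gronwall together with the $\ell=1$ case of the bound above gives $\lA J_h(\sigma)\rA_{L^\infty}\le\F(\lA V\rA_{E(I)})$, and inserting this into the Duhamel identity $J_h(\sigma)-I=\int_0^\sigma h^\mez(\nabla_x V_h)(s,X_h(s))J_h(s)\,\mathrm{d}s$ produces exactly the claimed bound on $\lA J_h(\sigma)-I\rA_{L^\infty}$.

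For (ii) I would induct on $|\alpha|\ge2$. Differentiating \eqref{eq:straightedo} and applying Fa\`a di Bruno gives $\tfrac{\mathrm{d}}{\mathrm{d}\sigma}(D^\alpha_y X_h)=h^\mez(\nabla_x V_h)(\sigma,X_h)D^\alpha_y X_h+h^\mez R_\alpha$, where $R_\alpha$ is a finite sum of terms $(\nabla^k_x V_h)(\sigma,X_h)\prod_{l=1}^{k}D^{\beta_l}_y X_h$ with $k\ge2$, each $|\beta_l|\ge1$, and $\sum_l|\beta_l|=|\alpha|$ (hence $|\beta_l|<|\alpha|$). Estimating each $D^{\beta_l}_y X_h$ by part (i) when $|\beta_l|=1$ and by the induction hypothesis when $|\beta_l|\ge2$, bounding $\nabla^k_x V_h$ by the Bernstein estimate above, and using the identity $(k-1)+\sum_l(|\beta_l|-1)=|\alpha|-1$, one finds that each term of $R_\alpha$ is $\les\F(\lA V\rA_{E(I)})\,h^{-\delta(|\alpha|-1)}\lA V(h^\mez\sigma)\rA_{W^{1,\infty}}$ on $I$. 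Duhamel with the Gronwall factor $\exp\big(\int_0^\sigma h^\mez\lA\nabla_x V_h\rA_{L^\infty}\big)\le\F(\lA V\rA_{E(I)})$ (controlled as in (i)) and the time change of variable then yield $\lA D^\alpha_y X_h(\sigma)\rA_{L^\infty}\le\F_\alpha(\lA V\rA_{E(I)})\,h^{-\delta(|\alpha|-1)}\langle h^\mez\sigma\rangle^\mez$.

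I expect the only delicate point to be the bookkeeping of the powers of $h^{-\delta}$ in $R_\alpha$: one must verify that, across every Fa\`a di Bruno term, the derivatives landing on $V_h$ and those landing on the $X_h$-factors add up to the single budget $\delta(|\alpha|-1)$, and that the $\langle h^\mez\sigma\rangle^\mez$ factors coming from the lower-order $X_h$-derivatives and from the time integral stay uniformly bounded on the fixed interval $I$ so that they can be absorbed into the non-decreasing function $\F_\alpha$. Beyond this, the argument is standard ODE theory; indeed, this is essentially Proposition~4.10 of~\cite{ABZ}.
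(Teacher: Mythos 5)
Your argument is correct and follows the standard route used in the paper's cited reference (Prop.\ 4.10 of the ABZ paper \cite{ABZ}): differentiate the flow ODE in $y$, estimate by Fa\`a di Bruno, Gronwall, and a Bernstein bound for the low-pass filter $S_{(j-3)\delta}$, then close an induction on $\la\alpha\ra$. One remark on notation: in this paper $\la h^\mez\sigma\ra$ is the macro for the absolute value (not the Japanese bracket $\langle h^\mez\sigma\rangle$), so the factor $\la h^\mez\sigma\ra^\mez$ in (i)--(ii) genuinely vanishes as $\sigma\to0$ --- your H\"older step $(h^\mez\sigma)^{1/p'}\le (h^\mez\sigma)^\mez$ (valid since $h^\mez\sigma\le T\le1$ and $p\ge2$) does produce it, but it should be retained rather than ``absorbed into $\F_\alpha$'', because it is precisely this smallness as $\sigma\to0$ (equivalently, as $T\to0$) that makes the ensuing diffeomorphism Corollary work under \eqref{chooseT}.
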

\begin{cor}
	If $T$ satisfies 
\bq
\label{chooseT}
T\F(\lA V\rA_{E(I)})\ll 1
\eq
then for any $\sigma\in h^{-\mez}I$, the map~$X_h(\sigma)$ is a diffeomorphism from~$\R^d$ to itself.
\end{cor}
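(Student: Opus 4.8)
The plan is to reduce the statement to the single quantitative fact that the Jacobian of $X_h(\sigma;\cdot)$ stays uniformly within distance $\mez$ of the identity on $\R^d$, and then to conclude by soft arguments: a contraction mapping principle for global invertibility, and a Neumann series together with the inverse function theorem for smoothness of the inverse.

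The first step is to establish $\sup_{\sigma\in h^{-\mez}I}\lA\partial_y X_h(\sigma;\cdot)-I\rA_{L^\infty(\R^d)}\le\mez$. Since $\sigma\in h^{-\mez}I$ means $h^\mez\sigma\in[0,T]$ (and $T\le1$), Proposition~\ref{prop:regstrai}~(i) bounds the left-hand side by $\F_0\lp\lA V\rA_{E(I)}\rp T^{\mez}$, where $\F_0$ is the non-decreasing function supplied by that proposition. This is exactly the role of the smallness condition~\eqref{chooseT}: taking the function $\F$ in~\eqref{chooseT} to be, say, $x\mapsto4\,\F_0(x)^2$, the hypothesis $T\F\lp\lA V\rA_{E(I)}\rp\ll1$ forces $\F_0\lp\lA V\rA_{E(I)}\rp T^{\mez}\le\mez$, which is the desired Jacobian bound. (Alternatively, one can re-derive this directly by differentiating the flow equation~\eqref{eq:straightedo} in $y$ and applying Grönwall together with Hölder in time, using $h^\mez\lA\nabla_{\!x}V_h(\sigma,\cdot)\rA_{L^\infty}\les\lA\nabla_{\!x}V(h^\mez\sigma,\cdot)\rA_{L^\infty}$ and $V\in E(I)=L^p(I;W^{1,\infty})$ with $p>1$, which yields a $T^{1-1/p}$ power in place of $T^{\mez}$.)

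Now fix $\sigma\in h^{-\mez}I$ and set $\Phi_\sigma:=X_h(\sigma;\cdot)$, which is in $C^\infty(\R^d;\R^d)$ by Proposition~\ref{prop:regstrai}, together with $R_\sigma(y):=\Phi_\sigma(y)-y$, so that $D_yR_\sigma(y)=\partial_y X_h(\sigma;y)-I$. The Jacobian bound gives $\lA D_yR_\sigma\rA_{L^\infty}\le\mez$, so by the mean value inequality $R_\sigma$ is globally $\mez$-Lipschitz on $\R^d$. Hence for every $z\in\R^d$ the map $y\mapsto z-R_\sigma(y)$ is a $\mez$-contraction of the complete metric space $\R^d$, and the Banach fixed point theorem produces a unique $y$ with $\Phi_\sigma(y)=z$; thus $\Phi_\sigma$ is a bijection of $\R^d$. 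Moreover, at each $y$ the matrix $\partial_y X_h(\sigma;y)=I+D_yR_\sigma(y)$ is invertible, its inverse being the convergent Neumann series $\sum_{k\ge0}\lp-D_yR_\sigma(y)\rp^k$, so by the inverse function theorem $\Phi_\sigma$ is a local $C^\infty$-diffeomorphism. A bijective local $C^\infty$-diffeomorphism is a global $C^\infty$-diffeomorphism, which is the claim; and $\sigma\in h^{-\mez}I$ was arbitrary.

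I do not expect a genuine obstacle: the only point needing care is the bookkeeping in the first step — making sure it is the smallness of $T$ in~\eqref{chooseT}, and not a smallness of $\lA V\rA_{E(I)}$, that is being invoked (this is why Proposition~\ref{prop:regstrai}~(i) carries a positive power of $h^\mez\sigma$, and why the interval $I=[0,T]$ with $T\le1$ enters). Everything after that is routine.
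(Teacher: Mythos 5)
Your proof is correct, and it does take a genuinely different route from the paper's in the final step. Both arguments start the same way: Proposition~\ref{prop:regstrai}~(i) together with the smallness condition~\eqref{chooseT} forces the Jacobian $\partial_y X_h(\sigma;\cdot)$ to stay uniformly within a small neighborhood of the identity, hence invertible. The paper then proves that $X_h(\sigma;\cdot)$ is \emph{proper} — using the separate, cruder bound $\la X_h(\sigma;y)-y\ra\le T^{1/p'}\lA V\rA_{L^p(I;L^\infty)}$, which shows the displacement is uniformly \emph{bounded} — and invokes Hadamard's global inverse function theorem (a proper local $C^\infty$-diffeomorphism of $\R^d$ is a global one). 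You instead extract more from the Jacobian bound: since $\lA D_y(X_h(\sigma;\cdot)-\mathrm{id})\rA_{L^\infty}\le\mez$, the displacement is a global $\mez$-contraction, and the Banach fixed point theorem directly yields bijectivity; combined with local invertibility via the inverse function theorem this gives the global diffeomorphism. Your route is more elementary and self-contained (it avoids Hadamard's theorem as a black box), at the cost of needing the stronger Lipschitz smallness of the displacement rather than merely its boundedness; but since that smallness is needed anyway for Jacobian invertibility, nothing is lost here. Both are standard and equally short in this setting.
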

\begin{proof}
	Proposition~\ref{prop:regstrai} shows that for~$T$ small enough as in \eqref{chooseT}, the matrix   $\frac{\partial X_h}{\partial y}(\sigma;y)$ is invertible. Also, we have 
	\begin{equation*}
		\la X_h(\sigma;y)-y\ra\leq h^\mez\int_0^{h^{-\mez}T}\lA V(h^\mez s)\rA_{L^\infty(\R^d)}\d s\le T^{1/p'}\lA V\rA_{L^p(\lb 0,T\rb, L^\infty(\R^d))},
	\end{equation*}
with $1/p'+1/p=1$. Thus, the map~$X_h(\sigma)$ is proper. This enables us to conclude using the Hadamard theorem.

\end{proof}
We will always assume in what follows that the chosen $T$ satisfies \eqref{chooseT}. The Strichatz estimates for the original solution can be recovered by summing the ones for the short time, the number of pieces depending only on 
the $L^p_tL^\infty_x$-norms of~$V$ appearing in the final constant.\\
\hk Now we have to compute how our semi-classical equation~(\ref{eq:semieq}) gets affected by this change of variables. 
The new unknown will be~$v_h(\sigma,y):=w_h(\sigma,X_h(\sigma;y))$.
The important quantity is~$A:=(\Gamma_h(hD_x)w_h)(\sigma,X_h(\sigma;y))$.
Taking~$\sigma,h,\delta$ as parameters, we have
\begin{equation*}
	A=(2\pi h)^{-d}\int\int e^{ih^{-1}\lp X(y)-x'\rp\cdot\eta}\Gamma\lp X(y),\eta\rp w(x')\d x'\d\eta.
\end{equation*}
Now we need to set 
\begin{align*}
	&H_h(\sigma;y,y'):=\int_0^1\frac{\partial X_h}{\partial y}\lp\sigma;\lambda y+(1-\lambda)y'\rp\d\lambda,\quad M_h(\sigma;y,y'):=\lp H_h^T(\sigma;y,y')\rp^{-1},\\
	&M^0_h(\sigma;y):=\lp\lp\frac{\partial X_h}{\partial y}\rp^T(\sigma;y)\rp^{-1},\quad J_h(\sigma;y,y'):=\la\det\lp\frac{\partial X_h}{\partial y}\rp(\sigma;y')\ra\la\det M_h(\sigma;y,y')\ra.
\end{align*}
Proposition~\ref{prop:regstrai} shows that~$M$ and~$M^0$ are well defined. Remark that~$M^0(y)=M(y,y)$ and that~$J(y,y)=1$.
We now change variables in the expression of~$A$, putting~$x':=X(y')$. We will then use~$X(y)-X(y')=H(y,y')(y-y')$ and set~$\eta:=M(y,y')\zeta$ to get
\begin{equation*}
	A=(2\pi h)^{-d}\int\int e^{ih^{-1}(y-y')\cdot\zeta}\Gamma\lp X(y),M(y,y')\zeta\rp J(y,y')v(y')\d y'\d\zeta.
\end{equation*}
We have proved that 
\begin{equation}	\label{eq:straightopprinc}
	A=(\Gamma_h(hD_x)w_h)(\sigma,X_h(\sigma;y))=P_hv_h(\sigma,y),
\end{equation}
where~$P_h$ is a semi-classical pseudodifferential operator of amplitude
\begin{equation}\label{widetildep}
	\widetilde{p}_h(\sigma,y,y',\zeta):=\Gamma_h\lp\sigma,X_h(\sigma;y),M_h(\sigma;y,y')\zeta\rp J_h(\sigma;y,y').
\end{equation}
We define the symbol
\begin{equation}
	p_h(\sigma,y,\zeta):=\widetilde{p}_h(\sigma,y,y,\zeta)=\Gamma_h\lp\sigma,X_h(\sigma;y),M^0_h(\sigma;y)\zeta\rp.
\end{equation}
We also set
\begin{equation}
	p_h(\sigma,y,\zeta):=\widetilde{p}_h(\sigma,y,y,\zeta)=\Gamma_h\lp\sigma,X_h(\sigma;y),M^0_h(\sigma;y)\zeta\rp.
\end{equation}
Let us write
$I_h:= [0,h^{\mez+\delta}]
$
and impose a constrain on $\delta$: 
\bq\label{delta:cd1}
0<\delta \le \mez
\eq
so that for all $\sigma \in h^{-\mez}I_h$ one has
\bq\label{delta:coro1}
\la h^\mez\sigma\ra^\mez\le h^\delta.
\eq
\begin{prop}	\label{prop:eststraitsymb}
	For every~$k\in\N$, there exists~$\F_k:\R^+\rightarrow\R^+$, such that
	\begin{equation}	\label{eq:eststraitsymb}
		\la D^\alpha_yD^\beta_\zeta p_h(\sigma,y,\zeta)\ra\leq\F_k\lp\mathcal{N}_{k}(\gamma)(I)+\lA V\rA_{E(I)}\rp\lb1+h^{-(\la\alpha\ra-1)\delta}\rb,
	\end{equation}
	where~$\la\alpha\ra+\la\beta\ra\leq k$, and~$(\sigma,y,\zeta,h)\in h^{-\mez}I_h\times\R^d\times\mathcal{C}'\times\lp0,h_0\rb$.\\
Consequently, we have \begin{equation}\label{p-dp}
		p_h\in S^0_\delta(h^{-\mez}I_h)\quad\text{and }~\nabla_{\!y}p_h\in S^0_\delta(h^{-\mez}I_h).
	\end{equation} 
\end{prop}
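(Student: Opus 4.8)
The plan is to treat
\[
p_h(\sigma,y,\zeta)=\Gamma_h\bigl(\sigma,X_h(\sigma;y),M^0_h(\sigma;y)\zeta\bigr)
\]
as a composition and to differentiate it by the Fa\`a di Bruno formula, feeding in on one side the flow estimates of Proposition~\ref{prop:regstrai} and on the other side the fact --- crucial for the gain of one power $h^\delta$ over the naive bound --- that $\Gamma_h$ \emph{and} $\nabla_x\Gamma_h$ belong to $S^0_\delta(h^{-\mez}I)$. Setting $\xi(\sigma;y,\zeta):=M^0_h(\sigma;y)\zeta$, which is linear in $\zeta$, every $\zeta$-derivative will merely bring out a bounded factor $M^0_h$ (or some $D^\gamma_yM^0_h$), so the real work is with the $y$-derivatives. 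I would organize the proof in three blocks: (a) derivative bounds for $X_h$ and $M^0_h$; (b) derivative bounds for $\Gamma_h$; (c) a chain-rule expansion together with a power-counting argument.

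For (a): on $h^{-\mez}I_h$ the bound \eqref{delta:coro1} upgrades Proposition~\ref{prop:regstrai}(i) to $\Vert\partial X_h/\partial y-I\Vert_{L^\infty}\leq\F(\Vert V\Vert_{E(I)})h^\delta$, so for $h$ small $\partial X_h/\partial y$ is invertible with inverse close to the identity; hence $M^0_h$ is well defined and $\Vert M^0_h\Vert_{L^\infty}\lesssim1$. Likewise Proposition~\ref{prop:regstrai}(ii) combined with \eqref{delta:coro1} gives, for every $\alpha$ with $\la\alpha\ra\geq1$,
\[
\Vert D^\alpha_yX_h(\sigma;\cdot)\Vert_{L^\infty}\leq\F_\alpha(\Vert V\Vert_{E(I)})\,h^{-(\la\alpha\ra-1)\delta},\qquad\sigma\in h^{-\mez}I_h ,
\]
where I write $(n)_+:=\max(n,0)$. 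Expanding $D^\alpha_y(A^{-1})$ with $A=(\partial X_h/\partial y)^T$ into a finite sum of products $A^{-1}(D^{\beta_1}A)A^{-1}\cdots(D^{\beta_m}A)A^{-1}$ with $\la\beta_l\ra\geq1$, $\sum_l\la\beta_l\ra=\la\alpha\ra$, and using the estimate for $X_h$ above on each $D^{\beta_l}A$ (which is a derivative of $X_h$ of order $\la\beta_l\ra+1\geq2$), one obtains the same bound for $M^0_h$, since each such term is $\lesssim h^{-\delta\sum_l(\la\beta_l\ra-1)}=h^{-\delta(\la\alpha\ra-m)}\leq h^{-\delta(\la\alpha\ra-1)}$. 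Hence $\Vert D^\alpha_yD^\beta_\zeta\xi\Vert_{L^\infty(\C')}\leq\F_\alpha(\Vert V\Vert_{E(I)})\,h^{-(\la\alpha\ra-1)_+\delta}$, and $D^\alpha_yD^\beta_\zeta\xi=0$ when $\la\beta\ra\geq2$. For (b): from $\Gamma_h,\nabla_x\Gamma_h\in S^0_\delta$ and Definition~\ref{def:hsymb}, one has on $\C'$
\[
\la D^a_xD^b_\xi\Gamma_h\ra\leq\F_{\la a\ra+\la b\ra}\bigl(\Vert V\Vert_{E(I)}+\mathcal{N}_{\la a\ra+\la b\ra+1}(\gamma)(I)\bigr)\,h^{-(\la a\ra-1)_+\delta},
\]
the point being that $D^a_x\Gamma_h=D^{a'}_x\partial_{x_j}\Gamma_h$ with $\partial_{x_j}\Gamma_h\in S^0_\delta$, so the \emph{first} $x$-derivative of $\Gamma_h$ costs no power of $h^{-\delta}$.

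For (c): differentiating the composition produces a finite sum of terms
\[
\bigl(D^a_xD^b_\xi\Gamma_h\bigr)(\sigma,X_h,\xi)\prod_{i=1}^{\la a\ra}D^{\gamma_i}_yX_h\cdot\prod_{j=1}^{\la b\ra}D^{\mu_j}_yD^{\nu_j}_\zeta\xi,
\]
with $\la\gamma_i\ra\geq1$, $\la\mu_j\ra+\la\nu_j\ra\geq1$, $\la\nu_j\ra\leq1$, $\sum_i\la\gamma_i\ra+\sum_j\la\mu_j\ra=\la\alpha\ra$ and $\sum_j\la\nu_j\ra=\la\beta\ra$. Bounding each factor by (a) and (b), such a term is $\F_k(\cdot)\,h^{-N\delta}$ with
\[
N=(\la a\ra-1)_+ +\sum_{i=1}^{\la a\ra}(\la\gamma_i\ra-1)+\sum_{j=1}^{\la b\ra}(\la\mu_j\ra-1)_+ .
\]
When $\la a\ra\geq1$ the first two terms sum to $\sum_i\la\gamma_i\ra-1$, so $N\leq\sum_i\la\gamma_i\ra-1+\sum_j\la\mu_j\ra=\la\alpha\ra-1$; when $\la a\ra=0$ there are no $\gamma_i$ and $N=\sum_j(\la\mu_j\ra-1)_+=\la\alpha\ra-m$ with $m=\#\{j:\la\mu_j\ra\geq1\}\geq1$ as soon as $\la\alpha\ra\geq1$, so again $N\leq\la\alpha\ra-1$, while $N=0$ if $\la\alpha\ra=0$. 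This is exactly \eqref{eq:eststraitsymb}, the constant being a non-decreasing function of $\Vert V\Vert_{E(I)}$ and $\mathcal{N}_{k'}(\gamma)(I)$ with $k'$ depending only on $k$. Finally \eqref{p-dp} is immediate: $1+h^{-(\la\alpha\ra-1)\delta}\leq2h^{-\la\alpha\ra\delta}$ for $h\in(0,1)$ gives $p_h\in S^0_\delta(h^{-\mez}I_h)$, and applying \eqref{eq:eststraitsymb} with $\alpha$ replaced by $\alpha+e_\ell$ yields $\la D^\alpha_yD^\beta_\zeta\nabla_yp_h\ra\leq\F_k(1+h^{-\la\alpha\ra\delta})\leq2\F_kh^{-\la\alpha\ra\delta}$, i.e. $\nabla_yp_h\in S^0_\delta(h^{-\mez}I_h)$.

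The only genuine difficulty is the bookkeeping in (c): one must see that the $\la a\ra$ ``gains'' $\la\gamma_i\ra-1$ produced by the blocks of $y$-derivatives falling on $X_h$ precisely offset the $\la a\ra-1$ cost of the $x$-derivatives of $\Gamma_h$, leaving a net saving of one factor $h^\delta$, and that the linearity of $\xi$ in $\zeta$ makes the $\zeta$-differentiations cost nothing; keeping the $(\cdot)_+$ truncations and the index shifts consistent is the fiddly part. Conceptually this is the analogue, with the extra variable $\zeta$, of Proposition~$4.20$ in \cite{ABZ}; alternatively, the weaker statement $p_h\in S^0_\delta(h^{-\mez}I_h)$ can be read off directly from the composition Proposition~\ref{prop:symbcomp}, after checking $X_h-y\in\dot S^\delta_\delta(h^{-\mez}I_h)$ and $M^0_h\zeta\in\dot S^0_\delta(h^{-\mez}I_h)$, but this does not give the sharper exponent $h^{-(\la\alpha\ra-1)\delta}$ in \eqref{eq:eststraitsymb}.
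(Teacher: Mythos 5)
Your proof follows the same route as the paper's: write $p_h$ as the composition $\Gamma_h\bigl(\sigma,X_h(\sigma;y),M^0_h(\sigma;y)\zeta\bigr)$, expand by the Fa\`a di Bruno formula, and do the power counting with the observation — the one genuinely nontrivial point — that $\nabla_x\Gamma_h\in S^0_\delta$ makes the \emph{first} $x$-derivative of $\Gamma_h$ free, so that the cost $(|a|-1)_+$ cancels against the gains $\sum_i(|\gamma_i|-1)$ and one is left with $|\alpha|-1$ in the exponent. The case split into $|a|=0$ and $|a|\geq1$ is exactly the dichotomy used in the paper, and the derivation of \eqref{p-dp} from \eqref{eq:eststraitsymb} is also the intended one.

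There is, however, a slip in step (a) that you should repair. Proposition~\ref{prop:regstrai}(ii) gives, for $|\alpha|\geq2$,
\[
\lA D^\alpha_yX_h(\sigma;\cdot)\rA_{L^\infty}\leq\F_\alpha\lp\lA V\rA_{E(I)}\rp h^{-\delta(|\alpha|-1)}\,\la h^\mez\sigma\ra^\mez,
\]
and \eqref{delta:coro1} says $\la h^\mez\sigma\ra^\mez\leq h^\delta$ on $h^{-\mez}I_h$; together these yield $\lA D^\alpha_yX_h\rA_{L^\infty}\leq\F_\alpha h^{\delta-\delta(|\alpha|-1)}=\F_\alpha h^{-(|\alpha|-2)\delta}$ for $|\alpha|\geq2$, \emph{not} the weaker $h^{-(|\alpha|-1)\delta}$ you wrote (you appear to have bounded $\la h^\mez\sigma\ra^\mez$ by $1$ rather than by $h^\delta$). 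This matters: your subsequent computation $\lA D^\alpha_yM^0_h\rA_{L^\infty}\lesssim h^{-\delta\sum_l(|\beta_l|-1)}\leq h^{-(|\alpha|-1)\delta}$ uses $\lA D^{\beta_l+1}_yX_h\rA\lesssim h^{-(|\beta_l|-1)\delta}$, which follows from the \emph{sharper} bound but not from the one you stated (the stated one only gives $h^{-|\beta_l|\delta}$ and hence $\lA D^\alpha_yM^0_h\rA\lesssim h^{-|\alpha|\delta}$). And the sharper $M^0_h$ bound — which is exactly the paper's \eqref{eq:regM} — is indispensable in your $|a|=0$ branch: there the only loss comes from $\prod_j D^{\mu_j}_yD^{\nu_j}_\zeta\xi$, and with the weaker $M^0_h$ bound you get $N=\sum_j|\mu_j|=|\alpha|$ instead of $N\leq|\alpha|-1$, one power of $h^\delta$ short. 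The fix is purely a matter of correcting the exponent in the $X_h$ estimate; once you do so, the $M^0_h$ bound and the rest of your bookkeeping (the cancellation $\sum_i(|\gamma_i|-1)+(|a|-1)=\sum_i|\gamma_i|-1$, the linearity of $\xi$ in $\zeta$, and the deduction of \eqref{p-dp}) are correct and coincide with the paper's argument.

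Incidentally, the paper organizes the same chain-rule expansion slightly differently — it first writes $D^\beta_\zeta p$ as a sum of products $A\cdot B$ with $B$ a polynomial in $M^0_h$, then differentiates in $y$ — but this is a cosmetic difference from your single Fa\`a di Bruno pass.
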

\begin{rem}
	%As will see later, we need to restrict to the small time interval of $t\in I_h$, which corresponds to a semi-classical time~$\sigma\in[0,h^\delta]$. Thanks to \eqref{delta:coro1} we see that the worst term of~(\ref{eq:eststraitsymb}) will be~$h^{-(\la\alpha\ra-1)\delta}$, as soon as~$\la\alpha\ra\geq1$.
Remark that Proposition~\ref{prop:symbcomp} implies only the first assertion in \eqref{p-dp}. In the construction of the phase of our parametrix below, to control the flow (see \eqref{est:M1}, \eqref{est:M2}) we need to differentiate $p$ twice in $x$ and thus the first assertion in \eqref{p-dp} implies only $\partial^2_xp\in S_{\delta}^{-2\delta}(h^{_\mez}I_h)$ while with the second one, we have $\partial^2_xp\in S_{\delta}^{-\delta}(h^{_\mez}I_h)$. Consequently, the restriction $\sigma\le h^\delta$ is sufficient instead of requiring $\sigma\le h^{2\delta} $. This means that the parametrix is constructed in a time of double length by virtue of the second one.
\end{rem}
\begin{proof}
	We will consider~$\sigma\in h^{-\mez}I_h$ and~$h\in (0, h_0)$ as parameters. Denote $A_k=\mathcal{N}_{k}(\gamma)(I)+\lA V\rA_{E(I)}$. First, remark that we can use the identity
	\begin{equation*}
		\lp\frac{\partial X}{\partial y}\rp(y)\cdot M^0(y)=I_d
	\end{equation*}
	and Proposition~\ref{prop:regstrai} to get 
	\begin{equation}	\label{eq:regM}
	\begin{aligned}
		&\lA M^0(y)-I\rA_{L^\infty(\R^d)}\leq\F\lp\lA V\rA_{E}\rp  h^\delta,\\
		&\lA D_y^\alpha M^0(y)\rA_{L^\infty(\R^d)}\leq\F_\alpha\lp\lA V\rA_{E}\rp h^{-\delta\la\alpha\ra}\la h^\mez\sigma\ra^\mez \leq\F_\alpha\lp\lA V\rA_{E}\rp h^{\delta-\delta\la\alpha\ra},\quad\text{for }\la\alpha\ra\geq1.
	\end{aligned}
	\end{equation}
	Now~$D^\beta_\zeta p$ is a finite linear combination of terms of the form
	\begin{equation*}
		(D^{\beta'}_\xi\Gamma)\lp X(y),M^0(y)\zeta\rp \cdot P_{\la\beta\ra}(M^0(y)):=A\cdot B,
	\end{equation*}
	where~$\la\beta'\ra=\la\beta\ra$, and where~$P_{\la\beta\ra}(M^0(y))$ is a homogeneous polynomial of order~$\la\beta\ra$ in the coefficients of~$M^0(y)$.
	Hence~$D^\alpha_yD^\beta_\zeta p$ is a finite linear combination of terms of the form
\[
D^{\alpha_1}_yA\cdot D^{\alpha_2}_yB,\quad \alpha_1+\alpha_2=\alpha.
\]
	Concerning $B$ we use~(\ref{eq:regM}) to find
	\begin{equation}	\label{eq:regB}
		\lB
		\begin{aligned}
			\la B\ra&\leq\F_k\lp\lA V\rA_{E}\rp,\\
			\la D_y^{\alpha_2}B\ra&\leq\F_k\lp\lA V\rA_{E}\rp h^{\delta-|\alpha_2|\delta} \quad\text{if }\la\alpha_2\ra\geq1.
		\end{aligned}
		\right.
	\end{equation}
	By the fact that $\Gamma_h\in S^0_\delta(h^{-\mez}I)$ we see that if $\alpha=0$ then $|AB|\le \F_k(\lA V\rA_{E})$. Considering now $|\alpha|\ge 1$. If $\alpha_1=0$ then $\alpha_2=\alpha\ne 0$ and thus by \eqref{eq:regB} 
\[
|A\cdot \partial_y^\alpha B|\le \F_k(\lA V\rA_{E})h^{\delta-\la\alpha\ra\delta}.
\]
From now on, we assume $\la\alpha_1\ra\geq1$. By virtue of the Faà di Bruno formula, we see that ~ $D^{\alpha_1}_yA$ is a finite linear combination of terms of the form
	\begin{equation*}
		C=\lp D^a_xD_\zeta^{\beta'+b}\Gamma\rp\lp X(y),M^0(y)\zeta\rp\prod_{j=1}^r\lp D_y^{l_j}X(y)\rp^{p_j}\lp D^{l_j}_yM^0(y)\zeta\rp^{q_j},
	\end{equation*}
	where~$1\leq\la a\ra+\la b\ra\leq\la\alpha_1\ra$, $\la l_j\ra\geq1$, $\sum_{j=1}^r(\la p_j\ra+\la q_j\ra)l_j=\alpha_1$, $\sum_{j=1}^rp_j=a$, $\sum_{j=1}^rq_j=b$.\\
	We distinguish 2 cases corresponding to $a=0$ or $a\ne 0$.

{\it Case 1}: $\la a\ra=0$. Then every~$p_j$ is~$0$, and~$\sum_{j=1}^r\la q_j\ra\la l_j\ra=\la\alpha_1\ra\geq1$, so that at least one of the~$\la q_j\ra\la l_j\ra$ is non null.
	Then using the boundedness of~$D_\zeta^{\beta'+b}\Gamma$ (since $\Gamma_h\in S^0_\delta(h^{-\mez}I)$),  estimates~(\ref{eq:regM}), and the fact that~$\zeta$ is bounded on the support of~$\ph_1(M^0(y)\zeta)$ and thus of~$(D_\zeta^{\beta'+b})\Gamma(X(y), M^0(y)\zeta)$, we obtain
	\begin{equation*}
		|C|\leq\F_k(A_{k}) h^{\delta-\delta\sum_{j=1}^r\la l_j\ra|q_j|} =\F_k (A_{k})h^{\delta-\delta|\alpha_1|}.
	\end{equation*}
On the other hand, \eqref{eq:regB} implies that $
|\partial_y^{\alpha_2} B|\le \F_k(\lA V\rA_{E})h^{-|\alpha_2|\delta},~ \forall \alpha_2\in \xN.
$ Therefore, we conclude in this case that 
\[
|C\cdot\partial_y^{\alpha_2} B|\le \F_k(A_{k})h^{\delta-|\alpha|\delta}.
\]
	{\it Case 2}: $\la a\ra\geq1$.  We use in this case ~$\nabla_{\!x}\Gamma\in S^0_\delta$, estimate (\ref{eq:regM}), and Proposition~\ref{prop:regstrai} with the remark above on the boundedness of~$\zeta$ to obtain
	\begin{align*}
		&\la\lp D^a_xD_\zeta^{\beta'+b}\Gamma\rp\lp X(y),M^0(y)\zeta\rp\ra
\leq\F_k (A_{k}) h^{-(|a|-1)\delta},\\
& \la \lp D_y^{l_j}X(y)\rp^{p_j}\ra \le \F_k(A_{k}) h^{-(|l_j|-1)|p_j|\delta},\\
&\la \lp D^{l_j}_yM^0(y)\zeta\rp^{q_j}\ra \le \F_k(A_{k}) h^{-|l_j||q_j|\delta}.
	\end{align*}
These estimates imply $|C|\le \F_k(A_{k})h^{M}$ with 
\[
M=-(|a|-1)\delta-\sum_{j=1}^r (|l_j|-1)|p_j|\delta-\sum_{j=1}^r |l_j||q_j|\delta=-(|\alpha_1|-1)\delta
\]
and hence $|C|\le \F_k(A_{k})h^{-(|\alpha_1|-1)\delta}$. The second inequality in \eqref{eq:regB} then yields
\[
|C\cdot\partial_y^{\alpha_2} B|\le \F_k(A_{k})h^{-(|\alpha|-1)\delta}.
\]
Summing up, we obtain in any case the desired estimate and complete the proof.
\end{proof}
We also have the following result, whose proof follows that of the preceding and is in fact simpler.
\begin{prop}	\label{prop:eststraitamp}
	For every~$k\in\N$, there exists~$\F_k:\R^+\rightarrow\R^+$, such that
\begin{equation} \label{eq:eststraitsymb2}
	\la D^{\alpha_1}_yD^{\alpha_2}_{y'}D^\beta_\zeta \widetilde {p_h}(\sigma,y, y',\zeta)\ra\leq\F_k\lp\mathcal{N}_{k}(\gamma)(I)+\lA V\rA_{E(I)}\rp h^{-(\la\alpha_1\ra +\la \alpha_2\ra)\delta},
\end{equation}

	where~$\la\alpha_1\ra+\la \alpha_2\ra+\la\beta\ra\leq k$, and~$(\sigma,y,\zeta,h)\in h^{-\mez}I_h\times\R^d\times\mathcal{C}'\times\lp0,h_0\rb$.
\end{prop}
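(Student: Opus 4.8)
The plan is to follow the proof of Proposition~\ref{prop:eststraitsymb} line by line in its structure, again treating~$\sigma\in h^{-\mez}I_h$ and~$h\in(0,h_0)$ as parameters, but to replace the careful homogeneity bookkeeping used there (which produced the gain of one power of~$h^\delta$) by the crude observation that \emph{each derivative in~$(y,y')$ costs at most one factor~$h^{-\delta}$}. Since the present statement asks only for the weaker bound~$h^{-(\la\alpha_1\ra+\la\alpha_2\ra)\delta}$, this crude count suffices, and is exactly what makes the proof ``in fact simpler''.

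First I would record the two-variable analogues of~\eqref{eq:regM}. Because~$H_h(\sigma;y,y')=\int_0^1\frac{\partial X_h}{\partial y}\lp\sigma;\lambda y+(1-\lambda)y'\rp\d\lambda$ is an average of translates of~$\frac{\partial X_h}{\partial y}$, differentiation in~$(y,y')$ commutes with the~$\lambda$-integral and produces only the bounded weights~$\lambda^{\la\cdot\ra},(1-\lambda)^{\la\cdot\ra}$; hence Proposition~\ref{prop:regstrai} combined with~\eqref{delta:coro1} gives
\begin{equation*}
	\lA H_h-I\rA_{L^\infty}\leq\F\lp\lA V\rA_{E}\rp h^\delta,\qquad\lA D^{\alpha_1}_yD^{\alpha_2}_{y'}H_h\rA_{L^\infty}\leq\F\lp\lA V\rA_{E}\rp h^{\delta-(\la\alpha_1\ra+\la\alpha_2\ra)\delta}\quad(\la\alpha_1\ra+\la\alpha_2\ra\geq1).
\end{equation*}
As~$H_h$ remains~$h^\delta$-close to the identity for~$T$ satisfying~\eqref{chooseT}, $M_h=(H_h^T)^{-1}$ is well defined, and differentiating the identity~$H_h^TM_h=I$ and inducting on the order of differentiation yields the same bounds for~$M_h$. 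Finally, $J_h$ is a polynomial (through the determinants) in the entries of~$\frac{\partial X_h}{\partial y}$ and of~$M_h$, so by the Leibniz rule~$\lA D^{\alpha_1}_yD^{\alpha_2}_{y'}J_h\rA_{L^\infty}\leq\F(\lA V\rA_{E})h^{-(\la\alpha_1\ra+\la\alpha_2\ra)\delta}$ for~$\la\alpha_1\ra+\la\alpha_2\ra\geq1$, and~$\lA J_h\rA_{L^\infty}\leq\F(\lA V\rA_{E})$. In short, each~$(y,y')$-derivative of~$H_h$, $M_h$ or~$J_h$ costs at most~$h^{-\delta}$.

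Next I would differentiate~$\widetilde p_h=\Gamma_h\lp\sigma,X_h(\sigma;y),M_h(\sigma;y,y')\zeta\rp J_h(\sigma;y,y')$. By the Leibniz rule together with the Fa\`a di Bruno formula --- exactly as in the proof of Proposition~\ref{prop:eststraitsymb}, but now in the pair of variables~$(y,y')$ --- the quantity~$D^{\alpha_1}_yD^{\alpha_2}_{y'}D^\beta_\zeta\widetilde p_h$ is a finite linear combination of products of a factor~$D^{\alpha_1-a_1}_yD^{\alpha_2-a_2}_{y'}J_h$ (with~$a_1\leq\alpha_1$, $a_2\leq\alpha_2$) and terms of the form
\begin{equation*}
	\lp D^a_xD^{\beta'+b}_\zeta\Gamma_h\rp\lp X_h(y),M_h(y,y')\zeta\rp\prod_{j}\lp D^{l_j}_{(y,y')}X_h\rp^{p_j}\lp D^{l_j}_{(y,y')}M_h\,\zeta\rp^{q_j},
\end{equation*}
where~$\la\beta'\ra=\la\beta\ra$, $\la l_j\ra\geq1$, $\sum_j(\la p_j\ra+\la q_j\ra)\la l_j\ra=\la a_1\ra+\la a_2\ra$, $\sum_jp_j=a$, $\sum_jq_j=b$. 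Using that~$\Gamma_h,\nabla_{\!x}\Gamma_h\in S^0_\delta(h^{-\mez}I)$ (so that an~$x$-derivative of~$\Gamma_h$ costs at most~$h^{-\delta}$ and a~$\zeta$-derivative costs~$h^0$), Proposition~\ref{prop:regstrai} with~\eqref{delta:coro1} (so that a~$(y,y')$-derivative of~$X_h$ costs at most~$h^{-\delta}$), the estimates of the previous paragraph for~$M_h$ and~$J_h$, and the fact that~$\zeta$ ranges in the bounded set~$\C'$ while~$M_h\zeta$ stays in a fixed neighbourhood of~$\supp\ph_1$, every factor above is bounded by~$\F_k\lp\mathcal{N}_{k}(\gamma)(I)+\lA V\rA_{E(I)}\rp$ times~$h^{-\delta}$ to the power of the number of~$(y,y')$-derivatives it carries; since these numbers add up to~$\la\alpha_1\ra+\la\alpha_2\ra$, multiplying everything out yields~\eqref{eq:eststraitsymb2}.

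There is essentially no obstacle here: the only point requiring attention is checking that no single factor ever loses more than~$h^{-\delta}$ per derivative, which is immediate from the~$S^0_\delta$-membership of~$\Gamma_h$ and~$\nabla_{\!x}\Gamma_h$, from Proposition~\ref{prop:regstrai} together with the restriction~\eqref{delta:coro1}, and from the (diagonal-independent) estimates on~$M_h$ and~$J_h$ above. In particular, unlike in Proposition~\ref{prop:eststraitsymb}, one does \emph{not} split the cases~$a=0$ and~$a\neq0$ and makes no attempt to extract the extra power~$h^\delta$: that refinement is used for the symbol~$p_h$ on the diagonal (where~$J_h\equiv1$), but is not needed for the full amplitude~$\widetilde p_h$, which is why the present proof is shorter than the preceding one.
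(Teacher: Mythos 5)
Your overall plan is right, and it matches what the paper intends (the paper gives no written proof here, only the remark that it ``follows that of the preceding and is in fact simpler''): you replicate the Leibniz/Fa\`a di Bruno structure of Proposition~\ref{prop:eststraitsymb}, establish the two-variable analogues of~\eqref{eq:regM} for~$H_h$, $M_h$, $J_h$, and correctly observe that because~\eqref{eq:eststraitsymb2} does not claim the extra gain~$h^\delta$ present in~\eqref{eq:eststraitsymb}, the case distinction~$a=0$ vs.~$a\neq0$ and the use of~$\nabla_{\!x}\Gamma_h\in S^0_\delta$ are no longer needed.

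However, the closing bookkeeping claim --- ``every factor above is bounded by~$\F_k(\dots)$ times~$h^{-\delta}$ to the power of the number of~$(y,y')$-derivatives it carries'' --- does not close the estimate if read literally. The factor~$\lp D^a_xD^{\beta'+b}_\zeta\Gamma_h\rp(X_h,M_h\zeta)$ carries \emph{zero}~$(y,y')$-derivatives and yet costs~$h^{-\la a\ra\delta}$, which your accounting scheme does not charge to anything; and if at the same time you price each~$(y,y')$-derivative of~$X_h$ crudely at~$h^{-\delta}$, i.e.~use~$\la D^{l_j}_yX_h\ra\leq\F h^{-\la l_j\ra\delta}$, then the total cost comes out as~$h^{-(\la a\ra+\la\alpha_1\ra+\la\alpha_2\ra)\delta}$, worse than claimed. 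The fix is precisely the refined estimate that Proposition~\ref{prop:regstrai} together with~\eqref{delta:coro1} provides: $\la D^{l_j}_yX_h\ra\leq\F h^{-(\la l_j\ra-1)\delta}$ (even~$h^{-(\la l_j\ra-2)^+\delta}$), and similarly~$\la D^{l_j}M_h\ra\leq\F h^{\delta-\la l_j\ra\delta}$, so that with~$\sum_j|p_j|=\la a\ra$ one gets
\begin{equation*}
	h^{-\la a\ra\delta}\cdot\prod_j h^{-(\la l_j\ra-1)\la p_j\ra\delta}\cdot\prod_j h^{-\la l_j\ra\la q_j\ra\delta}=h^{-(\la a_1\ra+\la a_2\ra)\delta},
\end{equation*}
i.e.\ the excess cost of the outer~$x$-derivatives of~$\Gamma_h$ is exactly absorbed by the savings of one power of~$h^\delta$ per factor~$D^{l_j}_yX_h$. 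You cite the right propositions, so the correct computation is within reach; you should simply replace the ``$h^{-\delta}$ per derivative'' slogan by this explicit cancellation, as the slogan as stated is false for the~$\Gamma_h$ factor.
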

Concerning the Hessian of the principal symbol, we derive the following result.
\begin{prop}	\label{prop:Hessp}
	There exist~$h_0>0$ and~$c_0>0$ such that
	\begin{equation*}
		\la\det\Hess_\zeta\lp p_h\rp\lp\sigma,y,\zeta\rp\ra\geq c_0,
	\end{equation*}
	for~$(\sigma,y,\zeta,h)\in h^{-\mez}I\times\R^d\times\C\times(0,h_0]$. Here, recall that~$\C=\lB\zeta\in\R^d:\mez\leq\la\zeta\ra\leq2\rB$.

\end{prop}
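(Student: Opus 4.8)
The plan is to reduce the statement to the non-degeneracy \eqref{eq:ellGam} of $\Hess_\xi(\Gamma_h)$, which is already available, by exploiting that $p_h$ is obtained from $\Gamma_h$ through a change of frequency variable that is \emph{linear} in $\zeta$. Fix $\sigma\in h^{-\mez}I$, $y\in\R^d$ and $h\in(0,h_0]$ as parameters and write $x_0:=X_h(\sigma;y)$, $N:=M^0_h(\sigma;y)$, so that $p_h(\sigma,y,\zeta)=\Gamma_h(\sigma,x_0,N\zeta)$ with $x_0$ and $N$ independent of $\zeta$. The chain rule then gives
\[
	\Hess_\zeta\lp p_h\rp(\sigma,y,\zeta)=N^T\,\Hess_\xi\lp\Gamma_h\rp\lp\sigma,x_0,N\zeta\rp\,N ,
\]
and hence
\[
	\det\Hess_\zeta\lp p_h\rp(\sigma,y,\zeta)=\lp\det M^0_h(\sigma;y)\rp^2\,\det\Hess_\xi\lp\Gamma_h\rp\lp\sigma,X_h(\sigma;y),M^0_h(\sigma;y)\zeta\rp .
\]
It then suffices to bound $\la\det M^0_h\ra$ from below and to check that the frequency $M^0_h(\sigma;y)\zeta$ stays in the annulus $\C''$ on which \eqref{eq:ellGam} applies.

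Both of these I would obtain from a quantitative version of the argument already used to show that $X_h(\sigma)$ is a diffeomorphism: under the running smallness assumption on $T$ (of the form \eqref{chooseT}), the matrix $M^0_h$ is uniformly close to the identity. Indeed, for $\sigma\in h^{-\mez}I$ one has $\la h^\mez\sigma\ra^\mez\le T^\mez$, so Proposition~\ref{prop:regstrai}(i) gives $\lA\partial_yX_h(\sigma;\cdot)-I_d\rA_{L^\infty}\le\F\lp\lA V\rA_{E(I)}\rp T^\mez$; inverting the transpose by a Neumann series, this yields $\lA M^0_h(\sigma;\cdot)-I_d\rA_{L^\infty}\le\eps_0$ uniformly in $\sigma\in h^{-\mez}I$ and $h\in(0,h_0]$, with $\eps_0$ as small as one wishes once $T$ is chosen small enough (compatibly with \eqref{chooseT}). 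For $\eps_0$ small this produces a lower bound $\la\det M^0_h(\sigma;y)\ra\ge c_1>0$, and since $\C$ and $\C''$ are concentric annuli with $\C$ compactly contained in the interior of $\C''$, it also forces $M^0_h(\sigma;y)\zeta\in\C''$ for every $\zeta\in\C$.

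Finally, because $h\le h_0$ and $M^0_h(\sigma;y)\zeta\in\C''$, estimate \eqref{eq:ellGam} applies at $\lp\sigma,X_h(\sigma;y),M^0_h(\sigma;y)\zeta\rp$ and yields a lower bound $c_0>0$ for the absolute value of the remaining determinant. Plugging the two lower bounds into the determinant identity above gives $\la\det\Hess_\zeta(p_h)(\sigma,y,\zeta)\ra\ge c_1^2c_0$ for all $(\sigma,y,\zeta,h)\in h^{-\mez}I\times\R^d\times\C\times(0,h_0]$, which is the claim after renaming the constant and, if needed, further shrinking $h_0$. The only slightly delicate point in this scheme is the uniform-in-$(\sigma,h)$ closeness of $M^0_h$ to $I_d$ over the \emph{whole} time span $h^{-\mez}I$ rather than the shorter $h^{-\mez}I_h$: on $h^{-\mez}I$ the sharp gain $h^\delta$ of \eqref{eq:regM} is not available, so one must rely on the cruder bound $\la h^\mez\sigma\ra^\mez\le T^\mez$ and thereby use the smallness of $T$ in an essential way; everything else is the elementary linear-algebra identity displayed above.
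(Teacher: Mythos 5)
Your proof is correct and follows essentially the same route as the paper: write $\Hess_\zeta(p_h)=(M^0_h)^T\,\Hess_\xi(\Gamma_h)\circ(X_h,M^0_h\zeta)\,M^0_h$, take determinants, bound $\la\det M^0_h\ra$ below via the closeness of $M^0_h$ to $I_d$, check that $M^0_h\zeta$ stays in $\C''$, and conclude via \eqref{eq:ellGam}. The one place you go further than the paper is useful: the paper's one-line proof cites \eqref{eq:regM}, which is stated only on the short interval $h^{-\mez}I_h$, whereas the proposition ranges over all of $h^{-\mez}I$; you correctly observe that on the longer span one must fall back on Proposition~\ref{prop:regstrai}(i) and the smallness of $T$ from \eqref{chooseT} to keep $M^0_h$ uniformly near $I_d$, which is indeed how the argument should be read.
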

\begin{proof}
	The Hessians of~$p_h$ and~$\Gamma_h$ are conjugated by
	\begin{equation*}
		\Hess_\zeta(p_h)(y,\zeta)=\lp M^0(y)\rp^T\Hess_\zeta \Gamma_h(X(y),M^0(y)\zeta)M^0(y),
	\end{equation*}
	so the result follows from~(\ref{eq:regM}) and~(\ref{eq:ellGam}) for~$h_0$ small enough.
\end{proof}
At last, the transport term disappears, since
\begin{equation}	\label{eq:straightopconst}
	\lp h\partial_{\sigma}w_h+h^\mez V_h\cdot(h\nabla_{\!x}) w_h\rp\lp\sigma,X_h(\sigma;y)\rp=
	      h\partial_\sigma v_h(\sigma,y).
\end{equation}

Now, using~(\ref{eq:straightopprinc}) and~(\ref{eq:straightopconst}), the semi-classical equation~(\ref{eq:semieq}) becomes
\begin{equation}	\label{eq:semstraeq}
	\left(\mathcal{L}_hw_h\right)(\sigma, X_h(\sigma, y))=\lp h\partial_\sigma+iP_h\rp v_h(\sigma,y)=0
\end{equation}
via the change of spatial variable $v_h(\sigma,y):=w_h(\sigma,X_h(\sigma;y))$.

\section{Construction of the parametrix}	\label{sec:param}
We want to construct a parametrix for the operator~$h\partial_\sigma+iP_h$ (recall that the space-time variables are $(\sigma, y)$). To compensate for the loss in powers of~$h$ incurred while differentiating our symbols, we will need to restrict ourselves to a small time interval depending on the frequency and the number of derivative used to regularize: $\sigma\in h^{-\mez}I_h= [0, h^\delta]$.\\
 We will look for a parametrix with the following Fourier integral operator form
\begin{equation}	\label{eq:parame}
	\mathcal{K}v(\sigma,y)=\lp2\pi h\rp^{-d}\iint e^{ih^{-1}\lp\phi_h(\sigma,y,\eta)-y'\cdot\eta\rp}\widetilde{b_h}\lp\sigma,y,y',\eta\rp\chi_1(\eta) v(y')\d y'\d\eta.
\end{equation}
We will take~$\phi_h$ to be a real valued phase such that 
\begin{equation*}
	\phi_h(0,y,\eta)=y\cdot\eta
\end{equation*}
 and~$\widetilde{b_h}$ an amplitude of the form
\begin{equation}	\label{eq:amppara}
	\widetilde{b_h}(\sigma,y,y',\eta)=b_h(\sigma,y,\eta)\psi\lp\frac{\partial\phi_h}{\partial\eta}(\sigma,y,\eta)-y'\rp,
\end{equation}
where~$b|_{\sigma=0}=\chi(\eta)$, $\chi\in C^\infty_c(\R^d\setminus\lB0\rB)$ and~$\psi\in C^\infty_c(\R^d)$ is such that~$\psi(z)=1$ if~$\la z\ra\leq1$.
At last, $\chi_1\in C^\infty_c(\R^d)$ is~$1$ on the support of~$\chi$.

\subsection{Construction of the phase}\label{construction:phase}
As usual, the phase will be the solution of the eikonal equation associated with the principal symbol of the operator,
\begin{equation}	\label{eq:eik}
	\frac{\partial\phi_h}{\partial\sigma}+p_h\lp\sigma,y,\frac{\partial\phi_h}{\partial y}\rp=0,\quad\phi_h(0,y,\eta)=y\cdot\eta.
\end{equation}
We will solve this equation with the method of characteristics.
Those are the solution of the system
\begin{equation}	\label{eq:chareik}
	\lB
	\begin{aligned}
		\dot{y_h}(\sigma;y_0,\eta)&=\frac{\partial p_h}{\partial\zeta}\lp\sigma,y_h(\sigma;y_0,\eta),\zeta_h(\sigma;y_0,\eta)\rp,\quad&y_h(0;y_0,\eta)=y_0,\\
		\dot{\zeta_h}(\sigma;y_0,\eta)&=-\frac{\partial p_h}{\partial y}\lp\sigma,y_h(\sigma;y_0,\eta),\zeta_h(\sigma;y_0,\eta)\rp,\quad&
																						    \zeta_h(0;y_0,\eta)=\eta.
	\end{aligned}
	\right.
\end{equation}
This system has a unique solution on~$h^{-\mez}I_h$. Now let us show that for fixed~$h$, $\eta$, and~$\sigma$  this flow is a global diffeomorphism from~$\R^d$ to itself. \\
We start by showing that the differential of this map is invertible. Taking~$h$ as a parameter, denote by
\[
m(\sigma):=(\sigma,y(\sigma;y_0,\eta),\zeta(\sigma,y_0,\eta))
\]
 the flow-out of~$(0,y_0,\eta)$. Differentiate~(\ref{eq:chareik}) with respect to~$y_0$. Then at the point~$(y_0,\eta)$, there holds
\begin{equation} \label{eq:difchareik}
	\lB
	\begin{aligned}
		\dot{\frac{\partial y}{\partial y_0}}(\sigma)&=\frac{\partial^2p}{\partial y\partial\zeta}(m(\sigma))\frac{\partial y}{\partial y_0}(\sigma)
					    +\frac{\partial^2p}{\partial\zeta^2}(m(\sigma))\frac{\partial\zeta}{\partial y_0}(\sigma),\quad&\frac{\partial y}{\partial y_0}(0)=I_d,\\
		\dot{\frac{\partial\zeta}{\partial y_0}}(\sigma)&=-\frac{\partial^2p}{\partial y^2}(m(\sigma))\frac{\partial y}{\partial y_0}(\sigma)
					    -\frac{\partial^2p}{\partial\zeta\partial y}(m(\sigma))\frac{\partial\zeta}{\partial y_0}(\sigma),\quad&\frac{\partial\zeta}{\partial y_0}(0)=0.
	\end{aligned}
	\right.
\end{equation}
This system is linear, of the form~$\dot{U}(\sigma)=M(\sigma)U(\sigma)$. Then, Proposition~\ref{prop:eststraitsymb} and the remark that follows it give 
\begin{equation}\label{est:M1}
	\lA M(\sigma)\rA\leq\F\lp\mathcal{N}_{2}(\gamma)(h^{-\mez}I_h)+\lA V\rA_{E(h^{-\mez}I_h)}\rp h^{-\delta}.
\end{equation}
When we integrate in time over~$h^{-\mez}I_h=[0,h^\delta]$, we get
\begin{equation}\label{est:M2}
	\int_0^\sigma\lA M(s)\rA\d s\leq\F\lp\mathcal{N}_{2}(\gamma)(h^{-\mez}I_h)+\lA V\rA_{E(h^{-\mez}I_h)}\rp.
\end{equation}
The Gr\"{o}nwall inequality then shows that~$\lA U(\sigma)\rA$ is uniformly bounded on~$h^{-\mez}I_h$.
Now using~(\ref{eq:difchareik}) and noticing that the coefficients of the first equation involve only derivatives of order 0 and  1 in~$y$ of $p$,  we obtain by virtue of Proposition \ref{prop:eststraitsymb}
\begin{equation}	\label{eq:estderchar1}
		\la\frac{\partial y_h}{\partial y_0}(\sigma;y_0,\eta)-I_d\ra\leq\F\lp\mathcal{N}_{2}(\gamma)(h^{-\mez}I_h)+\lA V\rA_{E(h^{-\mez}I_h)}\rp h^\delta.
\end{equation}
	Similarly, since the second equation in \eqref{eq:difchareik} has coefficients containing derivatives of $p$ in $y$ up to order $2$, we have
\bq\label{eq:estderchar1'}
	\la\frac{\partial\zeta_h}{\partial y_0}(\sigma;y_0,\eta)\ra\leq\F\lp\mathcal{N}_{2}(\gamma)(h^{-\mez}I_h)+\lA V\rA_{E(h^{-\mez}I_h)}\rp.
\eq

Now taking~$h$ small enough, \eqref{eq:estderchar1} gives the invertibility of the matrix~$\frac{\partial y_h}{\partial y_0}(\sigma;y_0,\eta)$,
and since
\begin{equation*}
	\la y_h(\sigma;y_0,\eta)-y_0\ra\leq\int_0^\sigma\la\dot{y_h}(s,y_0,\eta)\ra\d s\leq\F\lp\mathcal{N}_{2}(\gamma)(h^{-\mez}I_h)+\lA V\rA_{E(h^{-\mez}I_h)}\rp h^\delta
\end{equation*}
for~$\sigma\in h^{-\mez}I_h$, the map $y_0\mapsto y_h(\sigma, y_0, \eta)$ is proper. Therefore it is as announced a global diffeomorphism, and we denote by~$\kappa_h$ its inverse:
\begin{equation*}
	y_h(\sigma;y_0,\eta)=y\Leftrightarrow y_0=\kappa_h(\sigma;y,\eta).
\end{equation*}
Then we can define for~$h_0$ small enough, for~$(h,\sigma,y,\eta)\in(0,h_0]\times\lp h^{-\mez}I_h\rp\times\R^d\times\R^d$ the real-valued phase
\begin{equation}	\label{eq:defphase}
	\phi_h(\sigma,y,\eta)=y\cdot\eta-\int_0^\sigma p_h\lp s,y,\zeta_h\lp s;\kappa_h(s;y,\eta),\eta\rp\rp\d s.
\end{equation}
\begin{prop}
	The function~$\phi$ defined in~(\ref{eq:defphase}) solves the eikonal equation~(\ref{eq:eik}).
\end{prop}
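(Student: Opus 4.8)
The strategy is the standard one for the eikonal equation: verify directly that the function $\phi_h$ defined via the Hamiltonian flow satisfies both the initial condition and the Hamilton--Jacobi PDE. The initial condition is immediate: at $\sigma=0$ the integral in~\eqref{eq:defphase} vanishes, so $\phi_h(0,y,\eta)=y\cdot\eta$, as required by~\eqref{eq:eik}. For the PDE itself, the key classical fact is that along the bicharacteristics the momentum variable $\zeta_h$ coincides with the spatial gradient of the solution, i.e. we should prove the ``dual'' identity
\[
	\frac{\partial\phi_h}{\partial y}\lp\sigma,y_h(\sigma;y_0,\eta),\eta\rp=\zeta_h(\sigma;y_0,\eta),
\]
or equivalently $\nabla_y\phi_h(\sigma,y,\eta)=\zeta_h(\sigma;\kappa_h(\sigma;y,\eta),\eta)$ after inverting the flow $y_0\mapsto y_h$.

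First I would establish this gradient identity. Differentiating the defining formula~\eqref{eq:defphase} in $y$, one picks up a term from the explicit $y\cdot\eta$, a term from the $y$-dependence of $p_h$ in its second slot, and a term from the $y$-dependence of $\kappa_h$ (hence of $\zeta_h$) in the last slot. Differentiating instead the characteristic ODEs~\eqref{eq:chareik} with respect to the initial data and using the chain rule through $\kappa_h$, one finds that the sum telescopes: the contributions organize into a total $\sigma$-derivative whose value at $\sigma=0$ is $\eta$, matching $\zeta_h(0)=\eta$. Concretely, set $g(\sigma,y):=\nabla_y\phi_h - \zeta_h\lp\sigma;\kappa_h(\sigma;y,\eta),\eta\rp$; one shows $g(0,\cdot)=0$ and, using the Hamilton equations together with $\dot\kappa_h=0$ along the flow (since $\kappa_h$ is constant on characteristics by definition), that $g$ stays zero. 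This is the usual ``Hamilton--Jacobi from characteristics'' computation; the smoothness and invertibility needed to carry out the differentiations and changes of variable are guaranteed by the construction of $\kappa_h$ on $h^{-\mez}I_h$ carried out just above, together with the symbol bounds of Proposition~\ref{prop:eststraitsymb}.

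Given the gradient identity, I would then differentiate~\eqref{eq:defphase} in $\sigma$. The $\sigma$-derivative of the integral produces $-p_h\lp\sigma,y,\zeta_h(\sigma;\kappa_h(\sigma;y,\eta),\eta)\rp$ from the upper limit, plus an integral term coming from the $\sigma$-dependence of $\zeta_h$ and of $\kappa_h$ inside the integrand. Using the Hamilton equation $\dot\zeta_h=-\partial_y p_h$ and again $\dot\kappa_h=0$ along characteristics, that integral term is seen to be the $\sigma$-antiderivative of a quantity that — after invoking the gradient identity $\partial_y\phi_h=\zeta_h$ — telescopes to $0$. What survives is exactly
\[
	\frac{\partial\phi_h}{\partial\sigma}(\sigma,y,\eta)=-p_h\lp\sigma,y,\frac{\partial\phi_h}{\partial y}(\sigma,y,\eta)\rp,
\]
which is~\eqref{eq:eik}. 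Finally, $\phi_h$ is real-valued because $p_h$ is real (it is built from the real symbol $\gamma$ via~\eqref{def:gamma} and the real change of variables $M^0_h$), so the integral in~\eqref{eq:defphase} is real.

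The main obstacle is bookkeeping rather than conceptual: one must carefully justify differentiating under the integral sign and through the inverse map $\kappa_h$, and make sure the various $h^{-(\la\alpha\ra-1)\delta}$ losses from Proposition~\ref{prop:eststraitsymb} do not obstruct the cancellations — they do not, because the identity is exact and algebraic, the symbol bounds only being needed to guarantee the requisite $C^2$ regularity of the flow on the interval $\sigma\in[0,h^\delta]$. A secondary subtlety is that the gradient identity must be proved \emph{before} one can even make sense of the right-hand side of the eikonal equation as written, so the order — initial condition, then gradient identity, then $\sigma$-derivative — is essential.
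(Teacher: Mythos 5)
The paper itself gives no argument for this proposition: it simply cites Zworski, \emph{Semiclassical Analysis}, Section~10.2.2. So you are on your own here, and what matters is whether your sketch can actually be completed.

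Your high-level skeleton is the right one: verify the initial condition, establish the gradient identity $\partial_y\phi_h(\sigma,y,\eta)=\zeta_h(\sigma;\kappa_h(\sigma;y,\eta),\eta)$, and then read off the eikonal equation. But there are two issues, one minor and one substantive.

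The minor issue is your $\sigma$-differentiation step. In~\eqref{eq:defphase} the integrand is $p_h\big(s,y,\zeta_h(s;\kappa_h(s;y,\eta),\eta)\big)$ --- the flow and its inverse are evaluated at the \emph{integration variable} $s$, not at $\sigma$. There is no ``$\sigma$-dependence of $\zeta_h$ and of $\kappa_h$ inside the integrand,'' hence no extra integral term to be ``telescoped to $0$.'' One simply gets $\partial_\sigma\phi_h(\sigma,y,\eta)=-p_h\big(\sigma,y,\zeta_h(\sigma;\kappa_h(\sigma;y,\eta),\eta)\big)$ from the fundamental theorem of calculus. Your final conclusion is unaffected, but the reasoning you give for it is based on a misreading of the formula.

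The substantive issue is in the gradient identity itself, which is the only hard step. Writing $z(s,y):=\zeta_h(s;\kappa_h(s;y,\eta),\eta)$, differentiating \eqref{eq:defphase} in $y$ yields
\[
\partial_y\phi_h(\sigma,y,\eta)=\eta-\int_0^\sigma\Big[\partial_yp_h\big(s,y,z(s,y)\big)+(\nabla_y z(s,y))^T\,\partial_\zeta p_h\big(s,y,z(s,y)\big)\Big]\,\d s,
\]
while the characteristics and the identity $y_h(s;\kappa_h(s;y,\eta),\eta)=y$ give
\[
\partial_s z(s,y)=-\partial_y p_h\big(s,y,z(s,y)\big)-(\nabla_y z(s,y))\,\partial_\zeta p_h\big(s,y,z(s,y)\big).
\]
These two expressions differ by $\big[(\nabla_y z)^T-\nabla_y z\big]\partial_\zeta p_h$. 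The ``telescoping'' you invoke is therefore \emph{exactly} the assertion that $\nabla_y z=(\partial_{y_0}\zeta_h)(\partial_{y_0}y_h)^{-1}$ is a \emph{symmetric} matrix. This is the Lagrangian/symplectic property of the Hamiltonian flow; it is not automatic from ``the chain rule through $\kappa_h$.'' It is proved by setting $A=\partial_{y_0}y_h$, $B=\partial_{y_0}\zeta_h$, using the variational system~\eqref{eq:difchareik} to show $\tfrac{\d}{\d\sigma}(A^TB-B^TA)=0$, and noting $A^TB-B^TA=0$ at $\sigma=0$ because $A(0)=I_d$, $B(0)=0$. This is the missing lemma, and without naming it your ``the sum telescopes'' is not a proof. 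Relatedly, the ODE you propose for $g(\sigma,y)=\partial_y\phi_h-z$ does not close: along the flow one gets $\tfrac{\d}{\d\sigma}\,g(\sigma,y_h(\sigma))=\big[(\nabla_y z)^T-\nabla_y z\big]\partial_\zeta p_h+(\nabla_y g)\,\partial_\zeta p_h$, which involves $\nabla_y g$ (uncontrolled by $g$ alone) and the antisymmetric part of $\nabla_y z$; you still need the symmetry lemma, and once you have it you no longer need the Gr\"onwall step at all, since $g(\sigma,y)=-\int_0^\sigma\big[(\nabla_y z)^T-\nabla_y z\big]\partial_\zeta p_h\,\d s$ vanishes identically.
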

The proof of this proposition is standard (see for example~\cite{ZworskiSemClass}, 10.2.2).

The map~$\phi$ is~$C^1$ in~$\sigma$ and~$C^\infty$ in~$(y,\eta)$.
We can study the Hessian of this phase in the~$\eta$ variable, using our study of the symbol~$p$.
\begin{prop}	\label{prop:hess}
	For~$h_0$ small enough, there exists a constant~$M_0>0$ such that
	\begin{equation*}
		\la\det\Hess_\eta(\phi_h)(\sigma,y,\eta)\ra\geq M_0\sigma^d,
	\end{equation*}
	for~$\sigma\in\lp h^{-\mez}I_h\rp,$ $y\in\R^d$, $\eta\in\C$ and~$h\in(0,h_0]$.
\end{prop}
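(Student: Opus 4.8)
The plan is to compute $\Hess_\eta(\phi_h)$ in terms of the characteristic flow and then use a perturbation argument off of $\sigma=0$, where the behaviour is explicit. From the formula \eqref{eq:defphase}, differentiating once in $\eta$ gives $\partial_\eta\phi_h(\sigma,y,\eta)=y-\int_0^\sigma \partial_\zeta p_h\bigl(s,y,\zeta_h(s;\kappa_h(s;y,\eta),\eta)\bigr)\cdot\bigl(\partial_\eta\zeta_h\cdot(\cdots)\bigr)\d s$, but the cleanest route is to recall the classical identity from the method of characteristics: $\partial_\eta\phi_h(\sigma,y,\eta)=y_0$ evaluated at $y_0=\kappa_h(\sigma;y,\eta)$, i.e. $\partial_\eta\phi_h(\sigma,y,\eta)=\kappa_h(\sigma;y,\eta)$. (This is the standard fact that the phase generates the canonical transformation; see \cite{ZworskiSemClass}, 10.2.2.) Hence $\Hess_\eta(\phi_h)(\sigma,y,\eta)=\partial_\eta\kappa_h(\sigma;y,\eta)$, and since $\kappa_h$ is the inverse of $y_0\mapsto y_h(\sigma;y_0,\eta)$, we get
\[
	\Hess_\eta(\phi_h)(\sigma,y,\eta)=\lp\frac{\partial y_h}{\partial y_0}(\sigma;\kappa_h,\eta)\rp^{-1}\frac{\partial y_h}{\partial\eta}(\sigma;\kappa_h,\eta).
\]
The first factor is, by \eqref{eq:estderchar1}, within $\F(\cdots)h^\delta$ of $I_d$, so for $h_0$ small its determinant is bounded below by an absolute constant. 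Thus the whole problem reduces to bounding $\la\det\partial_\eta y_h(\sigma;y_0,\eta)\ra$ from below by $M_0\sigma^d$.

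For that, I would differentiate the characteristic system \eqref{eq:chareik} with respect to $\eta$, obtaining a linear ODE for the pair $\bigl(\partial_\eta y_h,\partial_\eta\zeta_h\bigr)$ analogous to \eqref{eq:difchareik} but with initial data $\partial_\eta y_h(0)=0$, $\partial_\eta\zeta_h(0)=I_d$. Integrating the $\dot{y}$-equation from $0$ gives
\[
	\frac{\partial y_h}{\partial\eta}(\sigma)=\int_0^\sigma\lb\frac{\partial^2p_h}{\partial y\partial\zeta}(m(s))\frac{\partial y_h}{\partial\eta}(s)+\frac{\partial^2p_h}{\partial\zeta^2}(m(s))\frac{\partial\zeta_h}{\partial\eta}(s)\rb\d s.
\]
The key point is that $\frac{\partial\zeta_h}{\partial\eta}(s)=I_d+O(h^\delta)$ uniformly on $h^{-\mez}I_h$ (same Grönwall argument as for \eqref{eq:estderchar1'}, now with the identity as initial data and using \eqref{est:M1}–\eqref{est:M2}), while $\frac{\partial^2 p_h}{\partial\zeta^2}=\Hess_\zeta(p_h)$ is, by Proposition~\ref{prop:Hessp}, an invertible matrix with determinant bounded below by $c_0$. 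The cross term $\frac{\partial^2 p_h}{\partial y\partial\zeta}$ and the error terms are $O(h^\delta)$-type contributions by Proposition~\ref{prop:eststraitsymb} (they involve one $y$-derivative of $p_h$, hence an $h^\delta$ gain). So to leading order $\partial_\eta y_h(\sigma)=\int_0^\sigma \Hess_\zeta(p_h)(m(s))\d s\,+\,(\text{error})$. Since the integrand is uniformly elliptic and varies slowly in $s$, a Taylor expansion gives $\int_0^\sigma\Hess_\zeta(p_h)(m(s))\d s=\sigma\,\Hess_\zeta(p_h)(m(0))+O(\sigma^2 h^{-\delta})$, and on $h^{-\mez}I_h=[0,h^\delta]$ one has $\sigma\le h^\delta$, so the correction is $O(\sigma h^{-\delta}\cdot h^\delta)\cdot\sigma=O(\sigma\cdot h^\delta)$ relative to the main term — hence $\sigma^{-1}\partial_\eta y_h(\sigma)$ stays within $\F(\cdots)h^\delta$ of the fixed invertible matrix $\Hess_\zeta(p_h)(0,y_0,\eta)$. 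Taking determinants, $\la\det\partial_\eta y_h(\sigma)\ra\ge \mez c_0^{\,?}\,\sigma^d$ for $h_0$ small enough, with the exact constant $M_0$ absorbed from the lower bound on $\det\Hess_\zeta p_h$ and the continuity of $\det$.

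The main obstacle is bookkeeping the powers of $h$: every spatial derivative of $p_h$ costs $h^{-\delta}$, and the matrix $M(\sigma)$ in the variational system is only bounded by $h^{-\delta}$, so a priori Grönwall over a time interval of length $h^{-\mez}\cdot h^{\mez+\delta}=h^\delta$ could blow up unless one checks — as in \eqref{est:M2} — that the time integral $\int_0^\sigma\lA M(s)\rA\d s\le \F(\cdots)$ is actually $O(1)$. This is exactly what the restriction to $\sigma\in[0,h^\delta]$ buys, and it is why the estimate degenerates precisely like $\sigma^d$ rather than remaining comparable to a constant: near $\sigma=0$ the map $\eta\mapsto y_h$ is near-constant (its $\eta$-differential vanishes at $\sigma=0$), and it only becomes a genuine diffeomorphism after time $\sim\sigma$, with Jacobian $\sim\sigma^d$. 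Once the powers of $h$ and $\sigma$ are tracked carefully the rest is the routine perturbation-of-ellipticity argument sketched above.
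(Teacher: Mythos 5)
Your argument is correct and gives the same conclusion by the same underlying mechanism — the $\eta$-Hessian of $\phi_h$ vanishes at $\sigma=0$, its $\sigma$-derivative there is (minus) $\Hess_\zeta p_h$, which is uniformly non-degenerate by Proposition~\ref{prop:Hessp}, and one Taylor-expands for $\sigma\le h^\delta$ small. The difference is in the route: the paper differentiates the eikonal equation \eqref{eq:eik} twice in $\eta$ and reads off $\partial_\sigma(\partial_{\eta_i}\partial_{\eta_j}\phi)|_{\sigma=0}=-(\partial_{\zeta_i}\partial_{\zeta_j}p)(0,y,\eta)$ directly from the initial conditions, whereas you first pass through the canonical identity $\partial_\eta\phi_h=\kappa_h$ (already in the proof of Corollary~\ref{cor:estphi}) and then analyze $\partial_\eta\kappa_h=-(\partial_{y_0}y_h)^{-1}\partial_\eta y_h$ via the linearized characteristic system. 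Your route is more machinery for the same payoff, but it does make transparent where the power $\sigma^d$ really comes from: $\partial_\eta y_h$ vanishes at $\sigma=0$ and grows linearly with rate $\Hess_\zeta p_h$. Two small inaccuracies in the bookkeeping, neither of which breaks the argument: (i) you attribute the $h^\delta$ smallness of the cross term to ``one $y$-derivative of $p_h$, hence an $h^\delta$ gain'' — but by Proposition~\ref{prop:eststraitsymb}, one spatial derivative of $p_h$ is only $O(1)$, not $O(h^\delta)$; the gain actually comes from the factor $\partial_\eta y_h=O(h^\delta)$ multiplying it, so the product is still $O(h^\delta)$ as claimed; (ii) in the Taylor-remainder step the arithmetic ``$O(\sigma h^{-\delta}\cdot h^\delta)\cdot\sigma=O(\sigma\cdot h^\delta)$'' is garbled — if the remainder really were $O(\sigma^2 h^{-\delta})$ then dividing by $\sigma$ and using $\sigma\le h^\delta$ would only give $O(1)$, not something small. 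What saves you is that the $s$-derivative of $\Hess_\zeta p_h(m(s))$ is in fact $O(1)$ (not $O(h^{-\delta})$, since $\partial_y D^2_\zeta p_h$ involves only one $y$-derivative), so the remainder is $O(\sigma^2)$ and the relative error is $O(\sigma)=O(h^\delta)$, which does the job. You should also add the missing minus sign in $\partial_\eta\kappa_h$, though it is irrelevant for $|\det|$.
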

\begin{proof}
	By differentiating the eikonal equation~(\ref{eq:eik}) twice with respect to~$\eta$, we find that
	\begin{align*}
		\partial_\sigma(\partial_{\eta_i}\partial_{\eta_j}\phi)=&-\sum_{k,l=1}^d(\partial_{\zeta_k}\partial_{\zeta_l}p)\lp\sigma,y,\frac{\partial\phi}{\partial y}\rp(\partial_{y_k}\partial_{\eta_i}\phi)
			(\partial_{y_l}\partial_{\eta_j}\phi)\\&-\sum_{k=1}^d(\partial_{\zeta_k}p)\lp\sigma,y,\frac{\partial\phi}{\partial y}\rp(\partial_{y_k}\partial_{\eta_i}\partial_{\eta_j}\phi).
	\end{align*}
	From the initial conditions of the eikonal equation, we obtain the values of the terms at~$\sigma=0$, so that
	\begin{equation*}
		\partial_\sigma(\partial_{\eta_i}\partial_{\eta_j}\phi)|_{\sigma=0}=-(\partial_{\zeta_i}\partial_{\zeta_j}p)\lp0,y,\eta\rp,
	\end{equation*}
	so that
	\begin{equation*}
		\partial_{\eta_i}\partial_{\eta_j}\phi\lp\sigma,y,\eta\rp=-(\partial_{\zeta_i}\partial_{\zeta_j}p)\lp0,y,\eta\rp\sigma+\mathrm{o}(\sigma).
	\end{equation*}
	Then using Proposition~\ref{prop:Hessp} and taking~$h_0$ small enough, which means~$\sigma$ small, we can conclude the proposition.
\end{proof}
Now we want estimates of higher orders for the phase and various related quantities. We start by estimating the derivatives of the flow.
\begin{prop}\label{pro:estflow}
	There exists~$\F:\R^+\rightarrow\R^+$ such that
	\begin{equation}	\label{eq:estderchar2}
	\begin{aligned}
		\la\frac{\partial y_h}{\partial y_0}(\sigma;y_0,\eta)-I_d\ra&\leq\F\lp\mathcal{N}_{2}(\gamma)(h^{-\mez}I_h)+\lA V\rA_{E(h^{-\mez}I_h)}\rp h^\delta,\\
		\la\frac{\partial\zeta_h}{\partial y_0}(\sigma;y_0,\eta)\ra&\leq\F\lp\mathcal{N}_{2}(\gamma)(h^{-\mez}I_h)+\lA V\rA_{E(h^{-\mez}I_h)}\rp,\\
		\la\frac{\partial y_h}{\partial\eta}(\sigma;y_0,\eta)\ra&\leq\F\lp\mathcal{N}_{2}(\gamma)(h^{-\mez}I_h)+\lA V\rA_{E(h^{-\mez}I_h)}\rp h^\delta,\\
		\la\frac{\partial\zeta_h}{\partial\eta}(\sigma;y_0,\eta)-I_d\ra&\leq\F\lp\mathcal{N}_{2}(\gamma)(h^{-\mez}I_h)+\lA V\rA_{E(h^{-\mez}I_h)}\rp h^\delta,\\
	\end{aligned}
	\end{equation}
	for~$(h,s,y_0,\eta)\in(0,h_0]\times h^{-\mez}I_h\times\R^d\times\mathcal{C}'$.\\
	Moreover, for every~$k\geq1$ there exists~$\F_k:\R^+\rightarrow\R^+$ such that for~$(\alpha,\beta)\in\N^d\times\N^d$
 \[
  \left\{ 
  \begin{aligned}
   &  \la D_y^\alpha D_\eta^\beta  y_h(\sigma) \ra \leq \mathcal{F}_k\big(\Vert V \Vert_{E_0} +   \mathcal{N}_{k+1}(\gamma)\big) h ^{\delta-\delta|\alpha|},\\
  & \la D_y^\alpha D_\eta^\beta  \zeta_h(s)  \ra\leq  \mathcal{F}_k\big(\Vert V \Vert_{E_0} +   \mathcal{N}_{k+1}(\gamma)\big) h ^{-\delta|\alpha|}.
  \end{aligned}
  \right.
   \]
Consequently, 
	\begin{equation*}
		y_h\in\dot{S}^\delta_\delta(h^{-\mez}I_h),\quad\zeta_h\in\dot{S}^0_\delta(h^{-\mez}I_h).
	\end{equation*}
\end{prop}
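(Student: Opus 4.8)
The plan is to treat the Hamiltonian system~\eqref{eq:chareik} as a family of ODEs parametrized by $h$ and $\eta$, and to propagate bounds through the variational (linearized) equations, in the spirit of the estimates~\eqref{eq:estderchar1}--\eqref{eq:estderchar1'} already established. For the first order estimates in $\eta$: differentiating~\eqref{eq:chareik} in $\eta$ yields a linear system $\dot{U}=M(\sigma)U$ with the \emph{same} matrix $M(\sigma)$ as in~\eqref{eq:difchareik} --- only the data change, to $\partial_\eta y_h(0)=0$, $\partial_\eta\zeta_h(0)=I_d$. Since $\lA M(\sigma)\rA\le\F h^{-\delta}$ on $h^{-\mez}I_h=[0,h^\delta]$ by~\eqref{est:M1}, hence $\int_0^\sigma\lA M\rA\le\F$ by~\eqref{est:M2}, Gr\"onwall gives $\la\partial_\eta y_h\ra+\la\partial_\eta\zeta_h\ra\le\F$. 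To upgrade to the stated $h^\delta$--bounds one integrates the two scalar equations separately over $[0,h^\delta]$: in the equation for $\partial_\eta y_h$ the coefficients are $\partial^2_{\zeta y}p_h$ and $\partial^2_{\zeta\zeta}p_h$, both $O(1)$ by Proposition~\ref{prop:eststraitsymb} (at most one $y$--derivative of $p_h$), so $\la\partial_\eta y_h\ra\le\F h^\delta$; inserting this into the equation for $\partial_\eta\zeta_h$, whose worst coefficient $\partial^2_{yy}p_h$ is only $O(h^{-\delta})$ --- exactly the improved bound of Proposition~\ref{prop:eststraitsymb}, coming from $\nabla_{\!y}p_h\in S^0_\delta$ --- multiplied against $\partial_\eta y_h=O(h^\delta)$, one again gets an $O(1)$ integrand, hence $\la\partial_\eta\zeta_h-I_d\ra\le\F h^\delta$. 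The same computation with $y_0$ in place of $\eta$ re-derives~\eqref{eq:estderchar1}--\eqref{eq:estderchar1'}.

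For the higher order estimates I would argue by induction on $N=\la\alpha\ra+\la\beta\ra\ge2$. Applying $D_y^\alpha D_\eta^\beta$ to~\eqref{eq:chareik} and expanding the right-hand side by the Fa\`a di Bruno formula, exactly as in the proof of Proposition~\ref{prop:eststraitsymb}, one isolates the two terms carrying an $N$--th order derivative of $(y_h,\zeta_h)$: these assemble into $\dot{U}=M(\sigma)U+g(\sigma)$ with the same $M(\sigma)$, and a source $g$ made of derivatives of $p_h$ of order $\le N+1$ composed with the flow, multiplied by products of derivatives of $(y_h,\zeta_h)$ of order $\le N-1$. Bounding the first factors by Proposition~\ref{prop:eststraitsymb} and the second by the induction hypothesis, a power count --- each $y$--differentiation of $p_h$ or of the flow costing a factor $h^{-\delta}$, each $\zeta$-- and each $\eta$--differentiation costing nothing --- gives $\lA g(\sigma)\rA\le\F_k(\ldots)\,h^{-\delta\la\alpha\ra}$. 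Since $U(0)=0$ when $N\ge2$, Gr\"onwall together with $\int_0^\sigma\lA M\rA\le\F$ and integration of $g$ over $[0,h^\delta]$ yields $\lA U(\sigma)\rA\le\F_k\,h^{\delta-\delta\la\alpha\ra}$, which gives both displayed bounds (the one for $\zeta_h$ being even slightly stronger than claimed). The dependence $\F_k(\lA V\rA_{E}+\mathcal{N}_{k+1}(\gamma))$ arises because controlling $k$ derivatives of the flow requires $p_h$ differentiated up to order $k+1$ in Proposition~\ref{prop:eststraitsymb}.

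Finally, the symbol--class conclusions follow by unwinding Definition~\ref{def:hsymb}: for the \emph{dotted} classes one only demands estimates for multi-indices with $\la\alpha\ra+\la\beta\ra\ge1$, so the bounds above say precisely that $\la D_y^\alpha D_\eta^\beta y_h\ra\le\F_k h^{\delta-\delta\la\alpha\ra}$, i.e. $y_h\in\dot S^\delta_\delta(h^{-\mez}I_h)$, and $\la D_y^\alpha D_\eta^\beta\zeta_h\ra\le\F_k h^{-\delta\la\alpha\ra}$, i.e. $\zeta_h\in\dot S^0_\delta(h^{-\mez}I_h)$; smoothness of the flow in $(y_0,\eta)$ and $C^1$ regularity in $\sigma$ are standard ODE facts. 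The only genuine difficulty is the Fa\`a di Bruno power count in the induction step, and it hinges on the sharp estimate $\partial^2_y p_h\in S^{-\delta}_\delta$ (equivalently $\nabla_{\!y}p_h\in S^0_\delta$) rather than the crude $\partial^2_y p_h\in S^{-2\delta}_\delta$; everything else is a routine Gr\"onwall argument on the short interval $[0,h^\delta]$.
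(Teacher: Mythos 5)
Your proof is correct and follows essentially the same route as the paper's, which is very terse: it observes that the first two bounds are already in~\eqref{eq:estderchar1}--\eqref{eq:estderchar1'}, says the~$\eta$-estimates follow by the same differentiation, and defers the higher-order induction entirely to Proposition~4.21 of~\cite{ABZ}. You have simply written out the two-step bootstrap for the first-order~$h^\delta$ gain and the Fa\`a di Bruno power count plus Gr\"onwall (on~$[0,h^\delta]$ with~$U(0)=0$ for~$N\ge2$) that the paper leaves implicit, correctly using~$\nabla_y p_h\in S^0_\delta$ to make the source integrable.
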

\begin{proof}
	The first two estimates of~(\ref{eq:estderchar2}) have already been proven in~(\ref{eq:estderchar1}) and (\ref{eq:estderchar1'}) . Similarly, if we differentiate the characteristic system \eqref{eq:chareik} with respect to $\eta$ we obtain the last two estimates of~(\ref{eq:estderchar2}).
	
	To prove estimates on higher order derivatives, we proceed by induction and Gr\"{o}nwall inequality (see Proposition $4.21$, \cite{ABZ}).
\end{proof}
We now deduce estimates on some quantities associated to the phase.
Define
\begin{equation}	\label{eq:deftheta}
	\theta_h(\sigma,y,y',\eta):=\int_0^1\frac{\partial\phi_h}{\partial y}\lp\sigma,\lambda y+(1-\lambda)y',\eta\rp\d\lambda.
\end{equation}

\begin{cor}	\label{cor:estphi}
	For every~$k\geq1$ there exists~$\F_k:\R^+\rightarrow\R^+$ such that for~$(\alpha,\beta)\in\N^d\times\N^d$ with~$\la\alpha\ra+\la\beta\ra=k$,~$(\alpha_1,\alpha_2,\beta)\in\N^d\times\N^d\times\N^d$
	with~$\la\alpha_1\ra+\la\alpha_2\ra+\la\beta\ra=k$,
	\begin{align}
		\la D^\alpha_yD^\beta_\eta\kappa_h(\sigma;y,\eta)\ra&\leq\F_k\lp\mathcal{N}_{k+1}(\gamma)(h^{-\mez}I_h)+\lA V\rA_{E(h^{-\mez}I_h)}\rp h^{\delta-\la\alpha\ra\delta},	\label{eq:estkappa}\\
		\la D^\alpha_yD^\beta_\eta\lp\frac{\partial\phi_h}{\partial y}\rp(\sigma,y,\eta)\ra&\leq\F_k\lp\mathcal{N}_{k+1}(\gamma)(h^{-\mez}I_h)+\lA V\rA_{E(h^{-\mez}I_h)}\rp 
																					h^{-\la\alpha\ra\delta},	\label{eq:estderspph}\\
		\la D^\alpha_yD^\beta_\eta\lp\frac{\partial\phi_h}{\partial\eta}\rp(\sigma,y,\eta)\ra&\leq\F_k\lp\mathcal{N}_{k+1}(\gamma)(h^{-\mez}I_h)+\lA V\rA_{E(h^{-\mez}I_h)}\rp 
																					h^{\delta-\la\alpha\ra\delta}, \label{eq:estderimpph}\\
		\la D^{\alpha_1}_yD^{\alpha_2}_{y'}D^\beta_\eta\theta_h(\sigma,y,y', \eta)\ra&\leq\F_k\lp\mathcal{N}_{k+1}(\gamma)(h^{-\mez}I_h)+\lA V\rA_{E(h^{-\mez}I_h)}\rp 
																		h^{-\lp\la\alpha_1\ra+\la\alpha_2\ra\rp\delta},	\label{eq:esttheta}
	\end{align}
for all~$(h,\sigma,y,y',\eta)\in(0,h_0]\times\lp h^{-\mez}I_h\rp\times\R^d\times\R^d\times \mathcal{C}'$.
	
	This means that~$\kappa_h\in\dot{S}^\delta_\delta$, $\frac{\partial\phi_h}{\partial y}\in\dot{S}^0_\delta$, $\frac{\partial\phi_h}{\partial\eta}\in\dot{S}^\delta_\delta$.
\end{cor}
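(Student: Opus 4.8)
\emph{Proof plan.}~ The strategy is to reduce all four estimates to the flow bounds of Proposition~\ref{pro:estflow}, using two classical Hamilton--Jacobi identities together with the composition rule of Proposition~\ref{prop:symbcomp}. The identities in question are
\[
\frac{\partial\phi_h}{\partial y}\bigl(\sigma,y_h(\sigma;y_0,\eta),\eta\bigr)=\zeta_h(\sigma;y_0,\eta),\qquad
\frac{\partial\phi_h}{\partial\eta}\bigl(\sigma,y_h(\sigma;y_0,\eta),\eta\bigr)=y_0,
\]
valid for $(\sigma,y_0,\eta)\in\bigl(h^{-\mez}I_h\bigr)\times\R^d\times\R^d$, which, composing with the inverse flow $\kappa_h$, read equivalently $\frac{\partial\phi_h}{\partial y}(\sigma,y,\eta)=\zeta_h\bigl(\sigma;\kappa_h(\sigma;y,\eta),\eta\bigr)$ and $\frac{\partial\phi_h}{\partial\eta}(\sigma,y,\eta)=\kappa_h(\sigma;y,\eta)$. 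The first identity is the usual statement that the bicharacteristic flow lies on the graph of $d_y\phi_h$, which is what one checks while proving that~\eqref{eq:defphase} solves the eikonal equation~\eqref{eq:eik} (cf.~\cite{ZworskiSemClass}, Section~10.2.2). For the second, set $G(\sigma):=\frac{\partial\phi_h}{\partial\eta}(\sigma,y_h(\sigma;y_0,\eta),\eta)$: using the eikonal equation differentiated in $\eta$, the fact that $p_h$ carries no explicit $\eta$-dependence, the characteristic equation $\dot{y_h}=\partial_\zeta p_h(\sigma,y_h,\zeta_h)$ and the first identity, the two contributions to $\dot{G}$ cancel, so $\dot{G}\equiv0$; since $\phi_h(0,y,\eta)=y\cdot\eta$ gives $G(0)=y_0$, the identity follows.

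Next I would prove~\eqref{eq:estkappa}. Differentiating $y_h(\sigma;\kappa_h(\sigma;y,\eta),\eta)=y$ once in $y$ gives $\frac{\partial y_h}{\partial y_0}\cdot\frac{\partial\kappa_h}{\partial y}=I_d$; by Proposition~\ref{pro:estflow} the Jacobian $\frac{\partial y_h}{\partial y_0}$ equals $I_d+O(h^\delta)$ uniformly on $h^{-\mez}I_h$, hence is invertible with uniformly bounded inverse, so $\la\frac{\partial\kappa_h}{\partial y}\ra\les 1=h^{\delta-\delta}$, and likewise $\frac{\partial\kappa_h}{\partial\eta}=-\bigl(\frac{\partial y_h}{\partial y_0}\bigr)^{-1}\frac{\partial y_h}{\partial\eta}=O(h^\delta)$. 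For $\la\alpha\ra+\la\beta\ra\geq2$ I would argue by induction: applying $D^\alpha_yD^\beta_\eta$ to $y_h(\sigma;\kappa_h,\eta)=y$, isolating via the Fa\`a di Bruno formula the leading term $\frac{\partial y_h}{\partial y_0}\,D^\alpha_yD^\beta_\eta\kappa_h$, and bounding each of the remaining terms — finite sums of products of derivatives $D^a_{y_0}D^b_\eta y_h$ with products of lower-order derivatives of $\kappa_h$ — using the bounds for $y_h$ in Proposition~\ref{pro:estflow} and the inductive hypothesis; a power count shows each such term is $O\bigl(h^{\delta-\la\alpha\ra\delta}\bigr)$, and multiplying on the left by the bounded inverse of $\frac{\partial y_h}{\partial y_0}$ yields~\eqref{eq:estkappa}. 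This is exactly the computation in Proposition~$4.21$ of~\cite{ABZ}; in particular $\kappa_h\in\dot{S}^\delta_\delta(h^{-\mez}I_h)$.

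The three remaining estimates then follow readily. Estimate~\eqref{eq:estderimpph} is literally~\eqref{eq:estkappa}, via the second identity, so $\frac{\partial\phi_h}{\partial\eta}\in\dot{S}^\delta_\delta$. For~\eqref{eq:estderspph}, the first identity exhibits $\frac{\partial\phi_h}{\partial y}$ as the composition of $\zeta_h\in\dot{S}^0_\delta(h^{-\mez}I_h)$ (Proposition~\ref{pro:estflow}) with $\kappa_h\in\dot{S}^\delta_\delta(h^{-\mez}I_h)$ in its spatial slot and the identity symbol $\eta\in\dot{S}^0_\delta$ in its frequency slot, so Proposition~\ref{prop:symbcomp} gives $\frac{\partial\phi_h}{\partial y}\in\dot{S}^0_\delta$, which is precisely~\eqref{eq:estderspph}. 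Finally, differentiating the definition~\eqref{eq:deftheta} under the integral sign,
\[
D^{\alpha_1}_yD^{\alpha_2}_{y'}D^\beta_\eta\theta_h(\sigma,y,y',\eta)=\int_0^1\lambda^{\la\alpha_1\ra}(1-\lambda)^{\la\alpha_2\ra}\Bigl(D^{\alpha_1+\alpha_2}_zD^\beta_\eta\tfrac{\partial\phi_h}{\partial y}\Bigr)\bigl(\sigma,\lambda y+(1-\lambda)y',\eta\bigr)\d\lambda,
\]
so~\eqref{eq:esttheta} follows from~\eqref{eq:estderspph} together with $\int_0^1\lambda^{\la\alpha_1\ra}(1-\lambda)^{\la\alpha_2\ra}\d\lambda\leq1$. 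The one genuinely non-routine point is the inductive Fa\`a di Bruno bookkeeping for $\kappa_h$ in the second paragraph: one must verify that inverting the perturbed-identity Jacobian $\frac{\partial y_h}{\partial y_0}$ and resumming the composition produce no loss beyond the announced power $h^{\delta-\la\alpha\ra\delta}$ — the same mechanism already at work for the direct flow in Proposition~\ref{pro:estflow}.
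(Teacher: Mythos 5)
Your proposal reproduces the paper's argument: differentiate $y_h(\sigma;\kappa_h(\sigma;y,\eta),\eta)=y$ and argue by induction for~\eqref{eq:estkappa}, use the Hamilton--Jacobi generating-function identities $\frac{\partial\phi_h}{\partial y}=\zeta_h\circ\kappa_h$ and $\frac{\partial\phi_h}{\partial\eta}=\kappa_h$ together with Proposition~\ref{prop:symbcomp} for~\eqref{eq:estderspph} and~\eqref{eq:estderimpph}, and differentiate the definition of $\theta_h$ for~\eqref{eq:esttheta}. Your treatment is slightly more explicit than the paper's (e.g.\ the $\dot G\equiv 0$ verification and the written-out $\theta_h$ formula), but the route and the key inputs are the same.
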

\begin{proof}
	The first estimate comes from the relation~$y(\sigma;\kappa(\sigma;y,\eta),\eta)=y$ which by differentiation gives
	\begin{equation*}
		\frac{\partial y}{\partial y}\cdot\frac{\partial\kappa}{\partial y}=I_d,\quad\frac{\partial y}{\partial y}\cdot\frac{\partial\kappa}{\partial\eta}=-\frac{\partial y}{\partial\eta}.
	\end{equation*}
	Now the case~$k=1$ follows from~(\ref{eq:estderchar2}), and by differentiating~$k+1$ times and using an induction we get~(\ref{eq:estkappa}).
	
	From the definition of~$\phi$ as a solution of the Hamilton-Jacobi equation associated to~$p$, we see that~$\phi$ is a generating function for the Lagrangian surface 
	\begin{equation*}
		\lB\lp\sigma,p(\sigma);y,\zeta(\sigma;\kappa(\sigma;y,\eta),\eta);\kappa(\sigma;y,\eta),\eta\rp|\sigma\in\lp h^{-\mez}I_h\rp,(y,\eta)\in\R^{2d}\rB,
	\end{equation*}
	so that
	\begin{equation}\label{dphi}
		\frac{\partial\phi}{\partial y}(\sigma,y,\eta)=\zeta(\sigma;\kappa(\sigma;y,\eta),\eta),\quad\frac{\partial\phi}{\partial\eta}(\sigma,y,\eta)=\kappa(\sigma;y,\eta).
	\end{equation}
	This immediately implies~(\ref{eq:estderimpph}), and since~$\zeta\in\dot{S}^0_\delta$, $\kappa\in\dot{S}^\delta_\delta$ and obviously~$\eta\in\dot{S}^0_\delta$, 
	we can use Proposition~\ref{prop:symbcomp} to get~(\ref{eq:estderspph}).
	
	At last~(\ref{eq:esttheta}) follows directly from the definition of~$\theta$ and from~(\ref{eq:estderspph}).
\end{proof}

\subsection{Construction of the amplitude}\label{construction:amplitude}
The first step in constructing the amplitude is to compute the expression

\begin{equation*}
	J_h(\sigma,y,y',\eta):=e^{-ih^{-1}\phi_h(\sigma,y,\eta)}P_h\lp e^{ih^{-1}\phi_h(\sigma,y,\eta)}\widetilde{b_h}\lp\sigma,y,y',\eta\rp\rp.
\end{equation*}
This is a classical computation, identical to the one performed in~\cite{ABZ}, section~4.7.1. 
This yields, taking~$h,\sigma,y',\eta$ as parameters, for all~$N\in\N^*$,
\begin{equation*}
\begin{aligned}
	J(y)=&p\lp y,\frac{\partial\phi}{\partial y}(y)\rp b(y)\Psi(y)+\sum_{1\leq\la\alpha\ra\leq N-1}\frac{h^{\la\alpha\ra}}{\alpha!}\left.D^\alpha_{z}\lb\lp\partial^\alpha_\eta\widetilde{p}\rp
																		      \lp y,z,\theta(y,z)\rp b(z)\rb\ra_{z=y}\Psi(y)\\&+U_N(y)+R_N(y)+S_N(y),
\end{aligned}     
\end{equation*}
where~$\Psi(y)=\psi\lp\frac{\partial\phi_h}{\partial\eta}(\sigma,y,\eta)-y'\rp$ which has been defined in~(\ref{eq:amppara}), and~$\theta$ has been defined in~(\ref{eq:deftheta}).
The first remainder contain all the terms where~$\Psi$ is differentiated at least once, 
\begin{equation}\label{eq:defUN}
	U_N(y):=\sum_{1\leq\la\alpha\ra\leq N-1}\frac{h^{\la\alpha\ra}}{\alpha!}\sum_{0\leq\la\beta\ra\leq\la\alpha\ra-1}\binom{\alpha}{\beta}
										  \left.D^\beta_{z}\lb\lp\partial^\alpha_\eta\widetilde{p}\rp\lp y,z,\theta(y,z)\rp b(z)\rb\ra_{z=y}D^{\alpha-\beta}_y\Psi(y),
\end{equation}
the second is the Taylor remainder due to the change of phase,
\begin{equation}	\label{eq:defRN}
	R_N(y):=\frac{1}{(2\pi h)^{d}}\iint e^{ih^{-1}(y-z)\cdot\mu}\kappa(\mu)r_N(y,z,\mu)\widetilde{b}(y)\d y\d\mu
\end{equation}
with
\[
r_N(y,z,\mu)=\sum_{\la\alpha\ra=N}\frac{N}{\alpha!}\int_0^1(1-\lambda)^{N-1}(\partial^\alpha_\eta\widetilde{p})(y,z,\mu+\lambda\theta(y,z))\mu^\alpha\d\lambda,
\] and where~$\kappa\in C^\infty_c$ is~$1$ on the
support of~$\widetilde{p}(y,z,\mu+\theta(z,z'))$, which is compact locally in~$\eta$ because the phase is locally bounded in~$\eta$.
The last one comes from this~$\kappa$ term, and it is
\begin{equation}\label{eq:defSN}
	S_N(y):=\frac{1}{(2\pi)^{d}}\sum_{\la\alpha\ra\leq N-1}\sum_{\la\beta\ra=N}N\frac{h^{\la\alpha\ra+\la\beta\ra}}{\alpha!\beta!}								\iint_0^1(1-\lambda)^{N-1}\mu^\beta\hat{\kappa}(\mu)f_{\alpha,\beta}(y,y+\lambda h\mu)\d\lambda\d\mu,
\end{equation}
where
\[f_{\alpha,\beta}(y,z)=\partial^\beta_zD^\alpha_{z}\lb\lp\partial^\alpha_\eta\widetilde{p}\rp\lp y,z,\theta(y,z)\rp\widetilde{b}(z)\rb.\]

Now~$\phi$ satisfies the eikonal equation~(\ref{eq:eik}), so that we have
\begin{multline}
	e^{-ih^{-1}\phi(y)}\lp h\partial_\sigma+iP\rp\lp e^{ih^{-1}\phi(y)}\widetilde{b}\lp y\rp\rp\\
	=h\lp\partial_\sigma b
	 +i\sum_{1\leq\la\alpha\ra\leq N-1}\frac{h^{\la\alpha\ra-1}}{\alpha!}\left.D^\alpha_{z}\lb\lp\partial^\alpha_\eta\widetilde{p}\rp\lp y,z,\theta(y,z)\rp b(z)\rb\ra_{z=y}\rp\Psi(y)\\
	 +\lp hb\partial_\sigma\Psi+iU_N\rp+iR_N+iS_N.
\end{multline}
We want this to be~$\mathcal{O}(h^{N+1})$. 
Let
\begin{equation}	\label{eq:TN1}
	T_N:=\partial_\sigma b
	 +i\sum_{1\leq\la\alpha\ra\leq N-1}\frac{h^{\la\alpha\ra-1}}{\alpha!}\left.D^\alpha_{z}\lb\lp\partial^\alpha_\eta\widetilde{p}\rp\lp y,z,\theta(y,z)\rp b(z)\rb\ra_{z=y},
\end{equation}
so that we want~$T_N=\mathcal{O}(h^N)$.

Writing
\begin{equation*}
	\mathcal{L}:=\partial_{\sigma}+\sum_{i=1}^da_i(y)\partial_{y_i}+c(y),
\end{equation*}
where
\begin{equation}\label{a0-c0}
	 \lB
	 \begin{aligned}
	 	a_i(y):=&\lp\partial_{\eta_i}p\rp\lp y,\frac{\partial\phi}{\partial y}(y)\rp,\\
	 	c(y):=&\sum_{i=1}^d\lp\partial_{\eta_i}\partial_{y_i'}\widetilde{p}\rp\lp y,y,\frac{\partial\phi}{\partial y}(y)\rp
			 +\sum_{i,j=1}^d\lp\partial_{\eta_i}\partial_{\eta_j}p\rp\lp y,\frac{\partial\phi}{\partial y}(y)\rp\lp\partial_{y_i}\partial_{y_j}\phi\rp(y),
	 \end{aligned}
	 \right.
\end{equation}
we can rewrite~(\ref{eq:TN1}) as
\begin{equation}	\label{eq:TN2}
	T_N=\mathcal{L}b(y)
	    +i\sum_{2\leq\la\alpha\ra\leq N-1}\frac{h^{\la\alpha\ra-1}}{\alpha!}\left.D^\alpha_{z}\lb\lp\partial^\alpha_\eta\widetilde{p}\rp\lp y,z,\theta(y,z)\rp b(z)\rb\ra_{z=y}.
\end{equation}
For a fixed $\nu$ satisfying
\bq\label{choose:nu}
0<\nu\le 1-\delta
\eq
we  look for~$b$ under the form
\begin{equation}\label{form:b}
	b=\sum_{k=0}^Nh^{k\nu}b_k.
\end{equation}
Inserting this ansatz into~(\ref{eq:TN2}) gives, after a change of indices,
\begin{equation}\label{TN:defn}
	\begin{aligned}
		T_N=&\sum_{k=0}^Nh^{k\nu}\mathcal{L}b_k(y)\\
		      &+i\sum_{k=1}^{N+1}h^{k\nu}\sum_{2\leq\la\alpha\ra\leq N-1}\frac{h^{\la\alpha\ra-1-\nu}}{\alpha!}\left.D^\alpha_{z}\lb\lp\partial^\alpha_\eta\widetilde{p}\rp\lp y,z,\theta(y,z)\rp b_{k-1}(z)\rb\ra_{z=y}.
	\end{aligned}
\end{equation}
We take~$b_0$ as a solution of
\begin{equation}	\label{eq:b_0}
	\lB
	\begin{aligned}
		\mathcal{L}b_0&=0,\\
		b_0(\eta)\rvert_{\sigma=0}&=\chi(\eta),
	\end{aligned}
	\right.
\end{equation}
where~$\chi\in C^\infty_c(\R^d\setminus\lB0\rB)$ is the Cauchy data needed for~$b$.

Then we will recursively construct~$b_k,~1\le k\le N$ as a solution of
\begin{equation}	\label{eq:b_k}
	\lB
	\begin{aligned}
		\mathcal{L}b_k&=F_{j-1}:=-i\sum_{2\leq\la\alpha\ra\leq N-1}\frac{h^{\la\alpha\ra-1-\nu}}{\alpha!}\left.D^\alpha_{z}\lb\lp\partial^\alpha_\eta\widetilde{p}\rp\lp y,z,\theta(y,z)\rp b_{k-1}(z)\rb\ra_{z=y}\\
	b_k\rvert_{\sigma=0}&=0.
	\end{aligned}
	\right.
\end{equation}
Again,  \eqref{eq:b_0} and \eqref{eq:b_k} are solved by the method of characteristics. First we study the highest-order coefficients.
\begin{lem}	\label{lem:ai}
	For~$1\leq i\leq d$, $a_i\in S^0_\delta(h^{-\mez}I_h)$.
\end{lem}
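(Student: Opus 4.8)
The plan is to obtain $a_i\in S^0_\delta(h^{-\mez}I_h)$ directly from the symbol bounds already established for the straightened principal symbol $p_h$ and for the phase $\phi_h$, using the composition rule of Proposition~\ref{prop:symbcomp}. Recall from \eqref{a0-c0} that $a_i(\sigma,y,\eta)=(\partial_{\zeta_i}p_h)\bigl(\sigma,y,\tfrac{\partial\phi_h}{\partial y}(\sigma,y,\eta)\bigr)$, where the derivative is taken in the cotangent slot of $p_h$. First I would note that, by Proposition~\ref{prop:eststraitsymb}, $p_h\in S^0_\delta(h^{-\mez}I_h)$, and since in the defining estimate \eqref{eq:defsemsym} only $x$-derivatives cost powers of $h$, differentiating once in $\zeta$ keeps us in the same class: $\partial_{\zeta_i}p_h\in S^0_\delta(h^{-\mez}I_h)$. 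Second, Corollary~\ref{cor:estphi} gives $\tfrac{\partial\phi_h}{\partial y}\in\dot{S}^0_\delta(h^{-\mez}I_h)$, together with the identity $\tfrac{\partial\phi_h}{\partial y}(\sigma,y,\eta)=\zeta_h(\sigma;\kappa_h(\sigma;y,\eta),\eta)$ from \eqref{dphi}.

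Before invoking the composition lemma one should check that the quantity fed into the cotangent slot of $p_h$ stays in the annulus $\C'$ on which the estimates for $p_h$ are valid. This follows because $\eta$ ranges over the compact set $\supp\chi_1$ away from the origin and, since $\dot\zeta_h=-\partial_yp_h$ is bounded (Proposition~\ref{prop:eststraitsymb}) while $\sigma$ runs over $h^{-\mez}I_h=[0,h^\delta]$, one has $|\zeta_h(\sigma;y_0,\eta)-\eta|\lesssim h^\delta$, so $\zeta_h$ — hence $\tfrac{\partial\phi_h}{\partial y}$ — remains in $\C'$ for $h$ small. Then I would apply Proposition~\ref{prop:symbcomp} with $f=\partial_{\zeta_i}p_h\in S^0_\delta$, with the frequency-slot substitution $V=\tfrac{\partial\phi_h}{\partial y}\in\dot{S}^0_\delta$, and with the spatial-slot substitution $U(\sigma,y,\eta)=y$, which lies trivially in $\dot{S}^\delta_\delta(h^{-\mez}I_h)$ since its derivatives of order $\ge1$ are either $O(1)$ or zero. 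This yields $a_i=f(\sigma,U,V)\in S^0_\delta(h^{-\mez}I_h)$.

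Equivalently — and this is the same computation unpacked, in case one prefers to avoid appealing to the composition proposition in this special case — one differentiates $a_i$ by the Fa\`a di Bruno formula: every $x$-derivative either lands on the spatial slot of $\partial_{\zeta_i}p_h$ (cost $h^{-\delta}$ by $p_h\in S^0_\delta$) or on a factor $\partial_x\tfrac{\partial\phi_h}{\partial y}$ (cost $h^{-\delta}$ by $\tfrac{\partial\phi_h}{\partial y}\in\dot S^0_\delta$), while $\eta$-derivatives are free, and an easy induction gives $|D^\alpha_xD^\beta_\eta a_i|\le\F_k(\Xi_{k+1}(I))\,h^{-|\alpha|\delta}$ for $|\alpha|+|\beta|\le k$, which is exactly membership in $S^0_\delta(h^{-\mez}I_h)$. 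I do not expect a real obstacle here; the only care needed is the bookkeeping of symbol classes (distinguishing $S$ from $\dot S$ and the orders $0$ versus $\delta$) and the brief support argument above ensuring the composition stays in the region where $p_h$ is controlled.
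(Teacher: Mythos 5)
Your proof is correct and takes essentially the same route as the paper: the paper also deduces $\partial_{\eta_i}p\in S^0_\delta$ from Proposition~\ref{prop:eststraitsymb}, takes $\frac{\partial\phi}{\partial y}\in\dot{S}^0_\delta$ from Corollary~\ref{cor:estphi}, and invokes Proposition~\ref{prop:symbcomp}. Your extra remarks (the check that the cotangent argument stays in $\C'$, the explicit spatial substitution $U=y\in\dot{S}^\delta_\delta$, and the unpacked Fa\`a di Bruno alternative) are details the paper leaves implicit, but they do not change the argument.
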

\begin{proof}
	We know from Proposition~\ref{prop:eststraitsymb} and the remark that follows it that~$\partial_{\eta_i}p\in S^0_\delta$. From Corollary~\ref{cor:estphi} 
	we have that~$\frac{\partial\phi}{\partial y}\in\dot{S}^0_\delta$. Then, using Proposition~\ref{prop:symbcomp} gives the lemma. 
\end{proof}

Now consider the differential equation
\begin{equation*}
	\lB
	\begin{aligned}
		\dot{Y}(\sigma)&=a(\sigma,Y(\sigma),\eta),\\
		Y(0)&=y.
	\end{aligned}
	\right.
\end{equation*}
From Lemma~\ref{lem:ai}, $a$ is bounded, so the system has a unique solution on~$h^{-\mez}I_h$.
Now we remark that
\begin{equation*}
	\frac{\partial a}{\partial y}=\frac{\partial^2p}{\partial y\partial\eta}+\frac{\partial^2p}{\partial\eta^2}\cdot\frac{\partial^2\phi}{\partial y^2}.
\end{equation*}
Thus, using Proposition~\ref{prop:eststraitsymb} and noticing as in the proof of Corollary~\ref{cor:estphi} that since~$\frac{\partial\phi}{\partial y}=\zeta(\kappa(y))$, we can differentiate to 
get~$\frac{\partial^2\phi}{\partial y^2}=\frac{\partial\zeta}{\partial y}\cdot\frac{\partial\kappa}{\partial y}$, which is bounded by (\ref{eq:estderchar2}) and \eqref{eq:estkappa}, we find 
that~$\frac{\partial a}{\partial y}$ is bounded.

Proceeding as in the proof of Proposition~\ref{prop:regstrai}, we differentiate the equation in $y$ and use Gr\"{o}nwall lemma to deduce that
\begin{equation*}
	\la\frac{\partial Y}{\partial y}-I_d\ra\leq\F\lp\mathcal{N}_{2}(\gamma)(h^{-\mez}I_h)+\lA V\rA_{E(h^{-\mez}I_h)}\rp h^\delta,
\end{equation*}
so that the map~$y\mapsto Y(\sigma;y,\eta)$ is a global diffeomorphism with inverse~$\mu(\sigma;Y,\eta)$.
Now again differentiating the equation and using the Faa-di-Bruno formula, we can prove by induction the following result.
\begin{lem}	\label{lem:Ymu}
	The functions~$Y$ and~$\mu$ both belong to~$\dot{S}^\delta_\delta (h^{-\mez}I_h)$.
\end{lem}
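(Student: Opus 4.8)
The plan is to run the same Gr\"{o}nwall-plus-Fa\`{a}-di-Bruno induction already used for the Hamiltonian flow in Proposition~\ref{pro:estflow} and Corollary~\ref{cor:estphi}, now applied to the linear transport equation $\dot Y=a(\sigma,Y,\eta)$, $Y(0)=y$. Recall that by Lemma~\ref{lem:ai} the coefficient $a$ lies in $S^0_\delta(h^{-\mez}I_h)$, so each $y$-derivative of $a$ costs a factor $h^{-\delta}$ while $\eta$-derivatives are harmless; moreover, as observed just before the statement, the first spatial derivative $\partial_y a$ is in fact uniformly bounded in $h$, since it involves only second derivatives of $p_h$ (bounded because $p_h$ and $\nabla_y p_h$ lie in $S^0_\delta$) and the matrix $\partial^2_{yy}\phi=\frac{\partial\zeta}{\partial y}\cdot\frac{\partial\kappa}{\partial y}$, which is bounded by~\eqref{eq:estderchar2} and~\eqref{eq:estkappa}. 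Likewise $\partial_\eta a$ is bounded, because $a$ depends on $\eta$ only through $\frac{\partial\phi}{\partial y}(\sigma,y,\eta)=\zeta(\sigma;\kappa(\sigma;y,\eta),\eta)$ (see~\eqref{dphi}), and $\frac{\partial^2\phi}{\partial y\partial\eta}=\frac{\partial\zeta}{\partial y_0}\cdot\frac{\partial\kappa}{\partial\eta}+\frac{\partial\zeta}{\partial\eta}$ is again bounded by the same estimates.

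The base case $|\alpha|+|\beta|=1$ is then immediate. The bound $\big|\frac{\partial Y}{\partial y}-I_d\big|\leq\F h^\delta$, with $\F$ a nondecreasing function of $\mathcal{N}_{2}(\gamma)(h^{-\mez}I_h)+\lA V\rA_{E(h^{-\mez}I_h)}$, was already derived above, and differentiating the characteristic equation in $\eta$ gives
\[
\frac{d}{d\sigma}\Big(\frac{\partial Y}{\partial\eta}\Big)=(\partial_y a)(\sigma,Y,\eta)\,\frac{\partial Y}{\partial\eta}+(\partial_\eta a)(\sigma,Y,\eta),\qquad \frac{\partial Y}{\partial\eta}\big|_{\sigma=0}=0,
\]
so that integrating over $I_h$, whose length in the variable $\sigma$ is $h^\delta$, and using the boundedness of $\partial_y a$ and $\partial_\eta a$ together with Gr\"{o}nwall yields $\big|\frac{\partial Y}{\partial\eta}\big|\leq\F h^\delta$. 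For $k=|\alpha|+|\beta|\geq 2$ I would argue by induction: applying $D^\alpha_y D^\beta_\eta$ to $\dot Y=a(\sigma,Y,\eta)$ and expanding via the Fa\`{a} di Bruno formula shows that $D^\alpha_y D^\beta_\eta Y$ solves a linear ODE
\[
\frac{d}{d\sigma}(D^\alpha_y D^\beta_\eta Y)=(\partial_y a)(\sigma,Y,\eta)\,D^\alpha_y D^\beta_\eta Y+G_{\alpha,\beta}(\sigma),\qquad (D^\alpha_y D^\beta_\eta Y)|_{\sigma=0}=0,
\]
where $G_{\alpha,\beta}$ is a universal polynomial in the quantities $(D^\rho_y D^\tau_\eta a)(\sigma,Y,\eta)$ with $1\leq|\rho|+|\tau|\leq k$ and in the strictly lower-order derivatives $D^{\alpha'}_y D^{\beta'}_\eta Y$ with $|\alpha'|+|\beta'|<k$. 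Using $|D^\rho_y D^\tau_\eta a|\leq\F h^{-|\rho|\delta}$ (from $a\in S^0_\delta$), the induction hypothesis $|D^{\alpha'}_y D^{\beta'}_\eta Y|\leq\F h^{\delta-|\alpha'|\delta}$, and Gr\"{o}nwall over $[0,h^\delta]$ (the homogeneous coefficient $\partial_y a$ being bounded), one checks that each monomial of $G_{\alpha,\beta}$ carries at most the power $h^{-|\alpha|\delta}$; hence, after the factor $h^\delta$ gained from the time integration, $|D^\alpha_y D^\beta_\eta Y|\leq\F_k h^{\delta-|\alpha|\delta}$, which is exactly the assertion $Y\in\dot{S}^\delta_\delta(h^{-\mez}I_h)$.

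The estimates for $\mu$ then follow from those for $Y$ precisely as the bounds on $\kappa_h$ followed from those on $y_h$ in Corollary~\ref{cor:estphi}. Differentiating the identity $Y(\sigma;\mu(\sigma;Y,\eta),\eta)=Y$ gives $\frac{\partial Y}{\partial y}\cdot\frac{\partial\mu}{\partial Y}=I_d$ and $\frac{\partial Y}{\partial y}\cdot\frac{\partial\mu}{\partial\eta}=-\frac{\partial Y}{\partial\eta}$, and since $\frac{\partial Y}{\partial y}=I_d+\mathcal{O}(h^\delta)$ is invertible with uniformly bounded inverse, the first-order derivatives of $\mu$ satisfy the required bounds; differentiating these relations repeatedly, applying Fa\`{a} di Bruno and inducting on the order (each spatial derivative again costing precisely one factor $h^{-\delta}$) gives $\mu\in\dot{S}^\delta_\delta(h^{-\mez}I_h)$. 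The only genuinely delicate point throughout is the bookkeeping of powers of $h$, namely checking that the losses incurred by differentiating $a$ and the flow $Y$ are exactly compensated by the factor $h^\delta$ gained on integrating over $h^{-\mez}I_h=[0,h^\delta]$; this is, however, structurally identical to (and slightly simpler than) the arguments already carried out for Proposition~\ref{prop:eststraitsymb} and Proposition~\ref{pro:estflow}, so for the detailed combinatorics I would refer to Proposition~4.21 of~\cite{ABZ}.
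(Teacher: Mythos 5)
Your argument is correct and follows exactly the route the paper indicates: the paper itself only writes out the first-order bound $|\partial_y Y - I_d| \leq \F h^\delta$ from Gr\"onwall, records the crucial observation that $\partial_y a$ is uniformly bounded (not merely $O(h^{-\delta})$), and then states that the higher-order estimates and the bounds on the inverse map $\mu$ follow ``again differentiating the equation and using the Fa\`a-di-Bruno formula''; you have simply fleshed that sketch out. One small remark: the boundedness of $\partial_\eta a$ that you re-derive from $\partial^2_{y\eta}\phi$ is already automatic from Lemma~\ref{lem:ai} (the class $S^0_\delta$ assigns no $h^{-\delta}$ penalty to $\eta$-derivatives), so that paragraph is harmless but redundant; the only piece of information genuinely beyond $a\in S^0_\delta$ that the argument needs is the improved bound $|\partial_y a|\lesssim 1$, which you correctly invoke and which is what keeps the Gr\"onwall exponent bounded in the top-order homogeneous term of the linearized ODE. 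The power-counting in your $G_{\alpha,\beta}$ term checks out: each monomial $(D^\rho_Y D^\tau_\eta a)\prod_j(D^{\gamma_j}_yD^{\delta_j}_\eta Y)^{q_j}$, with $\sum q_j=|\rho|$ and $\sum q_j\gamma_j=\alpha$, carries $h^{-|\rho|\delta}\cdot h^{|\rho|\delta-|\alpha|\delta}=h^{-|\alpha|\delta}$, and integrating over $[0,h^\delta]$ yields exactly $h^{\delta-|\alpha|\delta}$, as required for $\dot S^\delta_\delta$.
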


Now we see that
\begin{equation*}
	\frac{\d}{\d\sigma}\lb b_j(\sigma,Y(\sigma))\rb=\lp\frac{\partial b_j}{\partial\sigma}+a\cdot\nabla b_j\rp(\sigma,Y(\sigma))=-(cb_j)(\sigma,Y(\sigma))+F_{j-1}(\sigma,Y(\sigma)),
\end{equation*}
so that the unique solution of~(\ref{eq:b_0}) and~(\ref{eq:b_k}) is
\begin{equation}\label{formula:bj}
	\lB
	\begin{aligned}
		b_0(\sigma,y,\eta)&=\chi(\eta)\exp\lp\int_0^\sigma c\lp s,Y(s;\mu(\sigma,y,\eta),\eta), \eta\rp\d s\rp,\\
		b_j(\sigma,y,\eta)&=\int_0^\sigma e^{\int_\sigma^{s}c\lp s',Y(s';\mu(\sigma,y,\eta),\eta), \eta\rp\d s'}F_{j-1}\lp s,Y(s;\mu(\sigma,y,\eta),\eta), \eta\rp\d s.
	\end{aligned}
	\right.
\end{equation}
The main result in the construction of the amplitude is the following proposition on the regularity of the~$b_j$s.
\begin{prop}\label{prop:amppara}
	The symbols~$b_j$ are in~$S^0_\delta (h^{-\mez}I_h)$.
\end{prop}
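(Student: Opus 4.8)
The plan is to prove Proposition~\ref{prop:amppara} by induction on $j$, showing at each stage that $b_j\in S^0_\delta(h^{-\mez}I_h)$. The starting point is the explicit formula~\eqref{formula:bj}: each $b_j$ is obtained by integrating along the characteristic flow $Y(\sigma;y,\eta)$ of the transport operator $\mathcal{L}$, against the exponential weight $\exp(\int c\,\d s)$ built from the coefficient $c$ in~\eqref{a0-c0}, and — for $j\geq 1$ — against the source term $F_{j-1}$ coming from~\eqref{eq:b_k}. The strategy is the same as that underlying the preceding lemmas: differentiate the formula using the Fa\`a di Bruno formula, estimate every factor appearing in the resulting sum using the symbol-class membership already established, and check that the powers of $h$ combine to give exactly the $S^0_\delta$ bound, i.e. a loss of $h^{-\delta}$ per $x$-derivative and no loss per $\zeta$-derivative (here per $\eta$-derivative).

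The key steps are as follows. First, I would record the ingredients: $a_i\in S^0_\delta$ by Lemma~\ref{lem:ai}; $Y,\mu\in\dot S^\delta_\delta$ by Lemma~\ref{lem:Ymu}; $\frac{\partial\phi}{\partial y}\in\dot S^0_\delta$, $\frac{\partial\phi}{\partial\eta}\in\dot S^\delta_\delta$, $\theta\in\dot S^0_\delta$ (in the $y,y'$ variables) by Corollary~\ref{cor:estphi}; and $\widetilde p_h$ together with its derivatives satisfies the bounds of Proposition~\ref{prop:eststraitamp}. Second, I would treat $c$: from~\eqref{a0-c0}, $c$ is a sum of a term $(\partial_{\eta_i}\partial_{y_i'}\widetilde p)(y,y,\tfrac{\partial\phi}{\partial y})$ and a term $(\partial_{\eta_i}\partial_{\eta_j}p)(y,\tfrac{\partial\phi}{\partial y})\,\partial_{y_i}\partial_{y_j}\phi$; using $\partial^2_y\phi=\tfrac{\partial\zeta}{\partial y}\cdot\tfrac{\partial\kappa}{\partial y}\in\dot S^0_\delta$ (as noted right before Lemma~\ref{lem:Ymu}) and Proposition~\ref{prop:symbcomp} for the composition with $p$, one gets $c\in S^0_\delta(h^{-\mez}I_h)$. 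Third, the exponential weight: since the integral $\int_0^\sigma c\,\d s$ is over an interval of length $\le h^\delta$ and $c$ is bounded, the weight $e^{\int c}$ is bounded, and each derivative brings down derivatives of $c$ composed with the flow, which stay in $S^0_\delta$ by Proposition~\ref{prop:symbcomp}; hence $b_0\in S^0_\delta$, giving the base case. Fourth, the inductive step: assuming $b_{k-1}\in S^0_\delta$, the source $F_{k-1}$ in~\eqref{eq:b_k} is a finite sum of terms $h^{|\alpha|-1-\nu}D^\alpha_z[(\partial^\alpha_\eta\widetilde p)(y,z,\theta(y,z))b_{k-1}(z)]|_{z=y}$ with $|\alpha|\ge 2$; by Proposition~\ref{prop:eststraitamp} each $z$-derivative of $\widetilde p$ costs at most $h^{-\delta}$ and each $z$-derivative of $b_{k-1}$ costs at most $h^{-\delta}$ (plus the chain-rule factors from $\theta\in\dot S^0_\delta$, which cost nothing), so that term is of size $h^{|\alpha|-1-\nu-|\alpha|\delta}=h^{(|\alpha|-1)(1-\delta)-\nu}\le h^{1-\delta-\nu}$; by the constraint $\nu\le 1-\delta$ from~\eqref{choose:nu} this exponent is $\ge 0$, so $F_{k-1}\in S^0_\delta$, and one more integration along the flow (bounded weight, bounded interval) keeps $b_k\in S^0_\delta$. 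Finally, one must also handle $\eta$-derivatives hitting the Cauchy data $\chi(\eta)$ and the flow $Y(\sigma;\mu(\sigma,y,\eta),\eta)$ — but $\chi\in C^\infty_c$ contributes bounded factors and $Y,\mu\in\dot S^\delta_\delta$, so $\eta$-derivatives produce no negative powers of $h$, as required.

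The main obstacle I expect is bookkeeping the chain-rule combinatorics cleanly: in~\eqref{formula:bj} the argument of $c$ and of $F_{j-1}$ is the composite $Y(s;\mu(\sigma,y,\eta),\eta)$, so a $D^\alpha_y D^\beta_\eta$ derivative unfolds via Fa\`a di Bruno into a large sum of products of derivatives of $c$ (or $F_{j-1}$), of $Y$, and of $\mu$, each evaluated along the flow, and one must verify that across all such terms the worst power of $h$ is still $h^{-|\alpha|\delta}$. This is exactly the type of estimate carried out in Proposition~\ref{prop:eststraitsymb} and Proposition~$4.21$ of~\cite{ABZ}, and the bound $\nu\le 1-\delta$ is precisely what makes the recursion close without degrading the symbol order; so the argument is routine but the accounting is the delicate point. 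I would therefore present the base case in detail, then state the inductive step, invoking Propositions~\ref{prop:symbcomp}, \ref{prop:eststraitsymb}, \ref{prop:eststraitamp} and Corollary~\ref{cor:estphi} to absorb the combinatorics, exactly as in the companion treatment of~\cite{ABZ}.
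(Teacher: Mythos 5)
Your plan is the right one and you correctly identify the ingredients (the explicit solution formula~\eqref{formula:bj}, the characteristic flow, Lemmas~\ref{lem:ai} and~\ref{lem:Ymu}, Proposition~\ref{prop:symbcomp}, Propositions~\ref{prop:eststraitsymb}--\ref{prop:eststraitamp}, Corollary~\ref{cor:estphi}, and the constraint~$\nu\le 1-\delta$). But the write-up misses the single most important mechanism in the paper's proof, and this shows up as a genuine gap at two places.

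First, you claim $\partial^2_y\phi\in\dot S^0_\delta$ and conclude $c\in S^0_\delta$. This is not what the paper establishes: the remark before Lemma~\ref{lem:Ymu} only says that $\partial^2_y\phi$ is \emph{bounded} (the zero-th derivative), while Corollary~\ref{cor:estphi} gives $\partial_y\phi\in\dot S^0_\delta$, hence $\partial^2_y\phi\in S^{-\delta}_\delta$ and no better: already the \emph{first} $y$-derivative of $\partial^2_y\phi$ is only $O(h^{-2\delta})$, so the $S^0_\delta$ estimates fail from order one onward. Consequently $c\in S^{-\delta}_\delta$ only. Your phrase ``each derivative brings down derivatives of $c$ composed with the flow, which stay in $S^0_\delta$'' therefore does not hold, and your base case is not closed. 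What saves the day in the paper is the time integration: the integrand $c(s',\cdot)\in S^{-\delta}_\delta$ but the integral is over $[\sigma,s]\subset[0,h^\delta]$, which gains exactly one factor $h^\delta$ \emph{in every symbol estimate}, not just in the $L^\infty$ bound, giving $\int c\in S^0_\delta$ and hence $e^{\int c}\in S^0_\delta$. You invoke the length of the interval only to bound the undifferentiated weight, not to upgrade $S^{-\delta}_\delta$ to $S^0_\delta$.

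Second, in the inductive step your exponent arithmetic is off by exactly $\delta$: $h^{|\alpha|-1-\nu-|\alpha|\delta}$ equals $h^{(|\alpha|-1)(1-\delta)-\nu-\delta}$, not $h^{(|\alpha|-1)(1-\delta)-\nu}$. With $|\alpha|\ge 2$ and $\nu\le 1-\delta$ the exponent is therefore only $\ge -\delta$, so $F_{k-1}\in S^{-\delta}_\delta$ and \emph{not} $S^0_\delta$ — this is precisely the paper's conclusion~\eqref{Gj} and computation~\eqref{expo:G1}. Again the gain of $h^\delta$ from integrating $F_{k-1}$ along the flow over the interval $[0,\sigma]\subset[0,h^\delta]$ is what restores $b_k\in S^0_\delta$, as in~\eqref{est:F-bk}. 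Without this step the induction would only propagate $S^{-\delta}_\delta$ membership, which is not what Proposition~\ref{prop:amppara} asserts. Everything else in your sketch (the treatment of $\eta$-derivatives, $\chi(\eta)$, the Fa\`a di Bruno bookkeeping via Proposition~\ref{prop:symbcomp}) is in line with the paper; the missing idea is that the $[0,h^\delta]$ time integration must be used to gain $h^\delta$ \emph{at every order of differentiation}, not merely for boundedness.
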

\begin{proof}
{\it Step 1.}	We start by showing that 
	\begin{equation}	\label{eq:eintc}
		e^{\int_\sigma^{s}c\lp s',Y(s';\mu(\sigma,y,\eta),\eta), \eta\rp\d s'}\in S^0_\delta (h^{-\mez}I_h).
	\end{equation}
	This will be a consequence of the simple remark that if~$a\in S^m_\delta$ with~$m\geq0$, then~$e^a\in S^0_\delta$. So we need to show that 
	\begin{equation}	\label{eq:intc}
		\int_\sigma^{s}c\lp s',Y(s';\mu(\sigma,y,\eta),\eta),\eta\rp\d s'\in S^0_\delta (h^{-\mez}I_h).
	\end{equation}
	First, from Lemma~\ref{lem:Ymu} we now that~$Y$ and~$\mu$ both belong to~$\dot{S}^\delta_\delta$, and then Proposition~\ref{prop:symbcomp} implies that~$Y(s';\mu(\sigma,y,\eta),\eta)\in\dot{S}^\delta_\delta$. Therefore, once again, by Proposition~\ref{prop:symbcomp} we only need to prove 
\begin{equation}	\label{eq:intc'}
		\int_\sigma^{s}c( s', y,\eta)\d s'\in S^0_\delta (h^{-\mez}I_h).
	\end{equation}
(Recall \eqref{a0-c0} for the definition of $c$.)\\
	From Corollary~\ref{cor:estphi} we know that~$\frac{\partial\phi}{\partial y}\in\dot{S}^0_\delta$ and ~$\frac{\partial^2\phi}{\partial y^2}\in S^{-\delta}_\delta$. On the other hand, one can deduce from Proposition \ref{prop:eststraitsymb} that
\[
\partial_\eta^2p(y, \eta)\in S^0_\delta,\quad  (\partial_{y'}\partial_\eta \widetilde p)(y, y, \eta)\in S^0_\delta.
\]
 Then, by Proposition~\ref{prop:symbcomp} we get 
\[
(\partial^2_\eta p)(y, \partial_y\phi(y))\in S^0_\delta, \quad  (\partial_{y'}\partial_\eta \widetilde p)( y, y, \partial_y\phi(y))\in S^0_\delta
\]
and hence 
\[
(\partial^2_\eta p)(y, \partial_y\phi(y))\partial^2_y\phi(y)\in S^{-\delta}_\delta.
\]
Consequently, 
\[
\int_\sigma^s  (\partial^2_\eta p)(s', y, \partial_y\phi(s',y))\partial^2_y\phi (s', y)\d s'\in S^0_\delta, \quad  \int_\sigma^s(\partial_{y'}\partial_\eta \widetilde p)(s', y, \partial_y\phi(s', y), \eta)\d s'\in S^\delta_\delta.
\]
With this, we get~(\ref{eq:intc}) and thus~(\ref{eq:eintc}). \\
{\it Step 2.}	We now need to prove that
\bq\label{est:F-bk}
\int_0^\sigma F_{j-1}\lp s,Y(s;\mu(\sigma,y,\eta),\eta)\rp\d s\in S^0_\delta (h^{-\mez}I_h).
\eq
We write for any $2\le |\iota|\le N-1$,
\[
G_{j-1}(\sigma, y, \eta):=h^{\la \iota\ra-1-\nu}\left.D^\iota_{z}\lb\lp\partial^\iota_\eta\widetilde{p}\rp\lp \sigma, y,z,\theta(\sigma, y,z), \eta\rp b_{k-1}(z)\rb\ra_{z=y}.
\]

Since $Y\in \dot S^\delta_\delta$, by Proposition \ref{prop:symbcomp} we see that to obtain \eqref{est:F-bk} it suffices to prove that
 for any~$\Lambda=(\alpha, \beta)$ with~$\la \alpha\ra+\la \beta\ra=k\geq0$ there holds 
	\begin{align}\label{Gj}
		\la D^\Lambda G_{j-1}\lp s, y,\eta)\rp\ra\leq
		          \F_k&\lp \mathcal{N}_{k+1}(\gamma)(h^{-\mez}I_h)+\lA V\rA_{E(h^{-\mez}I_h)}\rp h^{-\delta-\la \alpha\ra\delta}
	\end{align}
where $D^\Lambda=(D^\alpha_y, D^\beta_\eta)$. Again, the Faa-di-Bruno formula implies that $D^\Lambda G_{j-1}$ is a finite linear combination of terms of the form $K_1\cdot K_2$ with
\begin{align*}
K_1&=h^{|\iota|-1-\nu} D^{\Lambda_1}\left\{\left.D^{\iota_1}_{z} \lb\lp\partial^\iota_\eta\widetilde{p}\rp\lp \sigma, y,z,\theta(y,z), \eta\rp \rb\ra_{z=y}\right\}, \\
K_2&=D^{\Lambda_2}D^{\iota_2}_zb_{j-1}(\sigma, y, \eta),
\end{align*}
where $\Lambda_i=(\alpha_i, \beta_i),~ |\Lambda_1|+|\Lambda_2|=|\Lambda|,~|\iota_1|+|\iota_2|=|\iota|$. By induction, there holds
\[
|K_2|\le \F_k(...)h^{-(|\alpha_2|+|\iota_2|)\delta}.
\]
On the other hand, thanks to Proposition~\ref{prop:eststraitamp}, we can deduce without any difficulty that 
\[
|K_1|\le \F_k(...)h^{|\iota|-1-\nu}h^{-(|\alpha_1|+|\iota_1|)\delta}.
\]
Consequently, $|K_1\cdot K_2|\le \F_k(...) h^M$ where, since $|\iota|\ge 2$,
\bq\label{expo:G1}
\begin{aligned}
M&=|\iota|-1-\nu-(|\alpha_1|+|\iota_1|)\delta-(|\alpha_2|+|\iota_2|)\delta=|\iota|(1-\delta)-1-\nu-|\alpha|\delta\\
& \ge 2(1-\delta)-1-\nu-|\alpha|\delta =1-\nu-2\delta-|\alpha|\delta\ge -\delta-|\alpha|\delta.
\end{aligned}
\eq
Therefore, we obtain \eqref{Gj}. 
\end{proof}
\begin{rem}
If instead of \eqref{form:b}, one takes $b$ of the usual form $b=\sum h^kb_k$ then a similar computation shows that step 2 of the above proof does not work.
\end{rem}
In summary, we have proved that
\begin{prop}
Let $\phi_h$ be the solution to the eikonal equation \eqref{eq:eik} and $b_j\in S^0_\delta (h^{-\mez}I_h)$ given by the formula~\eqref{formula:bj}. We have
\begin{equation}	\label{eq:remsparam}
	e^{-ih^{-1}\phi}\lp h\partial_\sigma+iP\rp\lp e^{ih^{-1}\phi}\widetilde{b}\rp=hT_N\Psi+ hb\partial_\sigma\Psi+iU_N+ iR_N+iS_N.
\end{equation}
with
\begin{equation}	\label{eq:TN}
	T_N=i h^{(N+1)\nu}\sum_{2\leq\la\alpha\ra\leq N-1}\frac{h^{\la\alpha\ra-1-\nu}}{\alpha!}\left.D^\alpha_{z}\lb\lp\partial^\alpha_\eta\widetilde{p}\rp\lp y,z,\theta(y,z)\rp b_N(z)\rb\ra_{z=y},
\end{equation}
and $U_N,~R_N,~S_N$ given by \eqref{eq:defUN}, \eqref{eq:defRN}, \eqref{eq:defSN} respectively.
\end{prop}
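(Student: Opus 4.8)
The plan is to assemble the ingredients built throughout this subsection. The starting point is the asymptotic expansion of
\[
J_h(\sigma,y,y',\eta)=e^{-ih^{-1}\phi_h}P_h\lp e^{ih^{-1}\phi_h}\widetilde{b_h}\rp,
\]
obtained by the standard non-stationary phase computation (Kuranishi trick) recalled above, identical to the one in~\cite{ABZ}, Section~4.7.1: writing the amplitude as $\widetilde{b_h}=b_h\Psi$ with $\Psi(y)=\psi\lp\partial_\eta\phi_h-y'\rp$, one Taylor-expands $\widetilde{p_h}$ in its second argument around the stationary point $z=y$ and separates the terms in which $\Psi$ is never differentiated from those in which it is, the latter collected into $U_N$ as in~\eqref{eq:defUN}, while the error of the finite Taylor expansion of the phase produces $R_N$ as in~\eqref{eq:defRN} and the auxiliary cutoff $\kappa$ contributes $S_N$ as in~\eqref{eq:defSN}.

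Next I would feed this into $e^{-ih^{-1}\phi}\lp h\partial_\sigma+iP\rp\lp e^{ih^{-1}\phi}\widetilde{b}\rp$. The $h\partial_\sigma$ falling on the phase produces $i(\partial_\sigma\phi)\,\widetilde{b}$, which together with the leading term $p\lp y,\partial_y\phi(y)\rp b(y)\Psi(y)$ of $J_h$ is exactly $i\lp\partial_\sigma\phi+p(y,\partial_y\phi)\rp b\,\Psi$ and hence vanishes, since $\phi$ solves the eikonal equation~\eqref{eq:eik} by construction. What survives is $hT_N\Psi$ with $T_N$ as in~\eqref{eq:TN1}, the transport term $hb\,\partial_\sigma\Psi$ from $h\partial_\sigma$ acting on $\Psi$, and $iU_N+iR_N+iS_N$. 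Peeling off the $\la\alpha\ra=1$ contribution of $T_N$ and absorbing it, together with the second-order term of the stationary-phase expansion, into the transport operator $\mathcal{L}=\partial_\sigma+\sum_i a_i\partial_{y_i}+c$ with $a_i,c$ given by~\eqref{a0-c0}, rewrites $T_N$ as~\eqref{eq:TN2}.

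Then I would insert the ansatz $b=\sum_{k=0}^N h^{k\nu}b_k$ from~\eqref{form:b}; reindexing the sum in~\eqref{eq:TN2} yields~\eqref{TN:defn}. The recursive choice of $b_0$ via~\eqref{eq:b_0} and of $b_k$, $1\leq k\leq N$, via~\eqref{eq:b_k}, both solved along the characteristics of $\mathcal{L}$ and given by the explicit formulas~\eqref{formula:bj}, is designed precisely so that every coefficient of $h^{k\nu}$ with $0\leq k\leq N$ in~\eqref{TN:defn} cancels: the term $\mathcal{L}b_0$ kills $k=0$, and each identity $\mathcal{L}b_k=F_{k-1}$ cancels the $k$-th term against the $(k-1)$-th contribution of the second sum. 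Only the single $k=N+1$ term remains, which is exactly~\eqref{eq:TN}, and its prefactor $h^{(N+1)\nu+\la\alpha\ra-1-\nu}=h^{N\nu+\la\alpha\ra-1}$ with $\la\alpha\ra\geq2$ makes it genuinely of high order. Combined with Proposition~\ref{prop:amppara}, which guarantees $b_j\in S^0_\delta(h^{-\mez}I_h)$ so that $b=\sum_k h^{k\nu}b_k$ is itself a legitimate symbol of the same class (this uses $\nu\leq1-\delta$, as emphasized in the remark following Proposition~\ref{prop:amppara}), one obtains~\eqref{eq:remsparam}.

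The computations are routine; the only delicate points are organizational, namely bookkeeping which terms of the stationary-phase expansion get absorbed into $\mathcal{L}b$ and checking that the reindexing passing from~\eqref{eq:TN2} to~\eqref{TN:defn} lines up the transport equations~\eqref{eq:b_k} so that the telescoping leaves exactly the $k=N+1$ remainder. The one substantive analytic input, the symbol estimates on the $b_j$, has already been isolated in Proposition~\ref{prop:amppara}.
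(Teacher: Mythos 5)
Your proposal is correct and follows precisely the paper's own route: the statement is, as the paper signals with ``In summary, we have proved that,'' simply the assembly of the expansion of $J_h$ (computed as in~\cite{ABZ}, Section 4.7.1), the cancellation of the leading term via the eikonal equation~\eqref{eq:eik}, the reorganization into $\mathcal{L}b$ plus a residual sum, and the telescoping induced by the ansatz~\eqref{form:b} together with the transport equations~\eqref{eq:b_0}--\eqref{eq:b_k}, leaving only the $k=N+1$ contribution~\eqref{eq:TN}. One small bookkeeping imprecision: in passing from~\eqref{eq:TN1} to~\eqref{eq:TN2} only the $\la\alpha\ra=1$ term is peeled off into $\mathcal{L}$; the $\partial_{\eta_i}\partial_{\eta_j}p\cdot\partial_{y_i}\partial_{y_j}\phi$ piece of $c$ in~\eqref{a0-c0} arises from the $z$-derivative hitting $\theta(y,z)$ in that very $\la\alpha\ra=1$ term (via the chain rule), not from absorbing the genuine $\la\alpha\ra=2$ term of the stationary-phase expansion, which remains in the residual sum of~\eqref{eq:TN2}. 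This does not affect the validity of the argument.
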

Define the "error" of the parametrix to the exact solution as
\begin{equation}\label{eq:Rh}
	R_h(\sigma,y):=\lp h\partial_\sigma+iP\rp\lp\mathcal{K}v\rp\lp\sigma,y\rp.
\end{equation}
Then using the preceding proposition and our study of the phase $\phi$ and the amplitude before, we can prove using the stationary phase method as in Proposition $4.31$, \cite{ABZ} that $\mathcal{K}v$ defined in \eqref{eq:parame} is a good parametrix in the following sense
\begin{prop}	\label{prop:param}
	Take~$M_0$ an integer. Then for any~$N\in \xN$, there exists a function~$\F_N:\R^+\rightarrow\R^+$ such that 
	\begin{equation*}
		\sup_{0<\sigma\leq h^\delta}\lA R_h(\sigma, \cdot)\rA_{H^{M_0}(\R^d)}\leq\F_N\lp\mathcal{N}_{k}(\gamma)+\lA V\rA_{E}\rp h^N\lA v\rA_{L^2}.
	\end{equation*}
\end{prop}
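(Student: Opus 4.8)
\emph{Plan of proof.} The idea is to commute $h\partial_\sigma+iP_h$ inside the oscillatory integral defining $\mathcal K v$ in \eqref{eq:parame} and invoke the key identity \eqref{eq:remsparam}. Since the factor $e^{-ih^{-1}y'\cdot\eta}\chi_1(\eta)v(y')$ does not depend on $(\sigma,y)$ and $P_h$ acts only on the $y$-variable, one gets
\[
R_h(\sigma,y)=(2\pi h)^{-d}\iint e^{ih^{-1}(\phi_h(\sigma,y,\eta)-y'\cdot\eta)}\big[hT_N\Psi+hb\,\partial_\sigma\Psi+iU_N+iR_N+iS_N\big]\chi_1(\eta)\,v(y')\,\d y'\,\d\eta ,
\]
so that $R_h(\sigma,\cdot)$ splits into five oscillatory integrals of the common form $\mathcal I(a)v(y)=(2\pi h)^{-d}\iint e^{ih^{-1}(\phi_h-y'\cdot\eta)}a\,\chi_1(\eta)\,v(y')\,\d y'\,\d\eta$, that is, semiclassical Fourier integral operators applied to $v$, with a graph-type phase (here $\partial_\eta\partial_{y'}(\phi_h-y'\cdot\eta)=-I_d$), amplitudes compactly supported in $\eta$ through $\chi_1$, and the cut-off $\psi$ inside $\widetilde{b_h}$ localizing $y'$ near $\partial_\eta\phi_h(\sigma,y,\eta)$. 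The plan is to bound the $H^{M_0}$-norm of each of the five pieces by $\F(\mathcal N_k(\gamma)+\|V\|_E)\,h^N\|v\|_{L^2}$, uniformly for $\sigma\in[0,h^\delta]$; this is forced by carrying the parametrix construction to an order large enough in terms of the prescribed $N$, $M_0$, $\delta$, $\nu$, $d$, after which $k$ is fixed so that all the finitely many symbol seminorms appearing below are dominated.

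The first, routine, ingredient is the $L^2$ mapping bound $\|\mathcal I(a)v\|_{L^2}\lesssim C(a)\|v\|_{L^2}$, where $C(a)$ depends on finitely many $(y,y')$-derivatives of $a$; this follows from the standard semiclassical $TT^*$/stationary-phase argument for such FIOs exactly as in Proposition $4.31$ of \cite{ABZ}, using the canonical-graph structure of the phase and the support properties of $a$. To pass to $H^{M_0}$ one differentiates $M_0$ times in $y$: each $\partial_y$ either falls on $a$ — controlled by the symbol estimates for $\widetilde{p_h}$, $b_j$ and $\phi_h$ from Propositions \ref{prop:eststraitsymb}, \ref{prop:eststraitamp}, \ref{prop:amppara} and Corollary \ref{cor:estphi}, at the price of at most $h^{-\delta}$ — or produces the factor $h^{-1}\partial_y\phi_h$, and $\partial_y\phi_h=\zeta_h(\kappa_h)$ is bounded by Corollary \ref{cor:estphi}. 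Hence $\|\mathcal I(a)v\|_{H^{M_0}}\lesssim h^{-C(M_0,\delta)}\,\widetilde C(a)\|v\|_{L^2}$, with $\widetilde C(a)$ a finite sum of $S^m_\delta$-type seminorms of $a$, so it suffices to exhibit for each of the five amplitudes an $h$-gain beating this fixed negative power.

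For $hT_N\Psi$ this is read off \eqref{eq:TN}: the amplitude carries an explicit factor $h^{(N+1)\nu}h^{|\alpha|-1-\nu}$ with $|\alpha|\ge2$, while the other factors ($\widetilde{p_h}$, $b_N$ and their derivatives, $\theta_h$) lie in $S^0_\delta$; since $\nu\le1-\delta$ and $|\alpha|(1-\delta)-1\ge1-2\delta$, the net power is at least $h^{\,1-2\delta+N\nu}$ modulo bounded $\delta$-losses, hence as large as wanted once $N$ is large. The terms $hb\,\partial_\sigma\Psi$ and $iU_N$ each contain at least one derivative of $\Psi=\psi(\partial_\eta\phi_h-y')$ — for $\partial_\sigma\Psi$ via the chain rule and $\partial_\sigma\partial_\eta\phi_h=-\partial_\eta[p_h(\sigma,y,\partial_y\phi_h)]$, which is bounded — so they are supported where $\psi$ transitions, i.e.\ where $|\partial_\eta\phi_h(\sigma,y,\eta)-y'|\ge1$; there the $\eta$-gradient of the phase $\partial_\eta\phi_h-y'$ is bounded below, and repeated non-stationary-phase integrations by parts in $\eta$ — which only spawn $\eta$-derivatives of $\phi_h$ and $b$, all bounded (in fact $\partial_\eta^\beta\phi_h=O(h^\delta)$ for $|\beta|\ge2$ by Corollary \ref{cor:estphi}) — produce a gain $h^L$ for any $L$. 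Finally $iR_N$ and $iS_N$ are the Taylor remainders arising respectively from writing $\phi_h(y')=\phi_h(y)+(y'-y)\cdot\theta_h$ and from inserting the cut-off $\kappa$: carrying out the integrations by parts in the oscillatory integrals that define them turns the factor $\zeta^\alpha$, resp.\ $\mu^\beta$, with $|\alpha|=N$, resp.\ $|\beta|=N$, into $h^N$ times a $\delta$-bounded symbol, again arbitrarily large modulo $\delta$-losses. Summing the five contributions and absorbing the fixed $h^{-C(M_0,\delta)}$ factor yields the claimed bound.

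The one genuinely delicate point is the bookkeeping: one has to verify that in each of the five amplitudes the available gain — a true positive power $h^{N\nu}$ for $T_N$, $R_N$, $S_N$ and the polynomial part of $U_N$, or an $h^\infty$ gain from non-stationary phase for the $\partial_\sigma\Psi$ and $\Psi$-derivative terms — dominates every $\delta$-loss coming from differentiating symbols in $S^m_\delta$ together with the $h^{-C(M_0,\delta)}$ loss from the $H^{M_0}$-norm, so that once $\delta\in(0,\mez]$ and $\nu\in(0,1-\delta]$ are fixed the construction order can be chosen large enough to make the whole error $O(h^N)$ for any prescribed $N$. Every symbol estimate needed for this has already been established in the present section, so the argument is a (somewhat lengthy) verification rather than a new idea, which is why we only sketch it and refer to Proposition $4.31$ of \cite{ABZ} for the analogous computation.
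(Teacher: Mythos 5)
Your overall strategy — decompose $R_h$ via the identity \eqref{eq:remsparam} into the five oscillatory contributions $hT_N\Psi$, $hb\,\partial_\sigma\Psi$, $iU_N$, $iR_N$, $iS_N$, establish an $L^2\to L^2$ mapping bound for the resulting operators with a fixed loss $h^{-C(M_0,\delta)}$ from the $H^{M_0}$ norm, and show each amplitude supplies a gain beating that loss — is exactly the mechanism behind Proposition~4.31 of \cite{ABZ}, which is what the paper invokes without reproducing. Your accounting of the gains is also correct: the explicit $h^{N\nu}$ factor in $T_N$ (using $\la\alpha\ra\ge 2$ and $\delta\le\tfrac12$ to absorb the $\delta$-losses), the non-stationary-phase gain on the support of $\partial_y^\alpha\Psi$ and $\partial_\sigma\Psi$ since there $\la\partial_\eta\phi_h-y'\ra\ge 1$, and the $N$-fold integrations by parts that convert $\zeta^\alpha$, $\mu^\beta$ into $h^N$ for $R_N$, $S_N$. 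On the substance you are in agreement with the paper.

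The one place you are imprecise is precisely the point the paper singles out in the Remark following the Proposition: the $L^2$ dependence on $v$. You assert that $\lA\mathcal I(a)v\rA_{L^2}\lesssim C(a)\lA v\rA_{L^2}$ "follows from the standard semiclassical $TT^*$/stationary-phase argument \dots exactly as in Proposition 4.31 of \cite{ABZ}." This is a double mis-attribution: $TT^*$ produces the dispersive $L^1\to L^\infty$ bound (used later for the Strichartz estimate), not the $L^2\to L^2$ bound needed here, and — as the paper points out explicitly — Proposition~4.31 of \cite{ABZ} estimates $R_h$ by $\lA v\rA_{L^1}$, not $\lA v\rA_{L^2}$. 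The upgrade to $\lA v\rA_{L^2}$ is the actual content of the paper's Remark: one quotes the kernel decay estimate of \cite[Lemma~4.33]{ABZ}, $\langle\partial_\eta\phi_h-y'\rangle^{k_0}\la D^\beta_y H_h\ra\lesssim h^{\rho N}$, and then applies a Schur test, the $y'$-integral being handled by the change of variables $y'\mapsto\partial_\eta\phi_h(\sigma,y,\eta)-y'$. Your appeal to the canonical-graph phase (nondegenerate mixed Hessian $\partial_\eta\partial_{y'}=-I_d$, localization by $\chi_1$ and the $\psi$-cutoff) is sound intuition for why an $L^2\to L^2$ bound should hold, but the clean implementation in this setting is precisely that Schur-test argument, and it is the one piece of the proof the paper did not simply delegate to \cite{ABZ}. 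You should also note that $R_N$ and $S_N$ are not literally of the form $\mathcal I(a)v$ — each carries an extra internal oscillatory integral in a pair of auxiliary variables — although your prescription of integrating by parts in that inner integral first does handle it correctly.
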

\begin{rem}
In  Proposition $4.31$, \cite{ABZ}, $\| v\|_{L^2}$ on the right hand side of the  preceding estimate is replaced by $\|v\|_{L^1}$. Let us remark how to get $\lA v\rA_{L^2}$ as above. According to Lemma $4.33$, \cite{ABZ} $R_h$ can be written as
\[
R_h(\sigma, y)=\int H_h(\sigma, y, y')v(y')dy'
\]
where the kernel $H_h$ satisfies the following property: let $ k_0>d$ be an integer then we have for some $\rho>0$
\bq\label{rem:R}
\sup_{\sigma\in[0, h^\delta], y\in \xR^d, y'\in \xR^d |\eta|\le C}\langle m(\sigma, y, y', \eta)\rangle^{k_0}\la D^\beta_y H_h(\sigma, y, y')\ra \le C_{\beta, N}h^{\rho N},
\eq
with
\[
m(\sigma, y, y', \eta)=\partial_\eta \phi(\sigma, y, \eta)-y'.
\]
We only need to bound $R_h$ in $L^2$, the bounds for $\partial^\beta_y R_h$ in $L^2$ follow similarly. Now by  the Schur test  it suffices to prove that 
\[
\sup_{\sigma \in [0, h^\delta], y'\in \xR}\int \la H_h(y, y')\ra dy\le C_Nh^{\rho N},\quad \sup_{\sigma \in [0, h^\delta], y\in \xR}\int \la H_h(y, y')\ra dy'\le C_Nh^{\rho N}
\]
In view of \eqref{rem:R} this reduces to 
\begin{align*}
&\sup_{\sigma \in [0, h^\delta], y'\in \xR, |\eta|\le C}\int \langle m(\sigma, y, y', \eta)\rangle^{-k_0}dy\le C_N h^{\rho N},\\
&\sup_{\sigma \in [0, h^\delta], y\in \xR, |\eta|\le C}\int \langle m(\sigma, y, y', \eta)\rangle^{-k_0}dy'\le C_N h^{\rho N}.
\end{align*}
The first inequality was proved in Lemma $4.32$, \cite{ABZ}. For the second one, the obvious change of variables $y'\mapsto \tilde y:=\partial_\xi \phi(\sigma, y, \eta)-y'$ gives the conclusion.
\end{rem}

\section{Strichartz estimates}\label{mainsection}
\hk We first derive Strichartz estimates for the semi-classical equation~(\ref{eq:semstraeq}).
If~$v_h^0$ is the initial datum for this equation, recall that the parametrix~$\mathcal{K}v_h^0$ is defined by~(\ref{eq:parame}), where  $\phi$ and $b$ were constructed in the preceding section. The kernel of $\mathcal{K}$ is
\begin{equation*}
	K_h(\sigma,y,y')=(2\pi h)^{-d}\int e^{ih^{-1}(\phi_h(\sigma,y,\eta))-y'\cdot\eta)}\widetilde{b_h}\lp\sigma,y,y',\eta\rp\chi_1(\eta)\d\eta,
\end{equation*}
so that
\begin{equation*}
	\mathcal{K}v_h^0=\int K_h(\sigma,y,y')v_h^0(y')\d y'.
\end{equation*}
The parametrix $\mathcal{K}$ at time 0 is a good approximation of the initial value, as proved below.
\begin{lem}
For any integer~$M_0$  greater than~$d/2$, we have
\begin{gather}	
	\mathcal{K}v_h^0(0,y)=v_h^0(y)+r_h(y),\label{eq:initparam}\\
	\lA r_h\rA_{H^{M_0}(\R^d)}\leq\F_N(\dots)h^N\lA v_h^0\rA_{L^2(\R^d)},\quad \forall N\in \N. \label{eq:reminitparam}
\end{gather}
\end{lem}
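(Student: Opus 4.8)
The plan is to exploit that at $\sigma=0$ the operator $\mathcal K$ degenerates into something completely explicit, since both its phase and its amplitude are then known in closed form. First I would record that the eikonal equation~\eqref{eq:eik} forces $\phi_h(0,y,\eta)=y\cdot\eta$, hence $\partial_\eta\phi_h(0,y,\eta)=y$, while~\eqref{eq:b_0}--\eqref{eq:b_k} give $b_0\rvert_{\sigma=0}=\chi(\eta)$ and $b_j\rvert_{\sigma=0}=0$ for $j\geq1$, so that the ansatz~\eqref{form:b} yields $b_h\rvert_{\sigma=0}=\chi(\eta)$ and $\widetilde{b_h}(0,y,y',\eta)=\chi(\eta)\psi(y-y')$. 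Substituting this into~\eqref{eq:parame} and using $\chi_1\equiv1$ on $\supp\chi$, one gets
\[
	\mathcal K v_h^0(0,y)=(2\pi h)^{-d}\iint e^{ih^{-1}(y-y')\cdot\eta}\,\chi(\eta)\,\psi(y-y')\,v_h^0(y')\,\d y'\,\d\eta .
\]

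I would then split $\psi(y-y')=1-g(y-y')$ with $g:=1-\psi$, which vanishes for $\la z\ra<1$ and has bounded derivatives of all orders. The contribution of the constant $1$ is exactly $\chi(hD_x)v_h^0$; since $v_h^0=w_h(0,\cdot)=u_j(0,\cdot)$ is (a rescaling of) a Littlewood--Paley block, its semi-classical spectrum sits in the annulus $\C$, on which $\chi\equiv1$, so $\chi(hD_x)v_h^0=v_h^0$. This is precisely the decomposition~\eqref{eq:initparam}, with
\[
	r_h(y):=-\int H_h(y,y')\,v_h^0(y')\,\d y',\qquad H_h(y,y'):=h^{-d}\,\check\chi\!\lp\frac{y-y'}{h}\rp g(y-y'),
\]
where $\check\chi(z):=(2\pi)^{-d}\int e^{iz\cdot\eta}\chi(\eta)\,\d\eta$ is the Schwartz-class inverse Fourier transform of $\chi$.

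It then remains to estimate $r_h$ in $H^{M_0}$, which I would do by the Schur test applied to the kernel $H_h$ and its $y$-derivatives. Differentiating in $y$ by the Leibniz rule costs at most $h^{-\la\alpha\ra}$, and on $\supp D^{\alpha-\beta}g\subset\{\la z\ra\geq1\}$ the argument $(y-y')/h$ of $\check\chi$ has modulus at least $\la y-y'\ra/h$, so the Schwartz decay of $\check\chi$ gives, for $\la\alpha\ra\leq M_0$ and every $M\in\N$,
\[
	\la D^\alpha_yH_h(y,y')\ra\lesssim_{M}h^{M-d-\la\alpha\ra}\,\la y-y'\ra^{-M}\,\mathbf 1_{\{\la y-y'\ra\geq1\}} .
\]
Choosing $M=N+d+M_0$ turns the right-hand side into $\lesssim_{N}h^{N}\,\la y-y'\ra^{-(d+1)}\,\mathbf 1_{\{\la y-y'\ra\geq1\}}$, whose integral in $y'$ (respectively in $y$) is bounded uniformly in $y$ (respectively in $y'$); the Schur lemma then yields $\lA D^\alpha_yr_h\rA_{L^2}\lesssim_N h^N\lA v_h^0\rA_{L^2}$, and summing over $\la\alpha\ra\leq M_0$ gives~\eqref{eq:reminitparam}. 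The resulting constant depends only on $N$, $M_0$, $d$ and the fixed cut-offs $\chi,\psi$; unlike in Proposition~\ref{prop:param}, there is no dependence on the coefficients. There is no genuine obstacle here; the only point deserving care is the spectral localization of $v_h^0$ that is used to absorb the $\psi\equiv1$ term --- this is why $\chi$ has been chosen to equal $1$ on the annulus $\C$ carrying the semi-classical frequencies of the dyadic piece --- and everything else is a routine non-stationary-phase estimate.
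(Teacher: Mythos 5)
Your proof is correct and takes essentially the same route as the paper: at $\sigma=0$ the parametrix collapses to convolution with $h^{-d}\check\chi\big(\tfrac{\cdot}{h}\big)(1-\psi)$, and the remainder bound comes from the rapid decay of this kernel on $|y-y'|\geq 1$. The only cosmetic differences are that you invoke the Schwartz-class decay of $\check\chi$ directly instead of integrating by parts in $\eta$ with $L=|Y|^{-2}\sum Y_j\partial_{\eta_j}$, and you phrase the $L^2$ bound via the Schur test instead of Young's convolution inequality, which for a convolution kernel are identical.
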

\begin{proof}
Keeping in mind the initial conditions imposed on~$\phi$ and~$b$, and the fact that~$v_h^0$ is localized in frequency, we have equation~(\ref{eq:initparam}) with
\begin{equation*}
	r_h(y)=\lp2\pi h\rp^{-d}\iint e^{ih^{-1}(y-y')\cdot\eta}\chi(\eta)\lp1-\Psi(y-y')\rp v_h^0(y')\d y'\d\eta.
\end{equation*}
Now for~$\la\beta\ra\leq M_0$, $D^\beta_yr_h(y)$ is a finite linear combination of terms of the form
\begin{equation*}
	h^{-d-\la\beta_1\ra}\iint e^{ih^{-1}(y-y')\cdot\eta}\eta^{\beta_1}\chi(\eta)\Psi_{\beta_1}(y-y')v_h^0(y')\d y'\d\eta,\quad\la\beta_1\ra\leq\la\beta\ra,
\end{equation*}
where~$\la y-y'\ra\geq1$ on the support of~$\Psi_{\beta_1}$.
This is a convolution of~$v_h^0(y')$ with 
\begin{equation*}
	w_h(Y):=h^{-d-\la\beta_1\ra}\int e^{ih^{-1}Y\cdot\eta}\eta^{\beta_1}\chi(\eta)\Psi_{\beta_1}(Y)\d\eta 
\end{equation*}
with~$\la Y\ra\geq1$ on the support of~$\Psi_{\beta_1}$.
This is an oscillating integral, and integrating by parts with the vector field
\begin{equation*}
	L=\frac{1}{\la Y\ra^2}\sum Y_j\partial_{\eta_j}
\end{equation*}
yields
\[
w_h(Y)=h^{M-d-\la\beta_1\ra}\int e^{ih^{-1}Y\cdot\eta}(-L)^M\lp \eta^{\beta_1}\chi(\eta)\rp\Psi_{\beta_1}(Y)\d\eta.
\]
Hence,  the~$L^1$ norm of $w_h$ is bounded by~$\F_M(\dots)h^{M-d-\la\beta_1\ra}$ for all~$M\in\N$, so that for~$\la\beta\ra\leq M_0$,
\begin{equation*}
	\lA D^\beta_yr_h\rA_{L^2}\leq\F_{M}(\dots)h^{M-d-\la\beta_1\ra}\lA v_h^0\rA_{L^2}.
\end{equation*}
This concludes the proof of~(\ref{eq:reminitparam}).
\end{proof}

Now define~$\mathcal{T}_h$ the propagator of our (homogeneous) semi-classical equation, i.e., 
\begin{equation}	\label{eq:defprop}
	\lB
	\begin{aligned}
		&\lp h\partial_\sigma+iP_h\rp\lp \mathcal{T}_h(\sigma,\sigma_0)v_h^0\rp(y)=0,\\
		&\lp  \mathcal{T}_h(\sigma_0,\sigma_0)v_h^0\rp(y)=v_h^0(y),
	\end{aligned}
	\right.
\end{equation}
where~$h\in(0,h_0]$ with~$h_0$ small enough, $0<\la\sigma-\sigma_0\ra\leq h^{\delta}$, $y\in\R^d$ and~$\widehat{v_h^0}$ supported in~$\C$.
Then using  the Duhamel formula and~\eqref{eq:Rh}, \eqref{eq:initparam} we can write 
\begin{equation}	\label{eq:duhpar}
	\mathcal{T}_h(\sigma,\sigma_0)v_h^0=\mathcal{K}v_h^0(\sigma-\sigma_0)-\mathcal{T}_h(\sigma,\sigma_0)r_h-\int_{\sigma_0}^\sigma \mathcal{T}_h(\sigma,s)R_h(s)\d s.
\end{equation}
By classical energy estimates, we have that~$\mathcal{T}_h$ is bounded on Sobolev spaces (and notably on~$L^2$), uniformly in time.
This, combined with Proposition~\ref{prop:param}, \eqref{eq:reminitparam} and~\eqref{eq:duhpar}, gives
\begin{equation*}
	\sup_{0\leq \sigma\leq h^{\delta}}\lA\mathcal{K}v_h^0(\sigma)\rA_{L^2}\leq\F(\dots)\lA v_h^0\rA_{L^2}.
\end{equation*}

Thus to use the classical TT* argument and prove Strichartz estimates, we only need to prove the following lemma.
\begin{lem}
	There holds for any~$0<\sigma\leq h^{\delta}$, 
	\begin{equation*}
		\lA\mathcal{K}(\sigma)\mathcal{K}^*(\sigma')v_h^0\rA_{L^\infty(\R^d)}\leq \F\lp\Xi_k\rp h^{-\frac{d}{2}}
		  \la\sigma-\sigma'\ra^{-\frac{d}{2}}\lA v_h^0\rA_{L^1(\R^d)},
	\end{equation*}
	where~$\mathcal{K}^*$ denotes the adjoint of~$\mathcal{K}$.
\end{lem}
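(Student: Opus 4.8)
The plan is to compute the kernel of $\mathcal{K}(\sigma)\mathcal{K}^*(\sigma')$ explicitly as an oscillatory integral and then apply a stationary‑phase bound. First I would write
\[
	\lp\mathcal{K}(\sigma)\mathcal{K}^*(\sigma')v_h^0\rp(y)=\int L_h(\sigma,\sigma';y,y')v_h^0(y')\d y',
\]
where, composing the kernel~$K_h$ of~$\mathcal{K}$ with the kernel of~$\mathcal{K}^*$ and integrating out the intermediate spatial variable, one gets
\[
	L_h(\sigma,\sigma';y,y')=(2\pi h)^{-d}\int e^{ih^{-1}\lp\phi_h(\sigma,y,\eta)-\phi_h(\sigma',y',\eta)\rp}\mathbf{c}_h(\sigma,\sigma';y,y',\eta)\,\d\eta,
\]
with amplitude~$\mathbf{c}_h$ built from~$\widetilde{b_h}$, its conjugate, and the cutoffs~$\chi_1$; the key point is that by~\eqref{eq:amppara} and Corollary~\ref{cor:estphi} the variable~$y'$ (coming from the~$\psi$ factor, which forces $|\tfrac{\partial\phi_h}{\partial\eta}-y'|\lesssim 1$) is effectively localized, and $\mathbf{c}_h$ together with all its $\eta$‑derivatives is bounded by $\F(\Xi_k)$ uniformly — this uses $b_j\in S^0_\delta$ (Proposition~\ref{prop:amppara}) and the estimates~\eqref{eq:estderimpph} on $\partial_\eta\phi_h$. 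Since $\eta$ ranges over the compact set where $\chi_1$ is supported, the $\eta$‑integral is over a fixed compact set.

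The second step is the stationary‑phase estimate for~$L_h$. The phase is $\Phi(\eta):=\phi_h(\sigma,y,\eta)-\phi_h(\sigma',y',\eta)$, and by the eikonal equation~\eqref{eq:eik},
\[
	\Phi(\eta)=(y-y')\cdot\eta-\int_{\sigma'}^{\sigma}p_h\lp s,\,\cdot\,,\zeta_h(s;\cdot,\eta)\rp\d s,
\]
so $\partial_\eta^2\Phi=\partial_\eta^2\phi_h(\sigma,y,\eta)-\partial_\eta^2\phi_h(\sigma',y',\eta)$. I would like to invoke Proposition~\ref{prop:hess}, which gives $|\det\Hess_\eta\phi_h(\tau,y,\eta)|\geq M_0\tau^d$; the natural move is a further change of the temporal variable (rescaling $\sigma-\sigma'$), or equivalently splitting $\mathcal{K}(\sigma)\mathcal{K}^*(\sigma')=[\mathcal{K}(\sigma)\mathcal{K}^*(0)][\mathcal{K}(0)\mathcal{K}^*(\sigma')]^{\text{appropriately}}$ is NOT what one wants; rather one works directly with $\Phi$. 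The cleanest route is: set $\tau=\sigma-\sigma'$, observe that for $\sigma'$ fixed the flow identities~\eqref{dphi} give $\partial_\eta\Phi=\kappa_h(\sigma;y,\eta)-\kappa_h(\sigma';y',\eta)$ and $\Hess_\eta\Phi=\partial_\eta\kappa_h(\sigma;y,\eta)-\partial_\eta\kappa_h(\sigma';y',\eta)$; from the Hamilton flow estimates~\eqref{eq:estderchar2} one has $\partial_\eta\kappa_h=\text{(}-\text{)}\tfrac{\partial\zeta_h}{\partial y_0}^{-1}\tfrac{\partial y_h}{\partial\eta}$‑type expressions that are $O(\tau)$ modulo a lower‑order error, with the leading term $-\,(\partial_\zeta^2 p_h)(0,y,\eta)\,\tau$ nondegenerate by Proposition~\ref{prop:Hessp} for $h_0$ small. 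Hence $|\det\Hess_\eta\Phi|\gtrsim|\sigma-\sigma'|^d$ and the Hessian, inverted, has entries $O(|\sigma-\sigma'|^{-1})$, uniformly over the compact $\eta$‑range. Stationary phase in $d$ variables with large parameter $h^{-1}$ then gives
\[
	|L_h(\sigma,\sigma';y,y')|\leq \F(\Xi_k)\,h^{-d}\lp\frac{h}{|\sigma-\sigma'|}\rp^{d/2}=\F(\Xi_k)\,h^{-d/2}|\sigma-\sigma'|^{-d/2},
\]
uniformly in $y,y'$, which yields the claimed bound by the trivial $L^1\to L^\infty$ estimate $\lA\mathcal{K}(\sigma)\mathcal{K}^*(\sigma')v_h^0\rA_{L^\infty}\leq\sup_{y,y'}|L_h|\cdot\lA v_h^0\rA_{L^1}$.

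The main obstacle is making the stationary‑phase argument uniform in $h$: the amplitude $\mathbf{c}_h$ has derivatives that blow up like negative powers of $h^\delta$ (because the symbols live in $S^0_\delta$, not $S^0_0$), and the phase $\Phi$ likewise has higher $\eta$‑derivatives controlled only by $h^{\delta-|\beta|\delta}$ via Corollary~\ref{cor:estphi}. One must therefore run the non‑stationary and stationary phase expansions keeping careful track of how many $\eta$‑derivatives fall on the amplitude versus how many powers of $h^{-1}|\sigma-\sigma'|$ one gains, and check that the net exponent of $h$ and of $|\sigma-\sigma'|$ comes out exactly as $h^{-d/2}|\sigma-\sigma'|^{-d/2}$ with no residual loss — the constraint $0<\sigma\leq h^\delta$ and the bounds $\la h^\mez\sigma\ra^\mez\leq h^\delta$ are exactly what is needed to absorb these losses. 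This is the standard but delicate "semiclassical stationary phase with $\rho=1-\delta$"‑type estimate; it is carried out in~\cite{ABZ} in an analogous situation, and I would follow that template, citing the relevant non‑stationary and stationary phase lemmas from~\cite{ZworskiSemClass} or from~\cite{ABZ} directly. A minor additional point is that $\sigma$ and $\sigma'$ need not be comparable; but since both lie in $[0,h^\delta]$ and the bound only involves $|\sigma-\sigma'|$, and the Hessian lower bound $|\sigma-\sigma'|^d$ degrades gracefully, no comparability is needed — one simply verifies the Hessian estimate at the actual pair $(\sigma,\sigma')$ using that the leading term is linear in $\sigma-\sigma'$.
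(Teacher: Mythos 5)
Your approach is essentially the paper's: compose the kernels, reduce to a single $\eta$--oscillatory integral, show the Hessian of the phase in $\eta$ is $\approx(\sigma-\sigma')\,\Hess_\zeta p_h(0,\cdot,\eta)$ and nondegenerate, then apply stationary phase to get $h^{-d/2}|\sigma-\sigma'|^{-d/2}$ and conclude by $L^1\to L^\infty$. The paper (following Theorem~10.8 of~\cite{ZworskiSemClass}) writes the kernel explicitly as a triple integral in $(y,\zeta,\eta)$ and performs a \emph{preliminary nondegenerate stationary phase in $(y,\zeta)$} to collapse it to the single $\eta$--integral; your ``integrate out the intermediate spatial variable'' glosses over this step, and the amplitude is not constant in that variable (the $\psi$ cut-off depends on it), so one cannot simply produce a delta in $\zeta-\eta$. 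Relatedly, the $\psi$ factor in~\eqref{eq:amppara} localizes the \emph{intermediate} integration variable near $\partial_\eta\phi_h$, not the outer variable $y'$ as you wrote --- $y'$ ranges over all of $\R^d$.

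Two further small points. First, you compute $\Hess_\eta\Phi$ via $\partial_\eta\kappa_h(\sigma;y,\eta)-\partial_\eta\kappa_h(\sigma';y',\eta)$ and the identity $\partial_\eta\kappa_h=-\lp\partial y_h/\partial y_0\rp^{-1}\partial y_h/\partial\eta$ (you wrote $\partial\zeta_h/\partial y_0$ by slip); this is equivalent to the paper's direct Taylor expansion $\tilde\phi=(\sigma-\sigma')\lp p_h(0,x,\eta)+\mathcal O(|\sigma|+|\sigma'|)\rp+\ls x-z,\eta+\sigma'F\rs$, but to see the error is $\mathcal O\lp|\sigma-\sigma'|h^\delta\rp$ rather than merely $\mathcal O(|\sigma-\sigma'|)$ you still need to localize to the stationary set $x-z=\mathcal O(\sigma-\sigma')$ and use $|\partial_x\partial^2_\zeta p_h|\leq\F(\dots)$; you gesture at this but do not carry it out. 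Second, stationary phase only applies when $|\sigma-\sigma'|\gtrsim h$; the paper explicitly covers $|\sigma-\sigma'|<Ch$ by the trivial bound $|W|\leq\F(\Xi_k)h^{-d}\leq\F(\Xi_k)h^{-d/2}|\sigma-\sigma'|^{-d/2}$, which is missing from your write-up.
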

\begin{proof}
	Here we follow the proof of Theorem $10.8$,~\cite{ZworskiSemClass}. The bound of the Lemma will be implied by an~$L^\infty$ bound on the kernel of~$\mathcal{K}(\sigma)\mathcal{K}^*(\sigma')$, which is
	\[W(\sigma,\sigma',x,z):=\frac1{(2\pi h)^{2d}}\iiint e^{\frac ih(\phi_h(\sigma,x,\eta)-\phi_h(\sigma',z,\zeta)-y\cdot(\eta-\zeta))}B\d y\d\zeta\d\eta\]
	where~$B\in S^0_\delta$. 
	
	In the~$(y,\zeta)$ variables, $\phi_h$ is non-degenerate and it is stationary at~$\zeta=\eta$, $y=\partial_\zeta\phi_h(\sigma',z,\zeta)$. Thus using stationary phase we get
	\[W(\sigma,\sigma',x,z)=\frac1{(2\pi h)^{d}}\int e^{\frac ih(\phi_h(\sigma,x,\eta)-\phi_h(\sigma',z,\eta))}B'\d y\d\zeta\d\eta\]
	where again~$B'\in S^0_\delta$.
	
	Now the phase of this oscillating integral is 
	\begin{align*}
		\tilde{\phi}:=&\phi_h(\sigma,x,\eta)-\phi_h(\sigma',z,\eta)\\
			     =&(\sigma-\sigma')(p_h(0,x,\eta)+\mathcal{O}(\la s\ra+\la s'\ra))+\ls x-z,\eta+\sigma'F(\sigma',x,z,\eta)\rs
	\end{align*}
	where $F$ is in~$S^0_\delta$ with seminorms controled by~$\F\lp\Xi_k\rp$, and the constant of the~$\mathcal{O}$ is also of this form.
	The phase is stationary when
	\[\partial_\eta\tilde{\phi}=(I+\sigma'\partial_\eta F)(x-z)+(t-s)(\partial_\eta p_h+\mathcal{O}(\la\sigma\ra+\la\sigma'\ra))=0,\]
	and since for~$h$ small, $\sigma'$ is small and thus~$(I+\sigma'\partial_\eta F)$ is invertible, the phase can only be stationary when~$x-z=\mathcal{O}(\sigma-\sigma')$.
	The Hessian is then
	\[\partial^2_\eta\tilde{\phi}=(\sigma-\sigma')(\partial^2_\eta p_h(0,x,\eta)+\mathcal{O}(\la\sigma\ra+\la\sigma'\ra)).\]
	Since~$\partial^2_\eta p_h(0,x,\eta)$ is non-singular, stationary phase gives for~$\la\sigma-\sigma'\ra>Ch$ that
	\[\la W(\sigma,\sigma',x,z)\ra\leq\F\lp\Xi_k\rp h^{-\frac d2}\la\sigma-\sigma'\ra^{-\frac d2},\]
	and for~$\la\sigma-\sigma'\ra<Ch$, it is easily seen that
	\[\la W(\sigma,\sigma',x,z)\ra\leq\F\lp\Xi_k\rp h^{-d}\leq\F\lp\Xi_k\rp h^{-\frac d2}\la\sigma-\sigma'\ra^{-\frac d2}\]
	also. This concludes the proof of the Lemma.
\end{proof}
Now the TT* argument (see Theorem $10.7$,~\cite{ZworskiSemClass}) can be invoked to prove the Strichartz estimates for the parametrix :
\begin{prop}
	For any~$2\leq p\leq\infty$, $1\leq q\leq\infty$ such that
	\[\frac 2p+\frac dq=\frac d2,\]
	there is a nonnegative nondecreasing function~$\F$ such that for~$0<h\leq h_0$ small enough, there holds
	\[\lA \mathcal{K}v_h^0\rA_{L^p((0,h^\delta);L^q(\R^d))}\leq\F(\Xi_k)h^{-\frac 1p}\lA v_h^0\rA_{L^2(\R^d)}.\]
\end{prop}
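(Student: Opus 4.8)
The plan is to feed the two estimates just established into the abstract semi-classical $TT^*$ argument, Theorem $10.7$ in~\cite{ZworskiSemClass}: the uniform $L^2$ bound $\sup_{0\le\sigma\le h^{\delta}}\lA\mathcal{K}v_h^0(\sigma)\rA_{L^2(\R^d)}\le\F(\Xi_k)\lA v_h^0\rA_{L^2(\R^d)}$ displayed above, together with the dispersive bound of the preceding Lemma, are exactly its hypotheses. Let me nonetheless recall the structure, since it is where the admissibility relation enters. Write $A:L^2(\R^d)\to L^p\big((0,h^{\delta});L^q(\R^d)\big)$ for the operator $Av_h^0=\big(\sigma\mapsto\mathcal{K}v_h^0(\sigma)\big)$; the claim amounts to $\lA A\rA\le\F(\Xi_k)h^{-1/p}$, and by the $TT^*$ principle it is equivalent to $\lA AA^*\rA\le\F(\Xi_k)h^{-2/p}$ for the operator $AA^*:L^{p'}\big((0,h^{\delta});L^{q'}\big)\to L^p\big((0,h^{\delta});L^q\big)$ given by $(AA^*G)(\sigma)=\int_0^{h^{\delta}}\mathcal{K}(\sigma)\mathcal{K}^*(\sigma')G(\sigma')\d\sigma'$.

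The core is then a one–dimensional singular integral. First I would interpolate (Riesz--Thorin) between the $L^1\to L^\infty$ dispersive bound and the $L^2\to L^2$ bound (the latter being an immediate consequence of the uniform $L^2$ bound on $\mathcal{K}(\sigma)$) to obtain, for fixed $\sigma,\sigma'$,
\[
\lA\mathcal{K}(\sigma)\mathcal{K}^*(\sigma')\rA_{L^{q'}(\R^d)\to L^q(\R^d)}\le\F(\Xi_k)\big(h^{-d/2}\la\sigma-\sigma'\ra^{-d/2}\big)^{1-2/q}=\F(\Xi_k)h^{-2/p}\la\sigma-\sigma'\ra^{-2/p},
\]
the last identity being precisely the admissibility relation $\frac2p+\frac dq=\frac d2$ rewritten as $\frac d2\big(1-\frac2q\big)=\frac2p$. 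Taking the $L^q_x$ norm of $(AA^*G)(\sigma)$ and integrating against $\la\sigma-\sigma'\ra^{-2/p}$ in $\sigma'$, I would conclude by the Hardy--Littlewood--Sobolev inequality on $\R$ (valid since $0<\frac2p<1$ for $2<p<\infty$; by Young's inequality when $p=\infty$) that $\lA AA^*G\rA_{L^p_\sigma L^q_x}\le\F(\Xi_k)h^{-2/p}\lA G\rA_{L^{p'}_\sigma L^{q'}_x}$, and taking square roots gives the stated bound. This already covers the pair $(p,q)=(4,\infty)$ used when $d=1$ and all admissible pairs with $2<p<\infty$.

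The delicate point — and the only one — is the endpoint $p=2$, which arises only when $d\ge2$: there $\frac2p=1$ and Hardy--Littlewood--Sobolev is not directly available. When $q<\infty$, i.e. $d\ge3$, one invokes the endpoint version of the abstract $TT^*$ lemma, whose hypotheses are again exactly the two estimates above. When $q=\infty$, i.e. $d=2$, I would exploit that the kernel of $\mathcal{K}(\sigma)\mathcal{K}^*(\sigma')$ is in fact controlled by $\min\big(h^{-d},h^{-d/2}\la\sigma-\sigma'\ra^{-d/2}\big)$ — the bound $h^{-d}$ being what the proof of the Lemma gives for $\la\sigma-\sigma'\ra\lesssim h$ — whose $L^1$ norm in $\sigma'$ over $(0,h^{\delta})$ is at most $\F(\Xi_k)h^{-1}\la\log h\ra$; Young's inequality $L^1_\sigma*L^2_\sigma\hookrightarrow L^2_\sigma$ then yields the estimate with one extra logarithmic factor, harmlessly absorbed when the dyadic pieces are recombined. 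Apart from this endpoint discussion and the routine bookkeeping that every constant produced along the way is of the form $\F(\Xi_k)$, there is nothing more to do.
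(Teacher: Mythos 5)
Your proposal unpacks exactly what the paper hides in its one-line citation of the abstract semi-classical $TT^*$ argument (Theorem 10.7 in \cite{ZworskiSemClass}): Riesz--Thorin interpolation between the $L^1\to L^\infty$ dispersive bound and the $L^2\to L^2$ energy bound, then Hardy--Littlewood--Sobolev in time. The only place you go beyond the paper is that you honestly confront the endpoint $p=2$, whereas the paper silently sidesteps it --- the subsequent Corollary~\ref{stri:v} restricts to $2<p\leq\infty$, and $p=2$ is reached only at the level of Corollary~\ref{cor:Strfull} after sacrificing an $\eps$ of Sobolev regularity --- so the logarithmic factor you correctly detect at the forbidden pair $(p,q)=(2,\infty)$ when $d=2$ is a real feature of the method but is never actually used downstream.
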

This, Proposition~\ref{prop:param}, \eqref{eq:reminitparam} and~\eqref{eq:duhpar}, the boundedness of~$\mathcal{T}_h$ on Sobolev spaces and Sobolev embeddings give the same Strichartz estimates for~\eqref{eq:defprop}.
\begin{cor}	\label{stri:v}
	For any~$2< p\leq\infty$, $1\leq q\leq\infty$ such that
	\[\frac 2p+\frac dq=\frac d2,\]
	there is a nonnegative nondecreasing function~$\F$ such that for~$0<h\leq h_0$ small enough, for any~$\sigma_0\in h^{-\mez}I$, there holds
	\[\lA \mathcal{T}_hv_h^0\rA_{L^p((\sigma_0,\sigma_0+h^\delta);L^q(\R^d))}\leq\F(\Xi)h^{-\frac 1p}\lA v_h^0\rA_{L^2(\R^d)}.\]
\end{cor}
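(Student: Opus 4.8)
The plan is to transfer the Strichartz bound just established for the parametrix $\mathcal{K}$ to the true propagator $\mathcal{T}_h$ of~\eqref{eq:defprop} by means of the Duhamel identity~\eqref{eq:duhpar}, the two remainder terms being treated as errors that carry an arbitrarily large power of $h$. A preliminary observation is that the whole construction of Section~\ref{sec:param}---the phase $\phi_h$, the amplitude, the parametrix $\mathcal{K}$, the initial error $r_h$ of~\eqref{eq:initparam} and the interior error $R_h$ of~\eqref{eq:Rh}---carries over verbatim when the Cauchy data are imposed at an arbitrary $\sigma_0\in h^{-\mez}I$ instead of at $\sigma=0$, on the interval $[\sigma_0,\sigma_0+h^\delta]$ (intersected with $h^{-\mez}I$ near its right endpoint). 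Indeed every bound produced there, in particular the $TT^*$ dispersive estimate and hence the Strichartz bound for $\mathcal{K}v_h^0$, the remainder bound~\eqref{eq:reminitparam} for $r_h$, and Proposition~\ref{prop:param} for $R_h$, is expressed only through the global semi-norms $\Xi_k=\lA V\rA_{E(I)}+\mathcal{N}_k(\gamma)(I)$ and the length restriction $\la\sigma-\sigma_0\ra\le h^\delta$, so all of them are uniform in $\sigma_0$.

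Granting this, I would fix $\sigma_0$, write $\mathcal{T}_h(\sigma,\sigma_0)v_h^0$ via~\eqref{eq:duhpar}, and estimate the three summands separately. The leading term $\mathcal{K}v_h^0(\cdot-\sigma_0)$ is controlled directly by the Strichartz estimate for the parametrix (the Proposition immediately preceding this Corollary, applied with base time $\sigma_0$), which yields exactly $\F(\Xi_k)h^{-1/p}\lA v_h^0\rA_{L^2}$.

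For the two error terms I would fix an integer $M_0>d/2$ and combine the uniform-in-time boundedness of $\mathcal{T}_h$ on $H^{M_0}(\R^d)$ (classical energy estimate), the Sobolev embedding $H^{M_0}(\R^d)\hookrightarrow L^q(\R^d)$ (legitimate since $q\ge2$ on the admissible line $2/p+d/q=d/2$ with $p>2$), and the fact that the time window has length $\le h^\delta\le1$. Applied to $\mathcal{T}_h(\sigma,\sigma_0)r_h$ this gives, using~\eqref{eq:reminitparam}, a bound $\F_N(\Xi_k)h^N\lA v_h^0\rA_{L^2}$ for every $N$, the $L^p$-in-time integration contributing only a harmless $h^{\delta/p}$; applied to $\int_{\sigma_0}^\sigma\mathcal{T}_h(\sigma,s)R_h(s)\d s$, after Minkowski's inequality and Proposition~\ref{prop:param}, it likewise gives $\F_N(\Xi_k)h^N\lA v_h^0\rA_{L^2}$ for every $N$. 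Since $h\le h_0\le1$, both error contributions are dominated by $\F(\Xi_k)h^{-1/p}\lA v_h^0\rA_{L^2}$, so adding the three pieces closes the estimate.

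I do not anticipate a real obstacle here: the substantive input---the construction of the microlocal parametrix and the dispersive $L^1\to L^\infty$ bound feeding the $TT^*$ argument---has already been carried out in Section~\ref{sec:param} and in the preceding Proposition. The only points demanding some care are the verification that the parametrix and all its estimates are uniform in the base time $\sigma_0$, and the bookkeeping which shows that the $r_h$ and $R_h$ contributions, once propagated by $\mathcal{T}_h$ and measured after Sobolev embedding, remain $O(h^N)$ and are therefore negligible against the $h^{-1/p}$ in the main term.
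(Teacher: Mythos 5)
Your proposal is correct and follows exactly the argument the paper compresses into its one-sentence justification: Duhamel identity \eqref{eq:duhpar}, the Strichartz bound already proved for $\mathcal{K}$, the $O(h^N)$ bounds on $r_h$ (from \eqref{eq:reminitparam}) and $R_h$ (from Proposition~\ref{prop:param}), propagated by the Sobolev-bounded flow $\mathcal{T}_h$ and converted to $L^q$ via Sobolev embedding. You also helpfully make explicit the uniformity in the base time $\sigma_0$, which the paper leaves tacit.
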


Now, recall from \eqref{eq:semstraeq} that with $v_h(\sigma, y)=w_h(\sigma, X_h(\sigma, y))$ there holds
\[
	\left(\mathcal{L}_hw_h\right)(\sigma, X_h(\sigma, y))=\lp h\partial_\sigma+iP_h\rp v_h(\sigma,y)=0.
\]
Denoting by $\mathcal{S}_h(\sigma, \sigma_0)$ the flow map of $\mathcal{L}_hw_h(\sigma, x)=0$ we deduce immediately from Corollary \ref{stri:v} the following estimates.
\begin{cor}\label{stri:w}
	Let~$\chi\in C^\infty_c(\R^d)$ be supported in~$\C=\lB\xi:\mez\leq\la\xi\ra\leq2\rB$. 
	Take~$2< p\leq\infty$, $1\leq q\leq\infty$ such that
	\[\frac 2p+\frac dq=\frac d2.\]
	There is a nonnegative nondecreasing function~$\F$ such that for~$0<h\leq h_0$ small enough, for any~$\sigma_0\in h^{-\mez}I$, there holds with
	$w_h^0:=\chi(hD_y)w_0$, for any~$L^2$ function~$w_0$, that
	\[\lA \mathcal{S}_hw_h^0\rA_{L^p((\sigma_0,\sigma_0+h^\delta);L^q(\R^d))}\leq\F(\Xi_k)h^{-\frac 1p}\lA w_h^0\rA_{L^2(\R^d)}.\]
\end{cor}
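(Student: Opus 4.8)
The plan is to transfer the semi-classical Strichartz estimate of Corollary~\ref{stri:v} from the straightened operator $h\partial_\sigma+iP_h$ to $\mathcal{L}_h$ via the Lagrangian change of variables $v_h(\sigma,y)=w_h(\sigma,X_h(\sigma;y))$ of~\eqref{eq:semstraeq}. Spelled out: if $w_h$ solves $\mathcal{L}_hw_h=0$ with $w_h(\sigma_0,\cdot)=w_h^0$, then $v_h:=w_h(\sigma,X_h(\sigma;\cdot))$ solves $\lp h\partial_\sigma+iP_h\rp v_h=0$ with $v_h(\sigma_0,\cdot)=w_h^0\circ X_h(\sigma_0;\cdot)=:v_h^0$, so that
\[
\mathcal{S}_h(\sigma,\sigma_0)w_h^0=\lb\mathcal{T}_h(\sigma,\sigma_0)v_h^0\rb\circ X_h(\sigma;\cdot)^{-1}.
\]
By Proposition~\ref{prop:regstrai} and the smallness of $T$ from~\eqref{chooseT}, for every $\sigma\in h^{-\mez}I$ the map $X_h(\sigma;\cdot)$ is a diffeomorphism whose Jacobian is bounded above and below by constants depending only on $\lA V\rA_{E(I)}$, uniformly in $(\sigma,h)$, with $\tfrac{\partial X_h}{\partial y}$ as close to $I_d$ as we wish. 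Changing variables $x=X_h(\sigma;y)$ at fixed $\sigma$ gives $\lA\mathcal{S}_h(\sigma,\sigma_0)w_h^0\rA_{L^q_x}\les\lA\mathcal{T}_h(\sigma,\sigma_0)v_h^0\rA_{L^q_x}$ for every $q$, and $\lA v_h^0\rA_{L^2}\les\lA w_h^0\rA_{L^2}$; since this change of variables leaves the time variable untouched, taking $L^p$ in $\sigma$ over $(\sigma_0,\sigma_0+h^\delta)$ reduces the statement to bounding $\lA\mathcal{T}_h(\sigma,\sigma_0)v_h^0\rA_{L^p((\sigma_0,\sigma_0+h^\delta);L^q)}$.

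The obstacle is that $v_h^0=w_h^0\circ X_h(\sigma_0;\cdot)$, unlike $w_h^0=\chi(hD_y)w_0$, is not exactly frequency localized, so Corollary~\ref{stri:v} does not apply to it verbatim. I would remedy this with a decomposition $v_h^0=\chi_1(hD_y)v_h^0+\lp1-\chi_1(hD_y)\rp v_h^0$, where $\chi_1\in C^\infty_c(\R^d\setminus\lB0\rB)$, $0\le\chi_1\le1$, is chosen equal to $1$ on a fixed compact neighborhood of $\lB\lp\tfrac{\partial X_h}{\partial y}(\sigma_0;y)\rp^{\!T}\eta:y\in\R^d,\ \eta\in\supp\chi\rB$, which for $T$ small enough is contained in $\C'$, and with $\supp\chi_1\subset\C'$. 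The piece $\chi_1(hD_y)v_h^0$ has Fourier support in $\C'$, so Corollary~\ref{stri:v} applies to it — the parametrix construction of Section~\ref{sec:param} being unchanged if $\C$ is replaced by the fixed annulus $\C'$, on which the Hessian estimate of Lemma~\ref{Hess:gamma} still holds — giving
\[
\lA\mathcal{T}_h(\sigma,\sigma_0)\chi_1(hD_y)v_h^0\rA_{L^p((\sigma_0,\sigma_0+h^\delta);L^q)}\le\F(\Xi_k)h^{-\frac1p}\lA v_h^0\rA_{L^2}\le\F(\Xi_k)h^{-\frac1p}\lA w_h^0\rA_{L^2}.
\]

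The remaining piece $\lp1-\chi_1(hD_y)\rp v_h^0$ should be $\mathcal{O}(h^\infty)$. Writing $v_h^0(y)=\lp\chi(hD)w_0\rp\lp X_h(\sigma_0;y)\rp$ as a semi-classical oscillatory integral, its $y$-phase gradient is $\lp\tfrac{\partial X_h}{\partial y}(\sigma_0;y)\rp^{\!T}\eta$ with $\eta\in\supp\chi$, hence lies where $\chi_1\equiv1$; a non-stationary-phase integration by parts — each step losing at most $h^{-\delta}$ when differentiating the amplitude (Proposition~\ref{prop:regstrai}) but gaining $h$, so a net gain since $\delta<1$ — gives $\lA\lp1-\chi_1(hD_y)\rp v_h^0\rA_{L^2}\le C_Nh^N\lA w_h^0\rA_{L^2}$ for all $N$. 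Moreover, since $w_h^0$ is localized at frequencies $\la\xi\ra\sim h^{-1}$ and the derivatives of $X_h(\sigma_0;\cdot)$ are polynomially bounded in $h^{-1}$, one has $\lA v_h^0\rA_{H^{s_0}}\les_{s_0}h^{-M(s_0)}\lA w_h^0\rA_{L^2}$ for every integer $s_0$. Interpolating the two bounds yields $\lA\lp1-\chi_1(hD_y)\rp v_h^0\rA_{H^\rho}\le C_Nh^N\lA w_h^0\rA_{L^2}$ for all $N$, where $\rho$ is a fixed integer $>d/2$; since $\mathcal{T}_h(\sigma,\sigma_0)$ is bounded on $H^\rho$ uniformly in $\sigma$ and $h$ (the classical energy estimate recalled before Corollary~\ref{stri:v}) and $H^\rho\hookrightarrow L^q$, integrating over the interval of length $h^\delta$ shows that the contribution of this piece to $\lA\mathcal{T}_h v_h^0\rA_{L^p((\sigma_0,\sigma_0+h^\delta);L^q)}$ is $\le C_Nh^N\lA w_h^0\rA_{L^2}$, hence $\le h^{-\frac1p}\lA w_h^0\rA_{L^2}$ for $N$ large. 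Adding the two contributions and reversing the reductions of the first paragraph gives the corollary.

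The routine ingredients are the change of variables and the flow estimates already proved for $X_h$; the one delicate point will be the treatment of $\lp1-\chi_1(hD_y)\rp v_h^0$, i.e. making precise that composition with the Lagrangian flow moves the semi-classical frequency support only by a small, controlled amount — and that the amplitude growth created by the integration by parts is beaten by the oscillation, which is exactly where the constraint $\delta<1$ is used.
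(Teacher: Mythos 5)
Your proof is essentially correct, but it takes a longer route than the paper's because of a different reading of how the Lagrangian flow is set up. You treat $X_h$ as the fixed flow initialized at $\sigma=0$, so that $v_h^0=w_h^0\circ X_h(\sigma_0;\cdot)\neq w_h^0$, and you then have to control the semi-classical frequency drift caused by composition with $X_h(\sigma_0;\cdot)$. The paper's intended (if unstated) convention is to re-initialize the straightening flow at $\sigma_0$, i.e.\ to use $X_h^{\sigma_0}$ with $X_h^{\sigma_0}(\sigma_0;y)=y$, and to redefine $P_h$, the phase, and the amplitude accordingly. This re-centering is in fact forced already in Corollary~\ref{stri:v}: the symbol bounds of Proposition~\ref{prop:eststraitsymb} and~(\ref{eq:regM}), which power the whole parametrix construction, rely on $\la h^{\mez}\sigma\ra^{\mez}\leq h^\delta$, and for $\sigma\in(\sigma_0,\sigma_0+h^\delta)$ with $\sigma_0\gg h^\delta$ this holds only if $\sigma$ is replaced by the elapsed time $\sigma-\sigma_0$, i.e.\ if the flow is started at $\sigma_0$. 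Once this re-centering is made, $v_h^0=w_h(\sigma_0,X_h^{\sigma_0}(\sigma_0;\cdot))=w_h^0$ exactly, so there is no frequency drift, Corollary~\ref{stri:v} applies verbatim, and one changes variables back using the uniform bounds on the Jacobian from Proposition~\ref{prop:regstrai}. That is the sense in which the paper's deduction is ``immediate.''

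Your alternative argument (the $\chi_1$ decomposition plus non-stationary phase for the piece where $1-\chi_1(hD_y)$ acts) is a valid way to handle the drift under your reading and is a nice exercise, but beware one imprecision: the claim that the parametrix construction ``is unchanged if $\C$ is replaced by the fixed annulus $\C'$'' is not quite right. The symbol $\Gamma_h$ carries the factor $\ph_1$, which equals~$1$ only on $\C''\subsetneq\C'$, and the Hessian non-degeneracy~(\ref{eq:ellGam}) is stated only on $\C''$; hence Proposition~\ref{prop:Hessp} gives a lower bound on $\Hess_\zeta p_h$ only on a sub-annulus of $\C''$ (namely $\C$, since conjugation by $M^0\approx I$ must keep the argument inside $\C''$). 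To make your decomposition work you should take $\chi_1$ equal to~$1$ on a small neighborhood of $\C$ and supported in a slightly larger neighborhood still contained in $\C''$; this is achievable since $\partial X_h/\partial y$ is $O(h^\delta)$-close to $I$ under the re-centered flow, or close to $I$ up to a small constant depending on $T$ under yours. This is only a cosmetic fix, but as written the step is not justified.
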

We are now in position to derive Strichartz estimates for the operator $L_j$, whose flow map is denoted by $\mathcal{S}_j$, using the relation \eqref{L:t,sigma}:
\[
	h^\tdm (L_ju_j)(h^\mez \sigma, x)=\mathcal{L}_h(\sigma, x)w_h(\sigma, x), \quad w_h(\sigma, x)=u_j(h^\mez \sigma,x)~h=2^{-j}.
\]

\begin{thm}	\label{thm:Strshort}
	Let~$I_j=[t_0,t_0+2^{-j(\delta+\mez)}]$.
	There exist~$k\in\N$, and~$j_0\in\N$ such that for any~$s\in\R$ and~$\eps>0$ there exist~$\F,\F_{\eps}:\R^+\rightarrow\R^+$ such that if we have 
	\begin{equation*}
		\lB
		\begin{aligned}
			&L_ju_j=0,\\
			&u_j(t_0)=u_j^0,
		\end{aligned}
		\right.
	\end{equation*}
	where~$u_j$, $u_j^0$ and~$F_{j\delta}$ are supported in the annulus~$\C_j=\lB\xi:\frac{1}{C}{2^j}\leq\la\xi\ra\leq C2^j\rB$, then there exist $k=k(d)$ and $j_0\in \xN$ such that  for $j\ge j_0$, we have
	\begin{gather*}
		      	\lA u_j\rA_{L^4(I_j,W^{s-\frac{1}{8},\infty}(\R))}\leq\F(N)\lA u_j^0\rA_{H^s(\R)}, \quad\text{if}~d=1,\\
		      	\lA u_j\rA_{L^{2+\eps}(I_j,W^{s-\frac{d}{2}+\frac{3}{4}-\eps,\infty}(\R^d))}\leq\F_{\eps}(N)\lA u_j^0\rA_{H^s(\R^d)}, \quad\text{if}~d= 2.
	\end{gather*}
\end{thm}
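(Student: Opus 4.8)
The plan is to deduce Theorem~\ref{thm:Strshort} from the semi-classical Strichartz estimate of Corollary~\ref{stri:w}, by undoing the rescaling of time $t=h^{\mez}\sigma$ (with $h=2^{-j}$) and then converting the $L^2\to L^q$ bound there into an $H^s\to W^{s-\frac d2+\mu,\infty}$ bound by means of Bernstein inequalities, crucially using that $u_j$ has spectrum in the annulus $\C_j$.

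First I would translate the problem into semi-classical form. Since $L_ju_j=0$, the identity~\eqref{L:t,sigma} shows that $w_h(\sigma,x):=u_j(h^\mez\sigma,x)$ solves $\mathcal{L}_hw_h=0$, so $w_h(\sigma)=\mathcal{S}_h(\sigma,\sigma_0)u_j^0$ with $\sigma_0:=h^{-\mez}t_0\in h^{-\mez}I$. After dilation by $h$ the spectrum of $u_j^0$ lies in a fixed annulus on which the construction of Section~\ref{sec:param} is valid, so $u_j^0=\chi(hD_x)u_j^0$ for a suitable cutoff $\chi$ and Corollary~\ref{stri:w} applies with $w_0=u_j^0$: for any admissible pair $2<p\le\infty$, $1\le q\le\infty$ with $\frac 2p+\frac dq=\frac d2$, it gives
\[
\lA w_h\rA_{L^p((\sigma_0,\sigma_0+h^\delta);L^q(\R^d))}\le\F(\Xi_k)\,h^{-\frac1p}\lA u_j^0\rA_{L^2(\R^d)}.
\]
Undoing the change of time variable ($\d t=h^\mez\d\sigma$, so that the $\sigma$-interval of length $h^\delta$ corresponds exactly to $I_j=[t_0,t_0+2^{-j(\delta+\mez)}]$) yields
\[
\lA u_j\rA_{L^p(I_j;L^q(\R^d))}=h^{\frac1{2p}}\lA w_h\rA_{L^p((\sigma_0,\sigma_0+h^\delta);L^q)}\le\F(\Xi_k)\,h^{-\frac1{2p}}\lA u_j^0\rA_{L^2(\R^d)}.
\]

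Next I would bring in the Bernstein inequalities for functions with spectrum in $\{\la\xi\ra\sim 2^j=h^{-1}\}$: for any $s'\in\R$, $\lA u_j\rA_{W^{s',\infty}}\les 2^{js'}\lA u_j\rA_{L^\infty}\les 2^{js'}2^{jd/q}\lA u_j\rA_{L^q}$ and, for $j$ large, $\lA u_j^0\rA_{L^2}\les 2^{-js}\lA u_j^0\rA_{H^s}$. Combined with the previous displays this gives $\lA u_j\rA_{L^p(I_j;W^{s',\infty})}\le\F(\Xi_k)\,2^{j\theta}\lA u_j^0\rA_{H^s}$ with $\theta=s'-s+\frac dq+\frac1{2p}$, which, by the admissibility relation $\frac dq=\frac d2-\frac2p$, equals $\theta=s'-s+\frac d2-\frac3{2p}$. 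For $d=1$ I take $(p,q)=(4,\infty)$, so $\theta=s'-s+\mez-\frac38$, which vanishes precisely at $s'=s-\frac18$. For $d=2$ the arithmetically-admissible pair $(2,\infty)$ is the forbidden Strichartz endpoint, so I take $p=2+\eps$ with the corresponding finite $q$; then $\frac3{2p}$ is slightly below $\frac34$ and the choice $s'=s-\frac14-\eps$, i.e. $\mu=\frac34-\eps$, makes $\theta<0$ for $\eps$ small. In both cases $2^{j\theta}\le1$ for $j\ge j_0$, which is the announced estimate, the constant being a function of the semi-norms $\Xi_k(I)$.

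I do not expect a genuine obstacle here: the analytic content — the eikonal and transport equations, the parametrix and the $TT^*$ argument — is already packaged in Corollary~\ref{stri:w}, so what remains is bookkeeping. The one point requiring care is that the gain in derivatives is forced and sharp for this scheme: it is exactly the balance, dictated by admissibility, between the total loss $h^{-1/(2p)}$ (the semi-classical $TT^*$ loss $h^{-1/p}$ times the Jacobian $h^{1/(2p)}$ of the time rescaling) and the Bernstein gains $2^{jd/q}2^{j(s'-s)}$; this explains why one obtains only $\frac18$ when $d=1$ and $\frac34-\eps$ when $d=2$, and why for $d=2$ the endpoint $p=2$ must be avoided, which is the source of the $\eps$-loss. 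One should also keep $j\ge j_0$ throughout so that $h\le h_0$ both in Corollary~\ref{stri:w} and in the frequency-localization bounds.
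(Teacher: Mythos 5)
Your proposal is correct and follows exactly the route the paper intends (though the paper leaves this step implicit, presenting Theorem~\ref{thm:Strshort} as an immediate consequence of Corollary~\ref{stri:w}, the identity~\eqref{L:t,sigma}, and Bernstein): rescale time by $t=h^{1/2}\sigma$ so that the $h^\delta$-window becomes $I_j$, absorb the Jacobian factor $h^{1/(2p)}$, then use frequency localization in $\C_j$ to trade the $L^2\to L^q$ bound of Corollary~\ref{stri:w} for an $H^s\to W^{s',\infty}$ bound, with $s'$ fixed by requiring the net power of $h$ to be nonnegative. The exponent bookkeeping ($\theta=s'-s+\frac d2-\frac{3}{2p}$, hence $s'=s-\frac18$ at $(p,q)=(4,\infty)$ for $d=1$, and $s'=s-\frac d2+\frac34-\eps$ at $p=2+\eps$ for $d=2$ to dodge the forbidden endpoint) is carried out correctly.
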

We now glue together the Strichartz estimates in the preceding proposition to get Strichartz estimates in the full time interval.
\begin{cor}	\label{cor:Strfull}
	Recall that~$I=[0,T]$.  Put
$
\varsigma=\mez+\delta.
$
	There exist~$k\in\N$, and~$j_0\in\N$ such that for any~$s\in\R$ and~$\eps>0$, there exist~$\F,\F_{\eps}:\R^+\rightarrow\R^+$ such that if we have 
	\begin{equation}\label{eq:regu}
		\lB
		\begin{aligned}
			&L_ju_j(t,x)=F_{j\delta}(t,x),\\
			&u_j(t_0, x)=u_j^0(x),
		\end{aligned}
		\right.
	\end{equation}
	where~$u_j$, $u_j^0$ and~$F_{j\delta}$ are supported in the annulus~$\C_j=\lB\xi:\frac{1}{C} {2^j}\leq\la\xi\ra\leq C2^j\rB$, then there exist $k=k(d)$ and $j_0\in \xN$ such that  for $j\ge j_0$, there holds
	\begin{gather*}
		      	\lA u_j\rA_{L^4(I,W^{s-\frac{1}{2}+\frac{3}{8}-\frac{\varsigma}{4},\infty}(\R))}\leq\F((\Xi_k)\lp\lA F_{j\delta}\rA_{L^4(I,H^{s-\varsigma}(\R))}+\lA u_j\rA_{L^\infty(I,H^s(\R))}\rp,~\text{if}~d=1,\\
		      	\lA u_j\rA_{L^2(I,W^{s-\frac{d}{2}+\frac{3}{4}-\frac{\varsigma}{2}-\eps,\infty}(\R^d))}\leq\F_{\eps}(\Xi_k)\lp\lA F_{j\delta}\rA_{L^2(I,H^{s-\varsigma}(\R^d))}+\lA u_j\rA_{L^\infty(I,H^s(\R^d))}\rp,d\ge 2.
\end{gather*}
\end{cor}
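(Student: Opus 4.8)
The plan is to chop $I=[0,T]$ into $N_j$ consecutive subintervals $I_\ell=[t_\ell,t_{\ell+1}]$ of length $|I_\ell|\le 2^{-j\varsigma}$ — the exact length of the window in Theorem~\ref{thm:Strshort} — so that $N_j\le C_T2^{j\varsigma}$, to apply the homogeneous short-time estimate of Theorem~\ref{thm:Strshort} on each $I_\ell$ after splitting $u_j$ by Duhamel's formula, and then to reassemble by an $\ell^p$ summation over $\ell$. The mechanism that makes the summation close is that the extra loss of $\varsigma/p$ derivatives in the Strichartz index of the statement relative to Theorem~\ref{thm:Strshort} (namely $\varsigma/4$ when $d=1$, $\varsigma/2$ when $d\ge2$, since $\varsigma/2+3/8=\varsigma/2-1/8+1/2$ and $\varsigma/p$ is $\varsigma/4$ resp. $\varsigma/2$) is precisely what absorbs the factor $N_j^{1/p}\sim2^{j\varsigma/p}$ produced by the summation, once one also uses that $u_j$ and $F_{j\delta}$ are spectrally localized in $\C_j$, so that shifting their Sobolev index by $\varsigma$ costs only a harmless factor $2^{\pm j\varsigma}$.

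In detail, let $\mathcal{S}_j(t,\tau)$ denote the flow map of $L_j$; by the classical energy estimate it is bounded on every $H^\mu$, uniformly in $t,\tau$ and $j$. On a subinterval $I_\ell$ Duhamel's formula gives
\[
u_j(t)=\mathcal{S}_j(t,t_\ell)u_j(t_\ell)+\int_{t_\ell}^t\mathcal{S}_j(t,\tau)F_{j\delta}(\tau)\,\d\tau .
\]
Write $s'$ for the short-time Strichartz index, i.e. $s'=s-\tfrac18$ if $d=1$ and $s'=s-\tfrac d2+\tfrac34-\eps$ if $d\ge2$, with $p$ the corresponding exponent (when $d\ge2$ one first converts the $L^{2+\eps}$ short-time bound of Theorem~\ref{thm:Strshort} into an $L^2$ bound on the short interval by H\"older in time, which only helps since $|I_\ell|\le1$). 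Theorem~\ref{thm:Strshort} applied with initial time $t_\ell$ bounds the free evolution in $L^p(I_\ell;W^{s',\infty})$ by $\F(\Xi_k)\lA u_j(t_\ell)\rA_{H^s}$; applied with initial time $\tau$ and combined with Minkowski's integral inequality (using that $[\tau,t_{\ell+1}]$ lies in a window of length $2^{-j\varsigma}$ starting at $\tau$) it bounds the Duhamel term in $L^p(I_\ell;W^{s',\infty})$ by $\F(\Xi_k)\int_{I_\ell}\lA F_{j\delta}(\tau)\rA_{H^s}\,\d\tau=\F(\Xi_k)\lA F_{j\delta}\rA_{L^1(I_\ell;H^s)}$.

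It remains to sum over $\ell$. Since $u_j$ has spectrum in $\C_j$ and the output index $s''$ of the statement equals $s'-\varsigma/p$, one has $\lA u_j\rA_{W^{s'',\infty}}\le C\,2^{-j\varsigma/p}\lA u_j\rA_{W^{s',\infty}}$, and therefore, summing the $p$-th powers of the short-time bounds over $\ell$,
\[
\lA u_j\rA_{L^p(I;W^{s'',\infty})}^p\le C\,2^{-j\varsigma}\F(\Xi_k)^p\sum_{\ell}\lp\lA u_j(t_\ell)\rA_{H^s}+\lA F_{j\delta}\rA_{L^1(I_\ell;H^s)}\rp^p .
\]
Choosing the $t_\ell$ to be Lebesgue points of $t\mapsto\lA u_j(t)\rA_{H^s}$, the first contribution is at most $C\,2^{-j\varsigma}N_j\lA u_j\rA_{L^\infty(I;H^s)}^p\le C_T\lA u_j\rA_{L^\infty(I;H^s)}^p$. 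For the second, H\"older in time gives $\lA F_{j\delta}\rA_{L^1(I_\ell;H^s)}\le2^{-j\varsigma/p'}\lA F_{j\delta}\rA_{L^p(I_\ell;H^s)}$ with $1/p+1/p'=1$, hence $\sum_\ell\lA F_{j\delta}\rA_{L^1(I_\ell;H^s)}^p\le2^{-j\varsigma(p-1)}\lA F_{j\delta}\rA_{L^p(I;H^s)}^p$; and since $F_{j\delta}$ is spectrally localized in $\C_j$, $\lA F_{j\delta}\rA_{H^s}\le C\,2^{j\varsigma}\lA F_{j\delta}\rA_{H^{s-\varsigma}}$, so together with the prefactor $2^{-j\varsigma}$ all powers of $2^{j}$ cancel and this contribution is $\le C\lA F_{j\delta}\rA_{L^p(I;H^{s-\varsigma})}^p$. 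Taking $p$-th roots yields the two stated inequalities with $\varsigma=\mez+\delta$.

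The only genuine difficulty is the bookkeeping: one must track the several powers of $2^{j}$ — from the number $N_j\sim2^{j\varsigma}$ of subintervals, from H\"older in time, from the two changes of Sobolev index permitted by the spectral localization of $u_j$ and of $F_{j\delta}$, and from the loss already built into the short-time estimate — and verify that with the output index $s''=s'-\varsigma/p$ they cancel exactly. All the analytic ingredients (the short-time estimate of Theorem~\ref{thm:Strshort}, the Sobolev boundedness of $\mathcal{S}_j$, Minkowski and H\"older) are either standard or already established above.
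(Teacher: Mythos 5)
Your proof is correct and follows the same strategy as the paper: partition $I$ into $\sim 2^{j\varsigma}$ pieces of length $\sim 2^{-j\varsigma}$, apply the short-time estimate of Theorem~\ref{thm:Strshort} via Duhamel on each piece, use the frequency localization of $u_j$ and $F_{j\delta}$ to trade the powers of $2^{j\varsigma}$ (from the number of pieces, from H\"older in time, and from the piece-length) for shifts of the Sobolev index, and sum the $p$-th powers. The only cosmetic difference is that the paper multiplies $u_j$ by a smooth time cut-off $\chi_{j,m}$ so as to produce a zero initial condition on each piece at the cost of a commutator term $2^{\varsigma j}\chi'(\cdot)u_j$ in the source, whereas you invoke Duhamel directly from the endpoint $t_\ell$ and bound $\lA u_j(t_\ell)\rA_{H^s}$ by the $L^\infty(I;H^s)$ norm (choosing $t_\ell$ generically); the two are equivalent and lead to the same bookkeeping.
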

\begin{proof}
	Take a cut-off~$\chi\in C^\infty_c(0,2)$ equal to one on~$[\mez,\tmez]$. Define for~$0\leq m\leq[2^{\varsigma j}T]-2$ the interval~$I_{j,m}:=[m2^{\varsigma j},(m+2)2^{-\varsigma j}]$, and 
	the associated cut-off~$\chi_{j,m}(t):=\chi\lp\frac{t-m2^{-\varsigma j}}{2^{-\varsigma j}}\rp$. We have
\[
L_j(\chi_{j,m}u_j)=\chi_{j,k}F_{j,\delta}+2^{\varsigma j}\chi'\lp\frac{tm2^{-\varsigma j}}{2^{-\varsigma j}}\rp u_j,
\]
	with~$\chi_{j,m}u(k2^{-\varsigma j})=0$.
	Then applying Theorem~\ref{thm:Strshort} to~$\chi_{j,k}u_j$ with the help of the Duhamel formula, noticing that the flow maps $\mathcal{S}(t, \tau)$ are bounded on Sobolev spaces and ~$\chi_{j,m}=1$ on~$((m+\mez)2^{\varsigma j},(m+\tmez)2^{-\varsigma j})$, we find for~$d\geq2$
	\begin{align*}
		&\lA u_j\rA_{L^2(((m+\mez)2^{-\varsigma j},(m+\tmez)2^{-\varsigma j}),W^{s-\frac{d}{2}+\frac{3}{4}-\eps,\infty})}\\
		  &\quad\leq\F(\Xi_k)\lp\lA F_{j\delta}\rA_{L^1((m2^{-\varsigma j},(m+2)2^{-\varsigma j}),H^s)}+2^{\varsigma j}\lA\chi'\lp\frac{t-m2^{-\varsigma j}}{2^{-\varsigma j}}\rp u_j\rA_{L^1(I,H^s)}\rp\\
		  &\quad\leq\F(\Xi_k)\lp2^{-\varsigma j/2}\lA F_{j\delta}\rA_{L^2((m2^{-\varsigma j},(m+2)2^{-\varsigma j}),H^s)}+\lA u_j\rA_{L^\infty(I,H^s)}\rp.
	\end{align*}
	Then we multiply both sides by~$2^{-\varsigma j/2}$ and use the fact that~$u_j$ and~$F_{j\delta}$ are supported in annulus to find 
	\begin{multline*}
		\lA u_j\rA_{L^2(((m+\mez)2^{-\varsigma j},(m+\tmez)2^{-\varsigma j}),W^{s-\frac{d}{2}+\frac{3}{4}-\frac{\varsigma}{2}-\eps,\infty})}\\
		\leq\F(\Xi_k)\lp\lA F_{j\delta}\rA_{L^2((m2^{-\varsigma j},(m+2)2^{-\varsigma j}),H^{s-\varsigma })}+2^{\varsigma j/2}\lA u_j\rA_{L^\infty(I,H^s)}\rp.
	\end{multline*}
	At last, elevating at the power~$2$ and summing back the pieces, and adding the control of the first and last pieces using Theorem~\ref{thm:Strshort}, we find the result as claimed.
	
	The case~$d=1$ follows along the same lines.
\end{proof}
The next step is to  derive Strichartz estimates for the non-regularized equation. For these estimates, one need  the following higher order semi-norm of $\gamma$
\begin{equation}\label{M_kgamma}
	\mathcal{M}_k(\gamma)(J):=\sum_{\la\beta\ra\leq k}\sup_{\xi\in\C'}\lA D^\beta_\xi\gamma\rA_{L^p(J;W_x^{\tdm,\infty}(\R^d))},
\end{equation}
and we put
\[
\widetilde\Xi =\mathcal{M}_k(\gamma)(J)+\mathcal{N}_k(\gamma)(J)+\mathcal{N}_k(\omega)(J)+\lA V\rA_{E(J)}.
\]
\begin{cor}	\label{cor:Strloc}
	There exists~$k\in\N$, and~$j_0\in\N$ such that for any~$s\in\R$ and~$\eps>0$ there exist~$\F,\F_{\eps}:\R^+\rightarrow\R^+$ such that if we have 
	\begin{equation}\label{eq:nonregu}
		\lB
		\begin{aligned}
			&\lp\partial_t+iT_\gamma+T_V\cdot\nabla\rp u_j=F_j,\\
			&u_j(0)=u_j^0,
		\end{aligned}
		\right.
	\end{equation}
	where~$u_j$, $u_j^0$ and~$F_j$ are supported in the annulus~$\C_j=\lB\xi:\frac{1}{C}{2^j}\leq\la\xi\ra\leq C2^j\rB$, then there exist $k=k(d)$ and $j_0\in \xN$ such that  for $j\ge j_0$, we have
	\begin{itemize}
		\item if~$d=1$,
		      \begin{equation*}
		      	\lA u_j\rA_{L^4(I,W^{s-\frac{1}{2}+\frac{3}{20},\infty}(\R))}\leq\F(\widetilde\Xi_k)\lp\lA F_j\rA_{L^4(I,H^{s-\frac{9}{10}}(\R))}+\lA u_j\rA_{L^\infty(I,H^s(\R))}\rp,
		      \end{equation*}
		\item if~$d\ge 2$,
		       \begin{equation*}
		      	\lA u_j\rA_{L^2(I,W^{s-\frac{d}{2}+\frac{3}{10}-\eps,\infty}(\R^d))}\leq\F_{\eps}(\widetilde\Xi_k)\lp\lA F_j\rA_{L^2(I,H^{s-\frac{9}{10}}(\R^d))}+\lA u_j\rA_{L^\infty(I,H^s(\R^d))}\rp,
		       \end{equation*}
	\end{itemize}
	for~$j\geq j_0$.	
\end{cor}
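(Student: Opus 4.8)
The plan is to reduce Corollary~\ref{cor:Strloc} to the Strichartz estimate already proved for the regularized operator~$L_j$ (Corollary~\ref{cor:Strfull}), by choosing the regularization parameter~$\delta$ optimally and transferring the discrepancy between~$L_j$ and the genuine paradifferential operator to the source term. First I would fix
\[
	\delta=\frac{2}{5},\qquad\text{so that}\quad\varsigma=\mez+\delta=\frac{9}{10},
\]
which respects~\eqref{delta:cd1}. With this value the output regularities in Corollary~\ref{cor:Strfull}, namely~$s-\mez+\frac{3}{8}-\frac{\varsigma}{4}$ for~$d=1$ and~$s-\frac{d}{2}+\frac{3}{4}-\frac{\varsigma}{2}-\eps$ for~$d\geq2$, become exactly~$s-\mez+\frac{3}{20}$ and~$s-\frac{d}{2}+\frac{3}{10}-\eps$, while the source is measured in~$H^{s-\varsigma}=H^{s-\frac{9}{10}}$; this matches the target statement and is precisely the arithmetic that pins down~$\delta$.

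Next, since~$u_j$ is spectrally supported in~$\C_j$ and~$\ph_1\equiv1$ on the support of the Littlewood--Paley function~$\ph$, one has~$\ph_1(2^{-j}D_x)u_j=u_j$, so inserting the equation~\eqref{eq:nonregu} for~$\partial_t u_j$ into the definition of~$L_j$ gives
\[
	L_ju_j=F_j+E_j,\qquad E_j:=(S_{(j-3)\delta}(V)-T_V)\cdot\nabla u_j+i(S_{(j-3)\delta}(\gamma)(x,D_x)-T_\gamma)u_j,
\]
and~$E_j$ is again spectrally supported in a fixed (slightly larger) annulus around~$2^j$, so Corollary~\ref{cor:Strfull} applies to~$L_ju_j=F_j+E_j$. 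To bound~$E_j$ I would use Lemma~\ref{lem:equipara} to write~$T_V\cdot\nabla u_j=S_{j-3}(V)\cdot\nabla u_j+R_ju$ and the proposition established above for general paradifferential operators (writing~$T_a\Delta_j=S_{j-3}(a)(x,D_x)\Delta_j+R_j'$), applied with~$a=\gamma\in\Gamma^{3/2}_{3/2}$ since~$\gamma\in L^p(I;C^\tmez S^\tmez)$ by Lemma~\ref{Hess:gamma}, to write~$T_\gamma u_j=S_{j-3}(\gamma)(x,D_x)u_j+R_j'u$, both remainders being spectrally localized near~$2^j$. What remains of~$E_j$ is a sum of four terms: $R_ju$, $R_j'u$, and the two regularization differences~$(S_{j-3}(V)-S_{(j-3)\delta}(V))\cdot\nabla u_j$ and~$(S_{j-3}(\gamma)-S_{(j-3)\delta}(\gamma))(x,D_x)u_j$.

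The delicate step --- the one I expect to be the main obstacle --- is the bookkeeping of powers of~$h=2^{-j}$ in the bound for~$E_j$. Using~$V\in E(I)$, $\gamma\in L^p(I;C^\tmez S^\tmez)$ (the latter controlled by~$\mathcal{M}_k(\gamma)$, cf.~\eqref{M_kgamma}), the smoothing gains~$2^{-\delta j}$ (order~$1$, $W^{1,\infty}$ coefficient) and~$2^{-\frac{3}{2}\delta j}$ (order~$\tmez$, $C^\tmez$ coefficient, obtained by a paraproduct-type estimate since~$u_j$ sits at frequency~$\sim 2^j$ while the symbol differences have~$x$-spectrum~$\les 2^j$), and the spectral localization at frequency~$\sim 2^j$ of every term, one finds, after taking the~$L^p$ norm in time, that the four contributions to~$\lA E_j\rA_{L^p(I;H^{s-9/10})}$ carry the factors~$2^{-\varsigma j}$ (from~$R_ju$ and~$R_j'u$), $2^{(1-\varsigma-\delta)j}=2^{-\frac{3}{10}j}$ (from the~$V$-regularization difference), and~$2^{(\frac{3}{2}-\varsigma-\frac{3}{2}\delta)j}=2^0$ (from the~$\gamma$-regularization difference, an operator of order~$\tmez$ with coefficient~$O(2^{-\frac{3}{2}\delta j})$ acting at frequency~$2^j$). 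With~$\delta=\frac{2}{5}$ all these exponents are~$\leq0$, the last one being~$=0$ \emph{exactly}: this is why the full~$C^\tmez$-regularity of~$\gamma$ in~$x$ must be invoked, why~$\mathcal{M}_k(\gamma)$ enters~$\widetilde\Xi$, and why this method cannot go below the stated threshold~$r<s-\frac{d}{2}+\mu$. Hence
\[
	\lA E_j\rA_{L^p(I;H^{s-9/10})}\leq\F(\widetilde\Xi_k)\lA u_j\rA_{L^\infty(I;H^s)}.
\]

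Finally, applying Corollary~\ref{cor:Strfull} with~$\delta=\frac{2}{5}$ to~$L_ju_j=F_j+E_j$ gives
\[
	\lA u_j\rA_{L^p(I;W^{s-\frac{d}{2}+\mu,\infty})}\leq\F(\widetilde\Xi_k)\big(\lA F_j+E_j\rA_{L^p(I;H^{s-9/10})}+\lA u_j\rA_{L^\infty(I;H^s)}\big)
\]
with~$\mu=\frac{3}{20}$, $p=4$ if~$d=1$ and~$\mu=\frac{3}{10}-\eps$, $p=2$ if~$d\geq2$; the triangle inequality together with the bound for~$E_j$ then absorbs~$\lA E_j\rA_{L^p(I;H^{s-9/10})}$ into~$\F(\widetilde\Xi_k)\lA u_j\rA_{L^\infty(I;H^s)}$, which is the claim. (The term~$\mathcal{N}_k(\omega)$ in~$\widetilde\Xi$ is not used here, but is kept for consistency with Theorem~\ref{main:theo}; the admissible~$k=k(d)$ and~$j_0$ are those of Corollary~\ref{cor:Strfull}, enlarged by the finitely many coefficient derivatives used in the estimate for~$E_j$.)
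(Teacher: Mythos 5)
Your proof is correct and follows essentially the same route as the paper's: decompose $L_ju_j=F_j+(\text{error})$ using Lemma~\ref{lem:equipara} and the proposition replacing $T_a\Delta_j$ by $S_{j-3}(a)(x,D_x)\Delta_j$, bound the error in $L^p(I;H^{s-\varsigma})$ by $\F(\widetilde\Xi_k)\lA u_j\rA_{L^\infty H^s}$ using the regularization gains, fix $\delta=\tfrac{2}{5}$ so that $\varsigma=\tfrac12+\delta=\tfrac{3}{2}(1-\delta)=\tfrac{9}{10}$, and invoke Corollary~\ref{cor:Strfull}. Your explicit tracking of the powers $2^{-\varsigma j}$, $2^{(1-\varsigma-\delta)j}$, $2^{(\frac32-\varsigma-\frac32\delta)j}$ is consistent with the paper's appeal to classical regularization estimates, and your remark that the $\gamma$-regularization term forces the use of $\mathcal{M}_k(\gamma)$ (hence $\widetilde\Xi_k$ rather than $\Xi_k$) is accurate.
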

\begin{proof}
 By \eqref{Fj'}, \eqref{eq:regedp} and \eqref{eq:remreg} we have that if $u_j$ is a solution of \eqref{eq:nonregu} then $u_j$ is also a solution of \eqref{eq:regu} with 
\begin{align*}
F_{j\delta}&=F_j+R_j+iR'_j+\lp S_{(j-3)\delta}\gamma(x,D_x)-S_{j-3}\gamma(x, D_x)\rp\Delta_ju+
	      \lp S_{(j-3)\delta}(V)-S_{j-3}(V)\rp\cdot\nabla\Delta_ju\\
	      &=:F_j+\widetilde F_j.
\end{align*}
From Lemma \ref{lem:equipara} we have that $R_j$ and$~R'_j$ are of order $0$. On the other hand, with~$p=4$ if~$d=1$ and~$p=2$ if~$d\geq2$, there holds
	\begin{align*}
		\lA(S_{(j-3)\delta}(V)-S_{j-3}(V))\cdot\nabla u_j\rA_{L^p(I,H^{s-(1-\delta)}(\R^d))}&\leq  \lA V\rA_{E(J)}\lA u_j\rA_{L^\infty(I,H^s(\R^d))},\\
		\lA(S_{(j-3)\delta}(\gamma)-S_{j-3}(\gamma))u_j\rA_{L^p(I,H^{s-\frac{3}{2}(1-\delta)}(\R^d))}&\leq\mathcal{M}_k(\gamma)(J)\lA u_j\rA_{L^\infty(I,H^s(\R^d))}.
	\end{align*}
	Those are classical regularization results (See e.g. \cite{TaylorPseudorNLPDE}, Section~1.3). \\
We deduce that 
\[
\lA \widetilde F_j\rA_{L^p(I,H^{s-\frac{3}{2}(1-\delta)}(\R^d))}\leq\F(\Xi_k)\lA u_j\rA_{L^\infty(I,H^s(\R^d))}.
\]
Now, choosing $\delta=\frac{2}{5}$ gives
$
\varsigma=\frac{3}{2}(1-\delta)
$
and then Corollary \ref{cor:Strfull} concludes the proof.
\end{proof}
Finally,  we prove our main theorem.\\
{\bf Proof of Theorem \ref{main:theo}}\\
Let $u$ be as in the statement of the theorem, by \eqref{ww:reduce} $u$ is a solution of 
\[
		\partial_tu+iT_\gamma u+ T_V\cdot\nabla u=f-iT_\omega u.
\]

Then, by \eqref{eq:sym} ~$\Delta_ju$ solves
\begin{equation*}
	\lp\partial_t+iT_\gamma +T_V\cdot\nabla \rp\Delta_ju=F_j,
\end{equation*}
with
\begin{equation*}
	F_j:=\Delta_j f-
	      i\Delta_j (T_\omega u)+i\lb T_\gamma,\Delta_j\rb u+
+\lb T_V,\Delta_j\rb\cdot\nabla u.
\end{equation*}
	Notice that~$F_j$ has spectrum in~$\C_j$. Applying the symbolic calculus Theorem \ref{theo:sc} we deduce that 
	\begin{align*} 
             \lA \Delta_j (T_{\omega_1} u)\rA_{L^p(I;H^{s-\mez}))}&\leq C\mathcal{N}_k(\omega_1)\lA u\rA_{L^\infty(I;H^s))}\\
		\lA\lb T_\gamma,\Delta_j\rb u\rA_{L^p(I;H^{s-\mez})}&\leq C\mathcal{N}_k(\gamma)\lA u\rA_{L^\infty(I;H^s))},\\
		\lA\lb T_V,\Delta_j\rb\cdot\nabla u\rA_{L^p(I;H^s))}&\leq C\lA V\rA_E\lA u\rA_{L^\infty(I;H^s))}.
	\end{align*}
	Then we can use Corollary~\ref{cor:Strloc} on~$\Delta_ju$ to prove
	 \begin{equation*}
		\lA\Delta_j u\rA_{L^p(I,W^{s-\frac{d}{2}+\mu,\infty}(\R^d))}\leq\F\lp\mathcal{N}_{k}(\gamma)+\lA V\rA_{E}\rp\lp\lA f\rA_{L^p(I,H^{s-\frac{9}{10}}(\R^d))}+\lA u\rA_{L^\infty(I,H^s(\R^d))}\rp
	\end{equation*}	
	for~$j\geq j_0$, and using the bound
	\begin{equation*}
		\lA\Delta_j u\rA_{L^p(I,W^{s-\frac{d}{2}+\mu,\infty}(\R^d))}\leq C2^{j\mu}\lA u\rA_{L^\infty(I,H^s(\R^d))}\leq C\lA u\rA_{L^\infty(I,H^s(\R^d))}
	\end{equation*}
	for~$j<j_0$, we finally obtain
\[
\lA u\rA_{L^p(I,W^{s-\frac{d}{2}+\mu-\eps,\infty}(\R^d))}\le \sum_j 2^{-j\eps} \lA\Delta_j u\rA_{L^p(I,W^{s-\frac{d}{2}+\mu,\infty}(\R^d))}
\]
which is bounded by the desired quantity.

\section{Cauchy problem} \label{sec:Cauchy}
We are now in position to derive the Cauchy theory announced in Theorem~\ref{theo:Cauchy}. Let
\[
(\eta_0, \psi_0)\in H^{s+\mez}\times H^s,\quad s>2-\frac d2+\mu
\]
be the initial data such that $\dist(\eta_0, \Gamma)>h>0$. We regularize $(\eta_0, \psi_0)$ to a sequence $(\eta_0^\eps, \psi_0^\eps)\in H^\infty\times H^\infty$ converging to $(\eta_0, \psi_0)$ in $H^{s+\mez}\times H^s$. Then we can choose a uniform $h_0>0$ such that $\dist(\eta_0^\eps, \Gamma)>h_0$. For each initial condition
$(\eta_0^\eps, \psi_0^\eps)$ we know from the local  well-posedness theory in \cite{ABZ1} that there exists a smooth solution $(\eta^\eps, \psi^\eps)$  to \eqref{eq:Zak} with the maximal life time interval $[0, T^*_\eps)$. Applying our a priori estimate of Proposition~\ref{prop:aprioriestimate} and the Strichartz estimate of Corollary~\ref{cor:Stri} give for each $\eps>0$
\[  M^\eps_{s, T}+Z^\eps_{r, T}\leq\F_{h_0}\lp M^\eps_{s,0}+T^\delta\F\lp M^\eps_{s, T}+Z^\eps_{r,T}\rp\rp,\quad \forall T\in [0, T^*_\eps),~\forall \eps>0.
\]
with obvious notations. Combining this estimate with the blow-up criterion in Proposition \ref{theo:blowup} one deduces by standard argument that there exists a time $T>0$ uniformly in $\eps>0$ such that $T_\eps^*>T$. Set $I=[0, T]$. By virtue of Proposition \ref{theo:contraction}, the sequence $(\eta^\eps, \psi^\eps)$ is Cauchy in 
\[
X^{s-\tmez, r-\tmez}:=C^0(I; H^{s-1}(\xR^d)\times H^{s-\tmez}(\xR^d))\cap  L^p(I; W^{r-\mez}(\xR^d)\times W^{r-1,\infty}(\xR^d))
\]
and therefore converges strongly to some $(\eta, \psi)$ in $X^{s,r}$. On the other hand, this sequence is bounded in
\[
Y^{r,s}:=(\eta, \psi)\in L^\infty(I; H^{s+\mez}(\xR^d)\times H^s(\xR^d))\cap L^p(I; W^{r+\mez}(\xR^d)\times W^{r,\infty}(\xR^d)).
\]
Therefore, it converges strongly to $(\eta, \psi)$ in $X^{s',r'}$ for any $s'<s,~r'<r$ and weakly to $(\eta, \psi)$ in $Y^{r,s}$. Consequently, one can pass to the limit in the system \eqref{eq:Zak} as $\eps\to 0$ to have that  $(\eta, \psi)$ is a distributional solution to \eqref{eq:Zak}. Here, we remark that the only nontrivial point is to pass to the limit in the Dirichlet-Neumann operator $G(\eta_\eps)\psi_\eps$, this is done for example in \cite{ABZ}, Corollary $5.16$. Finally, by interpolation it holds that 
\[
(\eta, \psi) \in  C^0(I; H^{s'+\mez}(\xR^d)\times H^{s'}(\xR^d)),\quad \forall s'<s,
\]
which completes the proof of Theorem \ref{theo:Cauchy}.

\section{Appendix}	\label{appendix}
\label{ap:para}
\begin{defn}\label{Paley}
1. (Littlewood-Paley decomposition) Let~$\psi\in C^\infty_0({\mathbf{R}}^d)$ be such that
$$
\psi(\theta)=1\quad \text{for }\la \theta\ra\le 1,\qquad 
\psi(\theta)=0\quad \text{for }\la\theta\ra>2.
$$
Then we define
\begin{equation*}
\psi_k(\theta)=\psi(2^{-k}\theta)\quad\text{for }k\in \xZ,
\qquad \varphi_0=\psi_0,\quad\text{ and } 
\quad \varphi_k=\psi_k-\psi_{k-1} \quad\text{for }k\ge 1.
\end{equation*}
Given a temperate distribution $u$ and an integer $k$ in $\xN$ we also introduce $S_k u$ 
and $\Delta_k u$ by 
$S_k u=\psi_k(D_x)u$ and $\Delta_k u=S_k u-S_{k-1}u$ for $k\ge 1$ and $\Delta_0u=S_0u$. Then we have the formal decomposition 
$$
u=\sum_{k=0}^{\infty}\Delta_k u.
$$
2. (Zygmund spaces) If~$s$ is any real number, we define the Zygmund class~$C^{s}_*({\mathbf{R}}^d)$ as the 
space of tempered distributions~$u$ such that
$$
\lA u\rA_{C^{s}_*}:= \sup_{q\ge 0} 2^{qs}\lA \Delta_q u\rA_{L^\infty}<+\infty.
$$
3. (H\"older spaces) For~$k\in\xN$, we denote by $W^{k,\infty}({\mathbf{R}}^d)$ the usual Sobolev spaces.
For $\rho= k + \sigma$, $k\in \xN, \sigma \in (0,1)$ denote 
by~$W^{\rho,\infty}({\mathbf{R}}^d)$ 
the space of functions whose derivatives up to order~$k$ are bounded and uniformly H\"older continuous with 
exponent~$\sigma$. 
\end{defn}
\begin{rem}\label{rem:Holder}
When $s\in (0,\infty)\setminus \xN$ we have
\[
C_*^s(\xR^d)\equiv W^{s,\infty}(\xR^d).
\]
For $n\in \xN$ we still have the following estimate
\[
\sup_{q\ge 0}2^{nq}\lA \Delta_q u\rA_{L^\infty}\le C(n)\lA u\rA_{W^{n,\infty}}.
\]
\end{rem}
\begin{defn}\label{para:sym+ope}
1. (Symbols) Given~$\rho\in [0, \infty)$ and~$m\in\xR$,~$\Gamma_{\rho}^{m}({\mathbf{R}}^d)$ denotes the space of
locally bounded functions~$a(x,\xi)$
on~${\mathbf{R}}^d\times({\mathbf{R}}^d\setminus 0)$,
which are~$C^\infty$ with respect to~$\xi$ for~$\xi\neq 0$ and
such that, for all~$\alpha\in\xN^d$ and all~$\xi\neq 0$, the function
$x\mapsto \partial_\xi^\alpha a(x,\xi)$ belongs to~$W^{\rho,\infty}({\mathbf{R}}^d)$ and there exists a constant
$C_\alpha$ such that,
\begin{equation}\label{para:symbol}
\forall\la \xi\ra\ge \mez,\quad 
\lA \partial_\xi^\alpha a(\cdot,\xi)\rA_{W^{\rho,\infty}(\xR^d)}\le C_\alpha
(1+\la\xi\ra)^{m-\la\alpha\ra}.
\end{equation}
Let $a\in \Gamma_{\rho}^{m}({\mathbf{R}}^d)$, we define the semi-norm
\begin{equation}\label{defi:semi-norm}
M_{\rho}^{m}(a)= 
\sup_{\la\alpha\ra\le d/2+1+\rho ~}\sup_{\la\xi\ra \ge 1/2~}
\lA (1+\la\xi\ra)^{\la\alpha\ra-m}\partial_\xi^\alpha a(\cdot,\xi)\rA_{W^{\rho,\infty}({\mathbf{R}}^d)}.
\end{equation}
2. (Paradifferential operators) Given a symbol~$a$, we define
the paradifferential operator~$T_a$ by
\begin{equation}\label{eq.para}
\widehat{T_a u}(\xi)=(2\pi)^{-d}\int \chi(\xi-\eta,\eta)\widehat{a}(\xi-\eta,\eta)\varrho(\eta)\widehat{u}(\eta)
\, d\eta,
\end{equation}
where
$\widehat{a}(\theta,\xi)=\int e^{-ix\cdot\theta}a(x,\xi)\, dx$
is the Fourier transform of~$a$ with respect to the first variable; 
$\chi(\theta,\eta)$  is defined by
\bq\label{chi}
\chi(\theta,\eta)=\sum_{k=0}^{+\infty} \psi_{k-3}(\theta) \varphi_k(\eta);
\eq
and~$\varrho\in C^\infty(\R^d),~\varrho=0$ if $|\xi|\le \mez$ and $\varrho=1$ if $|\xi|\ge 1$.
\end{defn}
%\begin{rem}
%Let $a\in W^{\rho, \infty}(\xR^d)$ then $a\in \Gamma^0_\rho$. By the preceding %definition we have
%\[
%T_au=\sum_{k=1}^\infty S_{k-3}a\Delta_k u+S_{-3}a\varrho(D_x)\Delta_0u.
%\]
%\end{rem}
\begin{rem}
The cut-off function $\chi$ has the following properties for some $0<\eps_1<\eps_2<1$
\bq\label{chi:prop}
\begin{cases}
\chi(\eta, \xi)=1,& \text{for}~|\eta|\le \eps_1(1+|\xi),\\
\chi(\eta, \xi)=0,&\text{for}~|\eta|\ge \eps_2(1+|\xi).
\end{cases}
\eq
\end{rem}
Symbolic calculus for paradifferential operators is summarized in the following theorem (see \cite{MePise}, \cite{Bony}).
\begin{thm}\label{theo:sc}(Symbolic calculus)
Let~$m\in\xR$ and~$\rho\in [0, \infty)$. \\
$(i)$ If~$a \in \Gamma^m_0({\mathbf{R}}^d)$, then~$T_a$ is of order~$\le m$. 
Moreover, for all~$\mu\in\xR$ there exists a constant~$K$ such that
\begin{equation}\label{esti:quant1}\lA T_a \rA_{H^{\mu}\rightarrow H^{\mu-m}}\le K M_{0}^{m}(a).
\end{equation}
$(ii)$ If~$a\in \Gamma^{m}_{\rho}({\mathbf{R}}^d), b\in \Gamma^{m'}_{\rho}({\mathbf{R}}^d)$ with $\rho>0$. Then 
$T_a T_b -T_{a \sharp b}$ is of order~$\le m+m'-\rho$ where
\[
a\sharp b:=\sum_{|\alpha|<\rho}\frac{(-i)^{\alpha}}{\alpha !}\partial_{\xi}^{\alpha}a(x, \xi)\partial_x^{\alpha}b(x, \xi).
\] 
Moreover, for all~$\mu\in\xR$ there exists a constant~$K$ such that
\begin{equation}\label{esti:quant2}
\lA T_a T_b  - T_{a \sharp b}   \rA_{H^{\mu}\rightarrow H^{\mu-m-m'+\rho}}%&
\le 
K M_{\rho}^{m}(a)M_{0}^{m'}(b)+K M_{0}^{m}(a)M_{\rho}^{m'}(b).
\end{equation}
$(iii)$ Let~$a\in \Gamma^{m}_{\rho}({\mathbf{R}}^d)$ with $\rho >0$. Denote by 
$(T_a)^*$ the adjoint operator of~$T_a$ and by~$\overline{a}$ the complex conjugate of~$a$. Then 
$(T_a)^* -T_{a^*}$ is of order~$\le m-\rho$ where
\[
a^*=\sum_{|\alpha|<\rho}\frac{1}{i^{|\alpha|}\alpha!}\partial_{\xi}^{\alpha}\partial_x^{\alpha}\overline{a}.
\]
Moreover, for all~$\mu$ there exists a constant~$K$ such that
\begin{equation}\label{esti:quant3}
\lA (T_a)^*   - T_{\overline{a}}   \rA_{H^{\mu}\rightarrow H^{\mu-m+\rho}}\le 
K M_{\rho}^{m}(a).
\end{equation}
\end{thm}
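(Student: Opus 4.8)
The plan is to establish the three assertions via the standard machinery of paradifferential calculus, following M\'etivier \cite{MePise} (Chapters~5--6) and Bony \cite{Bony}; here I only set up the decompositions and indicate where the seminorms enter, since the computations themselves are routine. The structural fact to exploit is the spectral truncation in \eqref{chi:prop}: in \eqref{eq.para} the $x$-frequency $\theta=\xi-\eta$ of the symbol is confined to $|\theta|\le\eps_2(1+|\xi|)$, so $T_a$ is a pseudodifferential operator whose symbol is spectrally localized in $x$ below the output frequency. The immediate payoff is that the dyadic blocks of $T_au$ are almost orthogonal, which reduces each estimate to a single frequency-localized bound followed by an $\ell^2$ summation.

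For $(i)$, I would decompose $u=\sum_k\Delta_ku$ and, in the symbol, $a=\sum_{k'}\Delta_{k'}^{x}a$; by the support of $\chi$ only the blocks $k'\le k-3$ contribute to $T_a\Delta_ku$, each such term being frequency-localized near $2^k$, so $T_au$ splits into almost orthogonal pieces supported in fixed dilates of $\lB|\xi|\sim2^k\rB$. The core point is the $L^2$ bound for one block, obtained from the symbol estimate \eqref{para:symbol} and Bernstein's inequality; the number $d/2+1$ of $\xi$-derivatives retained in $M_0^m(a)$ is exactly what makes the associated kernel integrable, and it yields $\lA T_a\Delta_ku\rA_{L^2}\leq CM_0^m(a)2^{km}\lA\Delta_ku\rA_{L^2}$. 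Squaring, weighting by $2^{2(\mu-m)k}$ and summing over $k$ gives \eqref{esti:quant1}.

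For $(ii)$ and $(iii)$ the same mechanism is applied at the symbol level: decompose $a=\sum_pa_p$ dyadically in $\xi$ so that each $T_{a_p}$ is of paraproduct type, then analyze the composition $T_{a_p}T_{b_q}$ by Taylor-expanding $b_q(x,\xi)$ in the $x$-variable around a base point. Since $x\mapsto\partial_\xi^\alpha b(x,\xi)\in W^{\rho,\infty}$, keeping the terms $|\alpha|<\rho$ in the expansion leaves a remainder that gains a negative power of order $-\rho$ in the frequency; resumming the main terms reproduces precisely the symbol $a\sharp b$ modulo an operator of order $m+m'-\rho$, and the asymmetric constant $M_\rho^m(a)M_0^{m'}(b)+M_0^m(a)M_\rho^{m'}(b)$ in \eqref{esti:quant2} appears because at each stage only one of the two symbols is differentiated in $x$. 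Assertion $(iii)$ is the transpose of this computation: the adjoint symbol is read off by integrating by parts in the kernel and Taylor-expanding, producing $a^*$ with an error of order $m-\rho$.

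The step I expect to be the main obstacle is the bookkeeping in $(ii)$: one must estimate the Taylor remainders uniformly while keeping the spectral supports of all intermediate operators under control, so that the near-orthogonality used in $(i)$ still applies to $T_aT_b$ and to $(T_a)^*$; this is precisely where the support properties \eqref{chi:prop} of $\chi$ must be invoked carefully at every stage. All the remaining work is the routine reassembly of the dyadic pieces.
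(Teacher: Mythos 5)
The paper does not supply a proof of Theorem~\ref{theo:sc} at all: it is quoted in Appendix~\ref{appendix} as a known result, with the single remark ``see \cite{MePise}, \cite{Bony}''. So there is no in-paper argument to compare your sketch against; the relevant benchmark is the standard proof in M\'etivier's lecture notes (Chapters~5--6 of~\cite{MePise}), which is precisely the source the paper defers to.

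Measured against that benchmark, your outline identifies the right structural ingredients (spectral localization of~$T_a$ from~\eqref{chi:prop}, near-orthogonality of the dyadic outputs, $L^2$ bound on a single block, Taylor expansion of one symbol in~$x$ to order~$<\rho$ leaving a remainder of order~$-\rho$), but it stops at the level of a plan rather than a proof. Two places deserve more than the waved hand. First, the claim that retaining $d/2+1$ $\xi$-derivatives ``is exactly what makes the associated kernel integrable'' is imprecise: the number $d/2+1+\rho$ appearing in~\eqref{defi:semi-norm} is chosen so that the fixed-frequency kernels lie in $L^1$ uniformly after a Cauchy--Schwarz/Plancherel argument, and the exact count depends on which route one takes; asserting it without the computation leaves the core of~$(i)$ unproved. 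Second, for~$(ii)$ M\'etivier does not decompose $a$ dyadically in~$\xi$; he works directly with the reduced symbol $\sigma_a=\chi(D_x,\xi)a$ and compares the pseudodifferential composition $\sigma_a\circ\sigma_b$ with $\sigma_{a\sharp b}$. Your variant (Bony-style, dyadic in~$\xi$ and treating each block as a paraproduct) can be made to work, but then the ``careful bookkeeping'' of spectral supports you flag as the main obstacle is not incidental: it is the whole content of the asymptotic expansion, and the asymmetric constant in~\eqref{esti:quant2} only falls out once one actually tracks which factor absorbs the $x$-derivatives in each term of the Taylor remainder. As a directional sketch toward the cited proof this is fine; as a self-contained proof it has not yet closed the gap, because the two estimates on which everything rests --- the single-block $L^2$ bound and the operator-norm bound on the Taylor remainder --- are asserted rather than established.
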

\bibliographystyle{plain}
%\bibliography{StrichEstWW}

\begin{thebibliography}{1}

\bibitem{ABZ}
Thomas {Alazard}, Nicolas {Burq}, and Claude {Zuily}.
\newblock {Strichartz estimates and the Cauchy problem for the gravity water
  waves equations}.
\newblock {\em  arXiv:1404.4276}, April 2014.

\bibitem{ABZ1}
Thomas Alazard, Nicolas Burq, and Claude Zuily.
\newblock On the water waves equations with surface tension.
\newblock {\em Duke Math. J.}, 158(3):413--499, 2011.

\bibitem{ABZ2}
Thomas Alazard, Nicolas Burq, and Claude Zuily.
\newblock Strichartz estimates for water waves.
\newblock {\em Ann. Sci. {\'E}c. Norm. Sup{\'e}r. (4)}, 44(5):855--903, 2011.

\bibitem{ABZ3}
Thomas Alazard, Nicolas Burq, and Claude Zuily.
\newblock On the Cauchy problem for gravity water waves.
\newblock {\em Invent.Math.}, 198(1): 71--163, 2014.

\bibitem{AD1}
Thomas Alazard, Jean-Marc Delort.
\newblock Global solutions and asymptotic behavior for two dimensional gravity water waves. 
\newblock {\em  arXiv:1305.4090}, 2013.

\bibitem{AD2}
Thomas Alazard, Jean-Marc Delort.
\newblock Sobolev estimates for two dimensional gravity water waves. 
\newblock {\em   arXiv:1307.3836}, 2013.

\bibitem{AM1}
David M. Ambrose, Nader Masmoudi.
\newblock The zero surface tension limit of two-dimensional water waves.
\newblock {\em Comm. Pure Appl. Math.} 58 (2005), no. 10, 1287–1315. 

\bibitem{AM2}
David M. Ambrose, Nader Masmoudi.
\newblock The zero surface tension limit of three-dimensional water waves.
\newblock {\em Indiana Univ. Math. J.} 58 (2009), no. 2, 479–521. 


\bibitem{BG}
Klaus Beyer, Matthias G\"{u}nther
\newblock On the Cauchy problem for a capillary drop. I. Irrotational motion.
\newblock  {\em Math. Methods Appl. Sci.} 21 (1998), no. 12, 1149–1183. 

\bibitem{BaCh-IMRN}
Hajer Bahouri and Jean-Yves Chemin.
\newblock \'{E}quations d'ondes quasilin\'eaires et effet dispersif.
\newblock {\em Internat. Math. Res. Notices}, (21):1141--1178, 1999.

\bibitem{BaCh-AJM}
Hajer Bahouri and Jean-Yves Chemin.
\newblock \'{E}quations d'ondes quasilin\'eaires et estimations de
  {S}trichartz.
\newblock {\em Amer. J. Math.}, 121(6):1337--1377, 1999.

\bibitem{Bony}
Jean-Michel Bony.
\newblock Calcul symbolique et propagation des singularit\'es pour les
  \'equations aux d\'eriv\'ees partielles non lin\'eaires.
\newblock {\em Ann. Sci. \'Ecole Norm. Sup. (4)}, 14(2):209--246, 1981.

\bibitem{BGT}
Nicolas Burq, Patrick G\'erard, and Nikolay Tzvetkov.
\newblock Strichartz inequalities and the nonlinear {S}chr\"odinger equation on
  compact manifolds.
\newblock {\em Amer. J. Math.}, 126(3):569--605, 2004.

\bibitem{CHS}
Hans Christianson, Vera~Mikyoung Hur, and Gigliola Staffilani.
\newblock Strichartz estimates for the water-wave problem with surface tension.
\newblock {\em Comm. Partial Differential Equations}, 35(12):2195--2252, 2010.

\bibitem{CouShk}
Daniel Coutand and Steve Shkoller.
\newblock Well-posedness of the free-surface incompressible Euler equations with or without surface tension. 
\newblock {\em J. Amer. Math. Soc. }20(3): 829–930, 2007.

\bibitem{Craig}
Walter Craig.
\newblock An existence theory for water waves and the Boussinesq and Korteweg-de Vries scaling limits.
\newblock {\em Comm. Partial Differential Equations} 10 (1985), no. 8, 787–1003.

\bibitem{CrSu}
Walter Craig and Catherine Sulem.
\newblock Numerical simulation of gravity waves. 
\newblock {\em J. Comput. Phys. }108(1): 73–83, 1993.

\bibitem{CL}
Demetrios Christodoulou, Hans Lindblad.
\newblock On the motion of the free surface of a liquid.
\newblock {\em  Comm. Pure Appl. Math.}  53 (2000), no. 12, 1536–1602.

\bibitem{GMS1}
Pierre Germain, Nader Masmoudi, Jalal Shatah. 
\newblock Global solutions for the gravity water waves equation in dimension 3.
\newblock {\em Ann. of Math.} (2) 175 (2012), no. 2, 691–754. 

\bibitem{GMS2}
Pierre Germain, Nader Masmoudi, Jalal Shatah. 
\newblock Global existence for capillary water waves.
\newblock {\em  Comm. Pure Appl. Math.} 68 (2015), no. 4, 625–687.

\bibitem{HIT}
 John Hunter, Mihaela Ifrim, Daniel Tataru.
\newblock Two dimensional water waves in holomorphic coordinates.
\newblock{\em  arXiv:1401.1252}, 2014.

\bibitem{IT1}
Mihaela Ifrim, Daniel Tataru.
\newblock Two dimensional water waves in holomorphic coordinates II: global solutions. 
\newblock{\em   arXiv:1404.7583}, 2014.

\bibitem{IT2}
Mihaela Ifrim, Daniel Tataru.
\newblock The lifespan of small data solutions in two dimensional capillary water waves. 
\newblock{\em arXiv:1406.5471}, 2014.

\bibitem{IP1}
Alexandru D. Ionescu, Fabio Pusateri.
\newblock Global solutions for the gravity water waves system in 2d. 
\newblock {\em Invent. Math.} 199 (2015), no. 3, 653–804. 

\bibitem{IP2}
Alexandru D. Ionescu, Fabio Pusateri.
\newblock Global regularity for 2d water waves with surface tension. 
\newblock {\em  arXiv:1408.4428}, 2014.

\bibitem{Lannes}
David Lannes.
\newblock Well-posedness of the water-waves equations.
\newblock {\em J. Amer. Math. Soc.} 18 (2005), no. 3, 605–654 (electronic). 

\bibitem{Gill}
Gilles Lebeau.
\newblock Controle de l'équation de Schrodinger.
\newblock{\em J. Math. Pures Appl.}, (9) 71, 267--291, 1992.

\bibitem{Nalimov}
V.I. Nalimov.
\newblock The Cauchy-Poisson problem.
\newblock  {\em Dinamika Splošn. Sredy Vyp. 18 Dinamika Židkost. so Svobod. Granicami} (1974), 104–210, 254.

\bibitem{NgPo}
Quang Huy Nguyen and Thibault de Poyferr\'{e}.
\newblock A paradifferential reduction for the gravity-capillary waves system at low regularity and applications.

\bibitem{Lind}
Hans Lindblad.
\newblock Well-posedness for the motion of an incompressible liquid with free surface boundary.
\newblock {\em Ann. of Math.} (2) 162 (2005), no. 1, 109–194.

\bibitem{MePise}
Guy M{\'e}tivier.
\newblock {\em Para-differential calculus and applications to the {C}auchy
  problem for nonlinear systems}, volume~5 of {\em Centro di Ricerca Matematica
  Ennio De Giorgi (CRM) Series}.
\newblock Edizioni della Normale, Pisa, 2008.

\bibitem{MiZh}
Mei Ming and Zhifei Zhang.
\newblock Well-posedness of the water-wave problem with surface tension.
\newblock{\em J. Math. Pures Appl.} (9) 92, no. 5, 429--455, 2009. 

\bibitem{SZ1}
Jalal Shatah and Chongchun Zeng.
\newblock Geometry and a priori estimates for free boundary problems of the
  {E}uler equation.
\newblock {\em Comm. Pure Appl. Math.}, 61(5):698--744, 2008.

\bibitem{SZ2}
Jalal Shatah and Chongchun Zeng.
\newblock  A priori estimates for fluid interface problems.
\newblock{\em Comm. Pure Appl. Math.} 61(6): 848–876, 2008.

\bibitem{SZ3}
Jalal Shatah and Chongchun Zeng.
 \newblock Local well-posedness for fluid interface problems. 
\newblock{\em Arch. Ration. Mech. Anal.} 199(2): 653–705, 2011.

\bibitem{StTa}
Gigliola Staffilani and Daniel Tataru.
\newblock Strichartz estimates for a {S}chr\"odinger operator with nonsmooth
  coefficients.
\newblock {\em Comm. Partial Differential Equations}, 27(7-8):1337--1372, 2002.

\bibitem{TataruNS}
Daniel Tataru.
\newblock Strichartz estimates for operators with nonsmooth coefficients and
  the nonlinear wave equation.
\newblock {\em Amer. J. Math.}, 122(2):349--376, 2000.

\bibitem{TaylorPseudorNLPDE}
Michael~E. Taylor.
\newblock {\em Pseudodifferential operators and nonlinear {PDE}}, volume 100 of
  {\em Progress in Mathematics}.
\newblock Birkh\"auser Boston, Inc., Boston, MA, 1991.

\bibitem{Wu1}
Sijue Wu.
\newblock Well-posedness in Sobolev spaces of the full water wave problem in 2-D.
\newblock {\em Invent. Math. 130 (1997)}, no. 1, 39–72.

\bibitem{Wu2}
Sijue Wu.
\newblock Well-posedness in Sobolev spaces of the full water wave problem in 3-D.
\newblock {\em J. Amer. Math. Soc.} 12 (1999), no. 2, 445–495.

\bibitem{Wu2D}
Sijue Wu.
\newblock Almost global wellposedness of the 2-D full water wave problem.
\newblock {\em Invent. Math.} 177 (2009), no. 1, 45–135. 

\bibitem{Wu3D}
Sijue Wu.
\newblock Global wellposedness of the 3-D full water wave problem.
\newblock {\em Invent. Math.} 184 (2011), no. 1, 125–220. 

\bibitem{Yosihara}
Hideaki Yosihara.
\newblock Gravity waves on the free surface of an incompressible perfect fluid
  of finite depth.
\newblock {\em Publ. Res. Inst. Math. Sci.}, 18(1):49--96, 1982.

\bibitem{Zack}
Vladimir Zakharov.
\newblock Stability of periodic waves of finite amplitude on the surface of a
  deep fluid.
\newblock {\em Journal of Applied Mechanics and Technical Physics}, 1968.


\bibitem{ZworskiSemClass}
Maciej Zworski.
\newblock {\em Semiclassical analysis}, volume 138 of {\em Graduate Studies in
  Mathematics}.
\newblock American Mathematical Society, Providence, RI, 2012.

\end{thebibliography}

\end{document}